\documentclass[a4paper,reqno]{amsart}
\usepackage[utf8]{inputenc}
\usepackage{amssymb}
\usepackage{enumitem}
\usepackage{mathrsfs}
\usepackage{mathtools}
\usepackage{amsmath}
\usepackage [dvipsnames] { xcolor }
\usepackage{multirow}
\usepackage[colorlinks=true]{hyperref}

\hypersetup{linkcolor=Green,urlcolor=Cyan,citecolor=Cyan}

\setlist[enumerate]{label=\emph{(\roman*)}}

\usepackage{environ}

\newtheorem{theorem}{Theorem}[section]

\newtheorem{lemma}[theorem]{Lemma}
\newtheorem{proposition}[theorem]{Proposition}

\theoremstyle{definition}
\newtheorem{definition}[theorem]{Definition}
\newtheorem{remark}[theorem]{Remark}
\numberwithin{equation}{section}
\newcommand{\RR}{\mathbb{R}}

\newcommand{\e}{\varepsilon}
\newcommand{\dd}{{\rm d}}

\begin{document}
	
	\title[Two-bubble solutions for gKdV equations]{Construction of two-bubble blow-up solutions for the mass-critical gKdV equations}
	
	\author{Yang Lan}
	\address{Yau Mathematical Sciences Center, Tsinghua University, Beijing 100084, P. R. China.}
	\email{lanyang@mail.tsinghua.edu.cn}

	\author{Xu Yuan}
	\address{State Key Laboratory of Mathematical Sciences, Academy of Mathematics and Systems Science, Chinese Academy of Sciences, Beijing 100190, P. R. China.}
	\email{xu.yuan@amss.ac.cn}

	\subjclass[2020]{35B40 (primary), 35Q51, 37Q53}
	
	\thanks{Y. L. was partially funded by the New Cornerstone Investigator Program 100001127.}
	
	\begin{abstract}
		For the mass-critical generalized Korteweg-de Vries equation,
		\begin{equation*}
			\partial_{t}u+\partial_{x}\left(
			\partial_{x}^{2}u+u^{5}\right)=0,\quad (t,x)\in [0,\infty)\times \RR,
		\end{equation*}
		We prove the existence of a global solution that blows up in infinite time and approaches the sum of two decoupled bubbles with opposite signs.
		The proof is inspired by the techniques developed for the two-dimensional mass-critical NLS equation in a similar context by Martel-Rapha\"el~\cite{MartelRaphael}.
		
		The main difficulty originates from the fact that the unstable directions related to scaling are excited by the nonlinear interactions. To overcome this difficulty, a refined approximate solution that involves some non-localized profiles is needed. In particular, a sharp understanding for the interactions between solitons and such profiles is also required.
	\end{abstract}
	
	\maketitle
	
	\section{Introduction}
	\subsection{Main result}\label{SS:Main}
	In this article, we consider the dynamics of two-bubbles for the mass-critical generalized Korteweg-de Vries (gKdV) equation,
	\begin{equation}\label{equ:gKdV}
		\partial_{t}u+\partial_{x}\left(
		\partial_{x}^{2}u+u^{5}\right)=0,\quad (t,x)\in [0,\infty)\times \RR.
	\end{equation}
	The mass $M(u)$ and energy $E(u)$ are formally conserved by the flow of~\eqref{equ:gKdV} where 
	\begin{equation*}
		M(u)=\int_{\RR}u^{2}\dd x\quad \mbox{and}\quad 
		E(u)=\frac{1}{2}\int_{\RR}(\partial_{x}u)^{2}\dd x-\frac{1}{6}\int_{\RR}u^{6}\dd x.
	\end{equation*}
	Recall that, the Cauchy problem for equation~\eqref{equ:gKdV} is locally well-posed in the energy space $H^{1}(\RR)$ (see Kenig-Ponce-Vega~\cite{KPVgKdV1,KPVgKdV2}). More precisely, for any initial data $u_{0}\in H^{1}(\RR)$, there exists a unique (in a certain class) maximal solution $u$ of~\eqref{equ:gKdV} in 
	$C\left([0,T);H^{1}(\RR)\right)$. For this Cauchy problem, the following blow-up criterion holds:
	\begin{equation}\label{equ:blowCauchy}
		T<\infty\Longrightarrow 
		\| \partial_{x} u(t)\|_{L^{2}}=\infty\quad \mbox{as}\ t\uparrow T.
	\end{equation}
	For such $H^{1}$ solution $u$, the mass $M(u)$ and energy $E(u)$ are conserved on $[0,T)$.
	
	\smallskip
	Recall also that, if $u$ is a solution of~\eqref{equ:gKdV} then for any $\lambda>0$, the function 
	\begin{equation*}
		u_{\lambda}(t,x)=\lambda^{\frac{1}{2}} u\left(\lambda^{3}t,\lambda x\right),\quad \mbox{for}\ (t,x)\in [0,\infty)\times \RR,
	\end{equation*}
	is also a solution of~\eqref{equ:gKdV}. In addition, this scaling symmetry keeps the $L^{2}$-norm invariant so that the problem is \emph{mass-critical}. 
	
	\smallskip
	We recall the family of solitary wave solutions of~\eqref{equ:gKdV}.
	Let $Q(x)=\left(3{\mathrm{sech}}^{2}(2x)\right)^{\frac{1}{4}}$ be the unique (up to translation) positive $H^{1}$ solution of the equation
	\begin{equation*}
		-Q''+Q-Q^{5}=0,\quad \mbox{on}\ \RR.
	\end{equation*}
	Then, for any $(\lambda_{0},x_{0})\in (0,\infty)\times \RR$, the function 
	\begin{equation*}
		u(t,x)=\lambda_{0}^{\frac{1}{2}}Q\left(\lambda_{0}\left(x-\lambda_{0}^{2}t-x_{0}\right)\right),
	\end{equation*}
	is a solution of~\eqref{equ:gKdV} called \emph{solitary wave} or \emph{soliton}. Following a variational argument~\cite{Wein}, the conservation laws and the blow-up criterion~\eqref{equ:blowCauchy} imply that any initial data $u_{0}\in H^{1}(\RR)$ with subcritical mass, \emph{i.e.} satisfying $\|u_{0}\|_{L^{2}}<\|Q\|_{L^{2}}$, generates a \emph{global and bounded} solution in $H^{1}(\RR)$.
	
	\smallskip
	To go beyond the threshold mass $\|Q\|_{L^{2}}$, it is natural to restrict to solutions with small supercritical mass, \emph{i.e.} satisfying
	\begin{equation}\label{equ:massabove}
		\|Q\|_{L^{2}}\le \|u_{0}\|_{L^{2}}<(1+\delta)\|Q\|_{L^{2}},\quad
		\mbox{for}\ \ 0<\delta\ll 1.
	\end{equation}
	The study of singularity formation for such case was first developed in a series of works by Martel-Merle~\cite{MMJMPA,MMGAFA,MMANN,MMJAMS} and Merle~\cite{MerleJAMS} via rigidity Liouville-type property, monotonicity formula and localized Virial estimate. Later on, Martel-Merle-Rapha\"el~\cite{MMRACTA,MMRJEMS,MMRASNSP} revisited the blow-up analysis for~\eqref{equ:gKdV}, giving a comprehensive description of the flow near the soliton and thereby completing the results in the above-mentioned works. See Section~\ref{SS:BLOWUPKDV} for more discussion.
	
	\smallskip
	For the multi-bubble dynamics of~\eqref{equ:gKdV}, it is known from Combet-Martel~\cite{CombetMartel} that \emph{finite time} blow-up solutions exist with an arbitrary number of bubbles and any choice of signs.
	In this article, we construct the first example of \emph{infinite time} blow-up solution of~\eqref{equ:gKdV} related to the strong interactions of two bubbles with opposite signs. Our main result is formulated as follows.
	
	\begin{theorem}\label{thm:main}
		There exists a global-in-time solution $u\in C\left([0,\infty);H^{1}\right)$ of~\eqref{equ:gKdV} that decomposes asymptotically into a sum of two bubbles at infinity:
		\begin{equation*}
			\lim_{t\to \infty}\left\|u(t)-\frac{1}{\lambda^{\frac{1}{2}}(t)}\sum_{n=1,2}(-1)^{n+1}Q\left(\frac{\cdot-x_{n}(t)}{\lambda(t)}\right)\right\|_{H^{1}}=0.
		\end{equation*}
		Here, the position parameters $(x_{1},x_{2})$ satisfy
		\begin{equation*}
			x_{2}(t)-x_{1}(t)=(2+o(1))\log^{\frac{5}{6}}t,\quad  \mbox{as}\ t\uparrow\infty.
		\end{equation*}
		In addition, the scaling and position parameters $(\lambda(t),x_{1}(t))$ satisfy
		\begin{equation*}
			\begin{aligned}
				\left(\lambda(t),x_{1}(t)\right)&=\left(1+o(1)\right)\left(\frac{1}{\log^{\frac{1}{6}} t},t\log^{\frac{1}{3}}t\right),
				\  \mbox{as}\ t\uparrow \infty.
			\end{aligned}
		\end{equation*}
		In particular, the blow-up rate for $u(t)$ is
		\begin{equation*}
			\|\partial_{x}u(t)\|^{2}_{L^{2}}=(2+o(1))\|Q'\|^{2}_{L^{2}}\log^{\frac{1}{3}} t,\quad \mbox{as}\ \ t\uparrow \infty.
		\end{equation*}
	\end{theorem}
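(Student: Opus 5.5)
We construct the solution by the backward-in-time compactness scheme of Martel--Rapha\"el~\cite{MartelRaphael}, adapted to the gKdV setting of Martel--Merle--Rapha\"el~\cite{MMRACTA} and Combet--Martel~\cite{CombetMartel}.

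\textbf{Step 1: approximate solution and formal dynamics.} Work in renormalized variables $\dd s/\dd t=\lambda^{-3}$ and $y=(x-x_1)/\lambda$, with a common scale $\lambda$ for both bubbles to leading order. Set $z=(x_2-x_1)/\lambda$. The ansatz is
\[
V=Q_{b_1}-Q_{b_2}(\cdot-z),
\]
where $Q_b$ is the localized profile of~\cite{MMRACTA} corrected along the direction $\Lambda Q$. Opposite signs make the tail interaction repulsive, of size $e^{-2z}$ (recall $Q\sim e^{-|y|}$ and $Q^5$ is the relevant nonlinear weight). Projecting the equation onto the generalized kernel $\{Q,\Lambda Q,Q'\}$ gives a system of the form
\[
\frac{\lambda_s}{\lambda}+b\approx0,\qquad b_s+2b^2\approx c\,e^{-z}\ (\text{or the appropriate power}),\qquad z_s\approx\text{difference of speeds},
\]
together with a possible splitting $\lambda_2/\lambda_1$. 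The interaction excites the scaling direction, and this is the main new feature. Since the $\Lambda Q$-correction on gKdV has a non-decaying tail, behaving like $\int_{-\infty}^yQ$ to the left, we must add non-localized profiles $P$, truncated at scale $\sim s^{\gamma}$. We then have to compute the interaction of the left tail of the second profile with the first soliton, and of the first soliton with $P$.

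We then solve the resulting ODE system explicitly. The ansatz $z\sim 2\log^{5/6}t$, $\lambda\sim\log^{-1/6}t$ must be consistent: it should give $b\sim -\lambda_t\lambda^2$, a logarithmic balance from $b_s\sim e^{-z}$-type forcing replaced by a polynomial-in-$z$ law coming from the $P$ tails, and $x_{1,t}=\lambda^{-2}$, which yields $x_1\sim t\log^{1/3}t$. I expect the $\log^{5/6}$ exponent to come out of this balance. I will verify that the formal system has a solution with exactly these asymptotics, and I will identify its instability directions: one from $b$ and one from $z$.

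\textbf{Step 2: uniform backward estimates.} For a sequence $T_n\to\infty$, take data at $T_n$ equal to the approximate solution, with the unstable parameters chosen by a topological (Brouwer) argument. Decompose $u=V+\varepsilon$ with orthogonality conditions to the generalized kernel of each bubble. The core of the argument is a mixed energy--virial functional $\mathcal F$ in the style of~\cite{MMRACTA}, with exponential weights adapted to each bubble and cutoffs between them. We need coercivity of $\mathcal F$ under the orthogonality conditions, and a monotonicity inequality
\[
\frac{\dd}{\dd s}\mathcal F\ \ge\ -C\,(\text{error of } V)^2 .
\]
Integrating this backward from $T_n$ gives $\|\varepsilon\|$ much smaller than the modulation error. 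The parameters then stay in a bootstrap neighbourhood of the formal trajectory, except for the unstable modes, which are controlled by the topological argument.

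\textbf{Step 3: compactness and conclusion.} The uniform bounds on $[T_0,T_n]$ give $H^1$-weak limits of $u_n(T_0)$. By $L^2$-compactness, which follows from monotonicity of mass on the right together with the decay of the left tail, and by weak continuity of the flow, we obtain a global solution satisfying the same estimates. This gives the stated asymptotics. The blow-up rate $\|\partial_xu\|_{L^2}^2\approx2\lambda^{-2}\|Q'\|^2$ then follows.

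\textbf{Main obstacle.} The hardest part is Step 1 combined with its use in Step 2. We must build non-localized profiles so that the error of $V$ is small enough in weighted norms to close the energy estimate, while their slowly decaying tails interact with the other soliton. I would first try profiles $P$ solving $(LP)'=\Lambda Q$-type equations, truncated at distance $\sim z$. I would then compute the projection of the interaction onto $\Lambda Q$ of the other bubble exactly, since this projection determines the $b$ law and hence the $\log^{5/6}$ separation.
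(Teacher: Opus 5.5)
Your backward scheme (data at $T_n$, topological choice of final parameters, weak limit) is the paper's. The gap is in Step 1: you never identify the mechanism that fixes the exponents in the theorem, and the system you write down gives the wrong rate. With a common scale you get $\dot z\approx bz$ and $\dot b\approx-\alpha e^{-z}-2b^2$. The forcing is of order $e^{-z}$, not $e^{-2z}$: $\partial_y(5Q_1Q_2^4-5Q_1^4Q_2)$ contains $e^{-z}\partial_y(e^{y}Q^4)$, whose projection on $Q$ is a nonzero multiple of $e^{-z}\int_{\RR}e^{y}Q^5$. That system gives $z\approx2\log s$ and $b=\dot z/z\approx\frac{1}{s\log s}$, hence $\lambda\approx\frac{1}{\log s}\approx\frac{1}{\log t}$. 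This is the NLS-type rate of Martel--Rapha\"el, not $\log^{-1/6}t$.

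The missing idea is the splitting $\lambda_2/\lambda_1$, which you mention only in passing; it is a leading-order parameter. The second bubble, at relative scale $1+\mu$, drifts at relative speed $\mu$, so $\dot z=b_1z+\mu$ with $\mu$ of the same size as $b_1z$, and $\dot\mu\approx2(b_2-b_1)$. Since $b_1$ and $b_2$ feel the same leading forcing $\alpha e^{-z}+2b_1^2$, $\Theta=b_2-b_1$ is visible only at precision $s^{-3}$. Reaching it requires tracking:
\begin{itemize}
\item the $\mu ze^{-z}$ correction in the soliton interaction;
\item the $\frac{c_Q}{4}y^2e^{-|y|}$ corrections to the tails of $P$, which give $bz^2e^{-z}$ profile--soliton terms;
\item linearly growing profiles $A,B$ that invert the $e^{\pm y}Q^4$ terms, whose pairing with $Q_2$ gives $\dot z\,ze^{-z}$ terms.
\end{itemize}
The outcome is $\dot\Theta\approx-\frac52\alpha\frac{\dd}{\dd s}(ze^{-z})$. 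Hence $\mu\approx5b_1z$, $\dot z\approx6b_1z$, $b_1\approx\frac{1}{6s\log s}$ and $\lambda\approx\log^{-1/6}s$; the exponent $\frac16=\frac{1}{1+5}$ is exactly this constant. Also, $z$ is the rescaled distance, $z\approx2\log s\approx2\log t$. The $\log^{5/6}t$ of the theorem is just $x_2-x_1=\lambda z$, and no polynomial-in-$z$ law from the $P$ tails replaces the exponential forcing.

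The same mechanism defeats the generic inequality $\frac{\dd}{\dd s}\mathcal F\ge-C(\text{error})^2$ you posit in Step 2. Because $\dot z+z\frac{\dot\lambda}{\lambda}\approx\mu\neq0$, the energy with localized mass produces $20\mu\int_{\RR}(\partial_yQ_2)Q_2^3\varepsilon^2$. This term has no sign, and after integration it is of the same size $C_0^2s^{-3}\log^{-2}s$ as the bootstrap bound on $\mathcal N_B^2(\varepsilon)$, so it cannot be absorbed. In addition, the modulation terms paired with the non-localized $X\phi$ produce $\dot\Theta(\varepsilon\phi_1,X_1\phi)$. Here $\dot\Theta$ is a priori only $O(C_0s^{-5/2}\log^{-2}s)$ and $(\varepsilon\phi_1,X_1\phi)$ only $O(s^{1/4}\mathcal N_B(\varepsilon))$, which is too large. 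The paper handles both with the corrections $\mu\int_{\RR}\varepsilon^2\phi_2$ and $-2b_1(\mathcal J_2+(c_0+\frac54)\mathcal J_1)(\varepsilon\phi_1,X_1\phi)$. Here $\mathcal J_1,\mathcal J_2$ are the linear-in-$\varepsilon$ functionals from the refined laws for $\mu$ and $\Theta$.

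Relatedly, $b$ is not an unstable direction. The values $b_1^{in},b_2^{in},\mu^{in}$ are explicit functions of $z^{in}$, chosen so that $b_1^2\approx\frac\alpha3z^{-1}e^{-z}$, $\mu\approx5b_1z$ and $\Theta\approx-\frac52\alpha ze^{-z}$ propagate backward. The topological argument runs on $(z^{in},x_1^{in})$, together with a choice of $s^{in}$ matching $t(s)\approx s\log^{-1/2}s$ to $T_n$.
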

	
	\begin{remark}
		We mention here that, Theorem~\ref{thm:main} deals with~\emph{strong interactions} in the sense that the blow-up dynamics of each bubble is perturbed at the main order by the presence of another bubble (see Section~\ref{SS:ROAD} for more discussion).
	\end{remark}
	
	\begin{remark}
		The choice of signs and the number of bubbles are related to the solvability of a third-order ODE system. The source term of this system originates from the interaction between solitons and certain non-localized profiles (see Lemma~\ref{le:AB}). Indeed, the system for two bubbles with the same sign is not solvable, and the multi-bubble case is far more involved than the two-bubble case. This is the main reason why we restrict ourselves to the current case.
	\end{remark}
	
	\subsection{Blow-up description and classification for gKdV equation}\label{SS:BLOWUPKDV}
	Let us first consider the set of initial data
	\begin{equation*}
		\mathcal{A}=\left\{u_{0}=Q+\varepsilon_{0}:
		\|\varepsilon_{0}\|_{H^{1}}<\alpha_{0}\ \  \mbox{and}\ \
		\int_{0}^{\infty}y^{10}\varepsilon_{0}^{2}\dd y<1
		\right\},\ \  \mbox{with}\ \ 0<\alpha_{0}\ll 1.
	\end{equation*}
	For initial data in $H^{1}$ close to the soliton with a suitable space decay property,
	Martel-Merle-Rapha\"el~\cite{MMRACTA} fully described \emph{the blow-up dynamics} of the corresponding solution. More precisely, for initial data $u_{0}\in \mathcal{A}$, Martel-Merle-Rapha\"el~\cite{MMRACTA} proved that only three possible behaviors can occur:
	\begin{description}
		\item[Exit] The solution eventually exits any small neighborhood of the solitons.
		
		\item [Blow-up] The solution blows up stably in finite time with rate $(T-t)^{-1}$.
		
		\item [Soliton] The solution is global and locally converges to a soliton.
	\end{description}
	Later on, building on the work~\cite{MMRACTA}, Martel-Merle-Nakanishi-Rapha\"el~\cite{MMNR} showed that there exists a local $C^{1}$ co-dimension one manifold included in $\mathcal{A}$ which separates the stable
	blow-up behavior from solutions that eventually exit the soliton neighborhood. 
	In particular, the solutions on the manifold are \emph{global in time} and converge in a local norm to a soliton. 
	
	\smallskip
	On the other hand, Martel-Merle-Raphaël ~\cite{MMRJEMS} established the \emph{existence and description} of the minimal mass blow-up solution, a key step for the complete description of the flow around the soliton. Then, the sharp asymptotics in both time and space variables were derived by Combet-Martel ~\cite{Comkdv} for any order derivative of this solution. Building on these sharp properties and the techniques developed mainly in ~\cite{MMRACTA, MMRJEMS, MMRASNSP}, Combet-Martel ~\cite{CombetMartel} constructed \emph{finite time} multi-bubble blow-up solutions with an arbitrary number of bubbles. Such solutions are also related to the \emph{strong interaction} of bubbles as can be seen from the presence of the tail to the left of the minimal mass blow-up solution (see ~\cite[Section 1.3]{CombetMartel} for more discussion).
	
	\smallskip
	Since the assumption $u_{0}\in \mathcal{A}$, the above-mentioned stable blow-up result does not describe all blow-up solutions with the small supercritical mass~\eqref{equ:massabove}. In Martel-Merle-Rapha\"el~\cite{MMRASNSP}, the authors showed that there exists a large class of \emph{exotic} finite time blow-up solutions, close to the solitons, enjoying blow-up rates of the form $\|u(t)\|_{H^{1}}\sim (T-t)^{-\nu}$ for any $\nu>\frac{11}{13}$. Global solutions
	blowing up in \emph{infinite time} with $\|u(t)\|_{H^{1}}\sim e^{t}$ or $\|u(t)\|_{H^{1}}\sim t^{\nu}$ for any $\nu>0$ were also constructed in~\cite{MMRASNSP}. More recently, Manatova~\cite{Manatova} extended these results, proving the existence of exotic blow-up solutions for any rate
	$(T-t)^{-\nu}$ with $\nu\in \left(\frac{1}{2},1\right)$. We mention here that, the exponent $\nu=\frac{1}{2}$ is critical: for $\nu\ge \frac{1}{2}$, the position of blow-up soliton goes to $\infty$, whereas for $\nu<\frac{1}{2}$, such position converges to a finite point. Recently, Martel-Pilod~\cite{MDASNSP}
	constructed the first example of a finite-energy solution that blows up at a \emph{finite spatial point} with the rate $(T-t)^{-\frac{2}{5}}$. Then, the same authors in~\cite{MDPRINT} introduced a more general framework for the finite point blow-up and proved the existence of such solutions for any blow-up rate $(T-t)^{-\nu}$ with $\nu\in \left(\frac{3}{7},\frac{1}{2}\right)$.
	
	\smallskip
	The asymptotic behavior of solutions close to the soliton was also studied for \emph{saturated nonlinearities} of the form $u^{5}-\gamma|u|^{p-1}u$ for fixed $p>5$ in the limit $\gamma \downarrow 0$ in~\cite{LAN2}. In particular, \emph{flattening solitary waves} were constructed in~\cite[Theorem 1.5]{LAN2}. Then, a full family of such flattening waves was constructed in Martel-Pilod~\cite{MDCMP} for~\eqref{equ:gKdV}. Last, we refer to~\cite{BGM,CLY,CLYMINI,FHRY,LYMINI} for some recent advances in the study of blow-up dynamics of the 2D cubic Zakharov-Kuznetsov equation, which is a natural analogue of the mass-critical gKdV equation in two dimensions.

	\subsection{Previous results for other models}\label{SS:PREVIOUSRESULT}
	In the last twenty years, there has been significant progress in the study of the blow-up dynamics for the mass-critical NLS equation. The first example of multi-bubble blow-up solution was constructed in Merle~\cite{MerleCMP}. In particular, such solution blows up with the pseudo-conformal rate $\|\nabla u(t)\|_{L^2}\sim t^{-1}$ as $t\downarrow 0$. Recently, the uniqueness for such blow-up solution was obtained in~\cite{CAO}. Some other multi-bubble type blow-up solutions for the NLS equation were constructed by~\cite{FAN,PlanRaphael} in the context of the \emph{log-log} regime.
	We mention here that, these works deal with \emph{weak interactions} in the sense that the leading-order term of the soliton dynamics is not generated by the interactions.
	Last, we refer to~\cite{Bourgain,KSnls,MR2,MR4,MR5,Perelmannls} and references therein for the study of single bubble blow-up dynamics of the mass-critical NLS equation.
	
	\smallskip
	For the NLS equation, the first multi-bubble blow-up solution with \emph{strong interactions} was constructed by Martel-Raphaël~\cite{MartelRaphael}. In particular, they built an infinite-time blow-up solution involving the strong interaction of an arbitrary number $K\ge 2$ of bubbles. Through the pseudo-conformal transform, this also yielded the first example of a \emph{finite-time} blow-up solution with a rate strictly faster than the conformal one. Such a solution concentrates the $K$ bubbles at one point at the blow-up time.
	
	\smallskip
	Following the spirit of~\cite{MartelRaphael}, we study in this article the \emph{two-bubble dynamics} with strong interactions for the gKdV equation~\eqref{equ:gKdV}. However, at least two main difficulties arise, originating from structural differences between the two equations. The first difficulty originates from the linear growth of the blow-up profile on one side of $\RR$. This requires us to carefully trace both \emph{soliton-soliton} and \emph{profile-soliton} interactions, rather than only those of the \emph{soliton-soliton} type. A second, more significant difficulty arises from the presence of the parameter $\mu$, which relates to the difference between the scaling parameters of the two bubbles. Unlike in the NLS case, this extra parameter appears as a leading-order term in the ODE governing the distance parameter $z$. This requires us to determine the asymptotic behavior of this parameter, and thus, we must first obtain refined control for the parameters $(b_{1},b_{2})$ which are related to the blow-up profile. See more discussion in Section~\ref{SS:ROAD}. We refer to~\cite{CDMJEMS,DMWAPDE,JJ1,JJ2} for some
	related results of multi-bubble solutions with strong interactions for other critical models. We also refer to~\cite{ValetJFA,Gerard,JJKdV,JJ3,KMRCPAM,LANWANG,MartelNguyenNLS,NguyenKdV,NguyenNLS} for some related results of multi-soliton and multi-kink solutions in this regime for other dispersive models.
	
	\subsection{A Roadmap of the proof}\label{SS:ROAD}
	The blow-up solutions in Theorem~\ref{thm:main} are obtained
	using the study of interaction between two solitons.
	More precisely, We first consider a time dependent $C^{1}$ geometrical function $\mathcal{G}$ of the following form:
	\begin{equation*}
		\mathcal{G}=(\lambda,z,\mu,x_{1},b_{1},b_{2})\in (0,\infty)^{2}\times \RR^{4},
	\end{equation*}
	with $\lambda+|\mu|+|b_{1}|+|b_{2}|+|x_{1}|^{-1}+z^{-1}\ll 1$.
	Here, the function $\lambda$ is the scaling parameter, the function $\mu$ is the difference of the scaling for the two bubbles, the function $z$ is the distance between the two bubbles and the function $x_{1}$ is the position for the first bubble.
	As usual in investigating the blow-up phenomenon for the mass-critical problem, we
	introduce the following new space-time variables:
	\begin{equation}\label{def:dsdt}
		\frac{\dd s}{\dd t}=\frac{1}{\lambda^{3}(t)}\ \ \mbox{and}\ \ y=\frac{x-x_{1}(t)}{\lambda(t)}.
	\end{equation}
    From now on, we consider the solution related to the space-time variables $(s,y)$:
    \begin{equation*}
        w(s,y)=\lambda^{\frac{1}{2}}(s)u(t(s),\lambda(s)y+x_{1}(s)).
    \end{equation*}
	Here, $u(t,x)$ is a solution of~\eqref{equ:gKdV}. By an elementary computation, we find  
	\begin{equation}\label{equ:gKdv2}
		\partial_{s}w+\partial_{y}\left(\partial_{y}^{2}w-w+w^{5}\right)-\frac{\dot{\lambda}}{\lambda}\Lambda w-\left(\frac{\dot{x}_{1}}{\lambda}-1\right)\partial_{y}w=0.
	\end{equation}
	An approximate two-bubble solution $V$ of~\eqref{equ:gKdv2} is defined of the form
	\begin{equation*}
		V=Q_{1}-Q_{2}+b_{1}X_{1}\phi-b_{2}X_{2}\phi+e^{-z}\left(A_{1}+B_{2}\right)\phi+\rm{L.O.T}.
	\end{equation*}
	Here, $(Q_{1},Q_{2})$ are two solitons at different positions and $(X_{1},X_{2},A_{1},B_{2})$ are some suitable non-localized profiles.
    On the one hand, the profiles $(X_{1},X_{2})$ are related to the standard profile $P$ used in studying the blow-up dynamics of~\eqref{equ:gKdV} (see~\cite[Section 2.2]{MMRACTA} and Remark~\ref{re:P}).
	On the other hand, the leading-order terms of soliton-soliton interactions are generated by $e^{-z}(e^{\pm y}Q^{4})$. This is the main reason why we introduce the auxiliary functions $(A_{1},B_{2})$ to construct the approximate solution.  Due to the cancellation $(\Lambda Q,Q)=0$, the auxiliary functions $(A_{1},B_{2})$ should exhibit a linear growth on the right-hand side of $\RR$, which is essentially different from that in~\cite{MartelRaphael,NguyenKdV}. This difference leads to much more complicated interactions and ODE system related to parameters (see Lemma~\ref{le:PAB} and Section~\ref{SS:Approximate} for more details).
	
	\smallskip
	Due to the presence of the parameter $\mu$, the geometrical parameters satisfy a different ODE system from that of the NLS equation (see~\cite[Lemma 3]{MartelRaphael}).
	More precisely, the geometrical parameters $(z,\mu,b_{1},b_{2})$ satisfy
	\begin{equation}\label{equ:ODESYSTEM}
		\left\{
		\begin{aligned}
			\dot{z}-b_{1}z-\mu&=
			\dot{b}_{1}+\alpha e^{-z}+2b_{1}^{2} ={\rm{L.O.T}},\\
			\dot{\mu}-2(b_{2}-b_{1})&=\dot{b}_{2}+\alpha e^{-z}+2b_{1}^{2}
			={\rm{L.O.T}}.
		\end{aligned}
		\right.
	\end{equation}
	In order to treat the above system, we first investigate the asymptotic behavior of $\Theta=(b_{2}-b_{1})$. This step requires a refined estimate for the parameters $(b_{1},b_{2})$ (see Lemma~\ref{le:refinedTheta}).
	Together with an analogous refinement for $\mu$ (see Lemma~\ref{le:refinmu}), we deduce that the size of $\mu$ is of the same order as $b_{1}z$. Therefore, the ODE system~\eqref{equ:ODESYSTEM} admits a different structure, and thus, it exhibits a behavior markedly different from that for the NLS equation. This is the main reason why the blow-up rate in Theorem~\ref{thm:main} is different from that for the case of the NLS equation.
	
	\smallskip
	We mention here that, the refined control of
	$\Theta$ requires careful tracking of both soliton-soliton and profile-soliton interactions. For instance, while the leading-order soliton-soliton interaction is of order $e^{-z}$, the next order $\mu ze^{-z}$ also enters the refined estimate for $\Theta$ (see Lemma~\ref{le:nonlinear}). Moreover, due to the linear growth of $(A_{1},B_{2})$, there are some terms of same size that appear in the profile-soliton type interactions (see Lemma~\ref{le:Psi1}). Indeed, it turns out that at the leading order, the following equation holds for $(\mu,\Theta)$:
	\begin{equation}\label{equ:ODESYSTEM2}
		\dot{\mu}-2\Theta=\dot{\Theta}+\frac{5}{2}\alpha \frac{\dd}{\dd s}(ze^{-z})=\rm{L.O.T}.
	\end{equation}
	The ODE system~\eqref{equ:ODESYSTEM}--\eqref{equ:ODESYSTEM2} admits the following approximate solution:
	\begin{equation*}
		\left\{
		\begin{aligned}
			z_{\rm{app}}(s)\sim2\log s,\ \  b_{1{\rm{app}}}(s)\sim \frac{1}{6s\log s},\\
			\mu_{\rm{app}}(s)\sim \frac{5}{3s}\quad \mbox{and}\quad \Theta_{\rm{app}}(s)\sim-\frac{5}{6s^{2}},
		\end{aligned}
		\right.\Longrightarrow \lambda_{\rm{app}}(s)\sim \frac{1}{\log^{\frac{1}{6}}s}.
	\end{equation*}
    It follows directly from~\eqref{def:dsdt} that 
    \begin{equation*}
        t\sim \frac{s}{\log^{\frac{1}{2}}s}\ \ \mbox{and}\ \ 
        s\sim t\log^{\frac{1}{2}}t
        \Longrightarrow 
         \lambda_{\rm{app}}(t)\sim \frac{1}{\log^{\frac{1}{6}}t}.
    \end{equation*}
	The above estimates mean that the blow-up rate in Theorem~\ref{thm:main} is a reasonable estimate for the first order asymptotics of some particular solutions of~\eqref{equ:gKdV}.
	
	\smallskip
	Finally, we use a variant of an energy functional on $\varepsilon$ to control it in the soliton region. The key point here is to estimate the error term $\varepsilon$ backwards in time. On the one hand, from the Kato localization identity (see~\cite{Kato}), for any solution $u$ of~\eqref{equ:gKdV} and any decreasing $C^{\infty}$ function $\chi_{1}:\RR\to [0,1]$, we have 
    \begin{equation*}
        \frac{\dd}{\dd t}\int_{\RR}u^{2}\chi_{1}\dd y=-3\int_{\RR}(\partial_{x}u)^{2}\chi'_{1}\dd y+\rm{L.O.T}.
    \end{equation*}
  Following the spirit of the above identity, a functional that contains the global energy and a localized variant of the mass with monotonicity property was introduced to control the remainder term $\varepsilon$. On the other hand, from~\eqref{equ:ODESYSTEM}, a delicate scaling term related to $\mu$ does not have a favorable sign. To overcome such difficulty, we employ the idea introduced in~\cite[Section 3]{NguyenKdV} to construct a suitable localized refined term for the standard energy functional of $\varepsilon$. Based on such control and weak $H^{1}$ stability of gKdV equation~\eqref{equ:gKdV}, we argue by compactness and obtain the infinite time blow-up solution $u$ as the weak limit of a sequence of solutions $(u_{n})_{n\in \mathbb{N}}$.
	
	\subsection{Notation and conventions}\label{SS:Nota}
	We denote by $\mathcal{Y}$ the set of smooth function $f\in C^{\infty}(\RR)$, such that for all $k\in \mathbb{N}$, there exist $(c_{k},C_{k})\in (0,\infty)^{2}$ such that  
	\begin{equation*}
		\big|f^{(k)}(y)\big|\le C_{k}\left(1+|y|\right)^{c_{k}}e^{-|y|},\quad \mbox{on}\ \RR.
	\end{equation*}
	We also denote 
	\begin{equation*}
		\mathcal{Y}_{1}=\left\{f\in C^{\infty}(\RR): \mbox{there exists}\ g\in \mathcal{Y}\ \mbox{such that}\ f(y)=\int_{y}^{\infty}g(\sigma)\dd\sigma
		\right\}.
	\end{equation*}
	In addition, we denote 
	\begin{equation*}
		\begin{aligned}
			\mathcal{Z}_{1}=\left\{f\in C^{\infty}(\RR): \mbox{there exists}\ g\in \mathcal{Y}\ \mbox{such that}\ f(y)-f(0)=\int_{0}^{y}g(\sigma)\dd\sigma
			\right\},\\
			\mathcal{Z}_{2}=\left\{f\in C^{\infty}(\RR): \mbox{there exists}\ g\in \mathcal{Z}_{1}\ \mbox{such that}\ f(y)-f(0)=\int_{0}^{y}g(\sigma)\dd\sigma
			\right\}.
		\end{aligned}
	\end{equation*}
	
	For any $f_{1}\in L^{2}(\RR)$ and $f_{2}\in L^{2}(\RR)$, we denote the $L^{2}$-scalar product by 
	\begin{equation*}
		(f_{1},f_{2})=\int_{\RR}f_{1}(y)f_{2}(y)\dd y.
	\end{equation*}
	
	Let $\chi:\RR\mapsto [0,1]$ be a non-increasing $C^{\infty}$ function such that 
	\begin{equation*}
		\chi_{|(-\infty,1)}\equiv 1\quad \mbox{and}\quad \chi_{|(2,\infty)}\equiv 0.
	\end{equation*}
	
	We see from the explicit expression of $Q$ in Section~\ref{SS:Main} that, as $y\to \infty$,
	\begin{equation}\label{est:QQ'}
		Q(y)=c_{Q}e^{-y}+O\left(e^{-2y}\right)\quad \mbox{and}\quad 
		Q'(y)=-c_{Q}e^{-y}+O\left(e^{-2y}\right).
	\end{equation}
	We also see that, as $y\to -\infty$,
	\begin{equation}\label{est:QQ'-}
		Q(y)=c_{Q}e^{y}+O\left(e^{-2y}\right)\quad \mbox{and}\quad 
		Q'(y)=c_{Q}e^{y}+O\left(e^{-2y}\right).
	\end{equation}
	Here, we denote $c_{Q}=12^{\frac{1}{4}}>0$ by a universal positive constant. Moreover, we set 
	\begin{equation}\label{equ:defalpha}
		m_{0}=\frac{1}{4}\int_{\RR}Q(y)\dd y>0\quad \mbox{and}\quad 
		\alpha=\frac{c_{Q}}{m_{0}^{2}}\int_{\RR}e^{y}Q^{5}(y)\dd y>0.
	\end{equation}
	For any $n\in \mathbb{N}$ and smooth real-valued function $f:\RR\mapsto \RR$, we denote 
	\begin{equation*}
		\partial_{y}^{\le n}f=\left(f,\partial_{y}f,\dots,\partial_{y}^{n}f\right),\quad \mbox{on}\ \RR.
	\end{equation*}
	In addition, for any parameter $\mu\in (0,1)$ and such function $f$, we set 
	\begin{equation*}
		f_{1+\mu}(y)=(1+\mu)^{\frac{1}{4}}f\big((1+\mu)^{\frac{1}{2}}y\big),\quad \mbox{on} \ \RR.
	\end{equation*}
	\smallskip
	We now introduce the scaling operator and the linearized operator around $Q$:
	\begin{equation}\label{def:L}
		\Lambda f=\frac{1}{2}f+yf'\ \ \mbox{and}\ \
		\mathcal{L}f=-f''+f-5Q^{4}f,\quad 
		\mbox{for all}\ f\in H^{1}(\RR).
	\end{equation}
	From the Fundamental Theorem of Calculus, for any $-\infty<a<b<\infty$ and smooth real-valued function $f:\RR\mapsto \RR$, we find 
    \begin{equation*}
        f(b)=f(a)+f'(a)(b-a)+\int_{a}^{b}f''(s)(b-s)\dd s.
    \end{equation*}
    Therefore, for any parameter $\mu \in (0,1)$ and smooth real-valued function $f:\RR\mapsto \RR$, 
	\begin{equation}\label{equ:muf}
		f_{1+\mu}=f+\frac{\mu}{2} \Lambda f+\frac{\mu^{2}}{2}\int_{0}^{1}(1-s)\left(\frac{(\Lambda^{2}f)_{1+s\mu}}{2(1+s\mu)^{2}}-\frac{(\Lambda f)_{1+s\mu}}{(1+s\mu)^{2}}\right)\dd s.
	\end{equation}
	
	For a given small constant $\delta$, we denote by $\varrho(\delta)$ a generic small constant with
	\begin{equation*}
		\varrho(\delta)\to 0,\quad \mbox{as}\ \ \delta\to 0.
	\end{equation*}

	\section{Construction of the approximate solution}
	In this section, we construct the approximate solution for~\eqref{equ:gKdv2} with an explicit two-bubble asymptotic behavior and extract the evolution system of the geometrical parameters for the two bubbles. Let $I=[s_{0},s_{1}]\subset (0,\infty)$ be an time interval. Recall that, we consider the following geometrical parameters: 
	\begin{equation*}
		\mathcal{G}=(\lambda,z,\mu,x_{1},b_{1},b_{2})\in (0,\infty)^{2}\times \RR^{4}.
	\end{equation*}
	Here, the function $\lambda$ is the scaling parameter, the function $z$ is the distance between the two bubbles, the function $\mu$ is the difference of the scaling for the two bubbles, the function $x_{1}$ is the position of the first bubble and the functions $(b_{1},b_{2})$ are some parameters related to the non-localized profile $X$ (see Section~\ref{SS:Approximate} for more details).
	
	\smallskip
	To state the general results on the solutions of~\eqref{equ:gKdv2} close to the sum of two decoupled bubbles, we assume that the function $\mathcal{G}(s)$ satisfies the following estimates:
	\begin{equation}\label{est:para1}
		0<\lambda\ll 1,\quad 
		1\ll z<\infty,\quad |\mu|+|b_{1}|+|b_{2}|\ll 1\quad \mbox{and}\quad 
		|\mu| z^{3}\ll 1.
	\end{equation}
	
	\smallskip
	For any smooth real-valued function $f:I\times \RR\mapsto \RR$, we set 
	\begin{equation}\label{def:Gamma}
		\left( \Gamma f \right)(s,y)=f_{1+\mu}(s,y-z(s))=(1+\mu(s))^{\frac{1}{4}}f(s,(1+\mu(s))^{\frac{1}{2}}(y-z(s))).
	\end{equation}
	To simplify the notation, we denote 
	\begin{equation}\label{equ:defQ1Q2}
		Q_{1}(s,y)=Q(y)\quad \mbox{and}\quad Q_{2}(s,y)=(\Gamma Q)(s,y).
	\end{equation}
	In addition, we set
	\begin{equation*}
		S=Q_{1}-Q_{2},\quad G=S^{5}-Q_{1}^{5}+Q_{2}^{5} \quad \mbox{and}\quad H=5Q_{1}Q_{2}^{4}-5Q_{1}^{4}Q_{2}.
	\end{equation*}
	
	\subsection{The linearized operator}\label{SS:linear}
	In this subsection, we recall some standard properties for the linearized operator $\mathcal{L}$. We start with the following spectral theory.
	
	\begin{proposition}
		[\cite{MMRACTA}]\label{prop:Spectral}
		The operator $\mathcal{L}$ defined by~\eqref{def:L} is self-adjoint on $L^{2}$. Moreover, the operator $\mathcal{L}$ satisfies the following properties.
		
		\begin{enumerate}
			\item \emph{Spectrum of $\mathcal{L}$}. 
			The operator $\mathcal{L}$ has only one negative eigenvalue $-8$ with 
			an positive radially symmetric eigenfunction $Q^{3}$ and the kernel of $\mathcal{L}$ is given by ${\rm{Ker}}\mathcal{L}={\rm{Span}}\{Q'\}$.
			\item \emph{Scaling identities.} It holds 
			\begin{equation*}
				\mathcal{L}\Lambda Q=-2Q\quad \mbox{and}\quad 
				(\Lambda Q, Q)=0.
			\end{equation*}
			
			\item \emph{Coercivity of $\mathcal{L}$.}
			For all $f\in H^{1}$, we have 
			\begin{equation*}
				\left(f,Q'\right)=\left(f,Q^{3}\right)=0\Longrightarrow
				\left(\mathcal{L}f,f\right)\ge \|f\|_{L^{2}}^{2}.
			\end{equation*}
			In addition, there exists $\nu>0$ such that for all $f\in H^{1}$, 
			\begin{equation*}
				\left(\mathcal{L}f,f\right)\ge 
				\nu \|f\|_{H^{1}}^{2}-\frac{1}{\nu}
				\big(\left(f,Q\right)^{2}+\left(f,Q'\right)^{2}+\left(f,\Lambda Q\right)^{2}\big).
			\end{equation*}
			
			\item \emph{Inversion of $\mathcal{L}$.} 
			For any $g\in L^{2}$ with $(g,Q')=0$, there exists a unique function $f\in H^{2}$ with $(f,Q')=0$ such that $\mathcal{L}f=g$. Moreover, if $g\in \mathcal{Y}$, then we have $f\in \mathcal{Y}$.
		\end{enumerate}
	\end{proposition}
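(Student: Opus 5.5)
The plan is to prove the four items in order, using only the explicit form of $Q$, the equation $-Q''+Q-Q^{5}=0$, and standard Sturm--Liouville and compactness arguments.

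\emph{Self-adjointness and spectrum.}
Since $5Q^{4}$ is bounded and decays exponentially, $\mathcal{L}$ is a relatively compact perturbation of $-\partial_y^2+1$. Hence $\mathcal{L}$ is self-adjoint on $H^{2}$, and its essential spectrum is $[1,\infty)$.

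For the negative eigenvalue, write $Q^{4}=3\,\mathrm{sech}^{2}(2y)$. Using the identities $(Q')^{2}=Q^{2}-\tfrac13 Q^{6}$ and $Q''=Q-Q^{5}$, compute $(Q^{3})''=3Q^{2}Q''+6Q(Q')^{2}$. This gives $\mathcal{L}Q^{3}=-8Q^{3}$.

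Since $Q^{3}>0$ and is even, Sturm--Liouville theory says it is the ground state, so $-8$ is the lowest eigenvalue and it is simple. Differentiating the equation for $Q$ gives $\mathcal{L}Q'=0$. The function $Q'$ has exactly one zero, so $0$ is the second eigenvalue. In one dimension this eigenvalue is simple, because a second $L^{2}$ solution would have Wronskian with $Q'$ equal to a nonzero constant, which is impossible for decaying functions. Hence $\mathrm{Ker}\,\mathcal{L}=\mathrm{Span}\{Q'\}$, and there are no other eigenvalues below $0$.

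\emph{Scaling identities.}
Differentiate the family $Q_\lambda=\lambda^{1/2}Q(\lambda y)$, which solves $-Q_\lambda''+\lambda^{2}Q_\lambda-Q_\lambda^{5}=0$, with respect to $\lambda$ at $\lambda=1$. This yields $\mathcal{L}\Lambda Q=-2Q$. Next, $(\Lambda Q,Q)=\frac{d}{d\lambda}\big|_{\lambda=1}\frac12\|Q_\lambda\|_{L^{2}}^{2}=0$, because the $L^{2}$ norm is invariant under this scaling.

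\emph{Coercivity.}
This is the main step. Let $\lambda_2$ be the infimum of $(\mathcal{L}f,f)/\|f\|_{L^2}^{2}$ over $f\perp Q^{3},Q'$. By the spectral picture above, $\lambda_2$ is either the next eigenvalue or $\inf\sigma_{ess}=1$, and in either case $\lambda_2>0$.

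The first estimate, with constant exactly $1$, claims $\lambda_2\ge1$, i.e. that $\mathcal{L}$ has no eigenvalue in $(0,1)$. To see this, use the explicit Pöschl--Teller structure: after the change of variable $x=2y$, the operator $\mathcal{L}$ becomes $4\left(-\partial_x^{2}+\tfrac14-\tfrac{15}{4}\mathrm{sech}^{2}x\right)$. The well $\frac{15}{4}\mathrm{sech}^{2}x=\nu(\nu+1)\mathrm{sech}^{2}x$ has $\nu=\tfrac32$. Its bound states sit at $-(\nu-k)^{2}$ for $k<\nu$, namely $-\tfrac94$ and $-\tfrac14$. These give $\mathcal{L}$-eigenvalues $-8$ and $0$ and nothing else. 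Hence $\lambda_2=1$.

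For the $H^{1}$ version, argue by contradiction. Take a normalized sequence with $(\mathcal{L}f_n,f_n)\to\le0$ and the three scalar products $(f_n,Q)$, $(f_n,Q')$, $(f_n,\Lambda Q)$ tending to $0$. Pass to a weak limit $f$ and use the compact weight $Q^{4}$ to get $(\mathcal{L}f,f)\le0$ with $f\perp Q,Q',\Lambda Q$. Then exclude $f\neq0$ by the standard Weinstein argument: $\min\{(\mathcal{L}f,f): f\perp Q\}$ is nonnegative because $(\mathcal{L}^{-1}Q,Q)=-\tfrac12(\Lambda Q,Q)=0$. On $\{Q,Q'\}^\perp$ the only null direction is then $\Lambda Q$, which is removed by the third orthogonality condition. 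Finally, upgrade from $L^{2}$ to $H^{1}$ by adding a small multiple of $\|f'\|^{2}$, which is controlled by $(\mathcal{L}f,f)+C\|f\|^{2}$.

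\emph{Inversion.}
By the Fredholm alternative for self-adjoint operators with $\mathrm{Ker}\,\mathcal{L}=\mathrm{Span}\{Q'\}$, the operator $\mathcal{L}$ is invertible from $\{Q'\}^\perp\cap H^{2}$ onto $\{Q'\}^\perp$.

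For decay when $g\in\mathcal{Y}$, write $f''=f-5Q^{4}f-g$. For $|y|$ large, solve by variation of constants with kernel $\frac12 e^{-|y-\sigma|}$. Since the forcing decays like $|y|^{c}e^{-|y|}$, the solution decays like $|y|^{c+1}e^{-|y|}$; the convolution of $e^{-|\cdot|}$ with $|\cdot|^{c}e^{-|\cdot|}$ produces one extra polynomial factor. Differentiating the equation and inducting on $k$ gives the estimates for all derivatives, so $f\in\mathcal{Y}$.
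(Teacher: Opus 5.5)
Your proposal is correct, and it is essentially the standard argument that the paper cites (Weinstein, and Martel--Merle) without writing out: the explicit spectrum of $\mathcal{L}$ via $\mathcal{L}Q^{3}=-8Q^{3}$, Sturm--Liouville theory and the P\"oschl--Teller structure; Weinstein's constrained minimization with $(\mathcal{L}^{-1}Q,Q)=-\tfrac12(\Lambda Q,Q)=0$ for the $H^{1}$ coercivity; and Fredholm theory plus the kernel $\tfrac12 e^{-|y-\sigma|}$ for the inversion and decay. The one step worth making explicit is that the weak limit in your contradiction argument is nonzero: $\|f_n\|_{L^{2}}=1$ and $(\mathcal{L}f_n,f_n)\le o(1)$ give $5\int Q^{4}f_n^{2}\ge 1-o(1)$, and this passes to the limit by compactness of the weight.
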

	
	\begin{proof}
		The proof is based on a standard argument related to the structure of  $\mathcal{L}$ and some results in Weinstein~\cite{Wein}. We refer to~\cite[Lemma 2]{MMGAFA} and~\cite[Lemma 2]{MMANN} for some details of the proof.
	\end{proof}
	
	\begin{remark}\label{re:Y}
		Let $g=Q^{4}$ in (iv) of Proposition~\ref{prop:Spectral}.
		It is easy to check that $(Q^{4},Q')=0$, and thus, 
		there exists a unique smooth function $Y\in \mathcal{Y}$ such that 
		\begin{equation*}
			\mathcal{L}Y=Q^{4}\quad \mbox{and}\quad (Y,Q')=0.
		\end{equation*}
		Moreover, from (ii) of Proposition~\ref{prop:Spectral}, we compute 
		\begin{equation*}
			(Y,Q)=-\frac{1}{2}(Y,\mathcal{L}\Lambda Q)=-\frac{1}{2}\left(Q^{4},\Lambda Q\right)=-\frac{3}{5}m_{0}.
		\end{equation*}
		The function $Y$ is related to the existence of the resonance for the operator $\partial_{y}\mathcal{L}$ of the form $1+f$ where $f$ is a Schwartz function. More precisely, we check that 
		\begin{equation*}
			\mathcal{L}(1+5Y)=1\Longrightarrow 
			(\mathcal{L}(1+5Y))'=0.
		\end{equation*}
		On the other hand, by integration by parts, we compute 
		\begin{equation*}
			Z=(yQ^{4})'\Longrightarrow (W,Q)=(Y,Q)+(Z,Q)=\frac{1}{5}m_{0}\quad \mbox{with}\  \ W=Y+Z.
		\end{equation*}
	\end{remark}
	To simplify the notation, we denote 
	\begin{equation*}
		\begin{aligned}
			Y_{1}(s,y)=Y(y)\quad \mbox{and}\quad Y_{2}(s,y)=(\Gamma Y)(s,y),\\
			Z_{1}(s,y)=Z(y)\quad \mbox{and}\quad Z_{2}(s,y)=(\Gamma Z)(s,y).
		\end{aligned}
	\end{equation*}
	
	We now introduce the existence of some non-localized profiles. We mention here that, the following non-localized profiles will be used to construct the approximate solution for~\eqref{equ:gKdv2} and the proof is inspired by~\cite[Proposition 2.2]{MMRACTA}.
	
	\begin{lemma}[Non-localized profiles]\label{le:non}
		\emph{(i)} Suppose that $g\in \mathcal{Y}$ with $(g,Q)=0$. Then there exists a unique smooth function $f_{1}\in \mathcal{Y}_{1}$ such that 
		\begin{equation*}
			\left(\mathcal{L}f_{1}\right)'=g,\quad (f_{1},Q')=0\quad \mbox{and}\quad 
			\lim_{y\to -\infty}f_{1}(y)=-\int_{\RR}g(\sigma)\dd \sigma.
		\end{equation*}
		
		\emph{(ii)}  Suppose that $g_{1}\in \mathcal{Z}_{1}$ with $(g_{1},Q)=0$. Then there exists a unique smooth function $f_{2}\in \mathcal{Z}_{2}$ such that 
		\begin{equation*}
			\begin{aligned}
				\left(\mathcal{L}f_{2}\right)'=g_{1},\quad   \left(f_{2},Q'\right)&=0,\\
				\lim_{y\to \infty}\left(f_{2}(y)-\left(\lim_{\sigma\to \infty}g_{1}(\sigma)\right) y\right)&=-\int_{0}^{\infty}\sigma g'_{1}(\sigma)\dd \sigma,\\
				\lim_{y\to -\infty}\left(f_{2}(y)-\left(\lim_{\sigma\to -\infty}g_{1}(\sigma)\right) y\right)&=\int_{-\infty}^{0}\sigma g'_{1}(\sigma)\dd \sigma.
			\end{aligned}
		\end{equation*}
	\end{lemma}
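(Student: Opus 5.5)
For part (i), the plan is to reduce to an equation $\mathcal{L}f_1=G+c$ with $G\in\mathcal{Y}$ and a constant $c$, and to use the resonance $\mathcal{L}(1+5Y)=1$ from Remark~\ref{re:Y} to absorb the constant. Set $G(y)=-\int_y^\infty g(\sigma)\dd\sigma$. Since $g\in\mathcal{Y}$, $G\in\mathcal{Y}_1$ with $G(y)\to 0$ as $y\to\infty$ and $G(y)\to-\int_\RR g$ as $y\to-\infty$. Thus $G$ is not localized on the left. Write $G=-\left(\int_\RR g\right)\chi(-y)+\tilde G$; the cutoff makes the correction exponentially decaying on both sides, so $\tilde G\in\mathcal{Y}$ after modifying by a function of $\mathcal{Y}$-type tails.

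The cleaner route is the ansatz
\[
f_1=\Big(-\int_\RR g\Big)\big(\chi(-y)+5Y\big)+h,
\]
and then solve for $h$ via $\mathcal{L}h=G-\big(-\int g\big)\mathcal{L}(\chi(-\cdot)+5Y)+\kappa$ for a suitable constant $\kappa$. Since $\mathcal{L}(1+5Y)=1$ and $\mathcal{L}\chi(-y)=\chi(-y)+\mathcal{Y}$-terms, the right-hand side is in $\mathcal{Y}$ once $\kappa=0$. Indeed $G-(-\int g)\chi(-y)\to0$ on both sides with exponential rate, and $-\chi''-5Q^4\chi$ is compactly supported or decays like $Q^4$.

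To invert $\mathcal{L}$ with Proposition~\ref{prop:Spectral}(iv), I need the right-hand side $R$ to be orthogonal to $Q'$. Here $(\mathcal{L}f,Q')=0$ for the full $f_1$ only makes sense after accounting for non-decay, so I compute $(R,Q')$ directly. By integration by parts, $(G,Q')=-(G',Q)=-(g,Q)=0$, which is exactly where the hypothesis $(g,Q)=0$ enters. The term $(\mathcal{L}(\chi(-\cdot)+5Y),Q')=(\chi(-\cdot)+5Y,\mathcal{L}Q')=0$ after integrating by parts. This is legitimate because $Q'$ decays exponentially and boundary terms vanish (the bounded function times $Q'$ decays).

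So Proposition~\ref{prop:Spectral}(iv) gives $h\in\mathcal{Y}$. Adding a multiple of $Q'$ enforces $(f_1,Q')=0$. Then $(\mathcal{L}f_1)'=G'=g$, $f_1\to-\int g$ at $-\infty$, and $f_1\to0$ at $+\infty$ exponentially. Derivative bounds show $f_1\in\mathcal{Y}_1$. For uniqueness, a difference $d$ satisfies $\mathcal{L}d=c$ with $d$ bounded and $d\to0$ at $+\infty$. Then $d-c(1+5Y)$ is in the bounded kernel, which is spanned by $Q'$ since the other ODE solution grows. The limit at $+\infty$ forces $c=0$, so $d\in\mathrm{Span}\{Q'\}$, and $(d,Q')=0$ gives $d=0$.

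Part (ii) follows the same scheme one level up. Let $\ell_\pm=\lim_{\pm\infty}g_1$ and set $G_1(y)=\int_0^y g_1$, which grows linearly at both ends. Subtract the explicit linear profile $\ell_+ y\chi(-y)^c+\ell_- y\chi(y)^c$, or more simply use smooth cutoffs $\eta_\pm$, and use $\mathcal{L}(y)=y-5Q^4y$ together with the resonance to correct constants. The orthogonality $(G_1,Q')=-(g_1,Q)=0$ is again the solvability condition.

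The main obstacle is pinning down the precise asymptotic constants $-\int_0^\infty\sigma g_1'$ and $\int_{-\infty}^0\sigma g_1'$. These come from $\int_0^y g_1=\ell_+y-\int_0^y\sigma g_1'(\sigma)\dd\sigma+\dots$ by integration by parts, as $g_1(y)-\ell_+=-\int_y^\infty g_1'$. However, the free constant from the resonance $1+5Y$ and the normalization $\lim f_1$ also shift $f_2$ at infinity. I would fix that constant by matching the right limit first, then check that the left limit comes out consistently. The mismatch is controlled because $1+5Y$ has the same limit $1$ on both sides, and the stated formulas are consistent with this.
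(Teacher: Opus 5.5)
Your argument works after one small fix (below), but it removes the non-decaying part differently from the paper.

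\textbf{What the paper does.} The paper uses no cutoff and no resonance. It takes the exact antiderivative as the non-localized piece, $f_1=h-\int_y^\infty g$ and $f_2=h_1+\int_0^y g_1$. This reduces the problem to $\mathcal{L}h=q$ with $q=g'-5Q^4\int_y^\infty g\in\mathcal{Y}$ (respectively $\mathcal{L}h_1=q_1$ with $q_1=g_1'+5Q^4\int_0^y g_1\in\mathcal{Y}$). These are solvable by Proposition~\ref{prop:Spectral}(iv), since $(q,Q')=-(g,Q)=0$ and $(q_1,Q')=-(g_1,Q)=0$. The result is $\mathcal{L}f_1=-\int_y^\infty g$ and $\mathcal{L}f_2=\int_0^y g_1$ with no additive constant. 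Also $(f_1,Q')=(h,Q')-(g,Q)=0$ holds without any $Q'$ correction. Since $f_2-\int_0^y g_1=h_1\in\mathcal{Y}$, both asymptotic constants follow at once from $\int_0^y g_1=y\,g_1(y)-\int_0^y\sigma g_1'(\sigma)\dd\sigma$. So the ``main obstacle'' you raise in (ii) never occurs there.

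\textbf{What your cutoff version requires.} You have to argue the constants by hand. The source $G_1+\kappa-\mathcal{L}\Pi$ lies in $\mathcal{Y}$ only if the constants attached to the right and left cutoffs in $\Pi$ are exactly $-\int_0^\infty\sigma g_1'+\kappa$ and $\int_{-\infty}^0\sigma g_1'+\kappa$. The resonance $1+5Y$ cannot repair two different constants; it only realizes the common shift $\kappa$. Matching the right limit then forces $\kappa=0$, and the left limit comes out correct, as you expected.

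\textbf{What your route gains.} Your uniqueness argument is better. You reduce a difference to $\mathcal{L}d=\kappa$ and observe that $d-\kappa(1+5Y)$ is a bounded kernel element, hence a multiple of $Q'$. This is more explicit than the paper's one-line appeal to uniqueness of $h$. It also shows that in (ii) the asymptotic normalization is exactly what removes the $\kappa(1+5Y)$ freedom.

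\textbf{Corrections.}
\begin{itemize}
\item With the paper's $\chi$ (equal to $1$ on $(-\infty,1)$), $\chi(-y)$ equals $1$ on the \emph{right}. So $G+(\int_\RR g)\chi(-y)\to\int_\RR g$ at $+\infty$, and does not decay as you claim. You need $\chi(y)$ there.
\item For the same reason, the two cutoffs you attach to $\ell_+y$ and $\ell_-y$ in (ii) appear to be swapped.
\item The $5Y$ in your ansatz for (i) is harmless but unnecessary, since already $\mathcal{L}(c\chi)-c\chi\in\mathcal{Y}$.
\end{itemize}
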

	
	\begin{proof}
		Proof of (i). We consider $f_{1}\in C^{\infty}(\RR)$ of the form
		\begin{equation*}
			f_{1}(y)=h(y)-\int_{y}^{\infty}g(\sigma)\dd \sigma,\quad \mbox{with}\ \ h\in \mathcal{Y}.
		\end{equation*}
		By an elementary computation, we see that $\left(\mathcal{L}f_{1}\right)'=g$ is equivalent to 
		\begin{equation*}
			\left( \mathcal{L}h\right)'=g+\left(\mathcal{L}\int_{y}^{\infty}g(\sigma)\dd\sigma\right)'=q',\quad \mbox{with}\ \ q=g'-5Q^{4}\int_{y}^{\infty}g(\sigma)\dd \sigma.
		\end{equation*}
		Note that, from $g\in \mathcal{Y}$ and $(g,Q)=0$, we find 
		\begin{equation*}
			q\in \mathcal{Y}\quad \mbox{and}\quad 
			\left(q,Q'\right)=-(q',Q)=-(g,Q)+\left(\int_{y}^{\infty}g(\sigma)\dd \sigma,\mathcal{L}Q'\right)=0.
		\end{equation*}
		Therefore, from (iv) of Proposition~\ref{prop:Spectral}, there exists a unique $h\in \mathcal{Y}$ such that 
		\begin{equation*}
			\mathcal{L}h=q\ \ \mbox{and}\ \ (h,Q')=0\Longrightarrow
			\left(\mathcal{L}h\right)'=q'\ \ \mbox{and}\ \ (h,Q')=0.
		\end{equation*}
		Then, from $h\in \mathcal{Y}$ and the definition of $f_{1}$, we find $f_{1}\in \mathcal{Y}_{1}$ and 
		\begin{equation*}
			\left(\mathcal{L}f_{1}\right)'=g,\quad (f_{1},Q')=0\quad \mbox{and}\quad 
			\lim_{y\to -\infty}f_{1}(y)=-\int_{\RR}g(\sigma)\dd \sigma.
		\end{equation*}
		Last, the uniqueness of $f_{1}$ is a consequence of the uniqueness of $h$.
		
		\smallskip
		Proof of (ii). Similarly, we consider $f_{2}\in C^{\infty}(\RR)$ of the form
		\begin{equation*}
			f_{2}(y)=h_{1}(y)+\int_{0}^{y}g_{1}(\sigma)\dd \sigma,\quad \mbox{with}\ \ h_{1}\in \mathcal{Y}.
		\end{equation*}
		By an elementary computation, we see that $\left(\mathcal{L}f_{2}\right)'=g_{1}$ is equivalent to 
		\begin{equation*}
			\left( \mathcal{L}h_{1}\right)'=g_{1}-\left(\mathcal{L}\int_{0}^{y}g_{1}(\sigma)\dd\sigma\right)'=q_{1}',\quad \mbox{with}\ \ q_{1}=g_{1}'+5Q^{4}\int_{0}^{y}g_{1}(\sigma)\dd \sigma.
		\end{equation*}
		Note that, from $g_{1}\in \mathcal{Z}_{1}$ and $(g_{1},Q)=0$, we find 
		\begin{equation*}
			q_{1}\in \mathcal{Y}\quad \mbox{and}\quad 
			\left(q_{1},Q'\right)=-(q_{1}',Q)=-(g_{1},Q)-\left(\int_{0}^{y}g_{1}(\sigma)\dd \sigma,\mathcal{L}Q'\right)=0.
		\end{equation*}
		Using again (iv) of Proposition~\ref{prop:Spectral}, there exists a unique $h_{1}\in \mathcal{Y}$ such that 
		\begin{equation*}
			\mathcal{L}h_{1}=q_{1}\ \ \mbox{and}\ \ (h_{1},Q')=0\Longrightarrow
			\left(\mathcal{L}h_{1}\right)'=q_{1}'\ \ \mbox{and}\ \ (h_{1},Q')=0.
		\end{equation*}
		Then, from $h_{1}\in \mathcal{Y}$ and the definition of $f_{2}$, we find 
		${f}_{2}\in \mathcal{Z}_{2}$, 
		\begin{equation*}
			\left(\mathcal{L}f_{2}\right)'=g_{1}\quad \mbox{and}\quad   \left(f_{2},Q'\right)=0.
		\end{equation*}
		In particular, from the Fundamental Theorem of Calculus, we obtain
		\begin{equation*}
			\int_{0}^{y}g_{1}(\sigma)\dd \sigma=\left.\sigma g_{1}(\sigma)\right|_{0}^{y}-\int_{0}^{y}\sigma g'_{1}(\sigma)\dd \sigma=g_{1}(y)y-\int_{0}^{y}\sigma g'_{1}(\sigma)\dd \sigma,
		\end{equation*}
		which directly implies
		\begin{equation*}
			\begin{aligned}
				\lim_{y\to \infty}\left(f_{2}(y)-\left(\lim_{\sigma\to \infty}g_{1}(\sigma)\right) y\right)&=-\int_{0}^{\infty}\sigma g'_{1}(\sigma)\dd \sigma,\\
				\lim_{y\to -\infty}\left(f_{2}(y)-\left(\lim_{\sigma\to -\infty}g_{1}(\sigma)\right) y\right)&=\int_{-\infty}^{0}\sigma g'_{1}(\sigma)\dd \sigma.
			\end{aligned}
		\end{equation*}
		Last, the uniqueness of $f_{2}$ is a consequence of the uniqueness of $h_{1}$.
	\end{proof}
	
	\begin{remark}\label{re:f1f2loca}
		Note that, from the construction of $f_{1}$ in (i) of Lemma~\ref{le:non}, for any $k\in \mathbb{N}$, there exist some constants $(c_{k},C_{k})\in (0,\infty)^{2}$ such that 
		\begin{equation*}
			\left| \left( f_{1}(y)+\int_{\RR}g(\sigma)\dd \sigma\right)^{(k)}\right|\textbf{1}_{(-\infty,0]}(y)
			+\big|f_{1}^{(k)}(y)\big|\textbf{1}_{[0,\infty)}(y)
			\le C_{k}\left(1+|y|\right)^{c_{k}}e^{-|y|}.
		\end{equation*}
		Similarly, for any $k\in \mathbb{N}$, there exist some constants $(c_{k},C_{k})\in (0,\infty)^{2}$ such that 
		\begin{equation*}
			\begin{aligned}
				\left|\left(f_{2}(y)-\left(\lim_{\sigma\to \infty}g_{1}(\sigma)\right) y
				+\int_{0}^{\infty}\sigma g'_{1}(\sigma)\dd \sigma
				\right)^{(k)}\right|\textbf{1}_{[0,\infty)}(y)&\le C_{k}\left(1+|y|\right)^{c_{k}}e^{-|y|},\\
				\left|\left(f_{2}(y)-\left(\lim_{\sigma\to -\infty}g_{1}(\sigma)\right) y
				-\int_{-\infty}^{0}\sigma g'_{1}(\sigma)\dd \sigma
				\right)^{(k)}\right|\textbf{1}_{(-\infty,0]}(y)&\le C_{k}\left(1+|y|\right)^{c_{k}}e^{-|y|}.
			\end{aligned}
		\end{equation*}
	\end{remark}
	
	\begin{remark}\label{re:P}
		Let $g=\Lambda Q$ in (i) of Lemma~\ref{le:non}. Then there exists a unique smooth function $P\in \mathcal{Y}_{1}$ such that 
		\begin{equation*}
			\left(\mathcal{L}P\right)'=\Lambda Q,\quad (P,Q')=0\quad \mbox{and}\quad 
			\lim_{y\to -\infty}P(y)=-\int_{\RR}\Lambda Q(\sigma)\dd \sigma.
		\end{equation*}
		In addition, from the definition of $\Lambda$ and $P$, we compute\footnote{See~\cite[Proposition 2.2]{MMRACTA} for more detail.}
		\begin{equation*}
			\lim_{y\to -\infty}P(y)=\frac{1}{2}\int_{\RR}Q(\sigma)\dd \sigma=2m_{0}\quad \mbox{and}\quad (P,Q)=\frac{1}{16}\left(\int_{\RR}Q(y)\dd y\right)^{2}=m_{0}^{2}.
		\end{equation*}
		Denote $X=P-2m_{0}(1+5Y)\in \mathcal{Z}_{1}$.
        It follows from Remark~\ref{re:Y} that 
        \begin{equation*}
			\left(\mathcal{L}X\right)'=\Lambda Q,\quad (X,Q')=0\quad \mbox{and}\quad 
			\lim_{y\to \infty}X(y)=-2m_{0}.
		\end{equation*}
		Using Remark~\ref{re:Y} and Remark~\ref{re:f1f2loca}, for any $k\in \mathbb{N}$, there exists  some constants $(c_{k},C_{k})\in (0,\infty)^{2}$ such that  
		\begin{equation*}
			\begin{aligned}
				\big| X^{(k)}(y)\big|\textbf{1}_{(-\infty,0]}(y)+ \big| \left( X(y)+2m_{0}\right)^{(k)}\big|\textbf{1}_{[0,\infty)}(y)&\le C_{k}\left(1+|y|\right)^{c_{k}}e^{-|y|}.
			\end{aligned}
		\end{equation*}
		Using again Remark~\ref{re:Y}, we compute 
		\begin{equation*}
			(X,Q)=(P,Q)-2m_{0}(1+5Y,Q)=-m_{0}^{2}.
		\end{equation*}
		Note that, from the uniqueness in (i) of Lemma~\ref{le:non}, we find 
		\begin{equation*}
			P(y)=-X(-y)\Longrightarrow X+P \ \ \mbox{is an odd function}\Longrightarrow
			(X+P,\Lambda Q)=0.
		\end{equation*}
		To simplify the notation, we denote 
		\begin{equation}\label{def:X1X2}
			\begin{aligned}
				P_{1}(s,y)=P(y)\quad \mbox{and}\quad P_{2}(s,y)=(\Gamma P)(s,y),\\
				X_{1}(s,y)=X(y)\quad \mbox{and}\quad X_{2}(s,y)=(\Gamma X)(s,y).
			\end{aligned}
		\end{equation}
	\end{remark}
	
	To compute the leading-order interaction terms between $X$ and $Q$, we rely on the following refined pointwise estimate for $X$ (see Lemma~\ref{le:refinedPAB} for more detail).
	\begin{lemma}\label{le:refindpointX}
		For any $k\in \mathbb{N}$, we have 
		\begin{equation*}
			\begin{aligned}
				\left| \left(X(y)+\frac{c_{Q}}{4}y^{2}e^{-|y|}\right)^{(k)}\right|\mathbf{1}_{(-\infty,0]}(y)&\lesssim (1+|y|)e^{-|y|},\\
				\left| \left(X(y)+2m_{0}-\frac{c_{Q}}{4}y^{2}e^{-|y|}\right)^{(k)}\right|\mathbf{1}_{[0,\infty)}(y)&\lesssim (1+|y|)e^{-|y|}.
			\end{aligned}
		\end{equation*}
	\end{lemma}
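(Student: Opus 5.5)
The plan is to read the asymptotics of $X$ directly off the ODE it satisfies, integrated once. By Remark~\ref{re:P}, $(\mathcal{L}X)'=\Lambda Q$, and $X$ together with its derivatives decays exponentially as $y\to-\infty$. Integrating from $-\infty$ therefore gives, on all of $\RR$,
\begin{equation*}
-X''+X-5Q^{4}X=\int_{-\infty}^{y}\Lambda Q(\sigma)\dd\sigma=yQ(y)-\frac{1}{2}\int_{-\infty}^{y}Q(\sigma)\dd\sigma,
\end{equation*}
where the right-hand side comes from $\Lambda Q=\frac12 Q+yQ'$ and one integration by parts. As a consistency check, letting $y\to\infty$ gives $\mathcal{L}X\to -\frac12\int_{\RR}Q=-2m_{0}$, which matches $\lim_{y\to\infty}X=-2m_{0}$.

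\textbf{Left side, $y\to-\infty$.} First I insert the expansion of $Q$ near $-\infty$: $Q=c_{Q}e^{y}+O(e^{3y})$, which follows from the explicit formula for $Q$. This gives $\int_{-\infty}^{y}Q=c_{Q}e^{y}+O(e^{3y})$. Also $5Q^{4}X=O(e^{4y})$, since $X$ is bounded. Hence
\begin{equation*}
-X''+X=c_{Q}\Big(y-\tfrac12\Big)e^{y}+R_{-},\qquad |R_{-}^{(k)}|\lesssim (1+|y|)e^{3y}\ \ \text{for } y\le 0 .
\end{equation*}
Next I look for a particular solution of the form $(ay^{2}+by)e^{y}$. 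A direct computation gives $-f''+f=-(4ay+2a+2b)e^{y}$, which forces $a=-c_{Q}/4$ and $b$ to be some explicit constant. Then $h=X+\frac{c_{Q}}{4}y^{2}e^{y}-bye^{y}$ solves $-h''+h=R_{-}$. Using the variation of constants formula with kernel $\frac12 e^{-|y-\sigma|}$, we get $h=Ce^{y}+De^{-y}+O(e^{3y}(1+|y|))$ on $(-\infty,0]$. Since $X$ is bounded as $y\to-\infty$, we must have $D=0$. The remaining terms $Ce^{y}$ and $bye^{y}$ are both $O((1+|y|)e^{y})$, which gives the first estimate for $k=0$.

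\textbf{Right side, $y\to+\infty$.} I rewrite the right-hand side using $\int_{\RR}\Lambda Q=-2m_{0}$ as
\begin{equation*}
-2m_{0}-\int_{y}^{\infty}\Lambda Q=-2m_{0}+yQ+\tfrac12\int_{y}^{\infty}Q .
\end{equation*}
With $Q=c_{Q}e^{-y}+O(e^{-3y})$, the function $\tilde X=X+2m_{0}$ then satisfies
\begin{equation*}
-\tilde X''+\tilde X=c_{Q}\Big(y+\tfrac12\Big)e^{-y}+O\big((1+y)e^{-3y}\big).
\end{equation*}
Here the term $5Q^{4}X$ has been absorbed into the error, since it is $O(e^{-4y})$. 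The ansatz $(ay^{2}+by)e^{-y}$ gives $-f''+f=(4ay-2a+2b)e^{-y}$, so $a=c_{Q}/4$, which is exactly the sign claimed in the lemma. The same Green's function argument applies, now excluding the $e^{y}$ mode because $\tilde X\to0$ (Remark~\ref{re:P}), and it yields the second estimate for $k=0$.

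\textbf{Derivatives and main obstacle.} For $k\ge1$ I would differentiate the variation of constants representation, or equivalently use the ODE itself: it expresses the second derivative of the remainder through the remainder and $R_{\pm}$, and by induction this gives the bounds on all higher derivatives. Here one uses that each derivative of $R_{\pm}$ keeps the same type of bound, because $Q\in\mathcal{Y}$ with an exact exponential expansion. I expect the only delicate step to be a careful check that the Green's function integrals of the source $(1+|\sigma|)e^{3\sigma}$ really produce $O((1+|y|)e^{|y|\cdot(-1)})$ remainders, i.e.\ $O((1+|y|)e^{-|y|})$, uniformly, with the resonant $e^{\pm y}$ modes correctly accounted for. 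This is routine, since the source decays strictly faster than the homogeneous solutions, so no additional logarithmic or polynomial growth arises.
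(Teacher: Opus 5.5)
Your proof is correct, but it follows a different route from the paper's.

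\emph{The paper's route.} The paper does not integrate the third-order equation. It starts from the representation $P=h-\int_{y}^{\infty}\Lambda Q$ in the proof of Lemma~\ref{le:non}. It then uses the identity $\mathcal{L}(yQ')=-2Q''$ to show that the explicit function $\bar h=-\frac{1}{4}y^{2}Q'$ satisfies $\mathcal{L}\bar h=yQ''+\frac{1}{2}Q'$. Thus $\bar h$ absorbs all of $(\Lambda Q)'=yQ''+\frac{3}{2}Q'$ except $Q'$. The remainder solves $\mathcal{L}(h-\bar h)=Q'-5Q^{4}\int_{y}^{\infty}\Lambda Q$, whose source is $O(e^{-|y|})$. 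The resonant convolution bound $\int_{\RR}e^{-|y-\sigma|}e^{-|\sigma|}\dd\sigma\lesssim(1+|y|)e^{-|y|}$ then gives $h-\bar h=O((1+|y|)e^{-|y|})$ on all of $\RR$ at once. The profiles $\mp\frac{c_{Q}}{4}y^{2}e^{-|y|}$ come from $Q'\approx c_{Q}e^{y}$ as $y\to-\infty$ and $Q'\approx-c_{Q}e^{-y}$ as $y\to+\infty$. One concludes through $X=P-2m_{0}(1+5Y)$ and the asymptotics of $\int_{y}^{\infty}\Lambda Q$ and $Y$.

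\emph{Your route.} You integrate $(\mathcal{L}X)'=\Lambda Q$ from $-\infty$, which is legitimate since $X$ and its derivatives decay there. This gives the global identity $\mathcal{L}X=yQ-\frac{1}{2}\int_{-\infty}^{y}Q$. On each half-line you treat $5Q^{4}X$ as a faster-decaying perturbation and solve the resonant problem with the ansatz $(ay^{2}+by)e^{\pm y}$. Your computation gives $a=-\frac{c_{Q}}{4}$ on the left, $a=\frac{c_{Q}}{4}$ on the right, and $b=\frac{c_{Q}}{2}$ on both sides. You then remove the growing homogeneous mode using only the limits of $X$ from Remark~\ref{re:P}.

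\emph{Comparison.} The paper's route is shorter and global. One explicit function handles both half-lines, and the derivative bounds come almost for free because $\bar h$ is explicit. Your route uses nothing about how $X$ was built beyond its ODE and boundary values. It also gives a sharper output: it identifies the next-order term $\frac{c_{Q}}{2}ye^{-|y|}$ on both sides with an $O(e^{-|y|})$ error, which the paper's bound does not resolve.

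For the derivative step, the $5Q^{4}X$ part of $R_{\pm}$ needs the derivative bounds on $X$ from Remark~\ref{re:P}; these are available. Also, choosing the particular solution $-\int_{-\infty}^{y}\sinh(y-\sigma)R_{-}(\sigma)\dd\sigma$, and its mirror $-\int_{y}^{\infty}\sinh(\sigma-y)R_{+}(\sigma)\dd\sigma$ on $[0,\infty)$, gives the $O((1+|y|)e^{-3|y|})$ remainder directly. So the point you flag as delicate is indeed routine.
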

	
	\begin{proof}
		First, from~\eqref{est:QQ'} and Remark~\ref{re:Y}, we check that\footnote{Here, we use the fact that $-f''+f=g\Longrightarrow |f(y)|\lesssim \int_{\RR}e^{-|y-\sigma|}|g(\sigma)|\dd \sigma$.}
		\begin{equation}
			-Y''+Y=5Q^{4}Y+Q^{4}\Longrightarrow |Y^{(k)}(y)|\lesssim \int_{\RR}
			e^{-|y-\sigma|}e^{-4|\sigma|}\dd \sigma\lesssim e^{-|y|}.
		\end{equation}
		Second, by an elementary computation, we also check that 
		\begin{equation}\label{equ:defbarh}
			\mathcal{L}\left(yQ'\right)=-2Q''
			\Longrightarrow
			\mathcal{L}\bar{h}=yQ''+\frac{1}{2}Q'\ \ \mbox{with}\ \bar{h}=-\frac{1}{4}y^{2}Q'.
		\end{equation}
		Using again~\eqref{est:QQ'} and~\eqref{est:QQ'-}, we have 
		\begin{equation*}
			\begin{aligned}
				\left|\left(y^{2}Q'-c_{Q}y^{2}e^{-|y|}\right)^{(k)}\right|\mathbf{1}_{(-\infty,0]}&\lesssim e^{-|y|},\\
				\left|\left(y^{2}Q'+c_{Q}y^{2}e^{-|y|}\right)^{(k)}\right|\mathbf{1}_{[0,\infty)}&\lesssim e^{-|y|}.
			\end{aligned}
		\end{equation*}
		It follows from the definition of $\bar{h}$ that 
		\begin{equation}\label{est:barh}
			\begin{aligned}
				\left|\left(\bar{h}(y)+\frac{c_{Q}}{4}y^{2}e^{-|y|}\right)^{(k)}\right|\mathbf{1}_{(-\infty,0]}(y)&\lesssim (1+|y|)e^{-|y|},\\
				\left|\left(\bar{h}(y)-\frac{c_{Q}}{4}y^{2}e^{-|y|}\right)^{(k)}\right|\mathbf{1}_{[0,\infty)}(y)&\lesssim (1+|y|)e^{-|y|}.
			\end{aligned}
		\end{equation}
		Recall that, from the proof for (i) of Lemma~\ref{le:non}, we consider $P$ of the form
		\begin{equation*}
			P(y)=h(y)-\int_{y}^{\infty}\Lambda Q(\sigma)\dd \sigma,\quad \mbox{on}\   \RR.
		\end{equation*}
		Here, the real-valued function $h:\RR\to \RR$ is a solution of 
		\begin{equation*}
			\mathcal{L}h=(\Lambda Q)'-5Q^{4}\int_{y}^{\infty}\Lambda Q(\sigma)\dd \sigma,\quad \mbox{on}\ \RR.
		\end{equation*}
		Based on the above identity and~\eqref{equ:defbarh}, we have
		\begin{equation*}
			\mathcal{L}(h-\bar{h})=Q'-5Q^{4}\int_{y}^{\infty}\Lambda Q(\sigma)\dd \sigma,\quad \mbox{on}\ \RR,
		\end{equation*}
		which directly implies that
		\begin{equation}\label{est:hbarh}
			\left|(h(y)-\bar{h}(y))^{(k)}\right|
			\lesssim \int_{\RR}e^{-|y-\sigma|}e^{-|y|}\dd y
			\lesssim (1+|y|)e^{-|y|}.
		\end{equation}
		On the other hand, from the Fundamental Theorem of Calculus, 
		\begin{equation}\label{est:intLamQ}
			\begin{aligned}
				\left| \left(\int^{\infty}_{y}\Lambda Q(\sigma)\dd \sigma\right)^{(k)}\right|\mathbf{1}_{[0,\infty)}(y)
				&\lesssim (1+|y|)e^{-|y|},\\
				\left| \left(\int^{\infty}_{y}\Lambda Q(\sigma)\dd \sigma+2m_{0}\right)^{(k)}\right|\mathbf{1}_{(-\infty,0]}(y)&\lesssim (1+|y|)e^{-|y|}.
			\end{aligned}
		\end{equation}
		Combining~\eqref{est:barh}--\eqref{est:intLamQ} with the definition of $X$, we complete the proof.
	\end{proof}

	Based on Lemma~\ref{le:non}, we consider the following two auxiliary functions that will be used to construct the approximate solution for~\eqref{equ:gKdv2}. More precisely, these functions will help us to refine the leading-order term of the nonlinear interaction.
	
	\begin{lemma}\label{le:AB}
		There exist smooth functions $(A,B)\in \mathcal{Z}_{2}\times \mathcal{Z}_{2}$ such that 
		\begin{equation*}
			\begin{aligned}
				(\mathcal{L}A)'+5c_{Q}(e^{y}Q^{4})'+\alpha X=(\mathcal{L}B)'-5c_{Q}(e^{-y}Q^{4})'-\alpha X-10\alpha m_{0}W&=0.
			\end{aligned}
		\end{equation*}
		In addition, we have $(A,Q')=(B,Q')=0$ and there exist some universal constants $(a_{0},a_{1})\in \mathbb{\RR}^{2}$, depending only on $P$ and $Q$, such that 
		\begin{equation*}
			\begin{aligned}
				\lim_{y\to \infty}\left(A(y)-2\alpha m_{0}y-a_{0}\right)
				& = \lim_{y\to \infty}\left(B(y)+2\alpha m_{0}y+a_{1}\right)=0.
			\end{aligned}
		\end{equation*}
		Moreover, for any $k\in \mathbb{N}$, there exists a constant $c_{k}>0$ such that 
		\begin{equation*}
			\begin{aligned}
				\left( \left|A^{(k)}(y)\right|+
				\left|B^{(k)}(y)\right|
				\right)\mathbf{1}_{(-\infty,0]}(y)&\lesssim \left(1+|y|\right)^{c_{k}}e^{-|y|},\\
				\left|\left(A(y)-2\alpha m_{0} y
				-a_{0}
				\right)^{(k)}\right|\mathbf{1}_{[0,\infty)}(y)&\lesssim \left(1+|y|\right)^{c_{k}}e^{-|y|},\\
				\left|\left(B(y)+2\alpha m_{0} y+a_{1}
				\right)^{(k)}\right|\mathbf{1}_{[0,\infty)}(y)&\lesssim \left(1+|y|\right)^{c_{k}}e^{-|y|}.
			\end{aligned}
		\end{equation*}
	\end{lemma}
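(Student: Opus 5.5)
The plan is to apply (ii) of Lemma~\ref{le:non} twice, with
\begin{equation*}
g_{1}^{A}=-5c_{Q}(e^{y}Q^{4})'-\alpha X,\qquad g_{1}^{B}=5c_{Q}(e^{-y}Q^{4})'+\alpha X+10\alpha m_{0}W.
\end{equation*}
Since $e^{\pm y}Q^{4}$ decays like $e^{-3|y|}$, its derivative lies in $\mathcal{Y}$. By Remark~\ref{re:P}, $X\in\mathcal{Z}_{1}$: it decays exponentially on the left and tends to $-2m_{0}$ on the right. The functions $W=Y+Z$ are in $\mathcal{Y}$. Hence $g_{1}^{A},g_{1}^{B}\in\mathcal{Z}_{1}$. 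The substance of the proof is checking the orthogonality condition $(g_{1},Q)=0$, and this is where the specific value of $\alpha$ and the coefficient $10\alpha m_{0}$ of $W$ come from.

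For $A$, integrate by parts to get
\begin{equation*}
(5c_{Q}(e^{y}Q^{4})',Q)=-5c_{Q}\int e^{y}Q^{4}Q'.
\end{equation*}
Writing $5Q^{4}Q'=(Q^{5})'$ and integrating by parts again gives $-c_{Q}\int (Q^{5})'e^{y}=c_{Q}\int e^{y}Q^{5}$. Also $\alpha(X,Q)=-\alpha m_{0}^{2}$ by Remark~\ref{re:P}. By the definition \eqref{equ:defalpha}, $\alpha m_{0}^{2}=c_{Q}\int e^{y}Q^{5}$, so the two contributions cancel and $(g_{1}^{A},Q)=0$.

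For $B$, the same computation gives
\begin{equation*}
(5c_{Q}(e^{-y}Q^{4})',Q)=-c_{Q}\int e^{-y}Q^{5}=-c_{Q}\int e^{y}Q^{5},
\end{equation*}
using the evenness of $Q$. Adding $\alpha(X,Q)=-\alpha m_{0}^{2}$ gives $-2\alpha m_{0}^{2}$ so far. Remark~\ref{re:Y} gives $(W,Q)=m_{0}/5$, so the last term contributes $10\alpha m_{0}\cdot m_{0}/5=2\alpha m_{0}^{2}$, and the total vanishes. Once orthogonality is checked, Lemma~\ref{le:non}(ii) yields unique $A,B\in\mathcal{Z}_{2}$ with $(\mathcal{L}A)'=g_{1}^{A}$, $(\mathcal{L}B)'=g_{1}^{B}$ and $(A,Q')=(B,Q')=0$.

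For the asymptotics, on the right $\lim_{+\infty}g_{1}^{A}=-\alpha\lim_{+\infty}X=2\alpha m_{0}$ and $\lim_{+\infty}g_{1}^{B}=-2\alpha m_{0}$. On the left both limits vanish, since $X\to0$ and all the other terms decay. Lemma~\ref{le:non}(ii) then gives
\begin{equation*}
a_{0}=-\int_{0}^{\infty}\sigma (g_{1}^{A})'\,\dd\sigma,
\end{equation*}
and $a_{1}$ is given by the corresponding formula for $g_{1}^{B}$ with a sign change. These depend only on $P$ and $Q$, as claimed. On the left, $A$ and $B$ tend to the constants $\int_{-\infty}^{0}\sigma g_{1}'\,\dd\sigma$. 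The asserted left bound $|A^{(k)}|\lesssim(1+|y|)^{c_{k}}e^{-|y|}$ requires these constants to be zero, and I expect checking this to be the main obstacle.

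The approach to this step is the explicit representation from the proof of Lemma~\ref{le:non}(ii), $f_{2}=h_{1}+\int_{0}^{y}g_{1}$ with $h_{1}\in\mathcal{Y}$. Here $\int_{-\infty}^{0}g_{1}$ is finite because $g_{1}$ decays on the left. The free normalization is the kernel: one may add to $f_{2}$ an element $c(1+5Y)$ with $\mathcal{L}(1+5Y)=1$, which preserves $(\mathcal{L}f_{2})'$. The correction $5cY$ is not orthogonal to $Q'$, but $(Y,Q')=0$ and $(1,Q')=0$, so the full shift $c(1+5Y)$ keeps $(f_{2},Q')=0$. I would use this freedom to shift $A$ and $B$ so that their left limits are zero, exactly as $X$ was obtained from $P$. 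The right slope is unchanged and only $a_{0},a_{1}$ change. In this sense the pair $(A,B)$ is understood as the profiles normalized to vanish at $-\infty$. The remaining pointwise estimates on both half-lines follow from Remark~\ref{re:f1f2loca} together with $Y\in\mathcal{Y}$.
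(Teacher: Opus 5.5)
Your proposal is correct and follows the paper's proof. You verify $(g_1,Q)=0$ via $((e^{\pm y}Q^4)',Q)=\pm\frac{1}{5}\int e^{y}Q^{5}$, $(X,Q)=-m_0^2$ and $(W,Q)=\frac{m_0}{5}$, apply Lemma~\ref{le:non}(ii), and then add a multiple of the resonance $1+5Y$ to remove the left limits of $A$ and $B$.

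Two small slips need fixing. First, $Y$ is orthogonal to $Q'$ by construction, so $5cY$ is orthogonal to $Q'$ as well. Second, your shift by $c(1+5Y)$ shows that the uniqueness in Lemma~\ref{le:non}(ii) holds only within the ansatz $h_1+\int_0^y g_1$ with $h_1\in\mathcal{Y}$, not in all of $\mathcal{Z}_2$, so you should not quote that uniqueness before using this freedom.
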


    \begin{remark}
        Indeed, the source terms involving $e^{\pm y}Q^{4}$ are introduced to refine the soliton-soliton interactions. On the other hand, the source term related to $W$ is used to refine the profile-soliton interaction between profile $A$ and soliton $Q$.
    \end{remark}
	
	\begin{proof}[Proof of Lemma~\ref{le:AB}]
		From Remark~\ref{re:Y}, Remark~\ref{re:P} and the definition of $\alpha$ in~\eqref{equ:defalpha}, 
		\begin{equation*}
			\left(
			5 c_{Q}(e^{y}Q^{4})'+\alpha X,Q\right)=\left(
			5c_{Q}(e^{-y}Q^{4})'+\alpha X+10\alpha m_{0}W,Q
			\right)=0.
		\end{equation*}
		Here, we use the fact that 
		\begin{equation*}
			\big(\big(e^{y}Q^{4}\big)',Q\big)=-\big(\big(e^{-y}Q^{4}\big)',Q\big)=\frac{1}{5}\int_{\RR}e^{y}Q^{5}(y)\dd y.
		\end{equation*}
		Therefore, the existence of $(A,B)\in \mathcal{Z}_{2}\times \mathcal{Z}_{2}$
		is a direct consequence for (ii) of Lemma~\ref{le:non}. In addition, using Remark~\ref{re:P}, we compute 
		\begin{equation*}
			\begin{aligned}
				\lim_{y\to \pm \infty}\left(  5c_{Q}(e^{y}Q^{4})'+\alpha X\right)&=-\alpha m_{0}\mp \alpha m_{0},\\
				\lim_{y\to \pm \infty}
				\left( 5c_{Q}(e^{-y}Q^{4})'+\alpha X+10\alpha m_{0}W\right)&=-\alpha m_{0}\mp \alpha m_{0}.
			\end{aligned}
		\end{equation*}
		On the other hand, the resonance $1+5Y$ satisfies $(\mathcal{L}(1+5Y))'=0$, and thus, we could use such resonance to adjust the asymptotic behavior of $A$ and $B$ on the left side of the real axis $\RR$.
		Therefore, using the above two identities and Remark~\ref{re:f1f2loca}, we complete the proof for the asymptotics behavior of $A$ and $B$ as $y\to \pm \infty$.
	\end{proof}
	
	Similar to before, we also denote 
	\begin{equation}\label{equ:defAB}
		\begin{aligned}
			A_{1}(s,y)=A(y)\quad \mbox{and}\quad 
			B_{2}(s,y)=(\Gamma B)(s,y).
		\end{aligned}
	\end{equation}
	
	In addition, we consider the following two auxiliary functions, which will be used to refine some leading-order terms at the $(b_{1}^{2},b_{2}^{2})$ level in the ODE system for $(b_{1},b_{2})$.
	\begin{lemma}\label{le:EF}
		There exist smooth functions $(E,F)\in \mathcal{Z}_{2}\times \mathcal{Z}_{2}$ such that 
		\begin{equation*}
			\begin{aligned}
				(\mathcal{L}E)'&=\Lambda X-2X+10(X^{2}Q^{3})',\\
				(\mathcal{L}F)'&=2X-\Lambda X+10m_{0}(2Y-\Lambda Y)
				-10(P^{2}Q^{3})'
				+15m_{0}Z.
			\end{aligned}
		\end{equation*}
		In addition, we have $(E,Q')=(F,Q')=0$ and there exist some universal constants $(a_{2},a_{3})\in \mathbb{\RR}^{2}$, depending only on $P$ and $Q$, such that 
		\begin{equation*}
			\begin{aligned}
				\lim_{y\to \infty}\left(E(y)-3m_{0}y-a_{2}\right)
				& = \lim_{y\to \infty}\left(F(y)+3m_{0}y+a_{3}\right)=0.
			\end{aligned}
		\end{equation*}
		Moreover, for any $k\in \mathbb{N}$, there exists a constant $c_{k}>0$ such that 
		\begin{equation*}
			\begin{aligned}
				\left( \left|E^{(k)}(y)\right|+
				\left|F^{(k)}(y)\right|
				\right)\mathbf{1}_{(-\infty,0]}(y)&\lesssim \left(1+|y|\right)^{c_{k}}e^{-|y|},\\
				\left|\left(E(y)-3m_{0} y
				-a_{2}
				\right)^{(k)}\right|\mathbf{1}_{[0,\infty)}(y)&\lesssim \left(1+|y|\right)^{c_{k}}e^{-|y|},\\
				\left|\left(F(y)+3m_{0} y+a_{3}
				\right)^{(k)}\right|\mathbf{1}_{[0,\infty)}(y)&\lesssim \left(1+|y|\right)^{c_{k}}e^{-|y|}.
			\end{aligned}
		\end{equation*}
	\end{lemma}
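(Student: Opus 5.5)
The plan is to follow the proof of Lemma~\ref{le:AB}: check that each right-hand side belongs to $\mathcal{Z}_{1}$ and is orthogonal to $Q$, and then apply (ii) of Lemma~\ref{le:non}. The linear growth at $+\infty$ is read off from the limits of the source terms at $\pm\infty$. The behaviour at $-\infty$ is then adjusted with the resonance $1+5Y$.

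\emph{Membership in $\mathcal{Z}_{1}$.} Set $g_{E}=\Lambda X-2X+10(X^{2}Q^{3})'$. From Remark~\ref{re:P}, $X\in\mathcal{Z}_{1}$, $X\to 0$ as $y\to-\infty$ and $X\to-2m_{0}$ as $y\to+\infty$, with derivatives in $\mathcal{Y}$. Hence $\Lambda X=\frac12 X+yX'$ lies in $\mathcal{Z}_{1}$, since $yX'$ still decays like $(1+|y|)^{c}e^{-|y|}$. The term $(X^{2}Q^{3})'$ lies in $\mathcal{Y}$. Therefore $g_{E}\in\mathcal{Z}_{1}$, with
\[
\lim_{y\to+\infty}g_{E}=-\tfrac32 X(\infty)=3m_{0},\qquad \lim_{y\to-\infty}g_{E}=0.
\]
The same reasoning applies to $g_{F}=2X-\Lambda X+10m_{0}(2Y-\Lambda Y)-10(P^{2}Q^{3})'+15m_{0}Z$, because $Y,Z\in\mathcal{Y}$ and $P\in\mathcal{Y}_{1}$ has a bounded limit. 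Its limits are $-3m_{0}$ at $+\infty$ and $0$ at $-\infty$.

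\emph{Orthogonality to $Q$ (main obstacle).} This is where the actual computation lies. For $E$, use $(\Lambda X,Q)=-(X,\Lambda Q)$, together with $(X,Q)=-m_{0}^{2}$ and $((X^{2}Q^{3})',Q)=-(X^{2}Q^{3},Q')=-\frac14(X^{2},(Q^{4})')$. To evaluate $(X,\Lambda Q)$, use $\Lambda Q=(\mathcal{L}X)'$:
\[
(X,(\mathcal{L}X)')=-(X',\mathcal{L}X).
\]
Expanding $\mathcal{L}X=-X''+X-5Q^{4}X$, the terms $(X',X'')$ and $(X',X)$ reduce to boundary values $\frac12[X^{2}]_{-\infty}^{\infty}=2m_{0}^{2}$. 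The remaining term $5(X',Q^{4}X)=\frac52(Q^{4},(X^{2})')=-\frac52((Q^{4})',X^{2})$ combines with the $(X^{2}Q^{3})'$ contribution. The coefficient $10$ is chosen exactly so that these two pieces cancel, since $10\cdot\frac14=\frac52$. I expect the bookkeeping to give
\[
(X,\Lambda Q)=-m_{0}^{2}+\tfrac52((Q^{4})',X^{2}),
\]
and hence $(g_{E},Q)=m_{0}^{2}+2m_{0}^{2}-\ldots$. The precise signs need care, and I would recheck each one.

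For $F$, carry out the same calculation with $P$ in place of $X$ inside the quadratic term. The rewriting $X=P-2m_{0}(1+5Y)$ produces cross terms involving $Y$. These are compensated by $10m_{0}(2Y-\Lambda Y)$ and $15m_{0}Z$, using $(Y,Q)=-\frac35 m_{0}$, $(\Lambda Y,Q)=-(Y,\Lambda Q)$ and $(Z,Q)=\frac45 m_{0}$ from Remark~\ref{re:Y}. If a discrepancy remains, I would expect it to be absorbed by the odd symmetry $P(y)=-X(-y)$ together with $(X+P,\Lambda Q)=0$.

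\emph{Asymptotics at $\pm\infty$.} Once orthogonality holds, (ii) of Lemma~\ref{le:non} gives $E,F\in\mathcal{Z}_{2}$ with $(E,Q')=(F,Q')=0$. By Remark~\ref{re:f1f2loca}, $E$ has slope $3m_{0}$ at $+\infty$ and $F$ has slope $-3m_{0}$, each with exponentially decaying corrections. At $-\infty$ the slopes are $0$, but there may be nonzero constants $\int_{-\infty}^{0}\sigma g'\,\dd\sigma$. To remove these, subtract $c(1+5Y)$. This does not change $(\mathcal{L}\cdot)'$ because $(\mathcal{L}(1+5Y))'=0$. It also keeps orthogonality to $Q'$, since $(1+5Y,Q')=5(Y,Q')=0$. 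It only shifts the constants $a_{2},a_{3}$ at $+\infty$. The exponential bounds on each half-line then follow from Remark~\ref{re:f1f2loca}, because $1+5Y$ converges to $1$ at $+\infty$ with $\mathcal{Y}$-type corrections.
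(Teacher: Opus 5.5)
Your overall structure matches the paper's and is correct. You show $g_E,g_F\in\mathcal{Z}_1$ with limits $\pm3m_0$ at $+\infty$ and $0$ at $-\infty$, apply (ii) of Lemma~\ref{le:non}, and subtract a multiple of $1+5Y$ to remove the constant at $-\infty$. The gap is the only non-routine step, the solvability conditions $(g_E,Q)=(g_F,Q)=0$. You do not establish them, and the computation you sketch for $E$ fails as written.

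\textbf{The gap for $E$.} In $(X,(\mathcal{L}X)')=-(X',\mathcal{L}X)$ you drop the boundary term $[X\,\mathcal{L}X]_{-\infty}^{+\infty}$. Since $X\to-2m_0$ and $\mathcal{L}X\to-2m_0$ at $+\infty$, this term equals $4m_0^2$, not zero. Carrying out your expansion without it gives $(X,\Lambda Q)=-2m_0^2-\frac52((Q^4)',X^2)$, hence $(g_E,Q)=4m_0^2\neq0$. Your guessed value $-m_0^2+\frac52((Q^4)',X^2)$ is inconsistent with that expansion, and would give $3m_0^2-5((Q^4)',X^2)$. The correct value is $(X,\Lambda Q)=2m_0^2-\frac52((Q^4)',X^2)$.

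\textbf{How to fix it for $E$.} The paper expands $\Lambda Q=-X'''+X'-5(Q^4X)'$ before pairing with $X$:
\[
(\Lambda X,Q)=-(X,\Lambda Q)=(X,X'''-X')-10\big((X^2Q^3)',Q\big).
\]
Here $(X,X''')=0$, and
\[
-(X,X')=-\tfrac12[X^2]_{-\infty}^{+\infty}=-2m_0^2=2(X,Q),
\]
so $(\Lambda X-2X+10(X^2Q^3)',Q)=0$. The boundary contribution of the non-decaying $X$ is exactly what balances $-2(X,Q)$. This is the point your sketch misses.

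\textbf{The gap for $F$.} You give no computation. Appealing to $P(y)=-X(-y)$ to ``absorb a discrepancy'' is not an argument, because the source term is fixed: its $Q$-projection either vanishes or the lemma fails.

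\textbf{How to fix it for $F$.} Run the same identity with $P$, using $(\mathcal{L}P)'=\Lambda Q$:
\[
(\Lambda P+10(P^2Q^3)',Q)=-\tfrac12[P^2]_{-\infty}^{+\infty}=2m_0^2.
\]
Since $X=P-2m_0(1+5Y)$ and $\Lambda 1=\frac12$, you have $\Lambda P=\Lambda X+10m_0\Lambda Y+m_0$. With $(1,Q)=4m_0$ this gives $(\Lambda X+10m_0\Lambda Y+10(P^2Q^3)',Q)=-2m_0^2$. Your values $(Y,Q)=-\frac35m_0$ and $(Z,Q)=\frac45m_0$ are correct, and give
\[
(2X+20m_0Y+15m_0Z,Q)=-2m_0^2-12m_0^2+12m_0^2=-2m_0^2.
\]
Subtracting the two identities yields $(g_F,Q)=0$.
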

	
	\begin{proof}
		From Remark~\ref{re:P} and integration by parts, we check that 
		\begin{equation*}
			(\Lambda X,Q)=-(X,\Lambda Q)=(X,X'''-X')-10((X^{2}Q^{3})',Q).
		\end{equation*}
		Based on the above identity, we compute 
		\begin{equation*}
			(\Lambda X+10(X^{2}Q^{3})',Q)=-\frac{1}{2}\lim_{y\to \infty}X^{2}(y)=-2m_{0}^{2}=2(X,Q),
		\end{equation*}
		which implies that 
		\begin{equation}\label{equ:X10}
			\left(\Lambda X-2X+10(X^{2}Q^{3})',Q\right)=0.
		\end{equation}
		Similar as above, we check that 
		\begin{equation*}
			\left(\Lambda P+10(P^{2}Q^{3})',Q\right)=\frac{1}{2}\lim_{y\to -\infty}P^{2}(y)=2m_{0}^{2}.
		\end{equation*}
		Then, using the definition of $X=P-2m_{0}(1+5Y)$, we find 
		\begin{equation*}
			\left(\Lambda P,Q\right)=(\Lambda X+10m_{0}\Lambda Y,Q)+4m_{0}^{2},
		\end{equation*}
		which implies that 
		\begin{equation*}
			(\Lambda X+10m_{0}\Lambda Y+10(P^{2}Q^{3})',Q)=-2m_{0}^{2}.
		\end{equation*}
		On the other hand, using again Remark~\ref{re:Y} and Remark~\ref{re:P}, 
		\begin{equation*}
			(2X+20m_{0}Y+15m_{0}Z,Q)=-2m_{0}^{2}.
		\end{equation*}
		Combining the above two identities, we obtain 
		\begin{equation}\label{equ:X20}
			\left( 2X-\Lambda X+10m_{0}(2Y-\Lambda Y)
			-10(P^{2}Q^{3})'
			+15m_{0}Z,Q\right)=0.
		\end{equation}
		Note that, the proof of Lemma~\ref{le:EF} follows directly from equations~\eqref{equ:X10}--\eqref{equ:X20} and an argument similar to that used for the proof of Lemma~\ref{le:AB}.
	\end{proof}
	
	Similar to before, we also denote 
	\begin{equation}\label{equ:defEF}
		\begin{aligned}
			E_{1}(s,y)=E(y)\quad \mbox{and}\quad 
			F_{2}(s,y)=(\Gamma F)(s,y).
		\end{aligned}
	\end{equation}
	
	\subsection{Leading order of the nonlinear interactions}
	In this subsection, we study the interactions between the solitons and non-localized profiles which were introduced in Section~\ref{SS:linear}. We start with the following two technical Lemmas.
	\begin{lemma}\label{le:boundinter}
		Let $(\theta,K_{1},K_{2})\in (0,\infty)\times\mathbb{N}^{2}$ and $D\gg 1$. Assume that $f_{1}\in C^{\infty}(\RR;\RR)$ 
		and $f_{2}\in C^{\infty}(\RR;\RR)$
		satisfy
		\begin{equation}\label{est:f1f1'f2f2'}
			\begin{aligned}
				\left|f_{1}(y)\right|
				+\left|f_{1}'(y)\right|\lesssim \left(1+ |y|\right)^{K_{1}}e^{-\theta|y|},\quad \mbox{on} \ \RR,\\
				\left|f_{2}(y)\right|+
				\left|f_{2}'(y)\right|\lesssim 
				\left(1+ |y|\right)^{K_{2}}
				e^{-\theta|y|},\quad \mbox{on} \ \RR.
			\end{aligned}
		\end{equation}
		Then, for all $(z_{1},z_{2})\in \RR^{2}$ with $r=z_{2}-z_{1}>D$, we have 
		\begin{equation*}
			\begin{aligned}
				|\left(f_{1}(\cdot-z_{1}),f_{2}(\cdot-z_{2})\right)|&\lesssim
				\left(|z_{1}|^{K_{1}+K_{2}+1}+|z_{2}|^{K_{1}+K_{2}+1}\right)e^{-\theta r},\\
				\left\|f_{1}(\cdot-z_{1})f_{2}(\cdot-z_{2})\right\|_{H^{1}}&\lesssim 
				\left(|z_{1}|^{K_{1}+K_{2}+\frac{1}{2}}+|z_{2}|^{K_{1}+K_{2}+\frac{1}{2}}\right)e^{-\theta r}.
			\end{aligned}
		\end{equation*}
	\end{lemma}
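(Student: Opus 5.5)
The plan is a direct pointwise estimate followed by a split of the real line. By~\eqref{est:f1f1'f2f2'} applied to $f_{1}$ and $f_{2}$,
\begin{equation*}
|f_{1}(y-z_{1})f_{2}(y-z_{2})|\lesssim (1+|y-z_{1}|)^{K_{1}}(1+|y-z_{2}|)^{K_{2}}e^{-\theta(|y-z_{1}|+|y-z_{2}|)}.
\end{equation*}
The same bound holds for the derivative $\partial_{y}\big(f_{1}(\cdot-z_{1})f_{2}(\cdot-z_{2})\big)=f_{1}'f_{2}+f_{1}f_{2}'$, because each derivative obeys the same bound.

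Next we split $\RR$ into three regions: $(-\infty,z_{1}]$, $[z_{1},z_{2}]$ and $[z_{2},\infty)$. On the middle interval we have $|y-z_{1}|+|y-z_{2}|=r$, so the exponential factor is exactly $e^{-\theta r}$. The polynomial factors are bounded by $(1+r)^{K_{1}+K_{2}}$, and the interval has length $r$. On $[z_{2},\infty)$ we have $|y-z_{1}|+|y-z_{2}|=r+2(y-z_{2})$. There the integral is bounded by $e^{-\theta r}$ times $\int_{0}^{\infty}(1+r+\sigma)^{K_{1}}(1+\sigma)^{K_{2}}e^{-2\theta\sigma}\dd\sigma\lesssim (1+r)^{K_{1}}$. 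The half-line $(-\infty,z_{1}]$ is handled symmetrically.

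Altogether this gives
\begin{equation*}
|(f_{1}(\cdot-z_{1}),f_{2}(\cdot-z_{2}))|\lesssim (1+r)^{K_{1}+K_{2}+1}e^{-\theta r}.
\end{equation*}
Since $r\le |z_{1}|+|z_{2}|$ and $r>D\gg1$, we have $1+r\lesssim \max(|z_{1}|,|z_{2}|)$. Hence $(1+r)^{K_{1}+K_{2}+1}\lesssim |z_{1}|^{K_{1}+K_{2}+1}+|z_{2}|^{K_{1}+K_{2}+1}$, which is the first estimate.

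For the $H^{1}$ norm we square the pointwise bounds. The exponent becomes $2\theta(|y-z_{1}|+|y-z_{2}|)$, which equals $2\theta r$ on the middle interval and decays at rate $4\theta$ outside it. Repeating the same three-region computation gives $\|\cdot\|_{H^{1}}^{2}\lesssim (1+r)^{2K_{1}+2K_{2}+1}e^{-2\theta r}$. Taking the square root yields $(1+r)^{K_{1}+K_{2}+\frac12}e^{-\theta r}$, and the same comparison of $1+r$ with $|z_{1}|,|z_{2}|$ finishes the proof.

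No step is genuinely hard. The only delicate bookkeeping is on the middle interval: the polynomial factors must be bounded by powers of $r$ rather than integrated, and the length-$r$ factor is what produces the extra power $+1$, or $+\frac12$ after the square root in the $H^{1}$ case. We also need to note that the exterior tails contribute a smaller power of $r$, so they do not change the final exponent.
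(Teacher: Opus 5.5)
Your proof is correct and follows the same route as the paper. Both arguments use pointwise bounds on the product and its derivative, split $\RR$ into $(-\infty,z_{1}]$, $[z_{1},z_{2}]$ and $[z_{2},\infty)$, and let the middle interval of length $r$ supply the extra power while the tails decay at rate $2\theta$ ($4\theta$ after squaring). The only cosmetic difference is that you first prove the translation-invariant bound $(1+r)^{K_{1}+K_{2}+1}e^{-\theta r}$ and then use $1+r\lesssim \max(|z_{1}|,|z_{2}|)$, whereas the paper bounds the polynomial weights directly by powers of $|z_{1}|$ and $|z_{2}|$.
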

	
	\begin{proof}
		First, using~\eqref{est:f1f1'f2f2'}, for any $y<z_{1}$, we find
		\begin{equation*}
			\begin{aligned}
				&\left| f_{1}(y-z_{1})f_{2}(y-z_{2})\right|+  \left|\left(f_{1}(y-z_{1})f_{2}(y-z_{2})\right)'\right|\\
				&\lesssim e^{-\theta r}
				\left(\langle y-z_{1}\rangle^{K_{2}}+|z_{2}|^{K_{2}}\right)\left(|f_{1}(y-z_{1})|+|f'_{1}(y-z_{1})|\right).
			\end{aligned}
		\end{equation*}
		Similarly, for any $y>z_{2}$, we find
		\begin{equation*}
			\begin{aligned}
				&\left| f_{1}(y-z_{1})f_{2}(y-z_{2})\right|+  \left|\left(f_{1}(y-z_{1})f_{2}(y-z_{2})\right)'\right|\\
				&\lesssim e^{-\theta r}
				\left(\langle y-z_{2}\rangle^{K_{1}}+|z_{1}|^{K_{1}}\right)\left(|f_{2}(y-z_{2})|+|f'_{2}(y-z_{2})|\right).
			\end{aligned}
		\end{equation*}
		Then, using again~\eqref{est:f1f1'f2f2'}, for any $z_{1}<y<z_{2}$, we deduce that
		\begin{equation*}
			\begin{aligned}
				&\left| f_{1}(y-z_{1})f_{2}(y-z_{2})\right|+\left| \left(f_{1}(y-z_{1})f_{2}(y-z_{2})\right)'\right|\\
				&\lesssim e^{-\theta r}
				\langle y-z_{1}\rangle ^{K_{1}}
				\langle y-z_{2}\rangle^{K_{2}}\lesssim \left(|z_{1}|^{K_{1}+K_{2}}+|z_{2}|^{K_{1}+K_{2}}\right)e^{-\theta r}.
			\end{aligned}
		\end{equation*}
		Combining above estimates and then integrating over $\RR$, we complete the proof.
	\end{proof}
	
	\begin{lemma}\label{le:interasym}
		Let $\left(\kappa_{1},\kappa_{2}\right)\in \RR^{2}$ and $D\gg 1$. Let $(z_{1},z_{2})\in \mathbb{R}^{2}$ with $r=z_{2}-z_{1}>D$.
		Assume that $f_{1}\in C^{\infty}(\RR;\RR)$ and $f_{2}\in C^{\infty}(\RR;\RR)$ satisfy
		\begin{equation}\label{est:ff'12}
			\left|f_{1}(y)\right|+|f_{1}'(y)|+|f_{2}(y)|+|f'_{2}(y)|\lesssim e^{-|y|},\quad \mbox{on}\ \RR.
		\end{equation}
		
		Then the following estimates hold.
		\begin{enumerate}
			\item {\emph{First-type estimate.}}
			Suppose that $f_{1}$ satisfies
			\begin{equation}\label{est:f1f1'}
				\begin{aligned}
					f_{1}(y)&=\kappa_{1}e^{-y}+O\left(e^{-2y}\right),\quad \  \ \mbox{as} \ y\to \infty,\\
					f'_{1}(y)&=-\kappa_{1}e^{-y}+O\left(e^{-2y}\right),\quad \mbox{as} \ y\to \infty.
				\end{aligned}
			\end{equation}
			Then, on $y>\frac{z_{1}+z_{2}}{2}$, we have 
			\begin{equation*}
				\begin{aligned}
					f_{1}(y-z_{1})f_{2}(y-z_{2})&=\kappa_{1}e^{-r}\left(e^{-(y-z_{2})}f_{2}(y-z_{2})\right)\\
					&+O_{H^{1}\left({z_{1}+z_{2}}<2y<\infty\right)}\left(e^{-\frac{3}{2}r}\right).
				\end{aligned}
			\end{equation*}
			
			\item {\emph{Second-type estimate.}}
			Suppose that $f_{2}$ satisfies
			\begin{equation}\label{est:f2f2'}
				\begin{aligned}
					f_{2}(y)&=\kappa_{2}e^{y}+O\left(e^{2y}\right),\quad \mbox{as} \ y\to -\infty,\\
					f'_{2}(y)&=\kappa_{2}e^{y}+O\left(e^{2y}\right),\quad \mbox{as} \ y\to -\infty.
				\end{aligned}
			\end{equation}
			Then, on $y<\frac{z_{1}+z_{2}}{2}$, we have 
			\begin{equation*}
				\begin{aligned}
					f_{1}(y-z_{1})f_{2}(y-z_{2})&=\kappa_{2}e^{-r}\left(e^{y-z_{1}}f_{1}(y-z_{1})\right)\\
					&+O_{H^{1}\left(-\infty<2y<z_{1}+z_{2}\right)}\left(e^{-\frac{3}{2}r}\right).
				\end{aligned}
			\end{equation*}
		\end{enumerate}
	\end{lemma}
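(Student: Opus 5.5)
By the symmetry $y\mapsto -y$ (which swaps the roles of $f_1,f_2$ and of the half-lines), it suffices to prove (i); (ii) follows by applying (i) to $\tilde f_1(y)=f_2(-y)$, $\tilde f_2(y)=f_1(-y)$, $\tilde z_1=-z_2$, $\tilde z_2=-z_1$. For (i), work on the region $2y>z_1+z_2$, where $y-z_1>r/2\ge D/2\gg1$, so the asymptotics \eqref{est:f1f1'} apply to $f_1(y-z_1)$. Write
\begin{equation*}
f_1(y-z_1)=\kappa_1e^{-(y-z_1)}+R(y-z_1),\qquad |R(\sigma)|+|R'(\sigma)|\lesssim e^{-2\sigma}\ \mbox{for}\ \sigma>D/2,
\end{equation*}
where the bound on $R'$ comes from the second line of \eqref{est:f1f1'}. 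Since $e^{-(y-z_1)}=e^{-r}e^{-(y-z_2)}$, the main term is exactly $\kappa_1e^{-r}e^{-(y-z_2)}f_2(y-z_2)$. It therefore remains to show that $\|R(\cdot-z_1)f_2(\cdot-z_2)\|_{H^1(2y>z_1+z_2)}\lesssim e^{-\frac32 r}$.

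For the pointwise bound, combine $|R|+|R'|\lesssim e^{-2(y-z_1)}$ with $|f_2|+|f_2'|\lesssim e^{-|y-z_2|}$ from \eqref{est:ff'12}. The product and its derivative are then bounded by $e^{-2(y-z_1)-|y-z_2|}$. Set $\sigma=y-z_2>-r/2$, so that $y-z_1=\sigma+r$, and the bound becomes $e^{-2r}e^{-2\sigma-|\sigma|}$.

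Next compute the squared $L^2$ norm over $\sigma>-r/2$. On $\sigma\ge0$ the integrand is $e^{-4r}e^{-6\sigma}$, which contributes $O(e^{-4r})$. On $-r/2<\sigma<0$ the integrand is $e^{-4r}e^{-2\sigma}$, which contributes at most $e^{-4r}e^{r}=e^{-3r}$. Hence the $H^1$ norm is $\lesssim e^{-\frac32 r}$, as claimed. The derivative is handled identically using the product rule, so this step contains no obstacle.

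The only delicate point is the endpoint $\sigma\approx -r/2$, i.e. the midpoint $y\approx(z_1+z_2)/2$. There the exponent is saturated exactly at $\frac32 r$; this is why the error is $e^{-\frac32 r}$ and not better, and why the restriction to $2y>z_1+z_2$ is essential. One must also check that $y-z_1\ge r/2$ lies in the range where \eqref{est:f1f1'} holds, which is guaranteed by $D\gg1$.
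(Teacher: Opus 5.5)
Your proposal is correct and matches the paper's proof of (i): you split off the $O(e^{-2(y-z_1)})$ remainder of $f_1(y-z_1)$ and integrate $e^{-4(y-z_1)-2|y-z_2|}$ over $2y>z_1+z_2$ to get $O(e^{-3r})$. The only difference is that you derive (ii) from (i) via the reflection $\tilde f_1(y)=f_2(-y)$, $\tilde f_2(y)=f_1(-y)$, $(\tilde z_1,\tilde z_2)=(-z_2,-z_1)$, which is valid, whereas the paper redoes the analogous computation directly.
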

	\begin{proof}
		Proof of (i). Using~\eqref{est:f1f1'}, for any $y>\frac{z_{1}+z_{2}}{2}$, we find
		\begin{equation*}
			y-z_{1}>\frac{z_{2}-z_{1}}{2}>\frac{D}{2}\Longrightarrow
			f_{1}(y-z_{1})=\kappa_{1}e^{-(y-z_{1})}+O\left(e^{-2(y-z_{1})}\right).
		\end{equation*}
		It follows from~\eqref{est:ff'12} that 
		\begin{equation*}
			\begin{aligned}
				f_{1}(y-z_{1})f_{2}(y-z_{2})&=\kappa_{1}e^{-r}\left(e^{-(y-z_{2})}f_{2}(y-z_{2})\right)\\
				&+O\left(\exp\left(-2(y-z_{1})-|y-z_{2}|\right)\right).
			\end{aligned}
		\end{equation*}
		Based on a similar argument as above, we also find
		\begin{equation*}
			\begin{aligned}
				\left(f_{1}(y-z_{1})f_{2}(y-z_{2})\right)'&=\kappa_{1}e^{-r}\left(e^{-(y-z_{2})}f_{2}(y-z_{2})\right)'\\
				&+O\left(\exp\left(-2(y-z_{1})-|y-z_{2}|\right)\right).
			\end{aligned}
		\end{equation*}
		By $r=z_{2}-z_{1}>D\gg 1$ and change of variable, we compute
		\begin{equation*}
			\begin{aligned}
				&\int_{\frac{z_{1}+z_{2}}{2}}^{\infty}\exp \left(-4(y-z_{1})-2|y-z_{2}|\right)\dd y\\
				&=\int_{-\frac{r}{2}}^{\infty}\exp \left(-4y-2|y|-4r\right)\dd y=O\left(e^{-3r}\right).
			\end{aligned}
		\end{equation*}
		Combining the above estimates, we complete the proof of first-type estimate.
		
		\smallskip
		Proof of (ii). Using~\eqref{est:f2f2'}, for any $y<\frac{z_{1}+z_{2}}{2}$, we find
		\begin{equation*}
			y-z_{2}<-\frac{z_{2}-z_{1}}{2}<-\frac{D}{2}\Longrightarrow
			f_{2}(y-z_{2})=\kappa_{2}e^{y-z_{2}}+O\left(e^{2(y-z_{2})}\right).
		\end{equation*}
		It follows from~\eqref{est:ff'12} that 
		\begin{equation*}
			\begin{aligned}
				f_{1}(y-z_{1})f_{2}(y-z_{2})&=\kappa_{2}e^{-r}\left(e^{y-z_{1}}f_{1}(y-z_{1})\right)\\
				&+O\left(\exp\left(2(y-z_{2})-|y-z_{1}|\right)\right).
			\end{aligned}
		\end{equation*}
		Based on a similar argument as above, we also find 
		\begin{equation*}
			\begin{aligned}
				\left(f_{1}(y-z_{1})f_{2}(y-z_{2})\right)'&=\kappa_{2}e^{-r}
				\left(e^{y-z_{1}}f_{1}(y-z_{1})\right)'\\
				&+O\left(\exp\left(2(y-z_{2})-|y-z_{1}|\right)\right).
			\end{aligned}
		\end{equation*}
		By $r=z_{2}-z_{1}>D\gg 1$ and change of variable, we compute
		\begin{equation*}
			\begin{aligned}
				&\int^{\frac{z_{1}+z_{2}}{2}}_{-\infty}
				\exp \left(4(y-z_{2})-2|y-z_{1}|\right)\dd y\\
				&=\int^{\frac{r}{2}}_{-\infty}\exp \left(4y-2|y|-4r\right)\dd y=O\left(e^{-3r}\right).
			\end{aligned}
		\end{equation*}
		Combining the above estimates, we complete the proof of second-type estimate.
	\end{proof}
	
	Recall that, we denote 
	\begin{equation*}
		S=Q_{1}-Q_{2},\quad G=S^{5}-Q_{1}^{5}+Q_{2}^{5} \quad \mbox{and}\quad H=5Q_{1}Q_{2}^{4}-5Q_{1}^{4}Q_{2}.
	\end{equation*}
	We now study the leading-order terms for the nonlinear interactions $G$ and $H$.
	\begin{lemma}\label{le:nonlinear}
		The following estimates hold.
		\begin{enumerate}
			\item {\emph{Bound}}. We have 
			\begin{equation*}
				\left\|\partial_{y}G-\partial_{y}H\right\|_{H^{1}}\lesssim 
				z^{\frac{1}{2}}e^{-2z}.
			\end{equation*}
			
			\item {\emph{Asymptotic}}. We have 
			\begin{equation*}
				\begin{aligned}
					\partial_{y}\left(Q_{1}^{4}Q_{2}\right)&=c_{Q}
					\left(1-\frac{1}{2}\mu z\right)e^{-z}\partial_{y}\left(e^{y}Q^{4}(y)\right)
					+O_{H^{1}}\left(e^{-\frac{3}{2}z}+ |\mu| e^{-z}\right),\\
					\partial_{y}\left(Q_{1}Q^{4}_{2}\right)&=c_{Q}e^{-z}\partial_{y}\left(e^{-(y-z)}Q^{4}(y-z)\right)
					+O_{H^{1}}\left(e^{-\frac{3}{2}z}+ |\mu| e^{-z}\right).
				\end{aligned}
			\end{equation*}
			In addition, we have 
			\begin{equation*}
				\begin{aligned}
					\partial_{y}H
					&=5c_{Q}e^{-z}
					\partial_{y}\left(e^{-(y-z)}Q^{4}(y-z)\right)
					+O_{H^{1}}\left(e^{-\frac{3}{2}z}\right)\\
					&-5c_{Q}\left(1-\frac{1}{2}\mu z\right)e^{-z}\partial_{y}\left(e^{y}Q^{4}(y)\right)+O_{H^{1}}\left(|\mu|  e^{-z}\right).
				\end{aligned}
			\end{equation*}
		\end{enumerate}
	\end{lemma}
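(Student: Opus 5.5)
We first reduce the problem to products of two exponentially localized bumps separated by distance $z$. Recall $Q_1=Q$ and $Q_2=\Gamma Q=Q_{1+\mu}(\cdot-z)$, and that $Q_{1+\mu}$ obeys \eqref{est:f1f1'f2f2'}, together with its derivatives, uniformly for $|\mu|\ll1$ with $\theta=1-O(|\mu|)$.

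For (i), expand $G=(Q_1-Q_2)^5-Q_1^5+Q_2^5$ binomially. The terms $5Q_1^4(-Q_2)+5Q_1(-Q_2)^4$ are exactly $H$. So $G-H$ is a combination of $Q_1^3Q_2^2$ and $Q_1^2Q_2^3$. Write $Q_1^3Q_2^2=f_1(\cdot)f_2(\cdot-z)$ with $f_1=Q^{3/2}\cdot Q^{3/2}$ and $f_2=Q_{1+\mu}^2$. Each factor decays like $e^{-2|y|}$ (up to $O(|\mu|)$ in the rate), so Lemma~\ref{le:boundinter} with $\theta=2$ applies. For the $H^2$ bound on the derivative, apply it to $\partial_y$ and $\partial_y^2$ of the product, expanded by Leibniz. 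This gives $\|\partial_y(G-H)\|_{H^1}\lesssim z^{1/2}e^{-2z}$.

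The only subtlety is the decay rate of $Q_{1+\mu}$, which is $e^{-(1+\mu)^{1/2}|y|}$. This loses a factor $e^{O(|\mu|z)}$. By \eqref{est:para1}, $|\mu|z\ll1$, so the loss is harmless. To track the polynomial weights correctly, we will redo the three-region splitting of the proof of Lemma~\ref{le:boundinter} directly.

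For (ii), apply Lemma~\ref{le:interasym} with $z_1=0$ and $z_2=z$.

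\emph{The term $Q_1^4Q_2$.} On $y<z/2$, use the second-type estimate with $f_1=Q^4$ and $f_2=Q_{1+\mu}$. As $y\to-\infty$,
\[
Q_{1+\mu}(y)=(1+\mu)^{1/4}c_Qe^{(1+\mu)^{1/2}y}+\dots
\]
We will not apply the lemma with a $\mu$-dependent exponent. Instead, write
\[
Q_{1+\mu}(y-z)=(1+\mu)^{1/4}c_Q\,e^{(1+\mu)^{1/2}(y-z)}+\dots
\]
and expand $e^{((1+\mu)^{1/2}-1)(y-z)}$ for $y$ in the support region of $Q^4(y)$, where $|y|\lesssim1$ up to exponentially small tails. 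This produces
\[
e^{-\frac{\mu}{2}z}\bigl(1+O(|\mu|(1+|y|))\bigr)=\Bigl(1-\tfrac12\mu z\Bigr)+O\bigl(\mu^2z^2+|\mu|\langle y\rangle\bigr).
\]
Likewise $(1+\mu)^{1/4}=1+O(\mu)$. Since $\mu^2z^2e^{-z}\le|\mu|e^{-z}$ when $|\mu|z^3\ll1$, the main term is $c_Q(1-\tfrac12\mu z)e^{-z}e^yQ^4$. Everything else is $O_{H^1}(|\mu|e^{-z})$ once it is weighted against $Q^4$, whose $e^{-4|y|}$ decay absorbs the polynomial factors. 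The region $y>z/2$ contributes $O(e^{-3z/2})$: the product is bounded by $e^{-4|y|}e^{-|y-z|}$ there.

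\emph{The term $Q_1Q_2^4$.} On $y>z/2$, use the first-type estimate with $f_1=Q$ (so $\kappa_1=c_Q$) and $f_2=Q_{1+\mu}^4$. Then replace $Q_{1+\mu}^4$ and $e^{-(1+\mu)^{1/2}(y-z)}$ by their $\mu=0$ versions, with error $O(|\mu|)$ in weighted norms. No $\mu z$ factor arises here, because the exponential tail being used is that of the unscaled $Q_1$. The region $y<z/2$ is again $O(e^{-3z/2})$.

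Taking $\partial_y$ costs nothing, since the lemmas give $H^1$ control. We will upgrade them to $H^2$ by differentiating once more; the asymptotics \eqref{est:QQ'} hold for all derivatives. The statement for $\partial_yH=5\partial_y(Q_1Q_2^4)-5\partial_y(Q_1^4Q_2)$ then follows by linearity.

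The main obstacle is the careful extraction of the $-\frac12\mu z$ correction. It requires uniform expansion of the $\mu$-dependent exponential tail of $Q_2$ near the first soliton, and checking that the Taylor remainders ($\mu^2z^2$ and $|\mu||y|$) are dominated by $|\mu|e^{-z}$. This uses $|\mu|z^3\ll1$ and the strong localization of $Q^4$.
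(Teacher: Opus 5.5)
Your proposal is correct, but you extract the $\mu$-dependence by a different mechanism than the paper. The paper never uses the exact decay rate of $Q_2$. In both parts it first expands $Q_2=Q(\cdot-z)+\frac{\mu}{2}(\Lambda Q)(\cdot-z)+O\bigl(\mu^{2}(y-z)^{2}e^{-(1-|\mu|)|y-z|}\bigr)$ via \eqref{equ:muf}, so that Lemmas~\ref{le:boundinter} and~\ref{le:interasym} apply verbatim to the $\mu$-independent profiles $Q$ and $\Lambda Q$. In (i) this yields a list of cross terms ($|\mu|z^{3/2}e^{-2z}$, $\mu^{2}z^{5/2}e^{-2z}$, \dots), which are absorbed using \eqref{est:para1}. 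In (ii) the factor $-\frac12\mu z$ comes from the $-zQ'(y-z)$ part of $(\Lambda Q)(y-z)=\frac12Q(y-z)+(y-z)Q'(y-z)$, which gives $(\Lambda Q)(y-z)=-c_{Q}ze^{y-z}+O\bigl((1+|y|)e^{y-z}+ze^{2(y-z)}\bigr)$ on $y<z/2$.

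You instead keep the exact tail $(1+\mu)^{1/4}c_{Q}e^{(1+\mu)^{1/2}(y-z)}$ and linearize $e^{((1+\mu)^{1/2}-1)(y-z)}$ around $e^{-\mu z/2}\approx 1-\frac12\mu z$. In (i) you just use the decay rate $(1+\mu)^{1/2}$ together with $|\mu|z\ll1$. Your route is shorter and avoids the bookkeeping of the polynomially weighted remainder in \eqref{equ:muf}. It also makes transparent that $-\frac12\mu z$ is simply the first-order change in the second soliton's tail at distance $z$; one could even keep $e^{-\mu z/2}$ unexpanded. The price is that Lemma~\ref{le:interasym} cannot be quoted as stated: its two-region argument must be redone with a $\mu$-dependent exponent, which you correctly plan to do.

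The paper's route is more modular, since the same expansion \eqref{equ:muf} is reused for the refined profile--soliton interactions in Lemma~\ref{le:refinedPAB}. Finally, your upgrade from $H^1$ control of the products to $H^1$ control of their $y$-derivatives is a point the paper passes over silently. Differentiating once more, as you propose, handles it correctly.
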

	
	\begin{proof}
		Proof of (i). By an elementary computation, we find 
		\begin{equation*}
			G-H=10Q_{1}^{3}Q_{2}^{2}-10Q_{1}^{2}Q_{2}^{3}.
		\end{equation*}
		It follows directly that 
		\begin{equation*}
			\begin{aligned}
				\left|\partial_{y}G-\partial_{y}H\right|
				&\lesssim
				|\partial^{\le 1}_{y}Q_{1}|^{3}
				|\partial^{\le 1}_{y}Q_{2}|^{2}
				+|\partial^{\le 1}_{y}Q_{1}|^{2}
				|\partial^{\le 1}_{y}Q_{2}|^{3}
				,\\
				\left|\partial^{2}_{y}G-\partial_{y}^{2}H\right|
				&\lesssim
				|\partial^{\le 2}_{y}Q_{1}|^{3}
				|\partial^{\le 2}_{y}Q_{2}|^{2}
				+|\partial^{\le 2}_{y}Q_{1}|^{2}
				|\partial^{\le 2}_{y}Q_{2}|^{3}.
			\end{aligned}
		\end{equation*}
		Recall that, from the explicit expression of $Q$ in Section~\ref{SS:Main}, as $y\to \infty$,
		\begin{equation}\label{equ:estQQ'}
			Q(y)=c_{Q}e^{-y}+O\left(e^{-2y}\right)\quad \mbox{and}\quad 
			Q'(y)=-c_{Q}e^{-y}+O\left(e^{-2y}\right).
		\end{equation}
		Recall also that, from the explicit expression of $Q$ in Section~\ref{SS:Main}, as $y\to-\infty$,
		\begin{equation}\label{equ:estQQ'-}
			Q(y)=c_{Q}e^{y}+O\left(e^{2y}\right)\quad \mbox{and}\quad 
			Q'(y)=c_{Q}e^{y}+O\left(e^{-2y}\right).
		\end{equation}
		Then, from~\eqref{equ:muf}, we check that 
		\begin{equation*}
			\begin{aligned}
				Q_{2}(s,y)&=Q(y-z)+\frac{\mu_{}}{2}(\Lambda Q)(y-z)+O\left(\mu^{2}(y-z)^{2}e^{-(1-|\mu|)|y-z|}\right),\\
				\partial_{y}Q_{2}(s,y)&=Q'(y-z)+\frac{\mu}{2}(\Lambda Q)'(y-z)+O\left(\mu^{2}(y-z)^{2}e^{-(1-|\mu|)|y-z|}\right),\\
				\partial_{y}^{2}Q_{2}(s,y)&=Q''(y-z)+\frac{\mu}{2}(\Lambda Q)''(y-z)+O\left(\mu^{2}(y-z)^{2}e^{-(1-|\mu|)|y-z|}\right).
			\end{aligned}
		\end{equation*}
		Therefore, from Lemma~\ref{le:boundinter} and~\eqref{equ:estQQ'}--\eqref{equ:estQQ'-}, we obtain
		\begin{equation*}
			\begin{aligned}
				\left\|\partial_{y}G-\partial_{y}H\right\|_{H^{1}}
				&\lesssim 
				|\mu|z^{\frac{3}{2}}e^{-2z}+
				\mu^{2}z^{\frac{5}{2}}e^{-2z}+\mu^{4}z^{\frac{9}{2}}e^{-2(1-|\mu|)z}\\
				&+|\mu|^{3}z^{\frac{7}{2}}e^{-2z}+\mu^{6}z^{\frac{13}{2}}e^{-2(1-|\mu|)z}+ z^{\frac{1}{2}}e^{-2z}.
			\end{aligned}
		\end{equation*}
		Combining the above estimate with~\eqref{est:para1}, we complete the proof of (i).
		
		\smallskip
		Proof of (ii). Using again ~\eqref{equ:muf},~\eqref{est:para1} and Lemma~\ref{le:boundinter}, 
		\begin{equation*}
			\partial_{y}\left(Q_{1}^{4}Q_{2}\right)
			=
			\partial_{y}\left(Q^{4}(y)Q(y-z)\right)
			+\frac{\mu}{2}\partial_{y}(Q^{4}(y)(\Lambda Q)(y-z))+O_{H^{1}}\left(|\mu|e^{-z}\right).
		\end{equation*}
		Based on (ii) of Lemma~\ref{le:interasym} and~\eqref{equ:estQQ'}--\eqref{equ:estQQ'-}, for $y<\frac{z}{2}$, we find 
		\begin{equation*}
			\begin{aligned}
				\partial_{y}\left(Q^{4}(y)Q(y-z)\right)&=c_{Q}e^{-z}\partial_{y}\left(e^{y}Q^{4}(y)\right)+O_{H^{1}(-\infty<2y<z)}\left(e^{-\frac{3}{2}z}\right),\\
				\partial_{y}(Q^{4}(y)(\Lambda Q)(y-z))&=-c_{Q}ze^{-z}\partial_{y}(e^{y}Q^{4}(y))+O_{H^{1}(-\infty<2y<z)}\left(
				e^{-z}\right).
			\end{aligned}
		\end{equation*}
		Here, we use the fact that 
		\begin{equation*}
			\begin{aligned}
				(\Lambda Q)^{(k)}(y-z)
				&=-zQ^{(k+1)}(y-z)+yQ^{(k+1)}(y-z)+\frac{1}{2}Q^{(k)}(y-z)\\
				&=-c_{Q}ze^{y-z}+O\left(ze^{2(y-z)}+(1+|y|)e^{y-z}\right),\quad \mbox{for}\ \ y<\frac{z}{2}.
			\end{aligned}
		\end{equation*}
		On the other hand, from the exponential decay of $Q$, we directly have 
		\begin{equation*}
			\left\| \partial_{y}\left(Q_{1}^{4}Q_{2}\right)\right\|_{H^{1}(z<2y<\infty)}+
			\left\|e^{-z}\partial_{y}\left(e^{y}Q^{4}(y)\right)\right\|_{H^{1}(z<2y<\infty)}\lesssim e^{-\frac{3}{2}z}.
		\end{equation*}
		Combining the above estimates, we complete the proof for the first estimate.
		
		\smallskip
		Based on (i) of Lemma~\ref{le:interasym} and~\eqref{equ:estQQ'}, for $y>\frac{z}{2}$, we find 
		\begin{equation*}
			\begin{aligned}
				\partial_{y}\left(Q(y)Q^{4}(y-z)\right)&=c_{Q}e^{-z}\partial_{y}\left(e^{-(y-z)}Q^{4}(y-z)\right)+O_{H^{1}(z<2y<\infty)}\left(e^{-\frac{3}{2}z}\right).
			\end{aligned}
		\end{equation*}
		Thus, using a similar argument, we complete the proof for the second estimate. Combining the first and second estimates, we complete the proof for the third one, and thus, the proof of (ii) is complete.
	\end{proof}
	
	Since the functions $(X,A,B,E,F)$ are non-localized profiles, the following estimates related to the interactions between solitons and such profiles are also needed.
	
	\begin{lemma}\label{le:PAB}
		The following estimates hold for some constant $K>1$.
		\begin{enumerate}
			\item {\emph{Estimates related to $X$}.} We have 
			\begin{equation*}
				\begin{aligned}
					\left\|\partial_{y}\left((X_{1}+2m_{0})\right)Q_{2}^{4}\right\|_{H^{1}}&\lesssim z^{K}e^{-z},\\
					\left\|\partial_{y}\left((X_{1}+2m_{0})\right)Q_{2}^{3}\right\|_{H^{1}}&\lesssim z^{K}e^{-z},\\
					\left\|\partial_{y}\left(Q_{1}^{3}X_{2}\right)\right\|_{H^{1}}
					+
					\left\|\partial_{y}\left(Q_{1}^{4}X_{2}\right)\right\|_{H^{1}}&\lesssim z^{K}e^{-z}.
				\end{aligned}
			\end{equation*}
			In particular, we have 
			\begin{equation*}
				\begin{aligned}
					\left\|\left(Q_{1}^{3}X_{2}\right)\right\|_{H^{1}}
					+
					\left\|\left(Q_{1}^{4}X_{2}\right)\right\|_{H^{1}}&\lesssim z^{K}e^{-z},\\
                    |(Q_{1},X_{2})|+ |(\partial_{y}Q_{1},X_{2})|
                    +|(\Lambda Q_{1},X_{2})|
                    &\lesssim z^{K}e^{-z}.
				\end{aligned}
			\end{equation*}
			
			\item  {\emph{Estimates related to $A$ and $B$}.} We have 
			\begin{equation*}
				\begin{aligned}
					\left\|\partial_{y}\left(Q_{1}^{4}B_{2}\right)\right\|_{H^{1}}
					&\lesssim z^{K}e^{-z},\\
					\left\|\partial_{y}\left(\left(A_{1}-2\alpha m_{0}y-a_{0}\right)Q_{2}^{4}\right)\right\|_{H^{1}}
					&\lesssim z^{K}e^{-z}.
				\end{aligned}
			\end{equation*}
			
			\item {\emph{Estimates related to $E$ and $F$}.} We have 
			\begin{equation*}
				\begin{aligned}
					\left\|\partial_{y}\left(Q_{1}^{4}F_{2}\right)\right\|_{H^{1}}
					&\lesssim z^{K}e^{-z},\\
					\left\|\partial_{y}\left(\left(E_{1}-3m_{0}y-a_{2}\right)Q_{2}^{4}\right)\right\|_{H^{1}}
					&\lesssim z^{K}e^{-z}.
				\end{aligned}
			\end{equation*}
		\end{enumerate}
	\end{lemma}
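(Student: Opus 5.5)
All estimates in the statement follow one scheme. Each product pairs a function exponentially localized near one soliton with a profile that is either exponentially decaying toward the other soliton, or polynomially growing but renormalized to decay there. The first step is to record the pointwise information on each profile, with all derivatives:
\begin{itemize}
\item For $X$: by Remark~\ref{re:P}, $X$ and its derivatives are bounded by $(1+|y|)^{c_k}e^{-|y|}$ on $(-\infty,0]$, and $X+2m_0$ obeys the same bound on $[0,\infty)$.
\item For $A$, $B$, $E$, $F$: Lemmas~\ref{le:AB} and~\ref{le:EF} give the analogous bounds for $B$ and $F$ on $(-\infty,0]$, and for $A-2\alpha m_0y-a_0$ and $E-3m_0y-a_2$ on $[0,\infty)$.
\end{itemize}
On the opposite half-lines these renormalized functions grow at most polynomially, and so do their derivatives. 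For the rescaled profiles $X_2$, $B_2$, $F_2$ one uses $\Gamma$: the scaling factor $(1+\mu)^{1/2}$ turns $e^{-|y-z|}$ into $e^{-(1+\mu)^{1/2}|y-z|}$. Since $|\mu|z\ll1$ by \eqref{est:para1}, this gives $e^{-(1+\mu)^{1/2}|y-z|}\lesssim e^{-|y-z|}$ up to a harmless constant on the relevant region. The soliton powers contribute decay: $Q_2^k$ behaves like $e^{-k(1+\mu)^{1/2}|y-z|}$, and likewise for $Q_1^k$.

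Each product is then split over the regions $y<z/2$ and $y>z/2$, which is the same splitting as in Lemma~\ref{le:boundinter}; I will either redo that argument directly or reduce to it. Take $(X_1+2m_0)Q_2^4$ as the model case. On $y>0$ the factor $X_1+2m_0$ is $O((1+y)^{c}e^{-y})$ and $Q_2^4\lesssim e^{-4|y-z|}$. Lemma~\ref{le:boundinter} with $\theta=1$, $z_1=0$, $z_2=z$ then gives $z^Ke^{-z}$ in $H^1$. On $y<0$ the factor $X_1+2m_0$ is only bounded, but $Q_2^4\lesssim e^{-4(z-y)}\le e^{-4z}e^{-4|y|}$ there, which is far smaller. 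The same two-region argument, with $\partial_y$ distributed by the Leibniz rule, handles:
\begin{itemize}
\item $\partial_y((X_1+2m_0)Q_2^4)$ and $\partial_y((X_1+2m_0)Q_2^3)$;
\item $Q_1^kX_2$, $k=3,4$;
\item $Q_1^4B_2$, $Q_1^4F_2$, $(A_1-2\alpha m_0y-a_0)Q_2^4$ and $(E_1-3m_0y-a_2)Q_2^4$.
\end{itemize}
For the profiles renormalized at $+\infty$ (the $A$ and $E$ cases), the polynomially growing remainder on $y<0$ is killed by $Q_2^4\lesssim e^{-4(z-y)}$, and the resulting factor is polynomial in $z$. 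For the profiles attached to the second soliton ($X_2$, $B_2$, $F_2$), the factor $X_2$, $B_2$ or $F_2$ decays like $e^{-|y-z|}$ for $y<z$ by construction. For $y>z$ it grows at most linearly in $y-z$ but is multiplied by $Q_1^k\lesssim e^{-ky}$, which gives $e^{-kz}$ times polynomial factors.

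The scalar products $(Q_1,X_2)$, $(\partial_yQ_1,X_2)$ and $(\Lambda Q_1,X_2)$ are handled the same way. $X_2$ is exponentially small for $y<z$ and bounded for $y>z$, where $Q_1$ is $O(e^{-z})$ and integrable, up to $e^{-y}$ weights. Note that $\Lambda Q_1$ carries an extra factor $y$, which only adds a power of $z$. The unweighted $H^1$ bounds on $Q_1^kX_2$ are just the first part of the computation above.

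The main obstacle is only bookkeeping. First, the scaling by $1+\mu$ changes decay rates, so one must check that the loss $e^{|\mu||y-z|}$ is harmless. On the interaction region $|y-z|\lesssim z$ it is, since $|\mu|z\ll1$. Far away, the stronger decay of $Q^k$ with $k\ge3$ dominates. Second, for the non-decaying parts one must make sure the polynomially growing piece is always multiplied by a soliton power of at least $3$, so the integrals converge and produce at most a polynomial loss $z^K$. Taking $K$ larger than every exponent $c_k+1$ that appears finishes the argument.
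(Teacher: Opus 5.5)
Your proposal is correct and follows essentially the same route as the paper. Both arguments use pointwise bounds on each renormalized profile (Remark~\ref{re:P}, Lemma~\ref{le:AB}, Lemma~\ref{le:EF}) together with the exponential decay of the soliton powers, split the line at $y=z/2$, absorb the $(1+\mu)^{1/2}$ rescaling via $|\mu|z\ll 1$, and integrate. Your remark that $X_2$ is only bounded for $y>z$, with the decay there supplied by $Q_1\lesssim e^{-z}e^{-(y-z)}$, is actually more accurate than the paper's stated bound $|X_2|\lesssim e^{-(1-|\mu|)^{1/2}|y-z|}$ on that region, which fails as written since $X\to-2m_0$ at $+\infty$, though the paper's conclusion is unaffected.
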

	
	\begin{proof}
		Proof of (i). First, using~\eqref{est:para1} and Remark~\ref{re:P},
		for any $k\in \mathbb{N}$, there exists a constant $c_{k}>0$ such that for any $y<\frac{z}{2}$, we have 
		\begin{equation*}
        \begin{aligned}
			\big|X^{(k)}_{2}(y)\big|
            &\lesssim 
            \left(1+|y-z|^{c_{k}}\right)e^{-(1-|\mu|)^{\frac{1}{2}}|y-z|}
            \\
            &\lesssim 
			\left(
			(1+|y|)^{c_{k}}+z^{c_{k}}\right)e^{-z}e^{(1-|\mu|)^{\frac{1}{2}}y}.
            \end{aligned}
		\end{equation*}
		Then, using again the exponential decay of $Q$ in~\eqref{est:QQ'}, for any $k\in \mathbb{N}$ and $y>\frac{z}{2}$, 
		\begin{equation*}
        \begin{aligned}
			\big|Q^{(k)}_{1}(y)\big|&\lesssim \big|Q^{(k)}(y)\big|\lesssim e^{-y}\lesssim e^{-z}e^{-(y-z)},\\
             \big|X^{(k)}_{2}(y)\big|&\lesssim 
        |X((1+\mu)^{\frac{1}{2}}(y-z))|
        \lesssim e^{-(1-|\mu|)^{\frac{1}{2}}|y-z|}.
            \end{aligned}
		\end{equation*}
		Combining the above estimates and then integrating over $\RR$, we complete the proof for the estimates related to $Q_{1}$ and $X_{2}$.
		
		\smallskip
		On the other hand, using again~\eqref{est:QQ'} and~\eqref{est:para1}, for any $k\in \mathbb{N}$ and $y<\frac{z}{2}$, 
		\begin{equation*}
			\big|Q^{(k)}_{2}(y)\big|\lesssim e^{-(1-|\mu|)^{\frac{1}{2}}z}e^{(1-{|\mu|})^{\frac{1}{2}}y}
			\lesssim
			e^{-z}e^{(1-|\mu|)^{\frac{1}{2}}y}.
		\end{equation*}
		Then, using again Remark~\ref{re:P},
		for any $k\in \mathbb{N}$, there exists a constant $c_{k}>0$ such that for any $y>\frac{z}{2}$, we have
		\begin{equation*}
			\big|\left(X_{1}+2m_{0}\right)^{(k)}(y)\big|
			\lesssim (1+|y|)^{c_{k}}e^{-y}
			\lesssim 
			\left(
			(1+|y-z|)^{c_{k}}+z^{c_{k}}\right)e^{-z}e^{-(y-z)}.
		\end{equation*}
		Combining the above estimates and then integrating over $\RR$, we complete the proof for the estimates related to $Q_{2}$ and $X_{1}$.
		
		\smallskip
		Proof of (ii). Recall that, from Lemma~\ref{le:AB}, for any $k\in \mathbb{N}$, there exists a constant $c_{k}>0$ such that for any $y<\frac{z}{2}$, we have
		\begin{equation*}
			\big|B^{(k)}_{2}(y)\big|\lesssim 
			\left(
			(1+|y|)^{c_{k}}+z^{c_{k}}\right)e^{-z}e^{(1-|\mu|)^{\frac{1}{2}}y}.
		\end{equation*}
		Recall also that, from Lemma~\ref{le:AB}, for any $k\in \mathbb{N}$, there exists a constant $c_{k}>0$ such that for any $y>\frac{z}{2}$, we have
		\begin{equation*}
			\big|(A_{1}-2\alpha m_{0}y-a_{0})^{(k)}(y)\big|\lesssim (1+|y|)^{c_{k}}e^{-y}
			\lesssim 
			\left(
			(1+|y-z|)^{c_{k}}+z^{c_{k}}\right)e^{-z}e^{-(y-z)}.
		\end{equation*}
		Hence, using a similar argument as in the proof of (i), we complete the proof.

        \smallskip
        Proof of (iii). The proof is similar to (ii), and we omit it.
	\end{proof}
	
	\begin{lemma}\label{le:refinedPAB}
		The following refined estimates hold.
		\begin{enumerate}
			\item \emph{Refined estimate related to $X_{1}$}. We have 
			\begin{equation*}
				\begin{aligned}
					\partial_{y}(X_{1}Q_{2}^{4})&=\frac{c_{Q}}{4}z^{2}e^{-z}\partial_{y}\left(e^{-(y-z)}Q^{4}(y-z)\right)\\
					&-2m_{0}\partial_{y}(Q_{2}^{4})+O_{H^{1}}(ze^{-z}+|\mu|z^{3}e^{-z}).
				\end{aligned}
			\end{equation*}
			\item  \emph{Refined estimate related to $X_{2}$}. We have 
			\begin{equation*}
				\partial_{y}(Q_{1}^{4}X_{2})=-\frac{c_{Q}}{4}z^{2}e^{-z}\partial_{y}\left(e^{y}Q^{4}(y)\right)+O_{H^{1}}(ze^{-z}+|\mu|z^{3}e^{-z}).
			\end{equation*}
		\end{enumerate}
	\end{lemma}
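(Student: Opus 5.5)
We first reduce to the case $\mu=0$, then split space at the midpoint $y=\frac{z}{2}$, using Lemma~\ref{le:refindpointX} on the side where the profile is evaluated far from its own center.

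\textbf{Reduction to $\mu=0$.} By \eqref{equ:muf}, $Q_{2}^{4}=Q^{4}(\cdot-z)+O(|\mu|\langle y-z\rangle e^{-4(1-|\mu|)^{1/2}|y-z|})$, and similarly for $X_{2}$. In the product with the other factor, the $\mu$-correction carries at most one extra power of $z$ (from $\Lambda$ evaluated at distance $z$). The crude weight $z^{2}e^{-z}$ of the leading term then gives an error $O_{H^{1}}(|\mu|z^{3}e^{-z})$. The shift $e^{-(1-|\mu|)^{1/2}z}=e^{-z}(1+O(|\mu|z))$ is absorbed in the same way. So it suffices to treat $\mu=0$.

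\textbf{Proof of (i).} Write $X_{1}Q_{2}^{4}=(X_{1}+2m_{0})Q^{4}(\cdot-z)-2m_{0}Q^{4}(\cdot-z)$. The last term gives exactly $-2m_{0}\partial_{y}(Q_{2}^{4})$ up to the $\mu$-error. For the first term we split at $y=\frac{z}{2}$.
\begin{itemize}
\item On $y<\frac{z}{2}$: $Q^{4}(y-z)\lesssim e^{-4(z-y)}$ and $|X+2m_{0}|\lesssim 1+|y|^{c}e^{-|y|}$, so the $H^{1}$ contribution is $O(e^{-\frac{3}{2}z})$ at worst, after integration.
\item On $y>\frac{z}{2}$: Lemma~\ref{le:refindpointX} gives $X(y)+2m_{0}=\frac{c_{Q}}{4}y^{2}e^{-y}+O((1+y)e^{-y})$, with the same bound for derivatives.
\end{itemize}
On $y>\frac{z}{2}$ we substitute $y=z+\sigma$, so that
\begin{equation*}
y^{2}e^{-y}=e^{-z}\left(z^{2}+2z\sigma+\sigma^{2}\right)e^{-\sigma}.
\end{equation*}
Multiplied by $Q^{4}(\sigma)$, the $z^{2}$ term yields $\frac{c_{Q}}{4}z^{2}e^{-z}e^{-(y-z)}Q^{4}(y-z)$. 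The remaining terms $2z\sigma+\sigma^{2}$ and the $O((1+y)e^{-y})$ remainder are $O(z)e^{-z}$ times a function with decay $\langle\sigma\rangle^{2}e^{-3|\sigma|}$ for $\sigma>-\frac{z}{2}$, hence $O_{H^{1}}(ze^{-z})$. The indicator cut-off is harmless, for two reasons. First, the leading profile $e^{-\sigma}Q^{4}(\sigma)$ restricted to $\sigma<-\frac{z}{2}$ is $O(z^{2}e^{-z}e^{-\frac{3}{2}z})$, which is negligible. Second, the pieces on each side are smooth and only require matching, since we estimate the globally defined smooth function minus the smooth main term on each half-line. Taking $\partial_{y}$ throughout, with derivative bounds from Lemma~\ref{le:refindpointX}, gives (i).

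\textbf{Proof of (ii).} This is symmetric. Now $X_{2}=X(\cdot-z)$ and $Q_{1}^{4}=Q^{4}(y)$.
\begin{itemize}
\item On $y>\frac{z}{2}$: $Q^{4}\lesssim e^{-4y}$ and $X$ is bounded, so the contribution is $O(e^{-\frac{3}{2}z})$.
\item On $y<\frac{z}{2}$: $y-z<-\frac{z}{2}$, so Lemma~\ref{le:refindpointX} gives $X(y-z)=-\frac{c_{Q}}{4}(y-z)^{2}e^{y-z}+O((1+|y-z|)e^{y-z})$.
\end{itemize}
Expanding $(y-z)^{2}=z^{2}-2zy+y^{2}$ isolates $-\frac{c_{Q}}{4}z^{2}e^{-z}e^{y}Q^{4}(y)$. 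The cross terms are $O(ze^{-z})\langle y\rangle^{2}e^{y}Q^{4}(y)$, and the remainder is $O((1+z+|y|)e^{-z})e^{y}Q^{4}$, both acceptable in $H^{1}$.

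\textbf{Main obstacle.} The delicate point is the bookkeeping of polynomial weights. One must make sure that only the $z^{2}$ term survives at order $z^{2}e^{-z}$, and that the $\mu$-expansion of $X_{2}$ and $Q_{2}$ costs at most $|\mu|z^{3}e^{-z}$. This uses $X\in\mathcal{Z}_{1}$ with $\Lambda X$ of size $O(\langle y\rangle^{3}e^{-|y|})$ on the left and bounded on the right, together with \eqref{est:para1} to control the $e^{|\mu|z}$ factors.
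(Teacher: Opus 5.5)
Your argument follows the same route as the paper. You isolate the constant $-2m_{0}$ of $X_{1}$ near the second soliton and insert the asymptotics of Lemma~\ref{le:refindpointX} on the side where the profile is evaluated far from its own centre. You then expand $y^{2}=(z+\sigma)^{2}$ (resp.\ $(y-z)^{2}=z^{2}-2zy+y^{2}$) so that only the $z^{2}$ term survives at order $z^{2}e^{-z}$, and use exponential decay of $Q$ elsewhere. The paper splits at $y=0$ (resp.\ $y=z$) rather than $y=\frac{z}{2}$, and carries the expansion \eqref{equ:muf} through the computation instead of reducing to $\mu=0$ first. These differences are cosmetic. Your part (ii) is fine, including the $|\mu|z^{3}e^{-z}$ cost coming from $\Lambda X$ evaluated at distance $z$.

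The step that fails as written is the blanket reduction to $\mu=0$ in part (i). Near $y=z$ the factor $X_{1}$ is not of size $z^{2}e^{-z}$ but $\approx -2m_{0}$. So if you replace $Q_{2}^{4}$ by $Q^{4}(\cdot-z)$ in the whole product and then restore $Q_{2}$ in $-2m_{0}\partial_{y}(Q^{4}(\cdot-z))$, the error is $2m_{0}\|\partial_{y}(Q_{2}^{4}-Q^{4}(\cdot-z))\|_{H^{1}}$. That is of order $|\mu|$, not $O(ze^{-z}+|\mu|z^{3}e^{-z})$: in the admissible regime $|\mu|$ is of order $s^{-1}$ while $ze^{-z}\sim s^{-2}$.

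This loss is not harmless. In the proof of Proposition~\ref{prop:approx}, the term $-10m_{0}b_{1}\partial_{y}(Q_{2}^{4})$ produced by this lemma must cancel exactly against the contribution of the $(1+\mu)^{-\frac{1}{4}}$-normalized $Y_{2}$-term of $U_{2}$ in \eqref{equ:defU2}. An $O(b_{1}|\mu|)$ discrepancy would not be $O_{\mathcal{S}}(1)$.

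The repair is easy. Write $X_{1}Q_{2}^{4}=(X_{1}+2m_{0})Q_{2}^{4}-2m_{0}Q_{2}^{4}$ with the true $Q_{2}$, keep the second term exact, and perform the $\mu$-reduction only on the first product. There $(X_{1}+2m_{0})(Q_{2}^{4}-Q^{4}(\cdot-z))=O_{H^{1}}(|\mu|z^{2}e^{-z})$, because $X_{1}+2m_{0}=O(z^{2}e^{-z})$ on the region where $Q_{2}^{4}-Q^{4}(\cdot-z)$ is concentrated. With that ordering the rest of your proof of (i) goes through.
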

	
	\begin{proof}
		Proof of (i). Recall that, from~\eqref{equ:muf}, we find  
		\begin{equation*}
			\begin{aligned}
				Q_{2}(s,y)&=Q(y-z)+\frac{\mu_{}}{2}(\Lambda Q)(y-z)+O\left(\mu^{2}(y-z)^{2}e^{-(1-|\mu|)|y-z|}\right),\\
				\partial_{y}Q_{2}(s,y)&=Q'(y-z)+\frac{\mu}{2}(\Lambda Q)'(y-z)+O\left(\mu^{2}(y-z)^{2}e^{-(1-|\mu|)|y-z|}\right),\\
				\partial_{y}^{2}Q_{2}(s,y)&=Q''(y-z)+\frac{\mu}{2}(\Lambda Q)''(y-z)+O\left(\mu^{2}(y-z)^{2}e^{-(1-|\mu|)|y-z|}\right).
			\end{aligned}
		\end{equation*}
		Then, from Lemma~\ref{le:refindpointX}, for any $k\in \mathbb{N}$ and $y>0$, 
		\begin{equation*}
			\begin{aligned}
				X_{1}^{(k)}(y)
				&=\left(-2m_{0}+\frac{c_{Q}}{4}z^{2}e^{-y}\right)^{(k)}
				+O\left((1+|y-z|+z)e^{-y}\right)\\
				&+\frac{c_{Q}}{4}\left(\left(y-z)^{2}e^{-y}\right)\right)^{(k)}
				+\frac{c_{Q}}{2}\left(z(y-z)e^{-y}\right)^{(k)}.
			\end{aligned}
		\end{equation*}
		Based on the above estimate and Lemma~\ref{le:interasym}, we obtain
		\begin{equation}\label{est:asymX1Q241}
			\begin{aligned}
				\partial_{y}(X_{1}Q_{2}^{4})&=\frac{c_{Q}}{4}z^{2}e^{-z}\partial_{y}\left(e^{-(y-z)}Q^{4}(y-z)\right)\\
				&-2m_{0}\partial_{y}(Q_{2}^{4})+O_{H^{1}(0,\infty)}(ze^{-z}+|\mu|z^{3}e^{-z}).
			\end{aligned}
		\end{equation}
		Here, we use the fact that 
		\begin{equation*}
			\int_{0}^{\infty}e^{-2y}\left(1+|y-z|^{2}+|y-z|^{4}\right)e^{-8|y-z|}\dd y\lesssim e^{-2z}.
		\end{equation*}
		On the other hand, using again the exponential decay of $Q$, 
		\begin{equation*}
			\begin{aligned}
				\partial_{y}(X_{1}Q_{2}^{4})&=\frac{c_{Q}}{4}z^{2}e^{-z}\partial_{y}\left(e^{-(y-z)}Q^{4}(y-z)\right)\\
				&-2m_{0}\partial_{y}(Q_{2}^{4})+O_{H^{1}(-\infty,0)}(ze^{-z}+|\mu|z^{3}e^{-z}).
			\end{aligned}
		\end{equation*}
		Combining the above estimate with~\eqref{est:asymX1Q241}, we complete the proof for (i).
		
		\smallskip
		Proof of (ii). Using again~\eqref{equ:muf} and Lemma~\ref{le:refindpointX}, for any $y<z$, we find  
		\begin{equation*}
			\begin{aligned}
				X_{2}(s,y)&=X(y-z)+\frac{\mu_{}}{2}(\Lambda X)(y-z)+O\left(\mu^{2}(y-z)^{4}e^{-(1-|\mu|)|y-z|}\right),\\
				\partial_{y}X_{2}(s,y)&=X'(y-z)+\frac{\mu}{2}(\Lambda X)'(y-z)+O\left(\mu^{2}(y-z)^{4}e^{-(1-|\mu|)|y-z|}\right),\\
				\partial_{y}^{2}X_{2}(s,y)&=X''(y-z)+\frac{\mu}{2}(\Lambda X)''(y-z)+O\left(\mu^{2}(y-z)^{4}e^{-(1-|\mu|)|y-z|}\right).
			\end{aligned}
		\end{equation*}
		Then, from Lemma~\ref{le:refindpointX}, for any $k\in \mathbb{N}$ and $y<z$,
		\begin{equation*}
			\begin{aligned}
				X^{(k)}(y-z)
				&=-\frac{c_{Q}}{4}\left(z^{2}e^{(y-z)}\right)^{(k)}
				+O\left((1+|y|+z)e^{(y-z)}\right)\\
				&-\frac{c_{Q}}{4}\left(y^{2}e^{(y-z)}\right)^{(k)}
				+\frac{c_{Q}}{2}\left(yz e^{(y-z)}\right)^{(k)}.
			\end{aligned}
		\end{equation*}
		Based on the above estimate and Lemma~\ref{le:interasym}, we obtain
		\begin{equation}\label{est:asymX2Q14}
			\partial_{y}(Q_{1}^{4}X_{2})=-\frac{c_{Q}}{4}z^{2}e^{-z}\partial_{y}\left(e^{y}Q^{4}(y)\right)+O_{H^{1}(-\infty,z)}(ze^{-z}+|\mu|z^{3}e^{-z}).
		\end{equation}
		On the other hand, using again the exponential decay of $Q$, 
		\begin{equation*}
			\partial_{y}(Q_{1}^{4}X_{2})=-\frac{c_{Q}}{4}z^{2}e^{-z}\partial_{y}\left(e^{y}Q^{4}(y)\right)+O_{H^{1}(z,\infty)}(ze^{-z}+|\mu|z^{3}e^{-z}).
		\end{equation*}
		Combining the above estimate with~\eqref{est:asymX2Q14}, we complete the proof for (ii).
	\end{proof}
	
	\subsection{Approximate solution}\label{SS:Approximate}
	In this subsection, we construct the approximate two-bubble solution for~\eqref{equ:gKdv2}. We now state the definition of admissible geometrical parameters $\mathcal{G}=(\lambda,z,\mu,x_{1},b_{1},b_{2})\in (0,\infty)^{2}\times \RR^{4}$. From now on, we denote by $\Theta = b_2 - b_1$ to keep track of the size difference between $b_1$ and $b_2$.
	\begin{definition}\label{def:Admissible}
		Let $I=[s_{0},s_{1}]\subset (0,\infty)$ be an time interval.
		We say that the function $\mathcal{G}(s):I\mapsto (0,\infty)^{2}\times \RR^{4}$ is admissible if it satisfies
		\begin{equation*}
			\begin{aligned}
				\frac{1}{7 s\log s}\le b_{1}\le \frac{1}{5s\log s},\quad 
				\frac{1}{7 s\log s}\le b_{2}\le \frac{1}{5s\log s},\\
				\frac{1}{7 s^{2}\log s}\le \alpha e^{-z}\le \frac{1}{5s^{2}\log s}
				\quad \mbox{and}\quad s|\Theta|+|\mu|\le \frac{10}{s}.
			\end{aligned}
		\end{equation*}
	\end{definition}
	Note that, from the definition of admissible geometrical parameters for $z$, we find 
	\begin{equation}\label{est:z1}
		e^{-z}\in\left[\frac{1}{7\alpha s^{2}\log s},\frac{1}{5\alpha s^{2}\log s}\right]\Longrightarrow |z-2\log s|\lesssim \log \log s.
	\end{equation}
	
	To simplify the notation, for any admissible geometrical function $\mathcal{G}(s)$, we denote\footnote{See~\eqref{def:X1X2} and~\eqref{equ:defAB} for the definition of $(X_{1},X_{2},A_{1},B_{2})$} 
	\begin{equation}\label{equ:defR}
		a=6b_{1}|\log b_{1}|\ \  \mbox{and}\ \ 
		R=b_{1}X_{1}-\zeta_{2} X_{2}+e^{-z}(A_{1}+B_{2}).
	\end{equation}
	Here, we denote $b_{2}=(1+\mu)^{\frac{3}{2}}\zeta_{2}$. From Definition~\ref{def:Admissible}, we compute 
	\begin{equation}\label{est:aa-1}
		a=6b_{1}|\log b_{1}|\in \left[\frac{4}{5s},\frac{2}{s}\right]\subseteq \Longrightarrow a^{-1}\in \left[\frac{s}{2},\frac{5s}{2}\right].
	\end{equation}
	Note that, the above-mentioned functions are non-localized profiles, and thus, we should consider the following suitable cut-off function
	\begin{equation}\label{equ:defphi}
		\phi(s,y)=\chi\left(a(s) y\right)\Longrightarrow \phi_{|(-\infty,a^{-1})}\equiv 1\quad \mbox{and}\quad 
		\phi_{|(2a^{-1},\infty)}\equiv 0.
	\end{equation}
	In addition, we require the refined term $U = U_1 + U_2$. More precisely, the term $U_1$ is introduced to refined the interactions at the $b_1^2$ level\footnote{See~\eqref{equ:defEF} for the definition of $(E_{1},F_{2})$.}:
	\begin{equation}\label{equ:defU1}
		U_{1}=b_{1}^{2}(E_{1}+F_{2})\phi+5(3m_{0}z+a_{2})b_{1}^{2}Y_{2}.
	\end{equation}
	We also need the following refined term $U_{2}$ which is related to the interactions between the soliton $Q_{2}$ and the non-localized profiles $X_{1}$ and $A_{1}$:
	\begin{equation}\label{equ:defU2}
		U_{2}=5\left(2\alpha m_{0}z+a_{0}\right)e^{-z}Y_{2}-\frac{10 m_{0}b_{1} Y_{2}}{(1+\mu)^{\frac{1}{4}}}.
	\end{equation}
	We now introduce the approximate solution and the related error term,
	\begin{equation}\label{equ:defV}
		V=S+R\phi+U \ \  \mbox{and} \ \ 
		\Psi(V)=\partial_{s}V+\partial_{y}\left(\partial_{y}^{2}V-V+V^{5}\right)+b_{1}\Lambda V.
	\end{equation}
	To avoid abuse of notation, we also denote $  V(s,y)=V(y;(z,\mu,b_{1},b_{2}))\in H^{1}$.
	
	\begin{remark}\label{re:SRU}
		On the one hand, the first two terms in $R$ are the standard blow-up profiles for the mass-critical gKdV equation, while the last two are related to the interaction between the two solitons. On the other hand, the first term in $U_{1}$ is used to refine the profile-soliton interaction at the $(b_{1}^{2},b^{2}_{2})$ level, while the second one is related to the interaction between the soliton $Q_{2}$ and the profile $E_{1}$. Last, the first term in $U_{2}$ is used to refine the interaction between the soliton $Q_{2}$ and the profile $A_{1}$, while the second one is related to the interaction between the soliton $Q_{2}$ and the profile $X_{1}$. See Proposition~\ref{prop:approx} for more detail.
	\end{remark}
	
	We now introduce the estimates related to non-localized profiles.
	\begin{lemma}\label{le:RAB}
		Let $I=[s_{0},s_{1}]\subset (0,\infty)$ be an time interval and 
		the function $\mathcal{G}(s):I\mapsto (0,\infty)^{2}\times \RR^{4}$ be admissible. Then the following estimates hold.
		\begin{enumerate}
			\item \emph{Estimate for $X_{1}$ and $X_{2}$.} We have 
			\begin{equation*}
				\|(X_{1}-X_{2})\phi\|_{H^{1}}\lesssim \log^{\frac{1}{2}}s.
			\end{equation*}
			
			\item \emph{Estimate for $R$.} We have 
			\begin{equation*}
				\left\|(\partial_{y}R)(\partial^{2}_{y}\phi)\right\|_{H^{1}}+ \left\|R\partial^{3}_{y}\phi\right\|_{H^{1}}
				+\left\|\left(\partial^{2}_{y}R\right)(\partial_{y}\phi)\right\|_{H^{1}}\lesssim \frac{1}{s^{3}\log s}.
			\end{equation*}
			\item \emph{Estimate for $A_{1}$ and $B_{2}$.} We have 
			\begin{equation*}
				\|(A_{1}+B_{2})\phi\|_{H^{1}}
				+\|y(\partial_{y}(A_{1}+B_{2}))\phi\|_{H^{1}}
				\lesssim s^{\frac{1}{2}}\log s.
			\end{equation*}
			
			\item \emph{First estimate for $E_{1}$ and $F_{2}$.} We have 
			\begin{equation*}
				\|(E_{1}+F_{2})\phi\|_{H^{1}}
				+\|y(\partial_{y}(E_{1}+F_{2}))\phi\|_{H^{1}}
				\lesssim s^{\frac{1}{2}}\log s.
			\end{equation*}
			
			\item \emph{Second estimates for $E_{1}$ and $F_{2}$.} We have 
			\begin{equation*}
				\begin{aligned}
					\left\|(\partial_{y}E_{1}+\partial_{y}F_{2})(\partial^{2}_{y}\phi)\right\|_{H^{1}}+ \left\|(E_{1}+F_{2})\partial^{3}_{y}\phi\right\|_{H^{1}}
					&\lesssim s^{-1},\\
					\left\|\left(\partial^{2}_{y}E_{1}+\partial_{y}^{2}F_{2}\right)(\partial_{y}\phi)\right\|_{H^{1}}+\|S^{4}(E_{1}+F_{2})\partial_{y}\phi\|_{H^{1}}&\lesssim s^{-1}.
				\end{aligned}
			\end{equation*}
			
			\item \emph{Third estimates for $E_{1}$ and $F_{2}$.} We have 
			\begin{equation*}
				\left\|(E_{1}+F_{2})\partial_{y}\phi\right\|_{H^{1}}+ \left\|y(\partial_{y}(E_{1}+F_{2}))\partial_{y}\phi\right\|_{H^{1}}\lesssim s^{-\frac{1}{2}}\log s.
			\end{equation*}
		\end{enumerate}
	\end{lemma}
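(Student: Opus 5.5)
The plan is to reduce every item of Lemma~\ref{le:RAB} to elementary pointwise bounds on the profiles, combined with the size and support of the cut-off $\phi$. From \eqref{equ:defphi} and \eqref{est:aa-1}, $\phi\equiv 1$ on $(-\infty,a^{-1})$ and vanishes for $y>2a^{-1}$, where $a^{-1}\sim s$. Moreover $\partial_y^k\phi=a^k\chi^{(k)}(ay)$ is bounded by $s^{-k}$ and supported in $[a^{-1},2a^{-1}]$, an interval of length $\sim s$.

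By \eqref{est:z1} we have $z=2\log s+O(\log\log s)$, so $z\ll s$. The region $y\in[a^{-1},2a^{-1}]$ therefore lies far to the right of both bubbles, where $y-z\sim s$. We also use $|\mu|\lesssim s^{-1}$, which keeps the dilation factor $(1+\mu)^{1/2}$ harmless: it changes decay rates only by $e^{O(|\mu| s)}=O(1)$ on the relevant region.

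The first step is to record uniform asymptotics for each profile. By Remark~\ref{re:P}, $X$ is exponentially localized on the left and $X+2m_0$ is exponentially localized on the right. By Lemma~\ref{le:AB} and Lemma~\ref{le:EF}, the functions $A,B,E,F$ are localized on the left and equal affine functions plus $\mathcal{Y}$-type errors on the right. The same holds for $X_2,B_2,F_2$ after the shift by $z$ and the dilation.

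For item (i), $X_1-X_2$ is bounded by $(1+|y|)^{c}e^{-|y|}$ for $y<0$, and by $(1+|y-z|)^ce^{-c|y-z|}$ for $y>z$ (the constants $-2m_0$ cancel up to $O(|\mu|)$ corrections times localized terms). On $[0,z]$ it is $O(1)$. The $L^2$ mass is therefore $O(z)$, giving the bound $z^{1/2}\lesssim\log^{1/2}s$; the derivative is similar.

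For items (iii) and (iv), the leading terms on the right are $2\alpha m_0 y-2\alpha m_0(1+\mu)^{1/2}(y-z)(1+\mu)^{1/4}$ and its $E,F$ analogue with $3m_0$. The linear parts cancel only up to $O(|\mu| y)+O(z)$. On the support $y\lesssim s$ the profile is therefore $O(z)=O(\log s)$ plus $O(|\mu|s)=O(1)$. Integrating its square over an interval of length $\sim s$ gives $s^{1/2}\log s$.

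The term $y\,\partial_y(\cdot)$ is controlled in the same way. The derivative of the affine parts is $2\alpha m_0(1-(1+\mu)^{3/4})=O(|\mu|)$, so $y\partial_y$ is $O(|\mu|s)=O(1)$ on the support, plus localized terms times $y$, which are bounded. This is within the bound.

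For item (ii), each term carries at least one derivative of $\phi$, which localizes to $y\sim s$. There all localized pieces of the profiles are $O(e^{-cs})$, so only the affine tails matter.

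The term $b_1X_1$ contributes $b_1\cdot 2m_0\cdot s^{-3}\cdot s^{1/2}$ for $R\partial_y^3\phi$. Similarly, $\zeta_2X_2$ contributes with constant $2m_0(1+\mu)^{1/4}$. Since $b_1-\zeta_2=O(\Theta+|\mu|b)$ and Definition~\ref{def:Admissible} gives $|\Theta|\le 10s^{-2}$, the combined constant is at most $s^{-2}$. The $A_1+B_2$ part is $e^{-z}\cdot O(\log s)$ with $e^{-z}\lesssim (s^2\log s)^{-1}$. Altogether,
\[
\|R\partial_y^3\phi\|_{H^1}\lesssim \Bigl(s^{-2}+\frac{1}{s\log s}\,s^{-1}+\frac{\log s}{s^2\log s}\Bigr)s^{-3}s^{1/2},
\]
which is far below $(s^3\log s)^{-1}$.

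The terms with $\partial_yR$ and $\partial_y^2R$ are even smaller: their tails are derivatives of constants or affine functions, which are $O(e^{-z}+|\mu|b)$ or exponentially small.

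Items (v) and (vi) follow the same pattern, without the small prefactors. The profile $E_1+F_2$ is $O(\log s)$ on the support with derivative $O(|\mu|)+O(e^{-cs})$, and $\partial_y^k\phi\sim s^{-k}$. This gives $s^{-1}\cdot\log s\cdot s^{1/2}$ for the first derivative, and better bounds for higher ones. The $S^4$ term is exponentially small there.

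The main obstacle is the careful bookkeeping of the affine tails. One must verify the near-cancellation of the linear growth between the $1$-profile and the shifted, dilated $2$-profile, namely $2\alpha m_0$ versus $-2\alpha m_0$ and $3m_0$ versus $-3m_0$, with $O(|\mu|y)$ errors and the $O(z)$ offset. Without this cancellation the norms would be of order $s^{3/2}$, so this check is what the whole lemma rests on.
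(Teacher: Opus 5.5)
Your proposal is correct and follows essentially the paper's argument: you split each profile into its affine tail plus exponentially localized pieces, you use that the slopes of $A_1,B_2$ (resp.\ $E_1,F_2$) cancel on $(z,\infty)$ up to $O(|\mu|y)+O(z)$, and you exploit that $\partial_y^k\phi=O(s^{-k})$ is supported on an interval of length $\sim s$ lying far to the right of both bubbles. There are two harmless imprecisions. In (i), the residual $2m_0\bigl((1+\mu)^{\frac14}-1\bigr)$ of $X_1-X_2$ on $(z,\infty)$ is a non-localized constant of size $O(s^{-1})$ (the paper's $s^{-1}\mathbf{1}_{(z,\infty)}$ term), and it contributes only $O(s^{-1/2})$ on the support of $\phi$. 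In (iii)--(iv), the term $y\partial_y(\cdot)$ is $O(\log s)$ rather than bounded on $[0,z]$ and near $y=z$, since the slopes have not cancelled yet there, and this costs only $O(\log^{3/2}s)$.
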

	
	\begin{proof}
		Proof of (i). We first rewrite the term $X_{1}-X_{2}$ by 
		\begin{equation*}
			X_{1}-X_{2}=(X_{1}+2m_{0})-\big(X_{2}+2(1+\mu)^{\frac{1}{4}}m_{0}\big)+2\big((1+\mu)^{\frac{1}{4}}-1\big)m_{0}.
		\end{equation*}
		It follows from Remark~\ref{re:P} and Definition~\ref{def:Admissible} that 
		\begin{equation*}
			\begin{aligned}
				|\partial_{y}^{\le 1}(X_{1}-X_{2})|&\lesssim \big(e^{-\frac{|y|}{2}}+e^{-\frac{|y-z|}{2}}\big)\textbf{1}_{(-\infty,0)}(y)+\textbf{1}_{[0,z]}(y)\\
				&+\big(e^{-\frac{|y|}{2}}+e^{-\frac{|y-z|}{2}}+s^{-1}\big)\textbf{1}_{(z,\infty)}(y).
			\end{aligned}
		\end{equation*}
		Based on the above estimate and $\rm{supp}\phi\subset (-\infty,\frac{5s}{2})$, we complete the proof for (i).
		
		\smallskip
		Proof of (ii).
		We first rewrite the term $R$ by 
		\begin{equation*}
			\begin{aligned}
				R
				&=e^{-z}\big(B_{2}+2\alpha m_{0}(1+\mu)^{\frac{3}{4}}(y-z)\big)+2\alpha m_{0}(1+\mu)^{\frac{3}{4}}ze^{-z}\\
				&+2b_{1}\big((1+\mu)^{\frac{1}{4}}-1\big)m_{0}-b_{1}(X_{2}+2(1+\mu)^{\frac{1}{4}}m_{0})+(b_{1}-\zeta_{2})X_{2}\\
				&+b_{1}(X_{1}+2m_{0})+e^{-z}(A_{1}-2\alpha m_{0}y)+2\alpha m_{0}e^{-z}y\big(1-(1+\mu)^{\frac{3}{4}}\big).
			\end{aligned}
		\end{equation*}
		It follows from~\eqref{est:z1}, Remark~\ref{re:P}, Lemma~\ref{le:AB} and Definition~\ref{def:Admissible} that 
		\begin{equation*}
			|R|\textbf{1}_{[\frac{s}{2},\frac{5s}{2}]}(y)\lesssim \left(s^{-2}+ (s^{-3}\log^{-1}s)y\right)\textbf{1}_{[\frac{s}{2},\frac{5s}{2}]}(y)\lesssim s^{-2}.
		\end{equation*}
		Moreover, we check that 
		\begin{equation*}
			\left(|\partial_{y}R|+ |\partial^{2}_{y}R|+ |\partial^{3}_{y}R|\right)\textbf{1}_{[\frac{s}{2},\frac{5s}{2}]}(y)\lesssim \frac{1}{s^{3}\log s}.
		\end{equation*}
		Second, from~\eqref{est:aa-1} and the definition of $\phi$ in~\eqref{equ:defphi}, we find 
		\begin{equation}\label{est:pyphi}
			s|\partial_{y}\phi|+
			s^{2}|\partial_{y}^{2}\phi|+s^{3}|\partial_{y}^{3}\phi|+s^{4}|\partial_{y}^{4}\phi|\lesssim \textbf{1}_{[\frac{s}{2},\frac{5s}{2}]}(y).
		\end{equation}
		Combining the above estimates, we obtain 
		\begin{equation*}
			\begin{aligned}
				& \left\|(\partial_{y}R)(\partial^{2}_{y}\phi)\right\|_{H^{1}}+ \left\|R\partial^{3}_{y}\phi\right\|_{H^{1}}
				+\left\|\left(\partial^{2}_{y}R\right)(\partial_{y}\phi)\right\|_{H^{1}}\\
				&\lesssim
				\left \|\partial^{\le 3}_{y}R|\textbf{1}_{[\frac{s}{2},\frac{5s}{2}]}\right\|_{L^{\infty}}\|\partial^{2}_{y}\phi\|_{H^{1}}+
				\left \|\partial^{\le 3}_{y}R|\textbf{1}_{[\frac{s}{2},\frac{5s}{2}]}\right\|_{L^{\infty}}
				\|\partial^{3}_{y}\phi\|_{H^{1}}\\
				&+\left\|\partial_{y}^{2}R\right\|_{L^{\infty}}\|\partial_{y}\phi\|_{H^{1}}+\left\|\partial_{y}^{3}R\right\|_{L^{\infty}}\|\partial_{y}\phi\|_{H^{1}}
				\lesssim \frac{1}{s^{3}\log s},
			\end{aligned}
		\end{equation*}
		which directly completes the proof of (ii).
		
		\smallskip
		Proof of (iii).
		We first rewrite the term $A_{1}+B_{2}$ by 
		\begin{equation*}
			\begin{aligned}
				A_{1}+B_{2}&=(A_{1}-2\alpha m_{0}y)
				+2\alpha m_{0}y\big(1-(1+\mu)^{\frac{3}{4}}\big)\\
				&+(B_{2}+2\alpha m_{0}(1+\mu)^{\frac{3}{4}}(y-z))+2\alpha m_{0}(1+\mu)^{\frac{3}{4}}z.
			\end{aligned}
		\end{equation*}
		It follows from Lemma~\ref{le:AB} and Definition~\ref{def:Admissible} that 
		\begin{equation*}
			\begin{aligned}
				\left\|\left(A_{1}+B_{2}\right)\phi\right\|_{H^{1}(z,\infty)}\lesssim  \left\|\left(A_{1}+B_{2}\right)\phi\right\|_{H^{1}\left(z,\frac{5}{2}s\right)}\lesssim s^{\frac{1}{2}}\log s,\\
				\|y(\partial_{y}(A_{1}+B_{2}))\phi\|_{H^{1}(z,\infty)}\lesssim 
				\|y(\partial_{y}(A_{1}+B_{2}))\phi\|_{H^{1}\left(z,\frac{5}{2}s\right)}\lesssim s^{\frac{1}{2}}\log s.
			\end{aligned}
		\end{equation*}
		Second, using again Lemma~\ref{le:AB} and Definition~\ref{def:Admissible}, on $[0,z]$, we find
		\begin{equation*}
			\left|\partial_{y}^{\le 2}A_{1}\right|+ \left|\partial_{y}^{\le 2}B_{2}\right|\lesssim (1+|y|)+\log s,
		\end{equation*}
		which directly implies that
		\begin{equation*}
			\left\|\left(A_{1}+B_{2}\right)\phi\right\|_{H^{1}(0,z)}
			+\|y(\partial_{y}(A_{1}+B_{2}))\phi\|_{H^{1}(0,z)}
			\lesssim \log^{4}s.
		\end{equation*}
		Last, from the exponential decay of $A$ and $B$ on the left-hand side of $\RR$, 
		\begin{equation*}
			\left\|\left(A_{1}+B_{2}\right)\phi\right\|_{H^{1}(-\infty,0)}
			+ \|y(\partial_{y}(A_{1}+B_{2}))\phi\|_{H^{1}(-\infty,0)}
			\lesssim 1.
		\end{equation*}
		Combining the above estimates, we directly complete the proof of (iii).
		
		\smallskip
		Proof of (iv). We first rewrite the term $E_{1}+F_{2}$ by 
		\begin{equation}\label{equ:E1F21}
			\begin{aligned}
				E_{1}+F_{2}&=(E_{1}-3m_{0}y)
				+3m_{0}y\big(1-(1+\mu)^{\frac{3}{4}}\big)\\
				&+(F_{2}+3m_{0}(1+\mu)^{\frac{3}{4}}(y-z))+3m_{0}(1+\mu)^{\frac{3}{4}}z.
			\end{aligned}
		\end{equation}
		Hence, using an argument similar to that in (iii), we complete the proof for (iv).
		
		\smallskip
		Proof of (v). Note that, from~\eqref{equ:E1F21} and Lemma~\ref{le:EF}, we have 
		\begin{equation*}
			\left\|(E_{1}+F_{2})
			\textbf{1}_{[\frac{s}{2},\frac{5s}{2}]}
			\right\|_{L^{\infty}}\lesssim \log s.
		\end{equation*}
		In addition, we also have 
		\begin{equation*}
			\begin{aligned}
				\left( \left|\partial_{y}(E_{1}+F_{2})\right|+ \left|\partial^{2}_{y}(E_{1}+F_{2})\right|
				\right)
				\textbf{1}_{[\frac{s}{2},\frac{5s}{2}]}(y)&\lesssim s^{-1},\\
				\left|\partial^{3}_{y}(E_{1}+F_{2})\right|
				+\left|\partial_{y}^{\le 1}(S^{4}(E_{1}+F_{2}))\right|
				\textbf{1}_{[\frac{s}{2},\frac{5s}{2}]}(y)
				&\lesssim s^{-1}.
			\end{aligned}
		\end{equation*}
		Hence, using an argument similar to that in (ii), we complete the proof of (v).
		
		\smallskip
		Proof of (vi). The proof is similar to that of (iv), and we omit it.
	\end{proof}

	Consider the following functions which are related to the modulation equations:
	\begin{equation}\label{equ:defMod}
		\vec{{\rm{Mod}}}=\left(\dot{z}-b_{1} z-\mu,\vec{N},\dot{\mu}-2\Theta\right).
	\end{equation}
	Here, we denote $\Theta=b_{2}-b_{1}$ and 
	\begin{equation*}
		\vec{N}=\left(\dot{\zeta}_{2}+\alpha(1+\mu)^{\frac{3}{2}} e^{-z}+2(1+\mu)^{\frac{3}{2}}b_{1}^{2},
		\dot{b}_{1}+\alpha e^{-z}+2b_{1}^{2}\right).
	\end{equation*}
	\begin{definition}\label{def:Sfunction}
		Let $I=[s_{0},s_{1}]\subset (0,\infty)$ be a time interval and 
		the function $\mathcal{G}(s):I\mapsto (0,\infty)^{2}\times \RR^{3}$ be admissible.
		We denote by $\mathcal{S}$ the set of smooth functions $f:I\times \RR\to \RR$ such that 
		\begin{equation*}
			\|f(s)\|_{H^{1}}\lesssim 
			\frac{|\dot{z}|}{s^{2}\log s}
			+\frac{|\dot{\mu}|}{s^{\frac{3}{2}}\log s}+\frac{|\dot{b}_{1}|+|\dot{b}_{2}|}{s \log s}
			+\frac{1}{s^{3}\log s}.
		\end{equation*}
		In addition, for such smooth function $f\in \mathcal{S}$, we denote $f=O_{\mathcal{S}}(1)$.
	\end{definition}
    \begin{remark}
        Indeed, the fastest decaying term in the leading-order ODE system for the parameters is $s^{-3}$ (see Lemma~\ref{le:refinedTheta}). Therefore, any term that decays faster does not affect the evolution of the ODE system. This is the main reason why we assume a decay rate of $s^{-3}\log^{-1}s$ for functions in $\mathcal{S}$.
    \end{remark}
	To state the modulation equations, we also need the following specific functions:
	\begin{equation}\label{equ:defMQ}
		\vec{M}Q=(M_{1Q},M_{2Q},M_{3Q},M_{4Q}).
	\end{equation}
	Here, we set
	\begin{equation*}
		\begin{aligned}
			\left(M_{1Q},M_{2Q}\right)=\left(\partial_{y}Q_{2}, -X_{2}\phi\right) \ \ \mbox{and} \ \ M_{3Q}=X_{1}\phi-\frac{10m_{0}}{(1+\mu)^{\frac{1}{4}}}Y_{2}.
		\end{aligned}
	\end{equation*}
	In addition, we denote
	\begin{equation*}
		M_{4Q}=-\frac{1}{2(1+\mu)}\left(\Gamma (\Lambda Q)+\zeta_{2}\Gamma(\Lambda X)\phi-e^{-z}\Gamma (\Lambda B)\phi\right).
	\end{equation*}
	Recall that, from Definition~\ref{def:Admissible}, we find 
	\begin{equation*}
		e^{-z}\in\left[\frac{1}{7\alpha s^{2}\log s},\frac{1}{5\alpha s^{2}\log s}\right]\Longrightarrow (b_{1}z,b_{2}z)\in 
		\left[\frac{9}{35s},\frac{16}{35s}\right]^{2}.
	\end{equation*}
	The above estimates will be used frequently in the remainder of this article.
	
	\smallskip
	We now prove that the smooth real-valued function $V:I\times \RR\mapsto \RR$ is an approximate solution of the rescaled equation~\eqref{equ:gKdv2} in the following sense.
	\begin{proposition}
		\label{prop:approx}
		Let $I=[s_{0},s_{1}]\subset (0,\infty)$ be an time interval and 
		the function $\mathcal{G}(s):I\mapsto (0,\infty)^{2}\times \RR^{3}$ be admissible. Then we have
		\begin{equation*}
			\Psi (V)=\vec{{\rm{Mod}}}\cdot \vec{M}Q
			+\sum_{i=1}^{4}\Psi_{i}(V)
			+O_{\mathcal{S}}(1),
		\end{equation*}
		where
		\begin{equation*}
			\begin{aligned}
				\Psi_{1}(V)&=\left(\dot{z}-b_{1}z-\mu\right)(\zeta_{2}\partial_{y}X_{2}+10m_{0}b_{1}\partial_{y}Y_{2})\\
				&+b_{1}^{3}\Lambda \left((E_{1}+F_{2})\phi\right)
				-b_{1}(\zeta_{2}-b_{1})\Lambda X_{2}\phi-\Theta\zeta_{2}\Gamma(\Lambda X)\phi\\
                 &+b_{1}^{2}((1+\mu)^{\frac{3}{2}}-1)\Gamma(\Lambda X)\phi+\Theta (1-(1+\mu)^{-1})\Gamma (\Lambda Q)\\
				&+zb_{1}(\zeta_{2}-b_{1})\partial_{y}X_{2}\phi+b_{1}e^{-z}\left(\Lambda A_{1}+\Lambda B_{2}\right)\phi+\Theta e^{-z}\Gamma(\Lambda B)\phi,
			\end{aligned}
		\end{equation*}
		\begin{equation*}
			\begin{aligned}
				\Psi_{2}(V)&=-\dot{z}e^{-z}\left(A_{1}+B_{2}\right)\phi-10\alpha m_{0}\dot{z}ze^{-z}(Y_{2}+\partial_{y}Y_{2})\\
				&+\frac{5}{2}m_{0}b_{1}\dot{\mu}\left(Y_{2}-2\Gamma(\Lambda Y)\right) -\dot{z}e^{-z}\phi\partial_{y}B_{2}+30m_{0}z\dot{b}_{1}b_{1}Y_{2}\\
				&+\frac{\dot{\mu}b_{1}^{2}\phi}{2(1+\mu)}\Gamma (\Lambda F)
				+ 2\dot{b}_{1}b_{1}(E_{1}+F_{2})\phi-\dot{z}b_{1}^{2}\phi\partial_{y}F_{2}
				+\mu e^{-z}\phi\partial_{y}B_{2},
			\end{aligned}
		\end{equation*}
		\begin{equation*}
			\begin{aligned}
				\Psi_{3}(V)&=\frac{5c_{Q}}{4}b_{1}z^{2}e^{-z}\partial_{y}\left(e^{-(y-z)}Q^{4}(y-z)\right)
				+\mu b_{1}^{2}\phi \partial_{y}F_{2}\\
				& +\frac{5c_{Q}}{4}\zeta_{2}z^{2}e^{-z}\partial_{y}(e^{y}Q^{4}(y)) +10\alpha m_{0}\mu ze^{-z}\partial_{y}Y_{2}\\
				&+\frac{5c_{Q}}{2}\mu z e^{-z}\partial_{y}\left(e^{y}Q^{4}(y)\right)+10\alpha m_{0}z(1-(1+\mu)^{\frac{1}{4}})e^{-z}\partial_{y}\left(Q_{2}^{4}\right),
			\end{aligned}
		\end{equation*}
		\begin{equation*}
			\begin{aligned}
				\Psi_{4}(V)=b_{1}^{2}(E_{1}+F_{2})\partial_{s}\phi-b_{1}^{2}(E_{1}+F_{2})\partial_{y}\phi 
				+R\partial_{s}\phi
				+b_{1}yR\partial_{y}\phi-R\partial_{y}\phi.
			\end{aligned}
		\end{equation*}
		Here, the approximate solution $V$ and the error term $\Psi(V)$ are defined by~\eqref{equ:defV}.
	\end{proposition}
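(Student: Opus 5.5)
The plan is to expand $\Psi(V)$ term by term, with $V=S+R\phi+U$, and sort the pieces. Some pieces are absorbed into $\vec{\rm Mod}\cdot\vec MQ$. Some are kept explicitly in $\Psi_1,\dots,\Psi_4$. All remaining pieces are shown to be $O_{\mathcal S}(1)$, meaning of $H^1$ size $\lesssim s^{-3}\log^{-1}s$ plus the allowed multiples of $|\dot z|,|\dot\mu|,|\dot b_j|$.

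I split $\Psi(V)$ into the following contributions.
\begin{itemize}
\item $\partial_s V$. The time derivatives of $Q_2=\Gamma Q$, $X_2$, $B_2$, $F_2$, $Y_2$ produce $-\dot z\,\partial_y(\cdot)_2$ and $\frac{\dot\mu}{2(1+\mu)}\Gamma(\Lambda\cdot)$. The derivatives of the coefficients $b_1,\zeta_2,e^{-z},b_1^2$ produce $\dot b_1X_1$, $-\dot\zeta_2X_2$, $-\dot z e^{-z}(A_1+B_2)$, etc. The derivative of the cutoff gives $\partial_s\phi$ terms, which go directly into $\Psi_4$.
\item The linear part $\partial_y(\partial_y^2-1)$ together with the nonlinearity $\partial_y(V^5)$, linearized around $S$. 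Writing $V^5=S^5+5S^4(R\phi+U)+O((R\phi+U)^2S^3)+\dots$, I use $S^5=Q_1^5-Q_2^5+G$ and the soliton equations. This gives $\partial_y(-Q''+Q-Q^5)=0$ for $Q_1$, while for $Q_2$ one gets the term $-\partial_y Q_2$ relative to the moving frame, since $Q_2$ travels at speed $1+\mu$.
\item The term $b_1\Lambda V$.
\end{itemize}

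The key algebraic steps come next. For $b_1X_1$, $(\mathcal L X)'=\Lambda Q$ cancels $b_1\Lambda Q_1$. For $\zeta_2X_2$, the rescaled identity, with the factor $(1+\mu)^{3/2}$ built into $b_2=(1+\mu)^{3/2}\zeta_2$, cancels against the $\Gamma(\Lambda Q)$ part of $-\frac{\dot\mu}{2(1+\mu)}\Gamma\Lambda Q$ after using $\dot\mu-2\Theta$. This is what defines $M_{4Q}$. The $-\dot z\partial_yQ_2$ and $(1+\mu)\partial_yQ_2$ terms combine with $b_1z\partial_yQ_2$, coming from $b_1\Lambda Q_2$ via $y=(y-z)+z$, into $(\dot z-b_1z-\mu)\partial_yQ_2$.

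For the interaction terms, Lemma~\ref{le:nonlinear} says $\partial_yH$ is to leading order $\pm5c_Qe^{-z}\partial_y(e^{\mp(y-\cdot)}Q^4)$, with the $\mu z$ correction kept in $\Psi_3$. These leading terms are cancelled by $e^{-z}(\mathcal LA_1)'$ and $e^{-z}(\mathcal L B_2)'$ through Lemma~\ref{le:AB}. The leftover $\pm\alpha e^{-z}X$ terms combine with $\dot b_1X_1$ and $\dot\zeta_2X_2$ into the components of $\vec N$. At order $b_1^2$, the terms $b_1^2(\Lambda X-2X)$ and $10b_1^2(X^2Q^3)'$ from $V^5$ are removed by $U_1$ through Lemma~\ref{le:EF}, leaving $2b_1^2X$, which is exactly the $2b_1^2$ entry of $\vec N$.

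The profile–soliton interactions $5\partial_y(Q_2^4 b_1X_1)$ and $5\partial_y(Q_2^4e^{-z}A_1)$ are expanded with Lemmas~\ref{le:refinedPAB} and~\ref{le:PAB}. Replacing $X_1$ by $-2m_0$ and $A_1$ by $2\alpha m_0 y+a_0$, and writing $y=(y-z)+z$, produces $-10m_0b_1\partial_y(Q_2^4)$ and $5(2\alpha m_0z+a_0)e^{-z}\partial_y(Q_2^4)$. Since $\mathcal LY=Q^4$, these are cancelled up to the shifted operator by $U_2$. The $z^2e^{-z}$ remainders are recorded in $\Psi_3$. The $\dot\mu$ and $\dot z$ derivatives of $U$ give the $\Psi_2$ terms and the $Y_2$ corrections inside $M_{3Q}$ and $\Psi_1$.

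For the remainders I use the admissibility bounds $b_j\sim(s\log s)^{-1}$, $e^{-z}\sim(s^2\log s)^{-1}$, $|\mu|\lesssim s^{-1}$, $|\Theta|\lesssim s^{-2}$, $z\approx2\log s$, and $\mu z^3\ll1$.
\begin{itemize}
\item Quadratic terms such as $(R\phi)^2S^3$, $b_1e^{-z}$ cross terms, and $b_1^3$ terms in the soliton region are $O(z^Ks^{-3}\log^{-2}s)$ or better, hence in $\mathcal S$ except where kept explicitly.
\item Terms where $\partial_y$ falls on $\phi$ are handled by Lemma~\ref{le:RAB}(ii),(v), giving $s^{-3}\log^{-1}s$; the first-order ones go into $\Psi_4$.
\item $G-H$ is controlled by Lemma~\ref{le:nonlinear}(i), giving $z^{1/2}e^{-2z}$.
\item Terms of size $\dot b_1\cdot(E_1+F_2)$ or $\dot\mu\cdot b_1$ that are not kept are bounded by the $\mathcal S$ weights using Lemma~\ref{le:RAB}(iii),(iv).
\end{itemize}

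The main obstacle is the bookkeeping in the profile–soliton interactions with linearly growing profiles. One must check that every term of size $s^{-3}$ or slower is either cancelled or listed in $\Psi_1$–$\Psi_3$. Here $Q_2$ multiplied by the linear growth of $A_1$ or $E_1$ produces an extra factor $z$, which is why $U_1$ and $U_2$ carry the factor $(3m_0z+a_2)$ and $(2\alpha m_0z+a_0)$. I would handle this by systematically decomposing each nonlocalized profile near $Q_2$ as (linear part in $y-z$) $+$ (constant involving $z$) $+$ (exponentially small part), and then invoking Lemmas~\ref{le:interasym}–\ref{le:refinedPAB} on each piece.
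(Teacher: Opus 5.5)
Your plan is the paper's proof. It uses the same split of $\Psi(V)$ into $\partial_sV$, the linearized flow around $S$ with its nonlinear remainder, and $b_1\Lambda V$; the same cancellations through Lemmas~\ref{le:AB}, \ref{le:EF}, \ref{le:refinedPAB} and $\mathcal{L}Y=Q^4$; and the same remainder bounds from Lemmas~\ref{le:boundinter}, \ref{le:nonlinear}, \ref{le:PAB}, \ref{le:RAB}. Two points need fixing in the bookkeeping:

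\begin{enumerate}
\item Since $V$ contains $-Q_2$, the signs are $+\dot z\partial_yQ_2$, $-\mu\partial_yQ_2$ and $-b_1z\partial_yQ_2$, which sum to $(\dot z-b_1z-\mu)M_{1Q}$. Your write-up has these sign-flipped, and $(1+\mu)$ in place of $\mu$.
\item When you expand the linear growth of $A_1$ near $Q_2$ using $y=(y-z)+z$, you drop the piece $10\alpha m_0e^{-z}\partial_y\big((y-z)Q_2^4\big)$. It has size $e^{-z}$, so it is neither $O_{\mathcal S}(1)$ nor listed in any $\Psi_i$. It is cancelled by the $Z$-part of the source $10\alpha m_0W$ in the equation of $B$. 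The $Y$-part of that source, $-10\alpha m_0e^{-z}Y_2$, is what supplies the $\alpha e^{-z}$ entry of $\vec{N}$ against the $Y_2$-component of $M_{3Q}$.
\end{enumerate}
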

	
	\begin{proof}
		\textbf{Step 1.} General computation. First, from the definition of $\Gamma$ in~\eqref{def:Gamma}, 
		for any smooth real-valued function $f:I\times \RR\mapsto \RR$, 
		\begin{equation}\label{equ:computef}
			\partial_{s}\left(\Gamma f\right)=
			\Gamma\left(\partial_{s}f\right)-\dot{z}
			\partial_{y}\left(\Gamma f\right)
			+\frac{\dot{\mu}}{2(1+\mu)}\Gamma\left(\Lambda f\right).
		\end{equation}
		Based on the above identity and the definition of $Q_{2}$ in~\eqref{equ:defQ1Q2},  we compute
		\begin{equation}\label{est:dsS}
			\partial_{s}Q_{2}=-\dot{z}\partial_{y}Q_{2}+\frac{\dot{\mu}}{2(1+\mu)}\Gamma(\Lambda Q)\Longrightarrow 
			\partial_{s}S=\dot{z}\partial_{y}Q_{2}-\frac{\dot{\mu}}{2(1+\mu)}\Gamma(\Lambda Q).
		\end{equation}
		Then, using again~\eqref{equ:computef} and the definition of $R$ in~\eqref{equ:defR},
		\begin{equation}\label{est:dsR}
			\begin{aligned}
				\partial_{s}(R\phi)
				&=\dot{b}_{1}X_{1}\phi-\dot{\zeta}_{2}X_{2}\phi-\dot{z}e^{-z}(A_{1}+B_{2})\phi+R\partial_{s}\phi\\
				&+\dot{z}(\zeta_{2}\partial_{y}X_{2}-e^{-z}\partial_{y}B_{2})\phi
				+\frac{\dot{\mu}}{2(1+\mu)}\left(e^{-z}\Gamma(\Lambda B)-\zeta_{2}\Gamma(\Lambda X)\right)\phi.
			\end{aligned}
		\end{equation}
		In addition, from~\eqref{equ:defU1}, we check that 
		\begin{equation}\label{est:dsU1}
			\begin{aligned}
				\partial_{s}U_{1}
				&=2\dot{b}_{1}b_{1}(E_{1}+F_{2})\phi-\dot{z}b_{1}^{2}\phi\partial_{y}F_{2}+\frac{\dot{\mu}b_{1}^{2}\phi}{2(1+\mu)}\Gamma (\Lambda F)\\
				&-5(3m_{0}z+a_{2})\dot{z}b_{1}^{2}\partial_{y}Y_{2}+\frac{5(3m_{0}z+a_{2})}{2(1+\mu)}\dot{\mu}b_{1}^{2}\Gamma(\Lambda Y)\\
				&+b_{1}^{2}(E_{1}+F_{2})\partial_{s}\phi+15m_{0}\dot{z}b_{1}^{2}Y_{2}+10(3m_{0}z+a_{2})\dot{b}_{1}b_{1}Y_{2}.
			\end{aligned}
		\end{equation}
		Similarly, from~\eqref{equ:defU2}, we check that 
		\begin{equation}\label{est:dsU}
			\begin{aligned}
				\partial_{s}U_{2}
				&=-\frac{\dot{z}}{(1+\mu)^{\frac{1}{4}}}\left(5(1+\mu)^{\frac{1}{4}}\left(2\alpha m_{0}z+a_{0}\right)e^{-z}-10 m_{0}b_{1}\right)
				\partial_{y}Y_{2}\\
				&+\frac{\dot{\mu}}{2(1+\mu)^{\frac{5}{4}}}\left(5(1+\mu)^{\frac{1}{4}}\left(2\alpha m_{0}z+a_{0}\right)e^{-z}-10 m_{0}b_{1}\right)\Gamma (\Lambda Y)\\
				&-\frac{10m_{0}\dot{b}_{1}Y_{2}}{(1+\mu)^{\frac{1}{4}}}+\frac{5m_{0}b_{1}\dot{\mu}Y_{2}}{2(1+\mu)^{\frac{5}{4}}}-5a_{0}\dot{z}e^{-z}Y_{2}+10\alpha m_{0}\dot{z}e^{-z}(1-z)Y_{2}.
			\end{aligned}
		\end{equation}
		Combining identities~\eqref{est:dsS}--\eqref{est:dsU} with the definition of $V$, we obtain 
		\begin{equation}\label{equ:dsV}
			\begin{aligned}
				\partial_{s}V&=\dot{z}M_{1Q}+\dot{\zeta}_{2}M_{2Q}+\dot{b}_{1}{M_{3Q}}+\dot{\mu}M_{4Q}
				\\
				&+\dot{z}\left(\zeta_{2}\phi \partial_{y}X_{2}+{10m_{0}b_{1}}\partial_{y}Y_{2}\right)+\mathcal{H}_{1}+\mathcal{H}_{2}+\mathcal{H}_{3}+\mathcal{H}_{4}+\mathcal{H}_{5}.
			\end{aligned}
		\end{equation}
		Here, we denote 
		\begin{equation*}
			\begin{aligned}
				\mathcal{H}_{1}&=
				\frac{5m_{0}b_{1}\dot{\mu}}{2(1+\mu)^{\frac{5}{4}}}\left(Y_{2}-2\Gamma(\Lambda Y)\right)
				-5\dot{z}e^{-z}\left(2\alpha m_{0}z+a_{0}\right)(Y_{2}+\partial_{y}Y_{2}),\\
				\mathcal{H}_{2}&=\frac{5\dot{\mu}e^{-z}}{2(1+\mu)}(2\alpha m_{0}z+a_{0})\Gamma (\Lambda Y)-\dot{z}e^{-z}(A_{1}+B_{2})\phi
				-\dot{z}e^{-z}\phi\partial_{y}B_{2},\\
				\mathcal{H}_{3}&=b_{1}^{2}(E_{1}+F_{2})\partial_{s}\phi+10(3m_{0}z+a_{2})\dot{b}_{1}b_{1}Y_{2}+\frac{\dot{\mu}b_{1}^{2}\phi}{2(1+\mu)}\Gamma (\Lambda F)
				+R\partial_{s}\phi,\\
				\mathcal{H}_{4}&=2\dot{b}_{1}b_{1}(E_{1}+F_{2})\phi-\dot{z}b_{1}^{2}\phi\partial_{y}F_{2}+\frac{5(3m_{0}z+a_{2})}{2(1+\mu)}\dot{\mu}b_{1}^{2}\Gamma(\Lambda Y)+15m_{0}\dot{z}b_{1}^{2}Y_{2},\\
				\mathcal{H}_{5}&=\frac{10m_{0}b_{1}\dot{z}}{(1+\mu)^{\frac{1}{4}}}\left(1-(1+\mu)^{\frac{1}{4}}\right)\partial_{y}Y_{2}+10\alpha m_{0}\dot{z}e^{-z}Y_{2}
				-5(3m_{0}z+a_{2})\dot{z}b_{1}^{2}\partial_{y}Y_{2}.
			\end{aligned}
		\end{equation*}
		
		On the other hand, from the definition of $Q_{2}$ in~\eqref{equ:defQ1Q2}, we compute 
		\begin{equation*}
			-\partial_{y}^{2}Q_{2}+(1+\mu)Q_{2}-Q_{2}^{5}=0,\quad \mbox{on} \ \RR.
		\end{equation*}
		It follows directly that 
		\begin{equation}\label{equ:LV}
			\partial_{y}\left(\partial_{y}^{2}V-V+V^{5}\right)
			=-\mu M_{1Q}+\mathcal{H}_{6}+\mathcal{H}_{7}.
		\end{equation}
		Here, we denote
		\begin{equation*}
			\begin{aligned}
				\mathcal{H}_{6}&=\partial_{y}\left(\partial_{y}^{2}(R\phi+U)-(R\phi+U)+5S^{4}(R\phi+U)\right)+\partial_{y}H,\\
				\mathcal{H}_{7}&=\partial_{y}\left(
				(S+R\phi+U
				)^{5}-S^{5}-5S^{4}(R\phi+U)\right)+\partial_{y}G-\partial_{y}H.
			\end{aligned}
		\end{equation*}
		
		\smallskip
		\textbf{Step 2.} Estimate on $\mathcal{H}_{1}$. We claim that 
		\begin{equation}\label{est:H1}
			\mathcal{H}_{1}=  \frac{5}{2}m_{0}b_{1}\dot{\mu}\left(Y_{2}-2\Gamma(\Lambda Y)\right)
			-10\alpha m_{0}\dot{z}ze^{-z}(Y_{2}+\partial_{y}Y_{2})+O_{\mathcal{S}}(1).
		\end{equation}
		Indeed, we rewrite the term $\mathcal{H}_{2}$ by 
		\begin{equation*}
			\begin{aligned}
				\mathcal{H}_{1}&
				= \frac{5}{2}m_{0}b_{1}\dot{\mu}\left(Y_{2}-2\Gamma(\Lambda Y)\right)
				-10\alpha m_{0}\dot{z}ze^{-z}(Y_{2}+\partial_{y}Y_{2})
				\\
				&+\frac{5}{2}m_{0}b_{1}\dot{\mu}
				((1+\mu)^{-\frac{5}{4}}-1)\left(Y_{2}-2\Gamma(\Lambda Y)\right)
				-5a_{0}\dot{z}e^{-z}(Y_{2}+\partial_{y}Y_{2}).
			\end{aligned}
		\end{equation*}
		Combining the above identity with Definition~\ref{def:Admissible} and $Y\in \mathcal{Y}$, we find 
		\begin{equation*}
			\begin{aligned}
				\| \dot{z}e^{-z}(Y_{2}+\partial_{y}Y_{2})\|_{H^{1}}&
				\lesssim |\dot{z}e^{-z}|
				\lesssim \frac{|\dot{z}|}{s^{2}\log s},\\
				\| b_{1}\dot{\mu}
				((1+\mu)^{-\frac{5}{4}}-1)Y_{2}\|_{H^{1}}&\lesssim 
				|b_{1}\mu \dot{\mu}|\lesssim
				\frac{|\dot{\mu}|}{s^{2}\log s},\\
				\| b_{1}\dot{\mu}
				((1+\mu)^{-\frac{5}{4}}-1)\Gamma(\Lambda Y)\|_{H^{1}}&
				\lesssim |b_{1}\mu \dot{\mu}|
				\lesssim \frac{|\dot{\mu}|}{s^{2}\log s},
			\end{aligned}
		\end{equation*}
		which directly completes the proof for the estimate~\eqref{est:H1}.
		
		\smallskip
		\textbf{Step 3.} Estimate on $\mathcal{H}_{2}.$
		We claim that 
		\begin{equation}\label{est:H2}
			\mathcal{H}_{2}=-\dot{z}e^{-z}(A_{1}+B_{2})\phi
			-\dot{z}e^{-z}\phi\partial_{y}B_{2}+O_{\mathcal{S}}(1).
		\end{equation}
		Indeed, using~\eqref{est:z1}, Definition~\ref{def:Admissible} and $Y\in \mathcal{Y}$, we find 
		\begin{equation*}
			\|\dot{\mu}e^{-z}\left(|z|+1\right)\Gamma (\Lambda Y)\|\lesssim 
			|\dot{\mu}|e^{-z}(|z|+1)\lesssim \frac{|\dot{\mu}|}{s^{2}},
		\end{equation*}
		which directly completes the proof for the estimate~\eqref{est:H2}.
		
		\smallskip
		\textbf{Step 4.} Estimate on $\mathcal{H}_{3}$. 
		We claim that 
		\begin{equation}\label{est:H3}
			\begin{aligned}
				\mathcal{H}_{3}&=b_{1}^{2}(E_{1}+F_{2})\partial_{s}\phi+30m_{0}z\dot{b}_{1}b_{1}Y_{2}\\
				&+\frac{\dot{\mu}b_{1}^{2}\phi}{2(1+\mu)}\Gamma (\Lambda F)
				+R\partial_{s}\phi+O_{\mathcal{S}}(1).
			\end{aligned}
		\end{equation}
		Indeed, using an argument similar to that in Step 3, we find 
		\begin{equation*}
			\|a_{2}\dot{b}_{1}b_{1}Y_{2}\|_{H^{1}}\lesssim |\dot{b}_{1}b_{1}|\lesssim \frac{|\dot{b}_{1}|}{s\log s},
		\end{equation*}
		which directly completes the proof for the estimate~\eqref{est:H3}.
		
		\smallskip
		\textbf{Step 5.} Estimates on $\mathcal{H}_{4}$ and $\mathcal{H}_{5}$. 
		We claim that 
		\begin{equation}\label{est:H4}
			\mathcal{H}_{4}+\mathcal{H}_{5}=2\dot{b}_{1}b_{1}(E_{1}+F_{2})\phi-\dot{z}b_{1}^{2}\phi\partial_{y}F_{2}+O_{\mathcal{S}}(1).
		\end{equation}
		Indeed, using again~\eqref{est:z1}, Definition~\ref{def:Admissible} and $Y\in \mathcal{Y}$, we find 
		\begin{equation*}
			\|\dot{z}b_{1}^{2}Y_{2}\|_{H^{1}}+  \|\dot{\mu}b_{1}^{2}(|z|+1)\Gamma(\Lambda Y)\|_{H^{1}}\lesssim \frac{|\dot{z}|}{s^{2}\log^{2}s}
			+\frac{|\dot{\mu}|}{s^{2}\log s}.
		\end{equation*}
		Similarly, we also find 
		\begin{equation*}
			\begin{aligned}
				\|  b_{1}\dot{z}\big(1-(1+\mu)^{\frac{1}{4}}\big)\partial_{y}Y_{2}\|_{H^{1}}&\lesssim \frac{|\dot{z}|}{s^{2}\log s},\\
				\|\dot{z}e^{-z}Y_{2}\|_{H^{1}}+
				\| (|z|+1)\dot{z}b_{1}^{2}\partial_{y}Y_{2}\|_{H^{1}}
				&\lesssim \frac{|\dot{z}|}{s^{2}\log s}.
			\end{aligned}
		\end{equation*}
		We see that the estimate~\eqref{est:H4} follows from the above estimates.
		
		\smallskip
		\textbf{Step 6.} Estimate on $\mathcal{H}_{6}$. We claim that 
		\begin{equation}\label{est:H6}
			\begin{aligned}
				\mathcal{H}_{6}
				&=\frac{5c_{Q}}{4}b_{1}z^{2}e^{-z}\partial_{y}\left(e^{-(y-z)}Q^{4}(y-z)\right)-b_{1}^{2}(E_{1}+F_{2})\partial_{y}\phi\\
				&+b_{1}^{2}(2X_{1}-\Lambda X_{1})\phi-10b_{1}^{2}\partial_{y}(X_{1}^{2}Q_{1}^{3})-10\alpha m_{0}b_{1}\mu \partial_{y}Y_{2}\\
				&+10b_{1}^{2}\partial_{y}(P_{2}^{2}Q_{2}^{3})
				-10\alpha m_{0}e^{-z}Y_{2}+10m_{0}b_{1}^{2}\left(\Gamma(\Lambda Y)-2Y_{2}\right)\\
				&+10\alpha m_{0}z(1-(1+\mu)^{\frac{1}{4}})e^{-z}\partial_{y}\left(Q_{2}^{4}\right)
				+\mu e^{-z}\phi\partial_{y}B_{2}-R\partial_{y}\phi\\
				&+b_{1}^{2}(1+\mu)^{\frac{3}{2}}(\Gamma (\Lambda X)-2X_{2})\phi+\mu b_{1}^{2}\phi \partial_{y}F_{2}+\frac{5c_{Q}}{2}\mu z e^{-z}\partial_{y}\left(e^{y}Q^{4}(y)\right)\\
				&+\alpha e^{-z}X_{1}\phi -\alpha (1+\mu)^{\frac{3}{2}}e^{-z}X_{2}\phi
				+10\alpha m_{0}\mu ze^{-z}\partial_{y}Y_{2}-\mu\zeta_{2}\phi\partial_{y}X_{2}
				\\
				&+b_{2}\Lambda Q_{2}-b_{1}\Lambda Q_{1}
				+\frac{5c_{Q}}{4}\zeta_{2}z^{2}e^{-z}\partial_{y}(e^{y}Q^{4}(y))
				-b_{2}z\partial_{y}Q_{2}+O_{\mathcal{S}}(1).
			\end{aligned}
		\end{equation}
		Indeed, we decompose
		\begin{equation*}
			\mathcal{H}_{6}=\mathcal{H}_{6,1}+\mathcal{H}_{6,2}+\mathcal{H}_{6,3},
		\end{equation*}
		where
		\begin{equation*}
			\begin{aligned}
				\mathcal{H}_{6,1}&=\partial_{y}\left(\partial_{y}^{2}U-U+5S^{4}U\right)+R\partial_{y}^{3}\phi,\\
				\mathcal{H}_{6,2}&=\left(\partial_{y}\left(\partial_{y}^{2}R-R+5S^{4}R\right)\right)\phi+\partial_{y}H,\\
				\mathcal{H}_{6,3}&=\left(3\partial_{y}^{2}R-R+5S^{4}R\right)\partial_{y}\phi
				+3(\partial_{y}R)\partial_{y}^{2}\phi.
			\end{aligned}
		\end{equation*}
		
		\smallskip
		\emph{Estimate on $\mathcal{H}_{6,1}$.} We claim that 
		\begin{equation}\label{est:H61}
			\begin{aligned}
				\mathcal{H}_{6,1}
				&=\left(10m_{0}b_{1}-5(2\alpha m_{0} z+a_{0})e^{-z}\right)\partial_{y}\left(Q_{2}^{4}\right)\\
				&+10b_{1}^{2}\partial_{y}(P_{2}^{2}Q_{2}^{3})-b_{1}^{2}(E_{1}+F_{2})\partial_{y}\phi+\mu b_{1}^{2}\phi \partial_{y}F_{2}\\
				&+b_{1}^{2}(2X_{1}-\Lambda X_{1})\phi-10b_{1}^{2}\partial_{y}(X_{1}^{2}Q_{1}^{3})-10\alpha m_{0}b_{1}\mu \partial_{y}Y_{2}\\
                &+10m_{0}b_{1}^{2}\left(\Gamma(\Lambda Y)-2Y_{2}\right)+b_{1}^{2}(1+\mu)^{\frac{3}{2}}(\Gamma (\Lambda X)-2X_{2})\phi\\
				&+10\alpha m_{0}z(1-(1+\mu)^{\frac{1}{4}})e^{-z}\partial_{y}\left(Q_{2}^{4}\right)
				+10\alpha m_{0}\mu ze^{-z}\partial_{y}Y_{2}+O_{\mathcal{S}}(1).
			\end{aligned}
		\end{equation}
		Indeed, we first decompose
		\begin{equation*}
			\begin{aligned}
				\mathcal{H}_{6,1}=\mathcal{H}_{6,1,1}
				+\mathcal{H}_{6,1,2}+\mathcal{H}_{6,1,3},
			\end{aligned}
		\end{equation*}
		with 
		\begin{equation*}
			\begin{aligned}
				\mathcal{H}_{6,1,1}&=\partial_{y}\left(\partial_{y}^{2}U_{1}-U_{1}+5S^{4}U_{1}\right),\\
				\mathcal{H}_{6,1,2}&=\partial_{y}  \left(\partial_{y}^{2}U_{2}-(1+\mu)U_{2}+5Q_{2}^{4}U_{2}\right),\\
				\mathcal{H}_{6,1,3}&=5\partial_{y}((S^{4}-Q_{2}^{4})U_{2})+\mu \partial_{y}U_{2}+R\partial_{y}^{3}\phi.
			\end{aligned}
		\end{equation*}
		Note that, from the definition of $U_{1}$ in~\eqref{equ:defU1}, we rewrite 
		\begin{equation*}
			\begin{aligned}
				\mathcal{H}_{6,1,1}
				&=2b_{1}^{2}\left(\partial_{y}^{2}E_{1}+\partial_{y}^{2}F_{2}\right)\partial_{y}\phi+3b_{1}^{2}\left(\partial_{y}E_{1}+\partial_{y}F_{2}\right)\partial_{y}^{2}\phi\\
				&+b_{1}^{2}\left((\partial_{y}^{2}E_{1}+\partial_{y}^{2}F_{2})-(E_{1}+F_{2})+5S^{4}(E_{1}+F_{2})\right)\partial_{y}\phi\\
				&+b_{1}^{2}\left(\partial_{y}\left((\partial_{y}^{2}E_{1}+\partial_{y}^{2}F_{2})-(E_{1}+F_{2})+5S^{4}(E_{1}+F_{2})\right)\right)\phi\\
				&+5(3m_{0}z+a_{2})b_{1}^{2}\partial_{y}\left(\partial_{y}^{2}Y_{2}-Y_{2}+5S^{4}Y_{2}\right)+b_{1}^{2}(E_{1}+F_{2})\partial_{y}^{3}\phi.
			\end{aligned}
		\end{equation*}
		Using Definition~\ref{def:Admissible} and Lemma~\ref{le:RAB}, we check that 
		\begin{equation*}
			\begin{aligned}
				b_{1}^{2}
				\left(
				\|(E_{1}+F_{2})\partial_{y}^{3}\phi\|_{H^{1}}+
				\|\left(\partial_{y}^{2}E_{1}+\partial_{y}^{2}F_{2}\right)\partial_{y}\phi\|_{H^{1}}\right)&\lesssim \frac{1}{s^{3}\log s},\\
				b_{1}^{2}\left(\|\left(\partial_{y}E_{1}+\partial_{y}F_{2}\right)\partial_{y}^{2}\phi\|_{H^{1}}
				+\|S^{4}(E_{1}+F_{2})\partial_{y}\phi\|_{H^{1}}
				\right)
				&\lesssim \frac{1}{s^{3}\log s}.
			\end{aligned}
		\end{equation*}
		Then, from Lemma~\ref{le:EF}, Lemma~\ref{le:boundinter} and Lemma~\ref{le:PAB}, 
		\begin{equation*}
			\begin{aligned}
				b_{1}^{2}\left(\partial_{y}\left(\partial_{y}^{2}E_{1}-E_{1}+5S^{4}E_{1}\right)\right)\phi
				&=b_{1}^{2}(2X_{1}-\Lambda X_{1})\phi-10b_{1}^{2}\partial_{y}(X_{1}^{2}Q_{1}^{3})\\
				&+15m_{0}b_{1}^{2}\partial_{y}\left(yQ_{2}^{4}\right)+5a_{2}b_{1}^{2}\partial_{y}(Q_{2}^{4})+O_{\mathcal{S}}(1).
			\end{aligned}
		\end{equation*}
		Similarly, we compute 
		\begin{equation*}
			\begin{aligned}
				&b_{1}^{2}\left(\partial_{y}\left(\partial_{y}^{2}F_{2}-F_{2}+5S^{4}F_{2}\right)\right)\phi\\
				&=\mu b_{1}^{2}\phi \partial_{y}F_{2}+b_{1}^{2}(1+\mu)^{\frac{3}{2}}(\Gamma (\Lambda X)-2X_{2})\phi-15m_{0}b_{1}^{2}\Gamma Z\\
				&+10m_{0}b_{1}^{2}\left(\Gamma(\Lambda Y)-2Y_{2}\right)+10b_{1}^{2}\partial_{y}(P_{2}^{2}Q_{2}^{3})+O_{\mathcal{S}}(1).
			\end{aligned}
		\end{equation*}
		Moreover, from Lemma~\ref{le:boundinter} and Remark~\ref{re:Y}, 
		\begin{equation*}
			\begin{aligned}
				&5(3m_{0}z+a_{2})b_{1}^{2}\partial_{y}\left(\partial_{y}^{2}Y_{2}-Y_{2}+5S^{4}Y_{2}\right)\\
				&=-5(3m_{0}z+a_{2})b^{2}_{1}\partial_{y}(Q_{2}^{4})+O_{\mathcal{S}}(1).
			\end{aligned}
		\end{equation*}
		Combining the above estimates, we obtain 
		\begin{equation}\label{est:H611}
			\begin{aligned}
				\mathcal{H}_{6,1,1}
				&=\mu b_{1}^{2}\phi \partial_{y}F_{2}+10m_{0}b_{1}^{2}\left(\Gamma(\Lambda Y)-2Y_{2}\right)+O_{\mathcal{S}}(1)\\
				&+b_{1}^{2}(1+\mu)^{\frac{3}{2}}(\Gamma (\Lambda X)-2X_{2})\phi+10b_{1}^{2}\partial_{y}(P_{2}^{2}Q_{2}^{3})\\
				&+b_{1}^{2}(2X_{1}-\Lambda X_{1})\phi-10b_{1}^{2}\partial_{y}(X_{1}^{2}Q^{3}_{1})-b_{1}^{2}(E_{1}+F_{2})\partial_{y}\phi.
			\end{aligned}
		\end{equation}
		On the other hand, from Remark~\ref{re:Y} and Definition~\ref{def:Admissible}, we find 
		\begin{equation}\label{est:H612}
			\begin{aligned}
				\mathcal{H}_{6,1,2}
				&=\left(10m_{0}b_{1}-5(2\alpha m_{0} z+a_{0})e^{-z}\right)\partial_{y}\left(Q_{2}^{4}\right)\\
				&+10\alpha m_{0}z(1-(1+\mu)^{\frac{1}{4}})e^{-z}\partial_{y}\left(Q_{2}^{4}\right)+O_{\mathcal{S}}(1).
			\end{aligned}
		\end{equation}
		Then, by an elementary computation, we decompose
		\begin{equation*}
			S^{4}-Q_{2}^{4}=Q_{1}^{4}-4Q_{1}^{3}Q_{2}+6Q_{1}^{2}Q_{2}^{2}-4Q_{1}Q_{2}^{3}.
		\end{equation*}
		Combining the above estimate with Lemma~\ref{le:boundinter} and Lemma~\ref{le:RAB}, we find 
		\begin{equation}\label{est:H613}
			\mathcal{H}_{6,1,3}=10\alpha m_{0}\mu ze^{-z}\partial_{y}Y_{2}
			-10\alpha m_{0}b_{1}\mu \partial_{y}Y_{2}+O_{\mathcal{S}}(1).
		\end{equation}
		Here, we use the fact that 
		\begin{equation*}
        \begin{aligned}
			\|Q_{1}^{4}Y_{2}\|_{H^{1}}&\lesssim \left(\int_{\RR}e^{-8|y|}e^{-2(1-|\mu|)^{\frac{1}{2}}|y-z|}\dd y\right)^{\frac{1}{2}}\lesssim e^{-(1-|\mu|)^{\frac{1}{2}}z}\lesssim \frac{1}{s^{2}\log s},\\
            \|Q_{1}Q_{2}^{3}Y_{2}\|_{H^{1}}&\lesssim 
            \left(\int_{\RR}e^{-2|y|}e^{-8(1-|\mu|)^{\frac{1}{2}}|y-z|}\dd y\right)^{\frac{1}{2}}\lesssim e^{-z}\lesssim \frac{1}{s^{2}\log s}.
            \end{aligned}
		\end{equation*}
		We see that~\eqref{est:H61} follows directly from~\eqref{est:H611},~\eqref{est:H612} and~\eqref{est:H613}.
		
		\smallskip
		\emph{Estimate on $\mathcal{H}_{6,2}$.} We claim that 
		\begin{equation}\label{est:H62}
			\begin{aligned}
				\mathcal{H}_{6,2}
				&=\alpha e^{-z}X_{1}\phi -\alpha (1+\mu)^{\frac{3}{2}}e^{-z}X_{2}\phi+\mu e^{-z}\phi\partial_{y}B_{2}\\
				&+\frac{5c_{Q}}{4}b_{1}z^{2}e^{-z}\partial_{y}\left(e^{-(y-z)}Q^{4}(y-z)\right)-\mu\zeta_{2}\phi\partial_{y}X_{2}\\
				&+b_{2}\Lambda Q_{2}-b_{1}\Lambda Q_{1}-10m_{0}b_{1}\partial_{y}\left(Q_{2}^{4}\right)+5a_{0}e^{-z}\partial_{y}\left(Q_{2}^{4}\right)\\
				&+\frac{5c_{Q}}{4}\zeta_{2}z^{2}e^{-z}\partial_{y}(e^{y}Q^{4}(y))
				-b_{2}z\partial_{y}Q_{2}
				-10\alpha m_{0}e^{-z}(\Gamma W)\\
				&+\frac{5c_{Q}}{2}\mu z e^{-z}\partial_{y}\left(e^{y}Q^{4}(y)\right)
				+10\alpha m_{0}e^{-z}\partial_{y}\left(yQ_{2}^{4}\right)+O_{\mathcal{S}}(1).
			\end{aligned}
		\end{equation}
		Indeed, we first decompose
		\begin{equation*}
			\mathcal{H}_{6,2}=\partial_{y}H+\mathcal{H}_{6,2,1}+\mathcal{H}_{6,2,2}+\mathcal{H}_{6,2,3}+\mathcal{H}_{6,2,4},
		\end{equation*}
		where
		\begin{equation*}
			\begin{aligned}
				\mathcal{H}_{6,2,1}&=b_{1} \left(\partial_{y}\left(\partial^{2}_{y}X_{1}-X_{1}+5Q_{1}^{4}X_{1}\right)\right)\phi
				+5b_{1}\left(\partial_{y}\left(\left(S^{4}-Q_{1}^{4}\right)X_{1}\right)\right)\phi,\\
				\mathcal{H}_{6,2,2}&=-\zeta_{2}\left(\partial_{y}\left(\partial_{y}^{2}X_{2}-X_{2}+5Q_{2}^{4}X_{2}\right)\right)\phi
				-5\zeta_{2}\left(\partial_{y}\left(\left(S^{4}-Q_{2}^{4}\right)X_{2}\right)\right)\phi,\\
				\mathcal{H}_{6,2,3}&=e^{-z}
				\left(\partial_{y}\left(\partial_{y}^{2}A_{1}-A_{1}+5Q_{1}^{4}A_{1}\right)\right)\phi
				+5e^{-z}\left(\partial_{y}\left(\left(S^{4}-Q_{1}^{4}\right)A_{1}\right)\right)\phi,\\
				\mathcal{H}_{6,2,4}&=e^{-z}
				\left(\partial_{y}\left(\partial_{y}^{2}B_{2}-B_{2}+5Q_{2}^{4}B_{2}\right)\right)\phi
				+5e^{-z}\left(\partial_{y}\left(\left(S^{4}-Q_{2}^{4}\right)B_{2}\right)\right)\phi.
			\end{aligned}
		\end{equation*}
		First, from Remark~\ref{re:P}, we check that 
		\begin{equation*}
			\begin{aligned}
				\mathcal{H}_{6,2,1}&=-b_{1}(\Lambda Q)\phi
				+5b_{1}\left(\partial_{y}\left(X_{1}Q_{2}^{4}\right)\right)\phi\\
				&+5b_{1}\left(\partial_{y}\left(\left(S^{4}-Q_{1}^{4}-Q_{2}^{4}\right)X_{1}\right)\right)\phi.
			\end{aligned}
		\end{equation*}
		Note that, from Lemma~\ref{le:boundinter} and Definition~\ref{def:Admissible}, we find 
		\begin{equation*}
			\|\partial_{y}\left(\left(S^{4}-Q_{1}^{4}-Q_{2}^{4}\right)X_{1}\right)\|_{H^{1}}\lesssim \|Q_{1}^{3}Q_{2}\|_{H^{1}}
			+\|Q_{1}Q_{2}^{3}\|_{H^{1}}\lesssim s^{-2}.
		\end{equation*}
		It follows from Lemma~\ref{le:refinedPAB} and the exponential decay of $Q$ that 
		\begin{equation}\label{est:H621}
			\begin{aligned}
				\mathcal{H}_{6,2,1}&=\frac{5c_{Q}}{4}b_{1}z^{2}e^{-z}\partial_{y}\left(e^{-(y-z)}Q^{4}(y-z)\right)\\
				&-b_{1}\Lambda Q_{1}-10m_{0}b_{1}\partial_{y}\left(Q_{2}^{4}\right)+O_{\mathcal{S}}(1).
			\end{aligned}
		\end{equation}
		Second, using $\Lambda Q=\frac{1}{2}Q+yQ'$, we check that 
		\begin{equation*}
			\begin{aligned}
				&(1+\mu)^{\frac{1}{4}}\left[(\Lambda Q)((1+\mu)^{\frac{1}{2}}(y-z))\right]\\
				&=\Lambda Q_{2}-z(1+\mu)^{\frac{3}{4}}Q'((1+\mu)^{\frac{1}{2}}(y-z))=\Lambda Q_{2}-z\partial_{y}Q_{2}.
			\end{aligned}
		\end{equation*}
		On the other hand, we compute 
		\begin{equation*}
			\partial_{y}^{2}X_{2}-X_{2}+5Q^{4}_{2}X_{2}=\mu X_{2}-(1+\mu)\Gamma(\mathcal{L}X).
		\end{equation*}
		It follows from Remark~\ref{re:P} that\footnote{Here, we use the fact that $b_{2}=\zeta_{2}(1+\mu)^{\frac{3}{2}}$.}
		\begin{equation*}
			\begin{aligned}
				\mathcal{H}_{6,2,2}&=b_{2}\Lambda Q_{2}-b_{2}z\partial_{y}Q_{2}-\mu\zeta_{2}\phi\partial_{y}X_{2}\\
				&-5\zeta_{2}\left(\partial_{y}\left(\left(S^{4}-Q_{1}^{4}-Q_{2}^{4}\right)X_{2}\right)\right)\phi
				-5\zeta_{2}\left(\partial_{y}\left(X_{2}Q_{1}^{4}\right)\right)\phi.
			\end{aligned}
		\end{equation*}
		
		Similar as above, from Lemma~\ref{le:refinedPAB} and the exponential decay of $Q$,
		\begin{equation}\label{est:H622}
			\begin{aligned}
				\mathcal{H}_{6,2,2}
				&=b_{2}\Lambda Q_{2}-b_{2}z\partial_{y}Q_{2}-\mu\zeta_{2}\phi\partial_{y}X_{2}\\
				&+\frac{5c_{Q}}{4}\zeta_{2}z^{2}e^{-z}\partial_{y}(e^{y}Q^{4}(y))+O_{\mathcal{S}}(1).
			\end{aligned}
		\end{equation}
		Then, from  Lemma~\ref{le:AB}, Lemma~\ref{le:PAB} and Definition~\ref{def:Admissible}, 
		\begin{equation*}
			\begin{aligned}
				\mathcal{H}_{6,2,3}
				&=10\alpha m_{0}e^{-z}\partial_{y}\left(yQ_{2}^{4}\right)+5a_{0}e^{-z}\partial_{y}\left(Q_{2}^{4}\right)\\
				&+5c_{Q}e^{-z}\partial_{y}(e^{y}Q^{4})+\alpha e^{-z} X_{1}\phi+O_{\mathcal{S}}(1).
			\end{aligned}
		\end{equation*}
		Here, we use the fact that 
		\begin{equation*}
			\begin{aligned}
				& \|\partial_{y}\left(\left(S^{4}-Q_{1}^{4}-Q_{2}^{4}\right)A_{1}\right)\|_{H^{1}}\\
				&\lesssim \|(1+|y|)Q_{1}^{3}Q_{2}\|_{H^{1}}
				+\|(1+|y|)Q_{1}Q_{2}^{3}\|_{H^{1}}\lesssim s^{-\frac{3}{2}}.
			\end{aligned}
		\end{equation*}
		Similar as above, we also have
		\begin{equation*}
			\begin{aligned}
				\mathcal{H}_{6,2,4}
				&=-5c_{Q}e^{-z}\partial_{y}\big(e^{-(y-z)}Q^{4}(y-z)\big)-10\alpha m_{0}e^{-z}(\Gamma W)\\
				&-\alpha(1+\mu)^{\frac{3}{2}} e^{-z}X_{2}\phi+\mu e^{-z}\phi\partial_{y}B_{2}+O_{\mathcal{S}}(1).
			\end{aligned}
		\end{equation*}
		Combining the above two estimates with Lemma~\ref{le:interasym}, we obtain 
		\begin{equation}\label{est:H623624}
			\begin{aligned}
				\mathcal{H}_{6,2,3}+\mathcal{H}_{6,2,4}
				&=\alpha e^{-z}X_{1}\phi -\alpha (1+\mu)^{\frac{3}{2}}e^{-z}X_{2}\phi+\mu e^{-z}\phi\partial_{y}B_{2}\\
				&-\partial_{y}H+\frac{5c_{Q}}{2}\mu z e^{-z}\partial_{y}\left(e^{y}Q^{4}(y)\right)+5a_{0}e^{-z}\partial_{y}\left(Q_{2}^{4}\right)\\
				&+10\alpha m_{0}e^{-z}\partial_{y}\left(yQ_{2}^{4}\right)-10\alpha m_{0}e^{-z}(\Gamma W)+O_{\mathcal{S}}(1).
			\end{aligned}
		\end{equation}
		We see that~\eqref{est:H62} follows directly from~\eqref{est:H621},~\eqref{est:H622} and~\eqref{est:H623624}.
		
		\smallskip
		\emph{Estimate on $\mathcal{H}_{6,3}$.} 
		Note that, from the definition of $\phi$ in~\eqref{equ:defphi}, we directly have 
		\begin{equation*}
			\|(S^{4}R)\partial_{y}\phi\|_{H^{1}}\lesssim \left(\int_{\RR}
			\big(e^{-|y|}+e^{-|y-z|}\big)\textbf{1}_{[\frac{s}{2},\frac{5s}{2}]}(y)\dd y\right)^{\frac{1}{2}}\lesssim \frac{1}{s^{3}\log s}.
		\end{equation*}
		Based on the above estimate and Lemma~\ref{le:RAB}, we obtain \begin{equation}\label{est:H63}
			\mathcal{H}_{6,3}=-R\partial_{y}\phi+O_{\mathcal{S}}(1).
		\end{equation}
		
		Last, we see that~\eqref{est:H6} follows directly from~\eqref{est:H61},~\eqref{est:H62} and~\eqref{est:H63}.
		
		\smallskip
		\textbf{Step 7.} Estimate on $\mathcal{H}_{7}$. We claim that 
		\begin{equation}\label{est:H7}
			\mathcal{H}_{7}=10b_{1}^{2}\partial_{y}\left(Q_{1}^{3}X_{1}^{2}\right)-10b_{1}^{2}\partial_{y}\left(Q_{2}^{3}P_{2}^{2}\right)+O_{\mathcal{S}}(1).
		\end{equation}
		Indeed, we decompose 
		\begin{equation*}
			\mathcal{H}_{7}=\mathcal{H}_{7,1}+\mathcal{H}_{7,2},
		\end{equation*}
		where
		\begin{equation*}
			\begin{aligned}
				\mathcal{H}_{7,1}&=\partial_{y}\left(10S^{2}(R\phi+U)^{3}+5S(R\phi+U)^{4}\right),\\
				\mathcal{H}_{7,2}&=\partial_{y}\left(10S^{3}(R\phi+U)^{2}+(R\phi+U)^{5}\right)+\partial_{y}G-\partial_{y}H.
			\end{aligned}
		\end{equation*}
		
		\emph{Estimate on $\mathcal{H}_{7,1}$.} From Definition~\ref{def:Admissible} and $\rm{supp}\phi\subset (-\infty,\frac{5}{2}s)$, we find 
		\begin{equation}\label{est:H71}
			\begin{aligned}
				\|\mathcal{H}_{7,1}\|_{H^{1}}
				&\lesssim \|S^{2}(R\phi+U)^{3}\|_{H^{1}}+\|S(R\phi+U)^{4}\|_{H^{1}}\\
				&\lesssim b_{1}^{3}+\zeta_{2}^{3}+z^{4}\left(e^{-3z}+e^{-4z}+b_{1}^{6}+b_{1}^{8}\right)\lesssim 
				\frac{1}{s^{3}\log s}.
			\end{aligned}
		\end{equation}
		
		\emph{Estimate on $\mathcal{H}_{7,2}$.}
		First, we decompose 
		\begin{equation*}
			\begin{aligned}
				S^{3}(R\phi+U)^{2}
				&=Q_{1}^{3}(R\phi+U)^{2}-Q_{2}^{3}(R\phi+U)^{2}\\
				&-3Q_{1}^{2}Q_{2}\left(R\phi+U\right)^{2}+3Q_{1}Q_{2}^{2}(R\phi+U)^{2}.
			\end{aligned}
		\end{equation*}
		By an elementary computation and definition of $R$ in~\eqref{equ:defR}, 
		\begin{equation*}
			\begin{aligned}
				(R\phi+U)^{2}
				&=b_{1}^{2}X_{1}^{2}\phi^{2}+\left(R\phi-b_{1}X_{1}\phi\right)\left(R\phi+b_{1}X_{1}\phi\right)+2RU\phi+U^{2}.
			\end{aligned}
		\end{equation*}
		It follows from Lemma~\ref{le:boundinter}, Lemma~\ref{le:PAB} and Definition~\ref{def:Admissible} that 
		\begin{equation}\label{est:Q3R1}
			\partial_{y}(Q_{1}^{3}(R\phi+U)^{2})=b_{1}^{2}\partial_{y}(Q_{1}^{3}X_{1}^{2})+O_{\mathcal{S}}(1).
		\end{equation}
		Then, using the definition of $R$ and $U$ in~\eqref{equ:defR} and~\eqref{equ:defU1}--\eqref{equ:defU2}, we rewrite 
		\begin{equation*}
			\begin{aligned}
				(R\phi+U)^{2}&=\left(b_{1}X_{1}\phi-\zeta_{2}X_{2}\phi-{10 m_{0}b_{1}}(1+\mu)^{-\frac{1}{4}}Y_{2}\right)^{2}\\
				&+\left(U_{1}+e^{-z}(A_{1}+B_{2})\phi+5(2\alpha m_{0}z+a_{0})e^{-z}Y_{2}\right)^{2}\\
				&+2\left[\left(b_{1}X_{1}\phi-\zeta_{2}X_{2}\phi-{10 m_{0}b_{1}}(1+\mu)^{-\frac{1}{4}}Y_{2}\right)\right.\\
				&\left.\qquad \qquad \times\left(U_{1}+e^{-z}(A_{1}+B_{2})\phi+5(2\alpha m_{0}z+a_{0})e^{-z}Y_{2}\right)\right].
			\end{aligned}
		\end{equation*}
		Moreover, using the definition of $X$ in Remark~\ref{re:P}, 
		\begin{equation*}
			\begin{aligned}
				& \partial_{y}^{\le 2}\left[\left(b_{1}X_{1}\phi-\zeta_{2}X_{2}\phi-{10 m_{0}b_{1}}(1+\mu)^{-\frac{1}{4}}Y_{2}\right)^{2}\right]\\
				&=\partial_{y}^{\le 2}\left[b_{1}^{2}P_{2}^{2}-2b_{1}^{2}(X_{1}+2m_{0})P_{2}+b_{1}^{2}(X_{1}+2m_{0})^{2}\right]\\
                &+O_{L^{\infty}}(b_{1}(|\mu b_{1}|+|\Theta|))
                +O\left(|1-\phi|+|\partial_{y}\phi|+|\partial_{y}^{2}\phi|\right)
                .
			\end{aligned}
		\end{equation*}
		Similarly, we check that 
		\begin{equation*}
			\begin{aligned}
				&\partial_{y}^{\le 2}\left[\left(U_{1}+e^{-z}(A_{1}+B_{2})\phi+5(2\alpha m_{0}z+a_{0})e^{-z}Y_{2}\right)^{2}\right]\\
				&+2\partial_{y}^{\le 2}\left[\left(b_{1}X_{1}\phi-\zeta_{2}X_{2}\phi-{10 m_{0}b_{1}}(1+\mu)^{-\frac{1}{4}}Y_{2}\right)\right.\\
				&\qquad \qquad \times\left(U_{1}+e^{-z}(A_{1}+B_{2})\phi+5(2\alpha m_{0}z+a_{0})e^{-z}Y_{2}\right)\Big]\\
				&=O_{L^{\infty}}\left((1+z)(|b_{1}|+|\Theta|)(b_{1}^{2}+e^{-z})
				+z^{2}b_{1}^{4}+z^{2}e^{-2z}
				\right).
			\end{aligned}
		\end{equation*}
		Here, we use again the fact that 
		\begin{equation*}
			\begin{aligned}
				A_{1}+B_{2}&=(A_{1}-2\alpha m_{0}y)
				+2\alpha m_{0}y\big(1-(1+\mu)^{\frac{3}{4}}\big)\\
				&+(B_{2}+2\alpha m_{0}(1+\mu)^{\frac{3}{4}}(y-z))+2\alpha m_{0}(1+\mu)^{\frac{3}{4}}z,\\
				E_{1}+F_{2}&=(E_{1}-3m_{0}y)
				+3m_{0}y\big(1-(1+\mu)^{\frac{3}{4}}\big)\\
				&+(F_{2}+3m_{0}(1+\mu)^{\frac{3}{4}}(y-z))+3m_{0}(1+\mu)^{\frac{3}{4}}z.
			\end{aligned}
		\end{equation*}
		Combining the above estimates with Lemma~\ref{le:PAB} and Definition~\ref{def:Admissible}, we find
		\begin{equation}\label{est:Q3R2}
			\partial_{y}\left(Q_{2}^{3}(R\phi+U)^{2}\right)=b_{1}^{2}\partial_{y}\left(Q_{2}^{3}P_{2}^{2}\right)+O_{\mathcal{S}}(1).
		\end{equation}
		Next, using again Lemma~\ref{le:boundinter} and Definition~\ref{def:Admissible},
		\begin{equation*}
			\begin{aligned}
				\big\|
				\partial_{y}\big(Q_{1}^{2}Q_{2}\left(R\phi+U\right)^{2}\big)\big\|_{H^{1}}
				\lesssim \|\partial_{y}^{\le 2}(Q_{1}^{2}Q_{2})\|_{L^{2}}
				\|\partial_{y}^{\le 2}(R\phi+U)\|^{2}_{L^{\infty}}\lesssim \frac{1}{s^{3}\log s},
				\\
				\big\|
				\partial_{y}\big(Q_{1}Q^{2}_{2}\left(R\phi+U\right)^{2}\big)\big\|_{H^{1}}
				\lesssim \|\partial_{y}^{\le 2}(Q_{1}Q^{2}_{2})\|_{L^{2}}
				\|\partial_{y}^{\le 2}(R\phi+U)\|^{2}_{L^{\infty}}\lesssim \frac{1}{s^{3}\log s}.
			\end{aligned}
		\end{equation*}
        On the other hand, from Lemma~\ref{le:interasym} and Definition~\ref{def:Admissible},
        \begin{equation*}
        \begin{aligned}
            &\|\partial_{y}((R\phi+U)^{5})\|_{H^{1}}+\|\partial_{y}G-\partial_{y}H\|_{H^{1}}\\
            &\lesssim \|\partial_{y}^{\le 1}(R\phi+U)\|_{L^{\infty}}^{4}\|\partial_{y} (R\phi+U)\|_{H^{1}}+
            \|\partial_{y}G-\partial_{y}H\|_{H^{1}}\lesssim \frac{1}{s^{3}\log s}.
            \end{aligned}
        \end{equation*}
		Combining the above estimates with~\eqref{est:Q3R1} and~\eqref{est:Q3R2}, we obtain 
		\begin{equation*}
			\mathcal{H}_{7,2}=10b_{1}^{2}\partial_{y}\left(Q_{1}^{3}X_{1}^{2}\right)-10b_{1}^{2}\partial_{y}\left(Q_{2}^{3}P_{2}^{2}\right)+O_{\mathcal{S}}(1).
		\end{equation*}
		We see that~\eqref{est:H7} follows directly from the above estimate and~\eqref{est:H71}.

		\smallskip
		\textbf{Step 8.} Estimate on $b_{1}\Lambda V$. We claim that 
		\begin{equation}\label{est:bLambdaV}
			\begin{aligned}
				b_{1}\Lambda V&
				=b_{1}^{3}\Lambda \left((E_{1}+F_{2})\phi\right)
				-b_{1}(\zeta_{2}-b_{1})\Lambda X_{2}\phi\\
				&+b_{1}yR\partial_{y}\phi-zb_{1}^{2}\phi\partial_{y}X_{2}-10m_{0}zb_{1}^{2}\partial_{y}Y_{2}\\
				&+b_{1}\Lambda Q_{1}-b_{1}\Lambda Q_{2}+b_{1}^{2}(\Lambda X_{1})\phi-b_{1}^{2}\Gamma(\Lambda X)\phi\\
				&+b_{1}e^{-z}\left(\Lambda A_{1}+\Lambda B_{2}\right)\phi
				-10m_{0}b_{1}^{2}\Gamma (\Lambda Y)+O_{\mathcal{S}}(1).
			\end{aligned}
		\end{equation}
		Indeed, from the definition of $V$ in~\eqref{equ:defV}, we decompose 
		\begin{equation*}
			\begin{aligned}
				b_{1}\Lambda V&=b_{1}\Lambda Q_{1}-b_{1}\Lambda Q_{2}+b_{1}^{2}\Lambda X_{1}\phi-b_{1}\zeta_{2}\Lambda X_{2}\phi\\
				&+b_{1}e^{-z}\left(\Lambda A_{1}+\Lambda B_{2}\right)\phi+b_{1}yR\partial_{y}\phi+b_{1}\Lambda U_{1}\\
				&+5b_{1}(2\alpha m_{0}z+a_{0})e^{-z}\Lambda Y_{2}-10m_{0}b_{1}^{2}(1+\mu)^{-\frac{1}{4}}\Lambda Y_{2}.
			\end{aligned}
		\end{equation*}
		First, we rewrite the last term in the first line by 
		\begin{equation*}
			b_{1}\zeta_{2}\Lambda X_{2}\phi=b_{1}^{2}\Gamma(\Lambda X)\phi
			+zb_{1}^{2}\phi\partial_{y}X_{2}+b_{1}(\zeta_{2}-b_{1})\Lambda X_{2}\phi.
		\end{equation*}
		Similarly, we rewrite the last term in the third line by 
		\begin{equation*}
			\begin{aligned}
				10m_{0}b_{1}^{2}(1+\mu)^{-\frac{1}{4}}\Lambda Y_{2}
				&=10m_{0}b_{1}^{2}\Gamma (\Lambda Y)+10m_{0}zb_{1}^{2}\partial_{y}Y_{2}\\
				&+10m_{0}b_{1}^{2}\big((1+\mu)^{-\frac{1}{4}}-1\big)\Lambda Y_{2}.
			\end{aligned}
		\end{equation*}
		From Definition~\ref{def:Admissible} and $Y\in \mathcal{Y}$, we directly have 
		\begin{equation*}
			\|  b_{1}^{2}\big((1+\mu)^{-\frac{1}{4}}-1\big)\Lambda Y_{2}\|_{H^{1}}\lesssim |\mu z b_{1}^{2}|\lesssim \frac{1}{s^{3}\log s},
		\end{equation*}
		which implies that
		\begin{equation*}
			10m_{0}b_{1}^{2}(1+\mu)^{-\frac{1}{4}}\Lambda Y_{2}
			=10m_{0}b_{1}^{2}\Gamma (\Lambda Y)+10m_{0}zb_{1}^{2}\partial_{y}Y_{2}+O_{\mathcal{S}}(1).
		\end{equation*}
		On the other hand, from the definition of $U_{1}$ in~\eqref{equ:defU1}, we decompose 
		\begin{equation*}
			b_{1}\Lambda U_{1}=b_{1}^{3}\Lambda \left((E_{1}+F_{2})\phi\right)+5(3m_{0}z+a_{2})b_{1}^{3}\Lambda Y_{2}.
		\end{equation*}
		Using again Definition~\ref{def:Admissible} and $Y\in \mathcal{Y}$, we directly have 
		\begin{equation*}
			\|(3m_{0}z+a_{2})b_{1}^{3}\Lambda Y_{2}\|_{H^{1}}\lesssim z^{2}b_{1}^{3}\lesssim \frac{1}{s^{3}\log s},
		\end{equation*}
		which implies that 
		\begin{equation*}
			b_{1}\Lambda U_{1}=b_{1}^{3}\Lambda \left((E_{1}+F_{2})\phi\right)+O_{\mathcal{S}}(1).
		\end{equation*}
		Similarly, we also check that 
		\begin{equation*}
			\|b_{1}(2\alpha m_{0}z+a_{0})e^{-z}\Lambda Y_{2}\|_{H^{1}}\lesssim b_{1}z^{2}e^{-z}\lesssim \frac{1}{s^{3}\log s}.
		\end{equation*}
		We see that the estimate~\eqref{est:bLambdaV} follows directly from the above estimates.
		
		\smallskip
		\textbf{Step 9.} Conclusion. Combining~\eqref{equ:dsV},~\eqref{equ:LV},~\eqref{est:H1}--\eqref{est:H6},~\eqref{est:H7} and~\eqref{est:bLambdaV} with an elementary computation, we complete the proof for Proposition~\ref{prop:approx}.
	\end{proof}
	
	Last, we establish some technical estimates related to the error terms $(\Psi_{i})_{i=1}^{4}$.
	\begin{lemma}\label{le:Psi1}
		The following estimates hold true.
		\begin{enumerate}
			\item \emph{Estimates related to $\Psi_{1}(V)$.}
			We have 
			\begin{equation*}
				\|\Psi_{1}(V)\|_{H^{1}}\lesssim\frac{|\dot{z}-b_{1}z-\mu|}{s\log s}+\frac{1}{s^{\frac{5}{2}}\log s}.
			\end{equation*}
			In particular, we have 
			\begin{equation*}
				\left| \left(\Psi_{1}(V),Q_{1}\right)\right|+
				\left|\left(\Psi_{1}(V),Q_{2}\right)\right|\lesssim 
				\frac{|\dot{z}-b_{1}z-\mu|}{s^{2}\log s}+
				\frac{1}{s^{3}\log s}.
			\end{equation*}
			
			\item \emph{Estimates related to $\Psi_{2}(V)$.}
			We have 
			\begin{equation*}
				\begin{aligned}
					\Psi_{2}(V)&=-\dot{z}e^{-z}\left(A_{1}+B_{2}\right)\phi
					+\frac{\dot{\mu}}{2}b_{1}^{2}\phi\Gamma (\Lambda F)\\
					&+O_{H^{1}}\left(\frac{|\dot{z}|}{s^{\frac{3}{2}}\log s}+\frac{|\dot{\mu}|}{s\log s}+\frac{|\dot{b}_{1}|}{s^{\frac{1}{2}}}+\frac{1}{s^{\frac{5}{2}}\log s}\right).
				\end{aligned}
			\end{equation*}
			In particular, we have 
			\begin{equation*}
				\begin{aligned}
					\left|  \left(\Psi_{2}(V),Q_{1}\right)\right|
					&\lesssim
					\frac{|\dot{z}|}{s^{2}\log s}+\frac{|\dot{\mu}|}{s^{2}\log s}+\frac{|\dot{b}_{1}|}{s}+\frac{1}{s^{3}\log s},\\
					\left(\Psi_{2}(V),Q_{2}\right)&=-2\alpha m_{0}^{2}\dot{z}ze^{-z}+\frac{5}{2}m_{0}b_{1}\dot{\mu}\left(Y-2\Lambda Y,Q\right)\\
					&+O\left( \frac{|\dot{z}|}{s^{2}\log s}+\frac{|\dot{\mu}|}{s^{2}\log s}+\frac{|\dot{b}_{1}|}{s}+\frac{1}{s^{3}\log s}\right).
				\end{aligned}
			\end{equation*}
			
			\item \emph{Estimates related to $\Psi_{3}(V)$.}
			We have 
			\begin{equation*}
				\|\Psi_{3}(V)\|_{H^{1}}\lesssim \frac{1}{s^{\frac{5}{2}}\log s}.
			\end{equation*}
			In particular, we have 
			\begin{equation*}
				\begin{aligned}
					(\Psi_{3}(V),Q_{1})
					&=\frac{\alpha}{4}m_{0}^{2}\zeta_{2}z^{2}e^{-z}+\frac{\alpha}{2}m_{0}^{2}\mu ze^{-z}+O\left(\frac{1}{s^{3}\log s}\right),\\
					(\Psi_{3}(V),Q_{2})
					&=-\frac{\alpha}{4}m_{0}^{2}b_{1}z^{2}e^{-z}+O\left(\frac{1}{s^{3}\log s}\right).
				\end{aligned}
			\end{equation*}
			
			\item  \emph{Estimates related to $\Psi_{4}(V)$.}
			We have 
			\begin{equation*}
				\begin{aligned}
					|\Psi_{4}(V)|&\lesssim \frac{|\dot{b}_{1}|}{s}\log s \mathbf{1}_{[\frac{s}{2},\frac{5s}{2}]}(y)
					+\frac{1}{s^{3}}\mathbf{1}_{[\frac{s}{2},\frac{5s}{2}]}(y),\\
					|\partial_{y}\Psi_{4}(V)|&\lesssim
					\frac{|\dot{b}_{1}|}{s} \mathbf{1}_{[\frac{s}{2},\frac{5s}{2}]}(y)
					+\frac{1}{s^{3}\log s}\mathbf{1}_{[\frac{s}{2},\frac{5s}{2}]}(y).
				\end{aligned}
			\end{equation*}
			In particular, we have 
			\begin{equation*}
				\left| \left(\Psi_{4}(V),Q_{1}\right)\right|+\left|\left(\Psi_{4}(V),Q_{2}\right)\right|\lesssim \frac{|\dot{b}_{1}|}{s}+\frac{1}{s^{3}\log s}.
			\end{equation*}
		\end{enumerate}
	\end{lemma}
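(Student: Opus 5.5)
The plan is to prove the four items term by term, using the admissibility bounds of Definition~\ref{def:Admissible}. These give $b_{1},\zeta_{2}\sim (s\log s)^{-1}$, $e^{-z}\sim (s^{2}\log s)^{-1}$, $|\Theta|\lesssim s^{-2}$, $|\mu|\lesssim s^{-1}$ and $z\sim 2\log s$. On top of them we use the $H^{1}$ bounds on the non-localized profiles in Lemma~\ref{le:RAB}, the cut-off bounds \eqref{est:pyphi}, and the interaction lemmas (Lemma~\ref{le:boundinter}, Lemma~\ref{le:interasym}, Lemma~\ref{le:PAB}). For each $\Psi_{i}$ there are two things to do. First, estimate every summand in $H^{1}$ crudely; profiles restricted to $\operatorname{supp}\phi$ cost $s^{1/2}\log s$, and profiles localized near $Q_{2}$ such as $Y_{2},\partial_{y}Y_{2}$ cost $O(1)$. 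Second, compute the scalar products with $Q_{1},Q_{2}$ separately. This is necessary because $H^{1}$ norm times $\|Q_{k}\|_{L^{2}}$ loses the factor $s^{1/2}$ coming from the linear growth. The saving comes from the fact that on $\operatorname{supp}Q_{k}$ every profile is $O(1+z)$ pointwise. So pairing a profile with $Q_{k}$ gives exactly the size of its coefficient, up to logarithms.

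For (i), the summand $(\dot z-b_{1}z-\mu)(\zeta_{2}\partial_{y}X_{2}+10m_{0}b_{1}\partial_{y}Y_{2})$ has $H^{1}$ norm bounded by $|\dot z-b_{1}z-\mu|/(s\log s)$. Its pairing with $Q_{1}$ and $Q_{2}$ gains a further $s^{-1}$ only if we use Lemma~\ref{le:PAB} together with the identities $(\partial_{y}X,Q)=-(X,Q')=0$ and $(\partial_{y}Y,Q)=-(Y,Q')=0$. The remaining terms are handled as follows.
\begin{itemize}
\item $b_{1}^{3}\Lambda((E_{1}+F_{2})\phi)$ has norm $\lesssim b_{1}^{3}s^{1/2}\log s$, by Lemma~\ref{le:RAB}(iv).
\item $b_{1}(\zeta_{2}-b_{1})\Lambda X_{2}\phi$, $zb_{1}(\zeta_{2}-b_{1})\partial_{y}X_{2}\phi$ and $\Theta\zeta_{2}\Gamma(\Lambda X)\phi$ use $|\zeta_{2}-b_{1}|\lesssim|\Theta|+|\mu|b_{1}$. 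The growth $|y\partial_{y}X_{2}|\lesssim 1$ on $\operatorname{supp}\phi$ gives norms $\lesssim s^{1/2}\cdot s^{-3}$.
\item $b_{1}e^{-z}(\Lambda A_{1}+\Lambda B_{2})\phi$ has norm $\lesssim b_{1}e^{-z}s^{3/2}$, because $|\Lambda A|\lesssim |y|$ and $\|y\phi\|_{H^{1}}\lesssim s^{3/2}$. This gives $s^{-5/2}$ up to logarithms. It is the term that fixes the stated $s^{-5/2}(\log s)^{-1}$ rate, so the logarithms have to be tracked precisely.
\item $\Theta(1-(1+\mu)^{-1})\Gamma(\Lambda Q)$ is of size $s^{-3}$.
\end{itemize}
For the pairings, each of these terms contributes at most its coefficient times $\log^{c}s$, which is $\lesssim s^{-3}(\log s)^{-1}$ after we check the powers. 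The one delicate pairing is $(b_{1}e^{-z}\Lambda A_{1}\phi,Q_{1})\sim b_{1}e^{-z}\lesssim s^{-3}\log^{-2}s$, and that is fine.

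For (ii) the procedure is the same. The term $-\dot z e^{-z}(A_{1}+B_{2})\phi$ is kept explicitly. In $H^{1}$ the term $\frac{\dot\mu b_{1}^{2}}{2(1+\mu)}\Gamma(\Lambda F)\phi$ is replaced by $\frac{\dot\mu}{2}b_{1}^{2}\phi\Gamma(\Lambda F)$ at cost $|\mu\dot\mu| b_{1}^{2}$. All the other summands are localized or carry a small coefficient, and we bound them as stated.
\begin{itemize}
\item $30m_{0}z\dot b_{1}b_{1}Y_{2}$ gives $|\dot b_{1}|/s$.
\item $2\dot b_{1}b_{1}(E_{1}+F_{2})\phi$ gives $|\dot b_{1}|s^{-1/2}$.
\item $\dot z e^{-z}\phi\partial_{y}B_{2}$ gives $|\dot z| s^{-3/2}$.
\end{itemize}
For the pairing with $Q_{2}$, the main term is $-10\alpha m_{0}\dot z z e^{-z}(Y_{2}+\partial_{y}Y_{2},Q_{2})$. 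Since $(Y,Q)=-\frac{3}{5}m_{0}$ and $(\partial_{y}Y,Q)=0$, this equals $6\alpha m_{0}^{2}\dot z z e^{-z}$. We must combine it with $-\dot z e^{-z}(A_{1}+B_{2},Q_{2})$. Here $A_{1}=2\alpha m_{0}y+a_{0}+O(e^{-y})$ near $y=z$, which produces $-2\alpha m_{0}z\dot z e^{-z}\cdot 4m_{0}$ (using $\int Q=4m_{0}$), plus $O(|\dot z|e^{-z})$ from the $B_{2}$ term and the constants. The total is $6-8=-2$, giving the coefficient $-2\alpha m_{0}^{2}\dot z z e^{-z}$. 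The $\frac52 m_{0}b_{1}\dot\mu$ term pairs exactly to $(Y-2\Lambda Y,Q)$ up to $O(\mu)$ corrections. The remaining summands are bounded by their coefficients. For $Q_{1}$, all $Y_{2}$-type terms decay like $e^{-z}$, and $(A_{1}+B_{2},Q_{1})=O(1)$.

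For (iii) the norm bound is immediate, because $z^{2}e^{-z}b_{1}\lesssim s^{-3}\log s$ and the only non-localized term is $\mu b_{1}^{2}\phi\partial_{y}F_{2}$, of size $s^{-3}\cdot s^{1/2}$. For the pairings, let $\kappa=\int e^{y}Q^{5}$, so that $\alpha=c_{Q}\kappa/m_{0}^{2}$. The needed identities are
\[
(\partial_{y}(e^{y}Q^{4}),Q)=\tfrac15\kappa,\qquad (\partial_{y}(e^{-(y-z)}Q^{4}(y-z)),Q_{2})=-\tfrac15\kappa+O(|\mu|),
\]
together with cross pairings of size $O(e^{-z})$. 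With these, $\frac{5c_{Q}}{4}\zeta_{2}z^{2}e^{-z}\cdot\frac{\kappa}{5}=\frac{\alpha}{4}m_{0}^{2}\zeta_{2}z^{2}e^{-z}$, and $\frac{5c_{Q}}{2}\mu z e^{-z}\cdot\frac{\kappa}{5}=\frac{\alpha}{2}m_{0}^{2}\mu z e^{-z}$. Similarly the $b_{1}$ term paired with $Q_{2}$ gives $-\frac{\alpha}{4}m_{0}^{2}b_{1}z^{2}e^{-z}$. The $\mu b_{1}^{2}\partial_{y}F_{2}$ term and the two $\mu z e^{-z}$-type $Y_{2}$, $Q_{2}^{4}$ terms are $O(s^{-3}\log^{-1}s)$ once paired. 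In particular $(\partial_{y}Q_{2}^{4},Q_{2})=0$ holds exactly.

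For (iv) the estimate is pointwise. We have $\partial_{s}\phi=\dot a\,y\chi'(ay)$, and $\dot a$ is controlled by $|\dot b_{1}|\log s$. On $[s/2,5s/2]$ we use $|R|\lesssim s^{-2}$ with $|\partial_{y}R|\lesssim s^{-3}\log^{-1}s$, and $|E_{1}+F_{2}|\lesssim\log s$, together with \eqref{est:pyphi}. The terms $b_{1}yR\partial_{y}\phi$ and $R\partial_{y}\phi$ are of size $s^{-3}$. The pairing bounds then follow because $Q_{1},Q_{2}$ are exponentially small on $[s/2,5s/2]$.

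The main obstacle I expect is the exact coefficient $-2\alpha m_{0}^{2}$ in (ii). It needs the cancellation between the resonance-type correction $U_{2}$ and the linear growth of $A_{1}$ felt by $Q_{2}$, together with careful bookkeeping of the $\log s$ factors throughout.
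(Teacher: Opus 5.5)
The $H^{1}$ bound in (i) fails at the term $b_{1}e^{-z}(\Lambda A_{1}+\Lambda B_{2})\phi$. You bound $\Lambda A_{1}$ and $\Lambda B_{2}$ separately by their linear growth. That gives $b_{1}e^{-z}\|y\phi\|_{H^{1}}\lesssim \frac{1}{s\log s}\cdot\frac{1}{s^{2}\log s}\cdot s^{3/2}=\frac{1}{s^{3/2}\log^{2}s}$, not $s^{-5/2}$ up to logarithms as you wrote. This misses the claimed $\frac{1}{s^{5/2}\log s}$ by a full power of $s$, so your route cannot prove (i).

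The missing idea is that the linear growths of $A_{1}$ and $B_{2}$ cancel to the right of $z$. By Lemma~\ref{le:AB}, for $y>z$ you have $A_{1}=2\alpha m_{0}y+a_{0}+\dots$ and $B_{2}=-2\alpha m_{0}(1+\mu)^{3/4}(y-z)-(1+\mu)^{1/4}a_{1}+\dots$, up to terms decaying like $(1+|y-z|)^{c}e^{-|y-z|}$. Hence $A_{1}+B_{2}=2\alpha m_{0}(1+\mu)^{3/4}z+O(1+|\mu|y)$ there, and $\partial_{y}(A_{1}+B_{2})=O\big(|\mu|+(1+|y-z|)^{c}e^{-|y-z|}\big)$. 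On $[0,z]$ both $A_{1}+B_{2}$ and $y\partial_{y}(A_{1}+B_{2})$ are $O(z)$. Since $|\mu|y\lesssim 1$ on $\operatorname{supp}\phi$, the function $\Lambda A_{1}+\Lambda B_{2}=\frac12(A_{1}+B_{2})+y\partial_{y}(A_{1}+B_{2})$ is $O(\log s)$ there, and so are its derivatives. This is exactly Lemma~\ref{le:RAB}(iii). It gives $\|(\Lambda A_{1}+\Lambda B_{2})\phi\|_{H^{1}}\lesssim s^{1/2}\log s$, hence a bound $b_{1}e^{-z}s^{1/2}\log s\lesssim\frac{1}{s^{5/2}\log s}$. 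This is how the paper proceeds: all $H^{1}$ bounds in (i)--(iii) are read off Definition~\ref{def:Admissible} and Lemma~\ref{le:RAB}.

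By contrast, the crude linear-growth bound does suffice for $\Theta e^{-z}\Gamma(\Lambda B)\phi$, because of the extra $s^{-1}$ in $|\Theta|\lesssim s^{-2}$. You omitted that term, and also $b_{1}^{2}((1+\mu)^{3/2}-1)\Gamma(\Lambda X)\phi$; both are harmless.

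A second, smaller gap is in the pairings of (i). Under Definition~\ref{def:Admissible}, the coefficients $zb_{1}(\zeta_{2}-b_{1})$ and $\Theta(1-(1+\mu)^{-1})$ are only $O(s^{-3})$, with no logarithmic gain. Also $b_{1}(\zeta_{2}-b_{1})\Lambda X_{2}$ is of size $zb_{1}|\zeta_{2}-b_{1}|\sim s^{-3}$ near $y=z$. So your rule ``coefficient times $\log^{c}s$'' does not give $\frac{1}{s^{3}\log s}$ for the pairings with $Q_{2}$. You need the exact identities $(\partial_{y}X_{2},Q_{2})=(1+\mu)^{1/2}(X',Q)=0$ and $(\Gamma(\Lambda Q),Q_{2})=(\Lambda Q,Q)=0$. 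You also need $(\Lambda X_{2},Q_{2})=O(1)$ rather than $O(z)$, since the part $z\partial_{y}X_{2}$ of $\Lambda X_{2}$ is orthogonal to $Q_{2}$. The cut-off is harmless here because $\phi\equiv1$ near $z$. The paper records precisely these identities. The same point applies in (iii) to $10\alpha m_{0}\mu ze^{-z}\partial_{y}Y_{2}$, which needs $(\partial_{y}Y_{2},Q_{2})=0$.

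The rest of your plan agrees with the paper's proof. This includes the $-8+6=-2$ computation giving $-2\alpha m_{0}^{2}\dot z ze^{-z}$ in (ii), the identity $(\partial_{y}(e^{y}Q^{4}),Q)=\frac15\int_{\RR}e^{y}Q^{5}$ in (iii), and the pointwise treatment of $\Psi_{4}$.
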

	
	\begin{proof}
		Proof of (i). The proof for the $H^{1}$ norm estimate of $\Psi_{1}(V)$ is a direct consequence of Definition~\ref{def:Admissible} and Lemma~\ref{le:RAB}. On the other hand, using the exponential decay of $Q$ and Lemma~\ref{le:boundinter}, we find 
		\begin{equation*}
			|(\partial_{y}X_{2},Q_{1})|+ |(\partial_{y}Y_{2},Q_{1})|
            +|(\Gamma(\Lambda Q),Q_{1})|
            \lesssim z^{5}e^{-z}\lesssim s^{-1}.
		\end{equation*}
		In addition, from Remark~\ref{re:Y} and Remark~\ref{re:P}, we check that 
		\begin{equation*}
			(\partial_{y}X_{2},Q_{2})=(\partial_{y}Y_{2},Q_{2})
            =(\Gamma(\Lambda Q),Q_{2})
            =0\ \ \mbox{and}\ \ \left(\Lambda X_{2},Q_{2}\right)=\frac{1}{2}(X,Q)+(yX',Q).
		\end{equation*}
		Combining the above estimates with Lemma~\ref{le:boundinter} and Definition~\ref{def:Admissible}, we complete the proof for the estimates in (i).
		
		\smallskip
		Proof of (ii). The proof for the $H^{1}$ norm estimate of $\Psi_{2}(V)$ is also a direct consequence of Definition~\ref{def:Admissible} and Lemma~\ref{le:RAB}. On the other hand, from Lemma~\ref{le:AB} and the exponential decay of $Q$, we directly have 
        \begin{equation*}
            \left(  -\dot{z}e^{-z}\left(A_{1}+B_{2}\right)\phi,Q_{1}\right)=O\left(\frac{|\dot{z}|}{s^{2}\log s}\right).
        \end{equation*}
         In addition, we rewrite 
		\begin{equation*}
			\begin{aligned}
				A_{1}+B_{2}&=(A_{1}-2\alpha m_{0}y)
				+2\alpha m_{0}y\big(1-(1+\mu)^{\frac{3}{4}}\big)\\
				&+(B_{2}+2\alpha m_{0}(1+\mu)^{\frac{3}{4}}(y-z))+2\alpha m_{0}(1+\mu)^{\frac{3}{4}}z,
			\end{aligned}
		\end{equation*}
		which implies that\footnote{Here, we use the fact that $m_{0}=\frac{1}{4}\int_{\RR}Q(y)\dd y$.}
		\begin{equation*}
			\begin{aligned}
				\left(  -\dot{z}e^{-z}\left(A_{1}+B_{2}\right)\phi,Q_{2}\right)=
				-8\alpha m_{0}^{2}\dot{z}ze^{-z}+
				O\left(\frac{|{\dot{z}}|}{s^{2}\log s}\right).
			\end{aligned}
		\end{equation*}
		Then, from Remark~\ref{re:Y}, we compute 
		\begin{equation*}
			\begin{aligned}
				\left(-10\alpha m_{0}\dot{z}ze^{-z}(Y_{2}+\partial_{y}Y_{2}),Q_{1}\right)
				&=O\left(\frac{|{\dot{z}}|}{s^{2}\log s}\right),\\
				\left(-10\alpha m_{0}\dot{z}ze^{-z}(Y_{2}+\partial_{y}Y_{2}),Q_{2}\right)
				&=
				6\alpha m_{0}^{2}\dot{z}ze^{-z}+
				O\left(\frac{|{\dot{z}}|}{s^{2}\log s}\right).
			\end{aligned}
		\end{equation*}
		Combining the above estimates with Lemma~\ref{le:boundinter} and Definition~\ref{def:Admissible}, we complete the proof for the estimates in (ii).
		
		\smallskip
		Proof of (iii). The proof for the $H^{1}$ norm estimate of $\Psi_{3}(V)$ is also a direct consequence of Definition~\ref{def:Admissible} and Lemma~\ref{le:RAB}. On the other hand, using again Lemma~\ref{le:boundinter} and Definition~\ref{def:Admissible}, we compute\footnote{Here, we use the fact that $\alpha=\frac{c_{Q}}{m_{0}^{2}}\int_{\RR}e^{y}Q^{5}(y)\dd y$.}
		\begin{equation*}
			\begin{aligned}
				\frac{5c_{Q}}{4}b_{1}z^{2}e^{-z}\left(\partial_{y}\left(e^{-(y-z)}Q^{4}(y-z)\right),Q_{1}\right)&=O\left(\frac{1}{s^{3}\log s}\right),\\
				\frac{5c_{Q}}{4}b_{1}z^{2}e^{-z}\left(\partial_{y}\left(e^{-(y-z)}Q^{4}(y-z)\right),Q_{2}\right)&=
				-\frac{\alpha}{4}m_{0}^{2}b_{1}z^{2}e^{-z}+
				O\left(\frac{1}{s^{3}\log s}\right).
			\end{aligned}
		\end{equation*}
		Similar as above, we also compute 
		\begin{equation*}
			\begin{aligned}
				\frac{5c_{Q}}{4}\zeta_{2}z^{2}e^{-z}
				\left(\partial_{y}(e^{y}Q^{4}(y)),Q_{1}\right)&=\frac{\alpha}{4}m_{0}^{2}\zeta_{2}z^{2}e^{-z},\\
				\frac{5c_{Q}}{4}\zeta_{2}z^{2}e^{-z}
				\left(\partial_{y}(e^{y}Q^{4}(y)),Q_{2}\right)&=
				O\left(\frac{1}{s^{3}\log s}\right),\\
				\frac{5c_{Q}}{2}\mu z e^{-z}
				\left(\partial_{y}\left(e^{y}Q^{4}(y)\right),Q_{1}\right)
				&=\frac{\alpha}{2}m_{0}^{2}\mu z e^{-z},\\
				\frac{5c_{Q}}{2}\mu z e^{-z}
				\left(\partial_{y}\left(e^{y}Q^{4}(y)\right),Q_{2}\right)
				&=O\left(\frac{1}{s^{3}\log s}\right).
			\end{aligned}
		\end{equation*}
		Then, from Remark~\ref{re:Y}, Lemma~\ref{le:EF} and Lemma~\ref{le:boundinter},
		\begin{equation*}
			(\partial_{y}F_{2},Q_{2})=(\partial_{y}Y_{2},Q_{2})=(\partial_{y}(Q_{2}^{4}),Q_{2})=0,
		\end{equation*}
		\begin{equation*}
			|(\partial_{y}F_{2},Q_{1})|+|(\partial_{y}Y_{2},Q_{1})|
			+|(\partial_{y}(Q_{2}^{4}),Q_{1})|\lesssim s^{-1}.
		\end{equation*}
		Combining above estimates with Definition~\ref{def:Admissible}, we complete the proof for (iii).
		
		\smallskip 
		Proof of (iv). 
		From Definition~\ref{def:Admissible} and the definition of $\phi$ in~\eqref{equ:defphi}, we check that 
		\begin{equation*}
			\begin{aligned}
				\left|\partial_{s}\phi\right|
				&\lesssim \bigg|\bigg(\frac{\dot{a}}{a}\bigg)\chi'(ay)\bigg|\lesssim 
				(s\log s)|\dot{b}_{1}|\textbf{1}_{[\frac{s}{2},\frac{5s}{2}]}(y),\\
				\left|\partial_{y}\partial_{s}\phi\right|
				&\lesssim |\dot{a}|\left(\left|\chi'(ay)\right|+
                \left|\chi''(ay)\right|
                \right)
                \lesssim 
				(\log s)|\dot{b}_{1}|\textbf{1}_{[\frac{s}{2},\frac{5s}{2}]}(y).
			\end{aligned}
		\end{equation*}
		Similar as above, we also check that 
		\begin{equation*}
			\begin{aligned}
				\left| \partial_{y}\phi\right|\lesssim s^{-1}\textbf{1}_{[\frac{s}{2},\frac{5s}{2}]}(y)\quad \mbox{and}\quad 
				\left| \partial^{2}_{y}\phi\right|\lesssim s^{-2}\textbf{1}_{[\frac{s}{2},\frac{5s}{2}]}(y).
			\end{aligned}
		\end{equation*}
		Recall that, we rewrite the term $R$ by
		\begin{equation*}
			\begin{aligned}
				R
				&=e^{-z}\big(B_{2}+2\alpha m_{0}(1+\mu)^{\frac{3}{4}}(y-z)\big)+2\alpha m_{0}(1+\mu)^{\frac{3}{4}}ze^{-z}\\
				&+2b_{1}\big((1+\mu)^{\frac{1}{4}}-1\big)m_{0}-b_{1}(X_{2}+2(1+\mu)^{\frac{1}{4}}m_{0})+(b_{1}-\zeta_{2})X_{2}\\
				&+b_{1}(X_{1}+2m_{0})+e^{-z}(A_{1}-2\alpha m_{0}y)+2\alpha m_{0}e^{-z}y\big(1-(1+\mu)^{\frac{3}{4}}\big),
			\end{aligned}
		\end{equation*}
		which implies that 
		\begin{equation*}
			|R|\textbf{1}_{[\frac{s}{2},\frac{5s}{2}]}(y)\lesssim s^{-2}\textbf{1}_{[\frac{s}{2},\frac{5s}{2}]}(y) \ \ \mbox{and} \ \ 
			|\partial_{y}R|\textbf{1}_{[\frac{s}{2},\frac{5s}{2}]}(y)\lesssim s^{-2}\log^{-1}s\textbf{1}_{[\frac{s}{2},\frac{5s}{2}]}(y).
		\end{equation*}
		In addition, we rewrite the term $E_{1}+F_{2}$ by 
		\begin{equation*}
			\begin{aligned}
				E_{1}+F_{2}&=(E_{1}-3 m_{0}y)
				+3 m_{0}y\big(1-(1+\mu)^{\frac{3}{4}}\big)\\
				&+(F_{2}+3 m_{0}(1+\mu)^{\frac{3}{4}}(y-z))+3 m_{0}(1+\mu)^{\frac{3}{4}}z,
			\end{aligned}
		\end{equation*}
		which implies that 
		\begin{equation*}
			\begin{aligned}
				| E_{1}+F_{2}|\textbf{1}_{[\frac{s}{2},\frac{5s}{2}]}(y)
				& \lesssim (1+z+|\mu y|)\textbf{1}_{[\frac{s}{2},\frac{5s}{2}]}(y)
				\lesssim (\log s) \textbf{1}_{[\frac{s}{2},\frac{5s}{2}]}(y),\\
				|\partial_{y} E_{1}+\partial_{y}F_{2}|\textbf{1}_{[\frac{s}{2},\frac{5s}{2}]}(y)
				&\lesssim (1+|\mu y|)\textbf{1}_{[\frac{s}{2},\frac{5s}{2}]}(y)
				\lesssim  \textbf{1}_{[\frac{s}{2},\frac{5s}{2}]}(y).
			\end{aligned}
		\end{equation*}
		It follows from Definition~\ref{def:Admissible} and the definition of $\Psi_{4}(V)$ in Proposition~\ref{prop:approx} that 
		\begin{equation*}
			\begin{aligned}
				\left|\Psi_{4}(V)\right|
				&\lesssim 
				\left(\frac{|E_{1}+F_{2}|}{s^{2}\log^{2}s}\right)|\partial_{s}\phi|+\left(\frac{|yR|}{s\log s}\right)|\partial_{y}\phi|\\
				&+\left(\frac{|E_{1}+F_{2}|}{s^{2}\log^{2}s}\right)|\partial_{y}\phi|+|R|(\partial_{s}\phi|+\partial_{y}\phi|)\\
				&\lesssim \frac{|\dot{b}_{1}|}{s}\log s \textbf{1}_{[\frac{s}{2},\frac{5s}{2}]}(y)
				+\frac{1}{s^{3}\log s}\textbf{1}_{[\frac{s}{2},\frac{5s}{2}]}(y)
				+\frac{1}{s^{3}}\textbf{1}_{[\frac{s}{2},\frac{5s}{2}]}(y).
			\end{aligned}
		\end{equation*}
		Similarly, we also have 
		\begin{equation*}
			\begin{aligned}
				\left|\partial_{y}\Psi_{4}(V)\right|
				&\lesssim \left(\frac{|E_{1}+F_{2}|}{s^{2}\log^{2}s}+|R|\right)|\partial_{s}\partial_{y}\phi|+\left(\frac{|yR|}{s\log s}+|R|\right)|\partial^{2}_{y}\phi|\\
				&+ 
				\left(\frac{|\partial_{y}E_{1}+\partial_{y}F_{2}|}{s^{2}\log^{2}s}+|\partial_{y}R|\right)|\partial_{s}\phi|+\left(\frac{|R|+|y\partial_{y}R|}{s\log s}\right)|\partial_{y}\phi|\\
				&+\left(\frac{|E_{1}+F_{2}|}{s^{2}\log^{2}s}\right)|\partial^{2}_{y}\phi|
				+\left(\frac{|\partial_{y}(E_{1}+F_{2})|}{s^{2}\log^{2}s}\right)|\partial_{y}\phi|+|\partial_{y}R||\partial_{y}\phi|
				\\
				&\lesssim \frac{|\dot{b}_{1}|}{s} \textbf{1}_{[\frac{s}{2},\frac{5s}{2}]}(y)
				+\frac{1}{s^{3}\log^{2} s}\textbf{1}_{[\frac{s}{2},\frac{5s}{2}]}(y)
				+\frac{1}{s^{3}\log s}\textbf{1}_{[\frac{s}{2},\frac{5s}{2}]}(y).
			\end{aligned}
		\end{equation*}
		Based on the above estimates and the exponential decay of $Q$, we obtain 
		\begin{equation*}
			\begin{aligned}
				& |(\Psi_{4}(V),Q_{1})|+|(\Psi_{4}(V),Q_{2})|\\
				& \lesssim 
				\frac{|\dot{b}_{1}|}{s}\log s \big\|Q_{1}\textbf{1}_{[\frac{s}{2},\frac{5s}{2}]}(y)\big\|_{L^{1}}
				+  \frac{|\dot{b}_{1}|}{s}\log s \big\|Q_{2}\textbf{1}_{[\frac{s}{2},\frac{5s}{2}]}(y)\big\|_{L^{1}}\\
				& +\frac{1}{s^{3}}\big\|Q_{1}\textbf{1}_{[\frac{s}{2},\frac{5s}{2}]}(y)\big\|_{L^{1}}
				+\frac{1}{s^{3}}\big\|Q_{2}\textbf{1}_{[\frac{s}{2},\frac{5s}{2}]}(y)\big\|_{L^{1}}\lesssim  \frac{|\dot{b}_{1}|}{s}+\frac{1}{s^{3}\log s}.
			\end{aligned}
		\end{equation*}
		Combining the above estimates, we complete the proof for (iv).
	\end{proof}
	
	\begin{lemma}\label{le:Psi2}
		The following estimates hold true.
		\begin{enumerate}
			\item \emph{Estimate related to $\Psi_{1}(V)$}. We have 
			\begin{equation*}
				\begin{aligned}
					\left|(\Psi_{1}(V),\Lambda Q)\right|
					+\left|(\Psi_{1}(V),\Gamma \Lambda Q)\right|
					&\lesssim \frac{|\dot{z}-b_{1}z-\mu|}{s\log s}+\frac{1}{s^{3}},\\
					\left|(\Psi_{1}(V),\partial_{y}Q_{1})\right|
					+\left|(\Psi_{1}(V),\partial_{y}Q_{2})\right|
					&\lesssim \frac{|\dot{z}-b_{1}z-\mu|}{s\log s}+\frac{1}{s^{3}}.
				\end{aligned}
			\end{equation*}
			
			\item \emph{Estimate related to $\Psi_{2}(V)$}. We have 
			\begin{equation*}
				\begin{aligned}
					\left|(\Psi_{2}(V),\Lambda Q)\right|
					+\left|(\Psi_{2}(V),\Gamma \Lambda Q)\right|
					&\lesssim \frac{|\dot{z}|}{s^{2}}+\frac{|\dot{\mu}|}{s\log s}+\frac{|\dot{b}_{1}|}{s}+\frac{1}{s^{3}},\\
					\left|(\Psi_{2}(V),\partial_{y}Q_{1})\right|
					+\left|(\Psi_{2}(V),\partial_{y}Q_{2})\right|
					&\lesssim \frac{|\dot{z}|}{s^{2}}+\frac{|\dot{\mu}|}{s\log s}+\frac{|\dot{b}_{1}|}{s}+\frac{1}{s^{3}}.
				\end{aligned}
			\end{equation*}
			
			\item \emph{Estimate related to $\Psi_{3}(V)$}. We have 
			\begin{equation*}
				\begin{aligned}
					\left|(\Psi_{3}(V),\Lambda Q)\right|
					+\left|(\Psi_{3}(V),\Gamma \Lambda Q)\right|\lesssim \frac{1}{s^{3}},
					\\
					\left|(\Psi_{3}(V), \partial_{y}Q_{1})\right|
					+\left|(\Psi_{3}(V),\partial_{y}Q_{2})\right|
					\lesssim \frac{1}{s^{3}}.
				\end{aligned}
			\end{equation*}
			
			\item \emph{Estimate related to $\Psi_{4}(V)$}. We have 
			\begin{equation*}
				\begin{aligned}
					\left|(\Psi_{4}(V),\Lambda Q)\right|
					+\left|(\Psi_{4}(V),\Gamma \Lambda Q)\right|
					\lesssim \frac{|\dot{b}_{1}|}{s}+\frac{1}{s^{3}},\\
					\left|(\Psi_{4}(V),\partial_{y}Q_{1})\right|
					+\left|(\Psi_{4}(V),\partial_{y}Q_{2})\right|
					\lesssim \frac{|\dot{b}_{1}|}{s}+\frac{1}{s^{3}}.
				\end{aligned}
			\end{equation*}
		\end{enumerate}
	\end{lemma}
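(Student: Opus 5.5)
The plan is to handle each $\Psi_{i}(V)$ term by term, mirroring Lemma~\ref{le:Psi1}, but now pairing against $\Lambda Q$, $\Gamma\Lambda Q$, $\partial_{y}Q_{1}$ and $\partial_{y}Q_{2}$ instead of $Q_{1},Q_{2}$. All four test functions lie in $\mathcal{Y}$, up to the translation/scaling $\Gamma$. We therefore always use the crude bound
\begin{equation*}
|(f,g)|\lesssim \|f\|_{L^{\infty}(\text{near the relevant soliton})}\,\|g\|_{L^{1}}.
\end{equation*}
Cross terms, where a profile centred at one soliton is paired with a test function centred at the other, are handled by Lemma~\ref{le:boundinter} and Lemma~\ref{le:PAB}. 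These give factors $z^{K}e^{-z}\lesssim s^{-2}\log^{K}s$, which are harmless against the $s^{-3}$ target because the tolerances here are $s^{-3}$ rather than $s^{-3}\log^{-1}s$. This loss of the $\log$ is exactly why no cancellation such as $(\partial_{y}X_{2},Q_{2})=0$ needs to be tracked precisely.

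For (i), the coefficient $(\dot z-b_{1}z-\mu)$ multiplies $\zeta_{2}\partial_{y}X_{2}+10m_{0}b_{1}\partial_{y}Y_{2}$. Its $L^{\infty}$ size near either soliton is $O(b_{1})$, which gives the first term. The remaining terms of $\Psi_{1}$ carry coefficients
\begin{equation*}
b_{1}^{3}z,\quad b_{1}|\zeta_{2}-b_{1}|\lesssim b_{1}(|\Theta|+|\mu|b_{1}),\quad |\Theta|\zeta_{2},\quad b_{1}^{2}|\mu|,\quad |\Theta||\mu|,\quad zb_{1}|\zeta_{2}-b_{1}|,\quad b_{1}e^{-z}z.
\end{equation*}
Each pairs against a localized function with a profile that is at most $O(\log s)$ on the soliton region; the $A,B$ profiles grow linearly but are weighted by $Q$. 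By Definition~\ref{def:Admissible} each contribution is $\lesssim s^{-3}$. The term $\Theta(1-(1+\mu)^{-1})\Gamma(\Lambda Q)$ is $O(s^{-4})$.

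For (ii), each term is bounded in $L^{\infty}$ near the solitons:
\begin{itemize}
\item $\dot z e^{-z}(A_{1}+B_{2})\phi$ and $\dot z e^{-z}\phi\partial_{y}B_{2}$ give $|\dot z|ze^{-z}\lesssim |\dot z|s^{-2}$;
\item $\dot z z e^{-z}Y_{2}$ gives the same bound;
\item the $b_{1}\dot\mu$ terms give $|\dot\mu|/(s\log s)$;
\item $z\dot b_{1}b_{1}$ and $\dot b_{1}b_{1}(E_{1}+F_{2})$ give $|\dot b_{1}|/s$;
\item $\dot\mu b_{1}^{2}\Gamma(\Lambda F)$, $\dot z b_{1}^{2}\partial_{y}F_{2}$ and $\mu e^{-z}\partial_{y}B_{2}$ are smaller.
\end{itemize}
For (iii), the coefficients are $b_{1}z^{2}e^{-z}$, $\zeta_{2}z^{2}e^{-z}$ and $\mu ze^{-z}$, all $\lesssim s^{-3}$, together with $\mu b_{1}^{2}$, $\mu ze^{-z}$ and $z|\mu|e^{-z}$, which are smaller. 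Each multiplies a localized function in $\mathcal{Y}$ or $\partial_{y}F_{2}$, whose derivative is bounded. For (iv), we reuse the pointwise bounds on $\Psi_{4}(V)$ from (iv) of Lemma~\ref{le:Psi1}, which are supported in $[\frac{s}{2},\frac{5s}{2}]$. Pairing with any of the four test functions, which decay like $(1+|y|)e^{-|y|}$ or $e^{-(1-|\mu|)^{1/2}|y-z|}$, gains an exponentially small factor. Since $z\approx2\log s\ll s/2$, the result is far below $|\dot b_{1}|/s+s^{-3}$.

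The main obstacle is expected in (i) and (ii): the terms whose profiles grow, namely $\Lambda X_{2}$, $\Lambda(A_{1}+B_{2})$ and $\Lambda((E_{1}+F_{2})\phi)$, paired against $\Lambda Q$ centred at the \emph{other} soliton. There one must check that the polynomial growth (factors $y$ and $z$) costs only powers of $\log s$. We would handle this by splitting $\RR$ at $z/2$ and using the asymptotic decompositions from Lemma~\ref{le:AB}, Lemma~\ref{le:EF} and Remark~\ref{re:P}, exactly as in the proof of Lemma~\ref{le:RAB}.
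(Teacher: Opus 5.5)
Your term-by-term argument is correct and is the argument the paper intends; the paper omits this proof, saying only that it follows as for Lemma~\ref{le:Psi1}, from the explicit formulas for $\Psi_{i}(V)$ in Proposition~\ref{prop:approx}, the admissibility bounds and the localization of the test functions. One harmless slip: $\Theta(1-(1+\mu)^{-1})\Gamma(\Lambda Q)$ and the $\mu z e^{-z}$ terms in $\Psi_{3}(V)$ are of exact size $s^{-3}$ (since $|\Theta||\mu|\lesssim s^{-3}$ and $|\mu|ze^{-z}\lesssim s^{-3}$), not smaller as you state, but this is still within the stated $s^{-3}$ tolerance.
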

	\begin{proof}
		The proof is based on an argument similar to the proof for Lemma~\ref{le:Psi1} and the definition of $(\Psi_{i})_{i=1}^{4}$ in Proposition~\ref{prop:approx}, and we omit it.
	\end{proof}
	
	\section{Dynamics close to the two-bubble solution}

	\subsection{Modulation of the approximate solution}
	In this subsection, we state a standard modulation result around the approximate two-bubble solution $V$. The proof of such result relies on a standard argument based on the Implicit Function Theorem~(see \emph{e.g.}~\cite[Lemma 2.5]{MMRACTA}).  For the sake of completeness and the reader's convenience, we provide the details for the proof here.
	
	\begin{proposition}\label{prop:modu}
		Let $u(t)$ be a solution of~\eqref{equ:gKdV} on a time interval $[t_{0},t_{1}]$. Suppose that at some time  $\bar{t}\in [t_{0},t_{1}]$, the solution $u(\bar{t})$ admits a geometrical decomposition in the following sense: there exist some geometrical parameters $\bar{\mathcal{G}}=(\bar{\lambda},\bar{z},\bar{\mu},\bar{x}_{1},\bar{b}_{1},\bar{b}_{2})\in (0,\infty)^{2}\times \mathbb{R}^{4}$ such that, defining
		\begin{equation*}
			\bar{\varepsilon}(y)=\bar{\lambda}^{\frac{1}{2}}u\big(\bar{t},\bar{\lambda}y+\bar{x}_{1}\big)-V
			\big(y;(\bar{z},\bar{\mu},\bar{b}_{1},\bar{b}_{2})\big)\in H^{1},
		\end{equation*}
		the function $\bar{\varepsilon}$ satisfies the following orthogonality conditions 
		\begin{equation}\label{equ:orthmodu1}
			\begin{aligned}
				(\bar{\varepsilon},\Lambda Q)
				&=(\bar{\varepsilon},Q')
				=(\bar{\varepsilon},Q)
				=0,\\
				(\bar{\varepsilon},\overline{\Gamma}\Lambda Q)
				& =(\bar{\varepsilon},\overline{\Gamma}Q')
				=(\bar{\varepsilon},\overline{\Gamma}Q)=0.
			\end{aligned}
		\end{equation}
		Here, we denote 
		\begin{equation*}
			\begin{aligned}
				(\overline{\Gamma}Q)(y)&=(1+\bar{\mu})^{\frac{1}{4}}\big[Q((1+\bar{\mu})^{\frac{1}{2}}(y-\bar{z}))\big],\\
				(\overline{\Gamma}Q')(y)&=(1+\bar{\mu})^{\frac{1}{4}}\big[Q'((1+\bar{\mu})^{\frac{1}{2}}(y-\bar{z}))\big],\\
				( \overline{\Gamma}\Lambda Q)(y)&=(1+\bar{\mu})^{\frac{1}{4}}\big[(\Lambda Q)((1+\bar{\mu})^{\frac{1}{2}}(y-\bar{z}))\big].
			\end{aligned}
		\end{equation*}
		We assume moreover that there exists a small enough constant $\delta>0$ such that 
		\begin{equation}\label{equ:modu2}
			(\bar{\lambda},\bar{z})\in (0,\infty)\times (\delta^{-1},\infty)
			\quad \mbox{and}\quad \|\bar{\varepsilon}\|_{H^{1}}+|\bar{b}_{1}|+|\bar{b}_{2}|+|\bar{\mu}|< \bar{z}^{-3}<\delta^{3}.
		\end{equation}
		Then there exist a time $\tau>0$ small enough and a  unique $C^{1}$ geometrical function $\mathcal{G}(t)=(\lambda(t),z(t),\mu(t),x_{1}(t),b_{1}(t),b_{2}(t))\in (0,\infty)^{2}\times \RR^{4}$ on $[\bar{t}-\tau,\bar{t}+\tau]$ such that, for all $t\in [\bar{t}-\tau,\bar{t}+\tau]$, $\varepsilon(t)$ being defined by 
		\begin{equation}\label{equ:defe}
			\varepsilon(t,y)=\lambda^{\frac{1}{2}}(t)u
			\left(t,\lambda(t)y+x_{1}(t)\right)-
			V\left(y;(z,\mu,b_{1},b_{2})\right),
		\end{equation}
		it satisfies the orthogonality conditions
		\begin{equation}\label{equ:orth}
			\begin{aligned}
				(\varepsilon(t),\Lambda Q)&
				=(\varepsilon(t),\partial_{y}Q_{1})
				=(\varepsilon(t),Q_{1})=0,\\
				(\varepsilon(t),\Gamma\Lambda Q)&=
				(\varepsilon(t),\partial_{y} Q_{2})= (\varepsilon(t),Q_{2})=0.
			\end{aligned}
		\end{equation}
		In addition, we have $\mathcal{G}(\bar{t})=\bar{\mathcal{G}}$ and $(\varepsilon(t),\mathcal{G}(t))$ satisfies
		\begin{equation*}
			\|\varepsilon\|_{H^{1}}+|b_{1}|+|b_{2}|
			+|\mu|
			+|z|^{-1}
			+|x_{1}-\bar{x}_{1}|
			+\left|\frac{\lambda}{\bar{\lambda}}-1\right|\lesssim \varrho (\delta).
		\end{equation*}
	\end{proposition}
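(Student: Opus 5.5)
The plan is to use the Implicit Function Theorem on a map encoding the six orthogonality conditions in~\eqref{equ:orth}, combined with continuity of the flow $t\mapsto u(t)$ in $H^{1}$.

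Concretely, for $v\in H^{1}$ near $\bar{\lambda}^{\frac12}u(\bar t,\bar\lambda\cdot+\bar x_1)$ and parameters $\mathcal{P}=(\lambda_0,x_0,z,\mu,b_1,b_2)$ near $(1,0,\bar z,\bar\mu,\bar b_1,\bar b_2)$, set
\[
\varepsilon_{v,\mathcal{P}}(y)=\lambda_0^{\frac12}v(\lambda_0 y+x_0)-V\big(y;(z,\mu,b_1,b_2)\big).
\]
Define $\Phi(v,\mathcal{P})\in\mathbb{R}^{6}$ as the vector of scalar products of $\varepsilon_{v,\mathcal{P}}$ with $\Lambda Q$, $Q'$, $Q$, $\Gamma\Lambda Q$, $\Gamma Q'$ and $\Gamma Q$, where $\Gamma$ uses the current $(z,\mu)$. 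By~\eqref{equ:orthmodu1}, $\Phi(\bar v,\bar{\mathcal P})=0$. Since $V$ depends smoothly on $(z,\mu,b_1,b_2)$ as an $H^{1}$-valued map, $\Phi$ is $C^{1}$. The cut-off $\phi=\chi(ay)$ with $a=6b_1|\log b_1|$ is smooth in $b_1>0$ by~\eqref{est:para1}, and $b_1>0$ near the admissible regime.

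The key step is to show that the Jacobian $D_{\mathcal P}\Phi$ at $(\bar v,\bar{\mathcal P})$ is invertible. To leading order, the derivatives of $\varepsilon$ are:
\begin{itemize}
\item in $\lambda_0$: $\Lambda Q_1-\Lambda Q_2$;
\item in $x_0$: $Q_1'-\partial_yQ_2$;
\item in $z$: $\partial_y Q_2$;
\item in $\mu$: $-\frac{1}{2}\Gamma\Lambda Q$;
\item in $b_1$: $-X_1\phi+10m_0Y_2$;
\item in $b_2$: $X_2\phi$.
\end{itemize}
These come from the definition of $\vec M Q$ and~\eqref{equ:computef}. The corrections are $O(\bar z^{K}e^{-\bar z}+|\bar b_1|+|\bar b_2|+|\bar\mu|+\|\bar\varepsilon\|_{H^1})$. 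They are controlled by Lemma~\ref{le:boundinter}, Lemma~\ref{le:PAB}, the bounds on $U$ and $R$ from Lemma~\ref{le:RAB}, and~\eqref{equ:modu2}. The $b_1$-derivative of $\phi$ is supported where $y\sim a^{-1}$, far from both solitons, so it pairs negligibly with $Q_1,Q_2$.

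Pairing with the six functions and using $(\Lambda Q,Q)=(Q',Q)=(Q',\Lambda Q)=0$, $(X,Q)=-m_0^2$, $(Y,Q)=-\frac35 m_0$ and $(X,Q')=(Y,Q')=0$, the matrix becomes block triangular up to small errors, with nondegenerate blocks:
\begin{itemize}
\item $x_0$ and $z$ against $Q',\Gamma Q'$: entries $\|Q'\|_{L^2}^2$;
\item $b_1$ and $b_2$ against $Q,\Gamma Q$: $(X_1\phi,Q_1)\approx-m_0^2$, $(X_2\phi,Q_2)\approx-m_0^2$, while the cross terms with $Y_2$ are lower-triangular;
\item $\lambda_0$ and $\mu$ against $\Lambda Q,\Gamma\Lambda Q$: entries $\|\Lambda Q\|^2$ and $-\tfrac12\|\Lambda Q\|^2$.
\end{itemize}
One also needs the couplings between blocks, for example $(X_1\phi,\Lambda Q)$ and $(X_2\phi,Q_2')$, which are $O(1)$. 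To handle them I will order the unknowns so that the matrix is triangular modulo $\varrho(\delta)$. Alternatively, I will compute the $O(1)$ entries and check that the determinant of the resulting $6\times6$ limit matrix, formed from two decoupled $3\times3$ soliton blocks (parameters $\lambda$-type, translation, $b$), is nonzero.

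For each single bubble this is the classical nondegeneracy of~\cite[Lemma 2.5]{MMRACTA}. The Gram-type matrix of $(\Lambda Q,Q',X)$ against $(\Lambda Q,Q',Q)$ is invertible because $(\Lambda Q,Q)=0$, $(X,Q)\neq0$, $(Q',Q')\neq0$, $(\Lambda Q,\Lambda Q)\neq0$ and $(Q',\Lambda Q)=0$. The cross-bubble entries are $O(\bar z^{K}e^{-\bar z})$. I expect this invertibility check, in particular tracking the non-localized profiles and the $\phi$ cut-off uniformly in $\bar z$, to be the main technical obstacle.

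Once invertibility holds with a bound uniform in $\delta$, the IFT gives a $C^{1}$ map $v\mapsto\mathcal P(v)$ on an $H^{1}$-neighborhood of $\bar v$, with $\mathcal P(\bar v)=\bar{\mathcal P}$ and $|\mathcal P(v)-\bar{\mathcal P}|\lesssim\|v-\bar v\|_{H^1}$. This proves uniqueness and the claimed smallness bounds, with $\varrho(\delta)$ absorbing $\bar z^{-3}$. For $t$ near $\bar t$, $v(t)=\bar\lambda^{\frac12}u(t,\bar\lambda\cdot+\bar x_1)$ stays in this neighborhood by $H^1$-continuity of $u$. We then set $\lambda=\bar\lambda\lambda_0$ and $x_1=\bar x_1+\bar\lambda x_0$.

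For $C^{1}$ regularity in $t$, apply the same argument to smooth $H^{3}$ approximate data, for which $t\mapsto v(t)$ is $C^{1}$ into $H^{1}$, and pass to the limit. A cleaner route is to write $\frac{\dd}{\dd t}\Phi(v(t),\mathcal P(t))=0$ weakly using the equation tested against smooth decaying functions; this shows $\mathcal P$ is $C^1$ via the invertible Jacobian.
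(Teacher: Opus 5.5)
Your proposal is essentially the paper's proof. Both normalize $(\bar\lambda,\bar x_1)=(1,0)$, apply the Implicit Function Theorem to the six orthogonality conditions, and show that $D_{\mathcal G}\Phi$ is triangular after reordering, with nonzero diagonal built from $\|\Lambda Q\|_{L^2}^2$, $\|Q'\|_{L^2}^2$ and $(X,Q)=-m_0^2$; your primary ordering argument goes through.

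One correction to your bookkeeping: the scaling column paired with $\partial_yQ_2$ equals $-\bar z\|Q'\|_{L^2}^2$ up to $O(\bar z^{-2})$, because a dilation about $y=0$ also translates the second bubble. So the limit matrix is not two decoupled $3\times3$ blocks, and its inverse is only $O(\bar z)$, not uniform in $\delta$. This is harmless for invertibility, since the errors are $O(\bar z^{-2})$, and it is why the paper works on balls of radius comparable to $\bar z^{-3}$.
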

	
	\begin{proof}
		Using the scaling and translation invariances of~\eqref{equ:gKdV}, we can assume that $(\bar{\lambda},\bar{x}_{1})=(1,0)$. From now on, we denote $(\bar{u},\overline{\mathcal{G}})=(u(\bar{t}),1,\bar{z},\bar{\mu},0,\bar{b}_{1},\bar{b}_{2})$. 
		
		\smallskip
		Fix $t\in [t_{0},t_{1}]$. We consider the map
		\begin{equation*}
			\begin{aligned}
				\Phi: (u,\mathcal{G})\mapsto (\Phi_{1},\Phi_{2},\Phi_{3},\Phi_{4},\Phi_{5},\Phi_{6})\in \RR^{6}.
			\end{aligned}
		\end{equation*}
		Here, we denote 
		\begin{equation*}
			\begin{aligned}
				\Phi_{1}&=(\varepsilon(t),\Lambda Q), \quad \ \Phi_{2}=(\varepsilon(t),\partial_{y}Q_{2}),\ \ \Phi_{3}=(\varepsilon(t),\Gamma \Lambda Q),\\
				\Phi_{4}&=(\varepsilon(t),\partial_{y}Q_{1}),\ \ \Phi_{5}=(\varepsilon(t),Q_{1}),\quad \ \ \Phi_{6}=(\varepsilon(t),Q_{2}).
			\end{aligned}
		\end{equation*}
		Note that, the set of conditions in~\eqref{equ:orth} is equivalent to $\Phi(u,\mathcal{G})=0$. In what follows, we solve this nonlinear system around $(\bar{u},\overline{\mathcal{G}})$ by the Implicit Function Theorem. 
		
		\smallskip
		First, it is easy to check that 
		$\Phi(\bar{u},\overline{\mathcal{G}})=0$. Second, by~\eqref{equ:modu2} and~\eqref{equ:defe}, we find
		\begin{equation*}
			\begin{aligned}
				\left. \frac{\partial \varepsilon}{\partial x_{1}} \right|_{(u,\mathcal{G})=(\bar{u},\overline{\mathcal{G}})}
				&=Q'-Q'(\cdot-\bar{z})+O_{H^{1}_{loc}}\left(\bar{z}^{-2}\right),\\
				\left. \frac{\partial \varepsilon}{\partial b_{1}} \right|_{(u,\mathcal{G})=(\bar{u},\overline{\mathcal{G}})}
				&=10m_{0}Y(\cdot-\bar{z})-X
				+O_{H^{1}_{loc}}\left(\bar{z}^{-2}\right),\\
				\left. \frac{\partial \varepsilon}{\partial \lambda} \right|_{(u,\mathcal{G})=(\bar{u},\overline{\mathcal{G}})}
				&=\Lambda Q-(\Lambda Q)(\cdot-\bar{z})-\bar{z}Q'(\cdot-\bar{z})+O_{H^{1}_{loc}}\left(\bar{z}^{-2}\right).
			\end{aligned}
		\end{equation*}
		Using again~\eqref{equ:modu2} and~\eqref{equ:defe}, we also find 
		\begin{equation*}
			\begin{aligned}
				\left. \frac{\partial \varepsilon}{\partial b_{2}} \right|_{(u,\mathcal{G})=(\bar{u},\overline{\mathcal{G}})}
				&=X(\cdot-\bar{z})+O_{H^{1}_{loc}}\left(\bar{z}^{-2}\right),\\
				\left. \frac{\partial \varepsilon}{\partial z} \right|_{(u,\mathcal{G})=(\bar{u},\overline{\mathcal{G}})}
				&=-Q'(\cdot-\bar{z})+O_{H^{1}_{loc}}\left(\bar{z}^{-2}\right),\\
				\left. \frac{\partial \varepsilon}{\partial \mu} \right|_{(u,\mathcal{G})=(\bar{u},\overline{\mathcal{G}})}&=\frac{1}{2}(\Lambda Q)(\cdot-\bar{z})+O_{H^{1}_{loc}}\left(\bar{z}^{-2}\right).
			\end{aligned}
		\end{equation*}
		Therefore, using integration by parts, for any $(u,\mathcal{G})$ around $(\bar{u},\overline{\mathcal{G}})$, we find 
		\begin{equation*}
         \begin{aligned}
			D_{\mathcal{G}}\Phi&=
			\begin{pmatrix}
				\|\Lambda Q\|_{L^{2}}^{2}
				& 0 & 0 & 0 & O(1) & 0\\
				- \bar{z}\|Q'\|_{L^{2}}^{2} &-\|Q'\|_{L^{2}}^{2} & 0 & -\|Q'\|_{L^{2}}^{2} & 0 &0 \\
				-\|\Lambda Q\|_{L^{2}}^{2} & 0 & \frac{1}{2}\|\Lambda Q\|^{2}_{L^{2}}& 0 &O(1) & O(1)\\
				0& 0 & 0 & \|Q'\|^{2}_{L^{2}} &0 &0 \\
				0 & 0 & 0 & 0 & m_{0}^{2} & 0\\
				0 & 0 & 0 & 0 & 2m_{0}^{2} & -m_{0}^{2}
			\end{pmatrix}
            \\
			&+O\left(\bar{z}^{-2}\right)+O\left(\|u-\bar{u}\|_{H^{1}}\right)+O\left(|\bar{\mathcal{G}}-\mathcal{G}|\right).
            \end{aligned}
		\end{equation*}
		Here, we use the fact that\footnote{See Proposition~\ref{prop:Spectral}, Remark~\ref{re:Y} and Remark~\ref{re:P}.}
		\begin{equation*}
			\begin{aligned}
				(\Lambda Q,Q)=(\Lambda Q,Q')=(X,Q)+m_{0}^{2}&=0,\\
				(Q,Q')=(X,Q')=(Y,Q')=(Y,Q)+\frac{3}{5}m_{0}^{2}&=0.
			\end{aligned}
		\end{equation*}
		Thus, from the smallness condition~\eqref{equ:modu2}, $D_{\mathcal{G}}\Phi(u,\mathcal{G})$ is an invertible matrix for $\delta>0$ small enough, with a lower bound uniform around $(\bar{u},\overline{\mathcal{G}})$.
		Therefore, by the uniform variant of the Implicit
		Function Theorem, there exist $\delta_{1}\in (0,\infty)$ small enough (independent with $\bar{z}$ and $\delta$) and a $C^{1}$ map 
		\begin{equation*}
			\Pi : B_{H^{1}}(\bar{u},\delta_{1}\bar{z}^{-3})\to B_{\RR^{6}}(\overline{\mathcal{G}},\bar{z}^{-3}),
		\end{equation*}
		such that for all $(u,\mathcal{G})\in B_{H^{1}}(\bar{u},\delta_{1}\bar{z}^{-3})\times B_{\RR^{6}}(\overline{\mathcal{G}},\bar{z}^{-3})$,
		\begin{equation*}
			\Phi (u,\mathcal{G})=0  \ \ 
			\mbox{if and only if}\ \ 
			\mathcal{G}=\Pi (u).
		\end{equation*}
		Moreover, taking $\delta>0$ small enough, for any $\bar{\varepsilon}$ satisfying~\eqref{equ:modu2}, we have 
		\begin{equation*}
			\|\varepsilon\|_{H^{1}}
			+|\mathcal{G}-\overline{\mathcal{G}}|
			\lesssim \|\bar{\varepsilon}\|_{H^{1}}+  |\mathcal{G}-\overline{\mathcal{G}}|\lesssim \|\overline{\varepsilon}\|_{H^{1}}
			+
			|\Pi (u)-\Pi(\bar{u})| \lesssim \delta,
		\end{equation*}
		which directly completes the proof for Proposition~\ref{prop:modu}.
	\end{proof}
	
	\subsection{Backwards uniform estimates}
	In this subsection, we state the uniform estimates for particular backwards solutions. As usual in the construction of solutions with explicit asymptotic behavior, the key point is to carefully adjust their final data to obtain the estimates that are uniform in the special regime of Theorem~\ref{thm:main}.
	
	\smallskip
	Let $\mathcal{G}^{in}=(\lambda^{in},z^{in},\mu^{in},x_{1}^{in},b_{1}^{in},b_{2}^{in})\in (0,\infty)^{2}\times \RR^{4}$ to be chosen with $0<\lambda^{in}\ll 1$, $1\ll z^{in}<\infty$, $1\ll x_{1}^{in}<\infty$ and $|\mu^{in}|+|b_{1}^{in}|+|b_{2}^{in}|\ll (z^{in})^{-3}\ll 1$. Let $u(t)$ for $t\in [T^*,T_n]$ be the solution of~\eqref{equ:gKdV} with the following finial data 
	\begin{equation}\label{equ:deffinaldata}
		u(T_n,x)=\frac{1}{(\lambda^{in})^{\frac{1}{2}}}V^{in}\left(\frac{x-x_{1}^{in}}{\lambda^{in}}\right)\in H^{1}.
	\end{equation}
	Here, we denote 
	\begin{equation*}
		V^{in}(y)=V(y;(z^{in},\mu^{in},b_{1}^{in},b_{2}^{in}))\in H^{1}.
	\end{equation*}
	Note that, the finial data $u(T_n)$ satisfies~\eqref{equ:orthmodu1}--\eqref{equ:modu2}, and thus, by Proposition~\ref{prop:modu} and continuity of the solution flow for~\eqref{equ:gKdV} in $H^{1}$, it exists and admits decomposition which satisfies~\eqref{equ:modu2}--\eqref{equ:orth} on some time interval $[T^{*},T_{n}]$. In addition, we denote $(\varepsilon,\mathcal{G})$ by the error term and geometrical parameters related to the decomposition in Proposition~\ref{prop:modu}.
	
	\smallskip
	For $s^{in}\gg 1$, we normalize the rescaled time $s$ as follows: for $t\in [T^{*},T_{n}]$, we set 
	\begin{equation}\label{equ:defs}
		s(t)=s^{in}-\int_{t}^{T_{n}}\frac{\dd \sigma}{\lambda^{3}(\sigma)}\in (-\infty,s^{in}].
	\end{equation}
	Observe from Proposition~\ref{prop:modu} and~\eqref{equ:deffinaldata} that 
	\begin{equation}\label{equ:moduinit}
		(\varepsilon(s^{in}),\mathcal{G}(s^{in}))=(0,\lambda^{in},z^{in},\mu^{in},x_{1}^{in},b_{1}^{in},b_{2}^{in}).
	\end{equation}
	
	From now on, we fix $T_{n}=n$ and choose $s^{in}$ such that 
	\begin{equation}\label{est:defsin}
		\left|T_{n}-\frac{s^{in}}{\log^{\frac{1}{2}}s^{in}}\right|\le \frac{s^{in}}{\log^{\frac{3}{4}} s^{in}}.
	\end{equation}
	
	The following Proposition is the main part of the proof of Theorem~\ref{thm:main}.
	
	\begin{proposition}
		[Uniform estimates]\label{prop:uni}
		There exists $s_{0}>1$ such that for all $s^{in}>s_{0}$ satisfying~\eqref{est:defsin}, there exist some choice of geometrical parameters $\mathcal{G}^{in}=(\lambda^{in},z^{in},\mu^{in},x_{1}^{in},b_{1}^{in},b_{2}^{in})\in (0,\infty)^{2}\times \RR^{4}$ with 
		\begin{equation}\label{est:finialdatawell}
			\begin{aligned}
				\lambda^{in}=\log^{-\frac{1}{6}}(s^{in}),\quad 
				b_2^{in}=b_1^{in}-\frac{5\alpha}{2}z^{in}e^{-z^{in}},\\
				\mu^{in}=5b_{1}^{in}z^{in},\quad b_1^{in}=\left(\frac{\alpha}{3}\right)^{\frac{1}{2}}\left(z^{in}\right)^{-\frac{1}{2}}e^{-\frac{1}{2}z^{in}},\\
				\left|x_{1}^{in}-\frac{s^{in}}{\log^{\frac{1}{6}} s^{in}}\right|^{2}+ \left|\left(\frac{1}{3\alpha}\right)^{\frac{1}{2}}(z^{in})^{-\frac{1}{2}}e^{\frac{1}{2}z^{in}}-s^{in}\right|^{2}\le\frac{(s^{in})^{2}}{\log s^{in}}.
			\end{aligned}
		\end{equation}
		such that the solution $u$ of~\eqref{equ:gKdV} corresponding to~\eqref{equ:deffinaldata} exists and satisfies~\eqref{equ:orthmodu1}--\eqref{equ:orth} on the rescaled time interval $[s_{0},s^{in}]$. Here, the rescaled time $s$ is defined in~\eqref{equ:defs}. Moreover, the decomposition of $u$ given by Proposition~\ref{prop:modu} on $[s_{0},s^{in}]$ 
		\begin{equation*}
			\varepsilon(s,y)=\lambda^{\frac{1}{2}}(s)u(s,\lambda(s)y+x_{1}(s))-V(y;(z,\mu,b_{1},b_{2})),
		\end{equation*}
		satisfies the following uniform estimates: for all $s\in [s_{0},s^{in}]$, we have
		\begin{equation}\label{est:unisolution}
			\begin{aligned}
				|z(s)-2\log s|\lesssim \log \log s,\quad 
				|b_{1}(s)|\lesssim \frac{1}{s\log s},\\
				\Big|x_{1}(s)-\frac{s}{\log^{\frac{1}{6}} s}\Big|\lesssim \frac{s}{\log^{\frac{1}{2}}s},\quad 
				\|\varepsilon(s)\|_{H^{1}}\lesssim \frac{1}{s},\\
				\Big|\lambda (s)-\frac{1}{\log^{\frac{1}{6}} s}\Big|\lesssim \frac{1}{\log^{\frac{2}{3}}s},\quad 
				|\mu(s)|\lesssim \frac{1}{s},\quad |b_{2}(s)|\lesssim \frac{1}{s\log s}.
			\end{aligned}
		\end{equation}
	\end{proposition}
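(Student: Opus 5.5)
The proof will be a bootstrap argument run backwards in time from $s^{in}$, combined with a topological (Brouwer-type) choice of the single free final parameter $z^{in}$. The other final parameters are fixed by the relations in \eqref{est:finialdatawell}, which encode the unstable direction. For $s\le s^{in}$ I will introduce bootstrap assumptions that are slightly weaker than \eqref{est:unisolution}. They say that $\mathcal{G}(s)$ is admissible in the sense of Definition~\ref{def:Admissible}, that $\|\varepsilon(s)\|_{H^{1}}\le K s^{-1}$ (in fact I will aim for a smaller bound such as $s^{-1}\log^{-1/2}s$ in a weighted energy norm), and that $\lambda$ and $x_{1}$ stay close to $\log^{-1/6}s$ and $s\log^{-1/6}s$. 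Let $s^{*}$ be the infimum of times at which all of these hold. The goal is to show $s^{*}=s_{0}$ for a suitable choice of $z^{in}$.

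\textbf{Step 1: modulation equations.} I will insert $w=V+\varepsilon$ into \eqref{equ:gKdv2}, use Proposition~\ref{prop:approx}, and project the $\varepsilon$ equation onto the six orthogonality directions \eqref{equ:orth}. Using Lemmas~\ref{le:Psi1}--\ref{le:Psi2} gives
\[
|\vec{\rm Mod}|+\Big|\frac{\dot\lambda}{\lambda}+b_{1}\Big|+\Big|\frac{\dot x_{1}}{\lambda}-1\Big|\lesssim \|\varepsilon\|_{L^{2}_{loc}}+s^{-3}.
\]
Projecting onto $Q_{1}$ and $Q_{2}$ and using the explicit inner products in Lemma~\ref{le:Psi1}(ii)--(iii) gives refined laws for $b_{1}$ and $b_{2}$: the quadratic terms in $\varepsilon$ are pushed to order $s^{-3}\log^{-1}s$, and the remaining terms are explicit. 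Subtracting the two laws yields \eqref{equ:ODESYSTEM2} for $(\mu,\Theta)$ with an $O(s^{-3}\log^{-1}s)$ error. This matches the refined lemmas (Lemma~\ref{le:refinedTheta} and Lemma~\ref{le:refinmu}) announced in the introduction.

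\textbf{Step 2: energy estimate for $\varepsilon$.} I will build a mixed functional of the form
\[
\mathcal{F}=\int\big(\varepsilon_{y}^{2}+\varepsilon^{2}\psi-\tfrac13((V+\varepsilon)^{6}-V^{6}-6V^{5}\varepsilon)\big),
\]
with a monotone weight $\psi$ of Kato type, plus a localized correction around $Q_{2}$ in the spirit of~\cite[Section 3]{NguyenKdV} that absorbs the badly signed term $\mu\,\partial_{y}\varepsilon$. The coercivity comes from Proposition~\ref{prop:Spectral}(iii) applied near each bubble, using the orthogonality \eqref{equ:orth}. Computing $\frac{d}{ds}\mathcal{F}$, the error $\Psi(V)$ is at most $s^{-5/2}\log^{-1}s$ in $H^{1}$ apart from the term $\Psi_{4}$, which is far-localized; the $\vec{\rm Mod}$ terms are controlled by Step~1. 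This should give $\frac{d}{ds}(s^{2}\mathcal{F})\gtrsim -s^{-1}\log^{-1}s$ or a similar inequality. Integrating backwards from $\varepsilon(s^{in})=0$ then gives $\|\varepsilon\|_{H^{1}}\lesssim s^{-1}\log^{-1/2}s$, which strictly improves the bootstrap.

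\textbf{Step 3: ODE analysis.} The system \eqref{equ:ODESYSTEM}--\eqref{equ:ODESYSTEM2} has the explicit approximate solution recorded in the introduction. I will linearize around it in the variables $(z,b_{1},\mu,\Theta)$. The data \eqref{est:finialdatawell} kill the stable-backward modes exactly at $s^{in}$: they put $\mu$ and $\Theta$ on their leading-order profiles and set $b_{1}^{2}\approx\alpha e^{-z}/(3z)$. I expect one direction, essentially $b_{1}^{2}-\frac{\alpha}{3z}e^{-z}$ together with $z$, to grow backwards. This direction is the main obstacle. I will handle it with a Lyapunov quantity such as $\mathcal{K}=s^{2}\log s\,(\alpha e^{-z})-\text{const}$, showing it is transversal at the exit boundary: whenever $|\mathcal{K}|$ reaches its bound, $\frac{d}{ds}\mathcal{K}^{2}$ has a definite sign. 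If for every admissible $z^{in}$ the exit time were $>s_{0}$, the map $z^{in}\mapsto \mathcal{K}(s^{*})$ would be continuous and onto the boundary $\{\pm1\}$ while equal to the identity on the boundary of the interval of $z^{in}$ values. This is impossible by connectedness.

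\textbf{Step 4: the remaining parameters.} The parameters $\lambda$ and $x_{1}$ are closed by integrating $\dot\lambda/\lambda=-b_{1}+O(s^{-3})$ and $\dot x_{1}=\lambda+\dots$. Here the constraint \eqref{est:defsin} and $\lambda^{in}=\log^{-1/6}s^{in}$ fix the constants. Controlling $\mu$ and $\Theta$ to the precision $s\Theta+\mu=O(s^{-1})$, despite the presence of $\mu$ in $\dot z$, is delicate. I will handle it by integrating \eqref{equ:ODESYSTEM2} exactly, since it is a total derivative up to lower-order terms.
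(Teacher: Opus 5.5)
Your overall architecture matches the paper's: backward bootstrap from $\varepsilon(s^{in})=0$, modulation, a Kato-weighted energy with a Nguyen-type correction near $Q_2$, and shooting in $z^{in}$. But Step~1 has a genuine gap. The refined $(\mu,\Theta)$ laws you attribute to Lemmas~\ref{le:refinmu} and~\ref{le:refinedTheta} do not follow from projecting the $\varepsilon$-equation, because the \emph{linear} terms in $\varepsilon$ survive.

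\textbf{The linear terms.} Since $\mathcal{L}(\Lambda Q)'=-2Q'+20Q^3Q'\Lambda Q$ is not killed by \eqref{equ:orth}, the $\Gamma\Lambda Q$ projection only gives $|\dot\mu-2\Theta|\lesssim\mathcal N_B(\varepsilon)$. The $Q_1,Q_2$ projections leave $20m_0^{-2}b_1\big((\varepsilon,Q_2^3P_2\partial_yQ_2)-(\varepsilon,Q^3XQ')\big)=O(b_1\mathcal N_B(\varepsilon))$ in $\dot\Theta$. The energy method yields $\mathcal N_B\lesssim C_0s^{-3/2}\log^{-1}s$. Integrating from $s^{in}$ then produces errors of order $s^{-1/2}\log^{-1}s$ in $\mu$ and $s^{-3/2}\log^{-2}s$ in $\Theta$. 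These swamp $\Theta\approx-\frac{5}{6s^2}$ and $\mu\approx\frac{5}{3s}$, and $\mu$ is needed to $o(s^{-1})$ because it dominates $\dot z=b_1z+\mu$.

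\textbf{The missing idea.} You need the Martel--Merle--Rapha\"el device of writing these linear terms as time derivatives of localized functionals.
\begin{itemize}
\item Set $\mathcal J_1=m_0^{-2}(\varepsilon,\rho_1)$ with $\rho_1=\int_y^\infty(\Lambda Q(\sigma)-\Lambda Q(\sigma-z))\,\dd\sigma$. The leading linear part of $\dot{\mathcal J}_1$ vanishes by $\mathcal L\Lambda Q=-2Q$ and $(\varepsilon,Q_1)=(\varepsilon,Q_2)=0$, which gives Lemma~\ref{le:refinmu}.
\item Set $\mathcal J_2=m_0^{-2}(\varepsilon,X-P(\cdot-z)+2m_0)$. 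Using $\mathcal L(X')=\Lambda Q+20Q^3XQ'$, this gives Lemma~\ref{le:refinedTheta}, with the extra terms $c_0b_1\dot\mu-b_1\dot{\mathcal J}_2$.
\end{itemize}
Only then can you integrate $\mu-5b_1z+\mathcal J_1$ and $\Theta+\frac52\alpha ze^{-z}+c_0b_1\mu-b_1\mathcal J_2$ from the final data.

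The same functionals enter the paper's energy. $\mathcal F_2$ contains $-2b_1\big(\mathcal J_2+(c_0+\frac54)\mathcal J_1\big)(\varepsilon\phi_1,X_1\phi)$. This cancels the term $2\dot\Theta(\varepsilon\phi_1,X_1\phi)-\frac52b_1\dot\mu(\varepsilon\phi_1,X_1\phi)$ coming from the modulation directions built on the non-localized $X$. That term is otherwise too large, because $(\varepsilon\phi_1,X_1\phi)$ is only $O(s^{1/4}\mathcal N_B)$.

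\textbf{Your energy bound.} Your target $\|\varepsilon\|_{H^1}\lesssim s^{-1}\log^{-1/2}s$ is too weak even for the $b_1$ law. The quadratic terms would be $O(s^{-2}\log^{-1}s)$, the same size as $\alpha e^{-z}$. Also, $\frac{\dd}{\dd s}(s^2\mathcal F)\gtrsim-s^{-1}\log^{-1}s$ integrates to $\log(\log s^{in}/\log s)$, which is not uniform in $s^{in}$.

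\textbf{The parameter $x_1$.} It cannot be closed by integrating backward from a fixed $x_1^{in}$. The quantity $x_1(s)-s\log^{-1/6}s$ equals its final value minus $\int_s^{s^{in}}O(\log^{-2/3}\sigma)\,\dd\sigma$. This integral can be of size $s^{in}\log^{-2/3}s^{in}\gg s\log^{-1/2}s$ when $s\ll s^{in}$. Indeed $(x_1\log^{1/6}s-s)/s$ is backward-unstable exactly like $(\xi_2-s)/s$, where $\xi_2=(3\alpha)^{-1/2}z^{-1/2}e^{z/2}$. The paper therefore shoots in both $(x_1^{in},z^{in})$, via $\eta_1,\eta_2$ and a two-dimensional no-retraction argument. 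Alternatively, since $x_1$ decouples by translation invariance, you could fix $x_1^{in}$ a posteriori.

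\textbf{The quantity $b_1^2-\frac{\alpha}{3}z^{-1}e^{-z}$.} This is not an unstable direction. Once $\mu=5b_1z+O(\frac{1}{s\log s})$ is known, $b_1\dot b_1=\frac{\alpha}{6}\frac{\dd}{\dd s}(z^{-1}e^{-z})+O(s^{-3}\log^{-3}s+\dots)$. So it is closed by direct integration from its zero final value. Only the $z$-mode (your $\mathcal K$) and $x_1$ require shooting.
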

	
	The key point in Proposition~\ref{prop:uni} is that $s_{0}>1$ and the implied constants in~\eqref{est:unisolution} are independent of $s^{in}$ as $s^{in}\to \infty$. The rest of the section is organized as follows. First, in Section~\ref{SS:Boot}, we introduce the bootstrap assumption which related to the above uniform estimates. Then, in Section \ref{SS:Energy}--\ref{SS:Proofuni}, we devote to the proof of Proposition~\ref{prop:uni} based on a standard energy estimate. More precisely, we control the remainder term $\varepsilon$ via energy estimate in the framework of the two-bubble solutions and estimate the geometrical parameters $\mathcal{G}$ via some ODE arguments.
	
	\subsection{Bootstrap bounds}\label{SS:Boot} 
	For $t\le T_n$ (\emph{i.e.} $s\le s^{in}$), as long as $u(t)$ is well-defined in $H^{1}$ and satisfies~\eqref{equ:orthmodu1}--\eqref{equ:modu2}, we decompose $u(t)$ as in Proposition~\ref{prop:modu}. In particular, we denote by $(\varepsilon(s),\mathcal{G}(s))$ the remainder term and the geometrical parameters of the decomposition for $u(t)$.  Let $\chi_{1}:\RR\to [0,1]$ be decreasing $C^{\infty}$ function such that
	\begin{equation}\label{equ:defchi1}
		\chi_{1}(y)
		=\left\{
		\begin{aligned}
			& 1-e^{y} \ \ \mbox{on} \ (-\infty,-1], \\ 
			& e^{-y}\quad \ \ \mbox{on}\ [2,\infty),
		\end{aligned}
		\right.
		\quad \mbox{and}\quad 
		\chi'_{1}<0\ \ \mbox{on}\ \RR.
	\end{equation}
	For $B>1$ large to be chosen later, we define
	\begin{equation}\label{equ:defphi1}
		\mathcal{N}_{B}(\varepsilon)=\left(\int_{\RR}\left((\partial_{y}\varepsilon)^{2}+\varepsilon^{2}\phi_{1}\right)\dd y\right)^{\frac{1}{2}}\quad \mbox{with}\ \ \phi_{1}(y)=\chi_{1}\left(\frac{y}{B}-s^{\frac{1}{2}}\right).
	\end{equation}

	The proof of Proposition~\ref{prop:uni} is based on the following bootstrap estimates: for $C_{0}>1$ to be chosen,
	\begin{equation}\label{est:Boot}
		\left\{
		\begin{aligned}
			|\mu(s)|\le \frac{5}{s},\quad   \frac{1}{7 s\log s}\le b_{1}(s)\le \frac{1}{5s \log s},\\
             |\Theta(s)|\le \frac{5}{s^{2}},\quad 
            \frac{1}{7 s\log s}\le b_{2}(s)\le \frac{1}{5s \log s},
            \\
            \frac{1}{2\log^{\frac{1}{6}}s}\le \lambda(s)\le \frac{3}{2\log^{\frac{1}{6}}s},\quad \mathcal{N}_{B}(\varepsilon)\le \frac{C_{0}}{s^{\frac{3}{2}}\log s},\\
			\Big|x_{1}(s)-\frac{s}{\log^{\frac{1}{6}} s}\Big|^{2}+  \left|\left(\frac{1}{3\alpha}\right)^{\frac{1}{2}}z^{-\frac{1}{2}}(s)e^{\frac{z(s)}{2}}-s\right|^{2}\le \frac{s^{2}}{ \log s}.
		\end{aligned}
		\right.
	\end{equation}
	Note that, the estimate on $z$ in~\eqref{est:Boot} immediately implies that, for $s$ large, 
	\begin{equation}\label{est:bootz}
		0<e^{-z(s)}\lesssim s^{-2}\log^{-1}s\quad \mbox{and}\quad 
		|z(s)-2\log s|\lesssim \log \log s.
	\end{equation}
	For $s_{0}>1$ to be chosen large enough (independent of $s^{in}$), and all $s^{in}\gg s_{0}$, we set 
	\begin{equation}\label{equ:defss}
		s^{*}=\inf\left\{s\in [s_{0},s^{in}]:u(s)\ \mbox{satisfies}\ \eqref{equ:orthmodu1}-\eqref{equ:modu2} \ \mbox{and}\ \eqref{est:Boot}\ \mbox{on}\ [s,s^{in}]\right\}.
	\end{equation}
	Observe that the choice of final data in~\eqref{equ:deffinaldata} and the continuity of the flow for~\eqref{equ:gKdV} directly imply $s^* < s^{in}<\infty$. Moreover, for any $s^{*}<s<s^{in}<\infty$, the bootstrap assumptions~\eqref{est:Boot}--\eqref{est:bootz} guarantee that~\eqref{equ:orthmodu1}--\eqref{equ:modu2} hold on $[s, s^{in}]$ whenever~\eqref{equ:orth} and~\eqref{est:Boot} hold on this interval. Therefore, to close the bootstrap argument concerning $s^*$, it suffices to justify the estimates in~\eqref{est:Boot} rigorously.
	
	\smallskip
	We further observe that, for $s_{0}>1$ large enough, the geometrical parameters $\mathcal{G}(s)$ satisfying~\eqref{est:Boot} are admissible in the sense of Definition~\ref{def:Admissible}. It follows that Proposition~\ref{prop:approx} remains valid under the bootstrap assumption~\eqref{est:Boot}.
	
	\smallskip
	In the rest of the section, the implied constant in $\lesssim$ and $O$ do not depend on the constant $C_{0}$ appearing in the bootstrap assumption~\eqref{est:Boot}.
	
	\smallskip
	We first establish the $L^{2}$ and $L^{\infty}$ estimates of the remainder term $\varepsilon$.
	\begin{lemma}\label{le:L2e}
		We have
		\begin{equation*}
			\|\varepsilon(s)\|_{L^{2}}\lesssim s^{-1}\quad \mbox{and}\quad \|\varepsilon(s)\|_{L^{\infty}}\lesssim s^{-\frac{5}{4}}.
		\end{equation*}
	\end{lemma}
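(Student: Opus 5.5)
The plan is to use mass conservation to control $\|\varepsilon\|_{L^{2}}$. The local norm $\mathcal{N}_{B}(\varepsilon)$ only controls $\varepsilon$ on the region where $\phi_{1}$ is of order one, so the far-right tail must come from conservation. Then the $L^\infty$ bound follows by Gagliardo--Nirenberg.

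\textbf{Mass expansion.} Since $\|\varepsilon(s^{in})\|_{L^{2}}=0$ by~\eqref{equ:moduinit}, and mass is conserved and scale-invariant, we have
\begin{equation*}
\|V(s)+\varepsilon(s)\|_{L^{2}}^{2}=\|V^{in}\|_{L^{2}}^{2}.
\end{equation*}
Expanding gives
\begin{equation*}
\|\varepsilon\|_{L^{2}}^{2}=\|V^{in}\|_{L^{2}}^{2}-\|V(s)\|_{L^{2}}^{2}-2(V,\varepsilon).
\end{equation*}

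\textbf{Bounding $\|V^{in}\|_{L^{2}}^{2}-\|V(s)\|_{L^{2}}^{2}$.} Write $V=S+R\phi+U$. The soliton part contributes $\|S\|_{L^{2}}^{2}=2\|Q\|_{L^{2}}^{2}+O(e^{-z})$, which is independent of $\mu$. The crucial term is the cross term $2(S,R\phi)$, whose leading part is $2b_{1}(Q,X)-2\zeta_{2}(Q,X)=-2m_{0}^{2}(b_{1}-\zeta_{2})$. This is $O(|\Theta|+|\mu b_{1}|)=O(s^{-2})$ under~\eqref{est:Boot}, up to the profile--soliton interactions, which are also small: for example $(Q_{1},X_{2})=O(z^{K}e^{-z})$ by Lemma~\ref{le:PAB}, and $e^{-z}(S,A_{1}+B_{2})=O(ze^{-z})$. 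The remaining pieces are handled as follows. By Lemma~\ref{le:RAB}, $\|R\phi\|_{L^{2}}^{2}\lesssim b_{1}^{2}s+e^{-2z}s^{3}\lesssim s^{-1}\log^{-2}s$. The term $\|U\|_{L^{2}}^{2}$ is smaller still. Both quantities are evaluated at $s$ and at $s^{in}\ge s$, so every contribution is $O(s^{-1})$ or better. The main point is that $\|R\phi\|_{L^{2}}^{2}\sim b_{1}^{2}a^{-1}\lesssim s^{-1}\log^{-2}s$, which comes from the cutoff length $a^{-1}\sim s$.

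\textbf{Bounding $(V,\varepsilon)$.} By the orthogonality conditions~\eqref{equ:orth}, $(Q_{1},\varepsilon)=(Q_{2},\varepsilon)=0$. Hence
\begin{equation*}
|(V,\varepsilon)|\le\|R\phi+U\|_{L^{2}}\|\varepsilon\|_{L^{2}}\lesssim s^{-\frac{1}{2}}\log^{-1}s\,\|\varepsilon\|_{L^{2}}.
\end{equation*}
Absorbing this into the left side by Young's inequality gives $\|\varepsilon\|_{L^{2}}^{2}\lesssim s^{-1}\log^{-2}s+\dots$. This is weaker than the claimed $s^{-1}$ bound on $\|\varepsilon\|_{L^{2}}$, so the mass identity alone is not sharp enough. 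Two refinements are needed.

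\textbf{Refinement 1: the $\|R\phi\|^{2}$ difference.} Use the difference structure: compute $\|R\phi\|_{L^{2}}^{2}$ at times $s$ and $s^{in}$ and show their difference is $O(s^{-2})$. Alternatively, bound the growing tail directly. The tail lives in $y\in[a^{-1},2a^{-1}]$ with amplitude $|R|\lesssim s^{-2}$ (as computed in the proof of Lemma~\ref{le:RAB}), so it contributes only $O(s^{-3})$. On $[0,z]$ the contribution is $b_{1}^{2}z$-sized, and on $[z,a^{-1}]$ the amplitude is $|b_{1}-\zeta_{2}|+e^{-z}y+\dots$. Squaring and integrating gives $\lesssim s\cdot s^{-4}+s^{3}e^{-2z}$. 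This last piece, $s^{3}e^{-2z}\lesssim s^{-1}\log^{-2}s$, is the dangerous one. It should still be acceptable, since it is bounded by $s^{-2}$ times $s\log^{-2}s$. If it is not acceptable outright, I would restrict the mass argument to the region $y\lesssim Bs^{1/2}$. There $\varepsilon$ is already controlled by the bootstrap, since $\mathcal{N}_{B}(\varepsilon)\lesssim C_{0}s^{-3/2}\log^{-1}s$ bounds $\varepsilon$ on the region where $\phi_{1}\gtrsim1$.

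\textbf{Refinement 2: the far-right tail via Kato.} Control $\int\varepsilon^{2}(1-\phi_{1})$ on the far right using the Kato-type monotonicity $\frac{\dd}{\dd t}\int u^{2}\chi\le\dots$ of the introduction, integrated backward from $s^{in}$ where $\varepsilon=0$. The source terms are the flux of $V$, which decays.

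\textbf{$L^\infty$ bound.} Use Gagliardo--Nirenberg, $\|\varepsilon\|_{L^{\infty}}^{2}\le\|\varepsilon\|_{L^{2}}\|\partial_{y}\varepsilon\|_{L^{2}}$. Since $\|\partial_{y}\varepsilon\|_{L^{2}}\le\mathcal{N}_{B}(\varepsilon)\lesssim C_{0}s^{-3/2}\log^{-1}s$, this gives $\|\varepsilon\|_{L^{\infty}}\lesssim s^{-1/2}C_{0}^{1/2}s^{-3/4}\log^{-1/2}s\lesssim s^{-5/4}$ for $s_{0}$ large depending on $C_{0}$.

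\textbf{Main obstacle.} The main obstacle is the precise size of the mass defect produced by the non-localized $R\phi$ tail. In particular, one must verify that all $C_{0}$-dependence drops out of the $L^{2}$ bound.
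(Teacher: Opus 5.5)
\paragraph{Verdict.} Your route is the paper's: mass conservation, orthogonality to remove $(S,\varepsilon)$, absorption, then $\|\varepsilon\|_{L^{\infty}}^{2}\le\|\varepsilon\|_{L^{2}}\|\partial_{y}\varepsilon\|_{L^{2}}$. The $L^{\infty}$ step is fine once the $L^{2}$ bound holds. However, the $L^{2}$ bound does not close as you have written it, because you miss the cancellations built into $R\phi+U$.

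\paragraph{The size of $R\phi$.} The term you call dangerous does not exist.
\begin{itemize}
\item The linear growth of $A_{1}$ (slope $2\alpha m_{0}$) is cancelled by that of $B_{2}$ (slope $-2\alpha m_{0}(1+\mu)^{\frac{3}{4}}$). So on $[z,2a^{-1}]$ one has $A_{1}+B_{2}=2\alpha m_{0}(1+\mu)^{\frac{3}{4}}z+O(1+|\mu|y)$, and hence $e^{-z}|A_{1}+B_{2}|\lesssim s^{-2}$ there.
\item Likewise $b_{1}X_{1}-\zeta_{2}X_{2}\to-2m_{0}\bigl(b_{1}-(1+\mu)^{\frac{1}{4}}\zeta_{2}\bigr)=O(|\Theta|+|\mu|b_{1})$. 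This is what Lemma~\ref{le:RAB}(i) encodes, so your first bound $b_{1}^{2}s$ is also off.
\end{itemize}
This is exactly the rewriting of $R$ in the proof of Lemma~\ref{le:RAB} that you quote for $|R|\lesssim s^{-2}$ on $[\frac{s}{2},\frac{5s}{2}]$. It gives the same bound on all of $[z+1,2a^{-1}]$, because the only coefficient of $y$ is $O(e^{-z}|\mu|)$.

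Hence $\|R\phi\|_{L^{2}}^{2}\lesssim b_{1}^{2}z+s\cdot s^{-4}\lesssim s^{-2}\log^{-1}s$ and $\|R\phi+U\|_{L^{2}}\lesssim s^{-1}$. Your mass identity then gives $\|\varepsilon\|_{L^{2}}^{2}\lesssim s^{-2}+s^{-1}\|\varepsilon\|_{L^{2}}$ directly, with no $C_{0}$ and no refinement.

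Your fallback does not work. The claim ``it should still be acceptable'' is false: an $s^{-1}\log^{-2}s$ contribution to $\|\varepsilon\|_{L^{2}}^{2}$ only yields $\|\varepsilon\|_{L^{2}}\lesssim s^{-\frac{1}{2}}\log^{-1}s$. The localization/Kato idea of Refinement~2 is neither carried out nor needed.

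\paragraph{The mass defect.} Your evaluation of the cross term is also incorrect.
\begin{itemize}
\item Since $X(y)\to-2m_{0}$ as $y\to+\infty$, $(Q_{2},X_{1})=-8m_{0}^{2}+O(s^{-1})$ is not a small interaction.
\item With $(Q_{2},X_{2})=(Q,X)=-m_{0}^{2}$, one gets $(S,b_{1}X_{1}-\zeta_{2}X_{2})=7m_{0}^{2}b_{1}-m_{0}^{2}\zeta_{2}+O(s^{-2})$. This is $6m_{0}^{2}b_{1}+O(s^{-2})$, of order $s^{-1}\log^{-1}s$.
\item Even the diagonal terms alone give $-m_{0}^{2}(b_{1}+\zeta_{2})$, not $-m_{0}^{2}(b_{1}-\zeta_{2})$.
\end{itemize}
You dropped $(S,U)$, and it is precisely what cancels this. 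The term $-10m_{0}b_{1}(1+\mu)^{-\frac{1}{4}}Y_{2}$ in $U_{2}$ contributes $10m_{0}b_{1}(Q,Y)=-6m_{0}^{2}b_{1}$.

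This is why the paper groups the terms as $(S,b_{1}X_{1}-\zeta_{2}X_{2}-10m_{0}b_{1}Y_{2})$. It then uses $(Q_{j},X_{1}-X_{2}-10m_{0}Y_{2})=-m_{0}^{2}+O(s^{-1})$ for $j=1,2$, and concludes $\|V(s)\|_{L^{2}}^{2}=2\|Q\|_{L^{2}}^{2}+O(s^{-2})$, and likewise for $V^{in}$. Without this cancellation, $\|V^{in}\|_{L^{2}}^{2}-\|V(s)\|_{L^{2}}^{2}$ would be of size $b_{1}(s)$, and the $s^{-1}$ bound on $\|\varepsilon\|_{L^{2}}$ would fail.
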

	\begin{proof}
		First, from the orthogonality condition~\eqref{equ:orth}, we compute 
		\begin{equation*}
			\|u(s)\|^{2}_{L^{2}}=\|\varepsilon(s)\|_{L^{2}}^{2}+\|V(s)\|_{L^{2}}^{2}+2(\varepsilon,R\phi)+2(\varepsilon,U).
		\end{equation*}
		Note that, using the bootstrap assumption~\eqref{est:Boot} and Lemma~\ref{le:boundinter}, 
		\begin{equation*}
			\|V(s)\|^{2}_{L^{2}}=2\|Q\|_{L^{2}}^{2}+2(S,R\phi+U)+\|R\phi+U\|_{L^{2}}^{2}+
			O\left(s^{-2}\right).
		\end{equation*}
		In addition, using again the bootstrap assumption~\eqref{est:Boot}, 
		\begin{equation*}
			(S,R\phi+U)=(S,b_{1}X_{1}-\zeta_{2}X_{2}-10m_{0}b_{1}Y_{2})+O\left(s^{-2}\right).
		\end{equation*}
		From Remark~\ref{re:Y} and Remark~\ref{re:P}, we have 
		\begin{equation*}
			\begin{aligned}
				&(S,b_{1}X_{1}-\zeta_{2}X_{2}-10m_{0}b_{1}Y_{2})\\
				&=b_{1}(Q_{1},X_{1}-X_{2}-10m_{0}Y_{2})+
				\left(1-(1+\mu)^{-\frac{3}{2}}\right)b_{1}(S,X_{2})\\
				&-b_{1}(Q_{2},X_{1}-X_{2}-10m_{0}Y_{2})-\Theta(1+\mu)^{-\frac{3}{2}}(S,X_{2})=O\left(s^{-2}\right).
			\end{aligned}
		\end{equation*}
		Here, we use the fact that 
		\begin{equation*}
			\begin{aligned}
				(Q_{1},X_{1}-X_{2}-10m_{0}Y_{2})&=-m_{0}^{2}+O\left(s^{-1}\right),\\
				(Q_{2},X_{1}-X_{2}-10m_{0}Y_{2})&=-m_{0}^{2}+O\left(s^{-1}\right).
			\end{aligned}
		\end{equation*}
		Then, using Lemma~\ref{le:RAB} and the bootstrap assumption~\eqref{est:Boot},
		\begin{equation*}
			\begin{aligned}
				\|R\phi\|_{L^{2}}^{2}+\|U\|_{L^{2}}^{2}
				&\lesssim\left(b_{1}^{4}+e^{-2z}\right)\|(A_{1}+B_{2})\phi\|_{L^{2}}^{2}
				+b_{1}^{2}\|Y_{2}\|_{L^{2}}^{2}
				\\
				&+\left(b_{1}^{4}+e^{-2z}\right)\|(E_{1}+F_{2})\phi\|_{L^{2}}^{2}
				+z^{2}e^{-2z}\|Y_{2}\|_{L^{2}}^{2}
				\\
				&+b_{1}^{2}\|(X_{1}-X_{2})\phi\|_{L^{2}}^{2}+|\zeta_{2}-b_{1}|^{2}\|X_{2}\phi\|_{L^{2}}^{2}\lesssim s^{-2}.
			\end{aligned}
		\end{equation*}
		Similar as above, using the choice of final data~\eqref{est:finialdatawell}, we also have 
		\begin{equation*}
			\|V^{in}\|_{L^{2}}^{2}=2\|Q\|_{L^{2}}^{2}+O\left((s^{in})^{-2}\right)=2\|Q\|_{L^{2}}^{2}+O\left(s^{-2}\right).
		\end{equation*}
		Combining the above estimates with the conservation law of mass, we complete the proof for the $L^{2}$ estimate. The $L^{\infty}$ estimate then follows directly from the bootstrap assumption~\eqref{est:Boot} and the Fundamental Theorem of Calculus.
	\end{proof}
	
	\smallskip
	We now deduce the equation of the remainder term $\varepsilon$ from~\eqref{equ:gKdv2} and Proposition~\ref{prop:modu}.
	
	\begin{lemma}[Equation of $\varepsilon$]\label{le:eque}
		It holds 
		\begin{equation}\label{equ:pse}
			\begin{aligned}    &\partial_{s}\varepsilon+\partial_{y}\left(\partial_{y}^{2}\varepsilon-\varepsilon+(V+\varepsilon)^{5}-V^{5}\right)+\Psi(V)\\
				&=\frac{\dot{\lambda}}{\lambda}\Lambda \varepsilon+\bigg(\frac{\dot{\lambda}}{\lambda}+b_{1}\bigg)\Lambda V+\left(\frac{\dot{x}_{1}}{\lambda}-1\right)\left(\partial_{y}V+\partial_{y}\varepsilon\right).
			\end{aligned}
		\end{equation}
	\end{lemma}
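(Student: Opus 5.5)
This is a direct computation from the rescaled equation~\eqref{equ:gKdv2} applied to $w(s,y)=\lambda^{\frac{1}{2}}(s)u(t(s),\lambda(s)y+x_{1}(s))$, using the decomposition $w=V+\varepsilon$ from~\eqref{equ:defe}. First I will re-derive~\eqref{equ:gKdv2} with the correct signs. Differentiating $w$ in $s$ with $\frac{\dd t}{\dd s}=\lambda^{3}$ gives
\begin{equation*}
\partial_{s}w=\frac{\dot{\lambda}}{\lambda}\Lambda w+\frac{\dot{x}_{1}}{\lambda}\partial_{y}w+\lambda^{\frac{7}{2}}(\partial_{t}u)(t,\lambda y+x_{1}).
\end{equation*}
By~\eqref{equ:gKdV} and the scaling, $\lambda^{\frac{7}{2}}\partial_{t}u=-\partial_{y}(\partial_{y}^{2}w+w^{5})$. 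Writing $\frac{\dot{x}_{1}}{\lambda}\partial_{y}w=\partial_{y}w+(\frac{\dot{x}_{1}}{\lambda}-1)\partial_{y}w$, I obtain
\begin{equation*}
\partial_{s}w+\partial_{y}\left(\partial_{y}^{2}w-w+w^{5}\right)=\frac{\dot{\lambda}}{\lambda}\Lambda w+\left(\frac{\dot{x}_{1}}{\lambda}-1\right)\partial_{y}w.
\end{equation*}

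Next I substitute $w=V+\varepsilon$. Here $V=V(y;(z(s),\mu(s),b_{1}(s),b_{2}(s)))$, so $\partial_{s}V$ is exactly the time derivative appearing in the definition of $\Psi(V)$ in~\eqref{equ:defV}. Linearity of $\partial_{s}$ and $\partial_{y}^{3}-\partial_{y}$ splits the left-hand side as
\begin{equation*}
\partial_{s}\varepsilon+\partial_{y}\left(\partial_{y}^{2}\varepsilon-\varepsilon+(V+\varepsilon)^{5}-V^{5}\right)+\partial_{s}V+\partial_{y}\left(\partial_{y}^{2}V-V+V^{5}\right).
\end{equation*}
By definition the last two terms equal $\Psi(V)-b_{1}\Lambda V$. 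On the right-hand side, I expand $\frac{\dot{\lambda}}{\lambda}\Lambda(V+\varepsilon)$ and $(\frac{\dot{x}_{1}}{\lambda}-1)(\partial_{y}V+\partial_{y}\varepsilon)$.

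Finally I move $-b_{1}\Lambda V$ to the right. Combined with $\frac{\dot{\lambda}}{\lambda}\Lambda V$ it becomes $(\frac{\dot{\lambda}}{\lambda}+b_{1})\Lambda V$, which gives exactly~\eqref{equ:pse}.

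The only point requiring care is regularity. The parameters $\mathcal{G}$ are $C^{1}$ by Proposition~\ref{prop:modu}, and $u$ is only an $H^{1}$ solution, so the identity is understood in $H^{-2}$ (or in the distributional sense). The standard route is to derive it first for smooth data and then pass to the limit using $H^{1}$ well-posedness. There is no real obstacle beyond tracking signs in the rescaling step.
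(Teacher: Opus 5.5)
Your proposal is correct and follows the paper's argument. You substitute $w=V+\varepsilon$ into the rescaled equation~\eqref{equ:gKdv2}, identify $\partial_{s}V+\partial_{y}(\partial_{y}^{2}V-V+V^{5})=\Psi(V)-b_{1}\Lambda V$ via~\eqref{equ:defV}, and absorb $-b_{1}\Lambda V$ into $\big(\frac{\dot{\lambda}}{\lambda}+b_{1}\big)\Lambda V$ on the right-hand side. Your re-derivation of~\eqref{equ:gKdv2} agrees with the paper's signs. The regularity remark (understanding the identity in the distributional sense, justified by approximation) is harmless, and the paper leaves it implicit.
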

	\begin{proof}
		First, from~\eqref{equ:gKdv2} and~\eqref{equ:defe}, we find 
		\begin{equation*}
			\begin{aligned}
				& \partial_{s}(V+\varepsilon)+\partial_{y}\left(\partial_{y}^{2}(V+\varepsilon)-(V+\varepsilon)+(V+\varepsilon)^{5}\right)\\
				&=\frac{\dot{\lambda}}{\lambda} (\Lambda V+\Lambda \varepsilon)+\left(\frac{\dot{x}_{1}}{\lambda}-1\right)(\partial_{y}V+\partial_{y}\varepsilon),
			\end{aligned}
		\end{equation*}
		which directly implies 
		\begin{equation*}
			\begin{aligned}
				&\partial_{s}\varepsilon+\partial_{y}\left(\partial_{y}^{2}\varepsilon-\varepsilon+(V+\varepsilon)^{5}-V^{5}\right)+\Psi(V)\\
				&+\partial_{s}V+\partial_{y}\left(\partial_{y}^{2}V-V+V^{5}\right)+b_{1}\Lambda V-\Psi(V)\\
				&= \frac{\dot{\lambda}}{\lambda}\Lambda \varepsilon
				+\bigg(\frac{\dot{\lambda}}{\lambda}+b_{1}\bigg)\Lambda V
				+\left(\frac{\dot{x}_{1}}{\lambda}-1\right)\left(\partial_{y}V+\partial_{y}\varepsilon\right).
			\end{aligned}
		\end{equation*}
		Combining the above identity with~\eqref{equ:defV}, we complete the proof for~\eqref{equ:pse}.
	\end{proof}
	
	\smallskip
	Note that, from~\eqref{est:Boot} and the definition $b_{2}=\zeta_{2}(1+\mu)^{\frac{3}{2}}$, we compute 
	\begin{equation}\label{est:dotzeta}
		\dot{\zeta}_{2}=\frac{\dot{b}_{2}}{(1+\mu)^{\frac{3}{2}}}-\frac{3\dot{\mu}b_{1}}{2(1+\mu)^{\frac{5}{2}}}
		\Longrightarrow
		\dot{\zeta}_{2}=\dot{b}_{2}+O\left(\frac{|\dot{b}_{2}|}{s}+\frac{|\dot{\mu}|}{s\log s}\right).
	\end{equation}
	The above estimate will be used frequently in the proof of the following Lemma.
	
	\smallskip
	Now, we derive the control for $\mathcal{G}(s)$ from the orthogonality conditions~\eqref{equ:orth}.
	\begin{lemma}[Control for parameters]\label{le:controlpara}
		The following estimates hold on $[s^{*},s^{in}]$.
		\begin{enumerate}
			\item \emph{Estimate on $(\lambda,z,\mu,x_{1})$}. We have 
			\begin{equation*}
				\begin{aligned}
					\Big|\dot{z}-b_{1}z-\mu+z\Big(\frac{\dot{\lambda}}{\lambda}+b_{1}\Big)\Big|
					&\lesssim
					\frac{C_{0}}{s^{\frac{3}{2}}\log s},\\
					\bigg|\frac{\dot{\lambda}}{\lambda}+b_{1}\bigg|+\bigg|\frac{\dot{x}_{1}}{\lambda}-1\bigg|+|\dot{\mu}-2\Theta|&\lesssim
					\frac{C_{0}}{s^{\frac{3}{2}}\log s}.
				\end{aligned}
			\end{equation*}
			\item \emph{Estimate on $(b_{1},\zeta_{2})$.} We have 
			\begin{equation*}
				\begin{aligned}
					|\dot{b}_{1}+\alpha e^{-z}+2b_{1}^{2}|&\lesssim \frac{C_{0}}{s^{\frac{5}{2}}\log^{2}s}
					+\frac{C_{0}^{2}}{s^{3}\log^{2}s},\\
					|\dot{\zeta}_{2}+\alpha e^{-z}+2b_{1}^{2}|&\lesssim \frac{C_{0}}{s^{\frac{5}{2}}\log^{2}s}
					+\frac{C_{0}^{2}}{s^{3}\log^{2}s}.
				\end{aligned}
			\end{equation*}
		\end{enumerate}
	\end{lemma}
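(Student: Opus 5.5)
We differentiate the six orthogonality conditions in \eqref{equ:orth} in $s$ and use the equation \eqref{equ:pse} for $\varepsilon$. Set $\vec{m}=\big(\frac{\dot\lambda}{\lambda}+b_1,\frac{\dot x_1}{\lambda}-1\big)$ and $\vec{\rm Mod}$ as in \eqref{equ:defMod}.

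For a generic test function $\Phi\in\{\Lambda Q,\partial_yQ_1,Q_1,\Gamma\Lambda Q,\partial_yQ_2,Q_2\}$ we write
\begin{equation*}
0=\frac{\dd}{\dd s}(\varepsilon,\Phi)=(\partial_s\varepsilon,\Phi)+(\varepsilon,\partial_s\Phi).
\end{equation*}
Into this we substitute \eqref{equ:pse} and Proposition~\ref{prop:approx}, namely $\Psi(V)=\vec{\rm Mod}\cdot\vec MQ+\sum_i\Psi_i(V)+O_{\mathcal S}(1)$. The linear term $(\partial_y(\partial_y^2\varepsilon-\varepsilon+5V^4\varepsilon),\Phi)$ is integrated by parts onto $\Phi$. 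Since $(\mathcal L\partial_yQ)=0$, $\mathcal L\Lambda Q=-2Q$, and analogous identities hold for $\Gamma$, together with the orthogonality conditions, these pairings reduce to $O(\|\varepsilon\|_{L^2_{loc}})$ times small factors. There is an exception: for $\Phi=Q_j$ we get $(\varepsilon,\mathcal L Q'_j)=0$ exactly up to interaction terms. Localized $\varepsilon$-norms are bounded by $\mathcal N_B(\varepsilon)\le C_0s^{-3/2}\log^{-1}s$, since the $Q_j$ live where $\phi_1\equiv1$. The nonlinear terms are quadratic in $\varepsilon$ and contribute $O(\mathcal N_B^2)\lesssim C_0^2s^{-3}\log^{-2}s$. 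The term $(\varepsilon,\partial_s\Phi)$ involves $\dot z,\dot\mu$ times $\|\varepsilon\|_{loc}$, which is absorbed. The right-hand side of \eqref{equ:pse} gives $\vec m$ paired with $(\Lambda V,\Phi)$ and $(\partial_yV,\Phi)$, plus $\vec m\cdot O(\mathcal N_B)$.

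This yields a $6\times6$ linear system for the unknowns $(\vec m,\vec{\rm Mod})$. Its matrix is the Gram-type matrix already computed in the proof of Proposition~\ref{prop:modu}: pairings of $\vec MQ$, $\Lambda V\approx\Lambda Q_1-\Lambda Q_2$ and $\partial_yV$ against the six test functions. By Proposition~\ref{prop:Spectral}, Remarks~\ref{re:Y} and \ref{re:P} (in particular $(X,Q)=-m_0^2$, $(Y,Q)=-\tfrac35 m_0$ and $(\Lambda Q,Q)=0$) and Lemma~\ref{le:PAB}, this matrix is invertible up to $O(s^{-1})$ perturbations. One feature must be tracked: pairing $\Lambda Q_2$ with $\partial_yQ_2$ produces the factor $z$, because $\Lambda Q_2=\Gamma\Lambda Q+z\partial_yQ_2$. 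This is exactly why the first estimate in (i) is stated for the combination $\dot z-b_1z-\mu+z(\frac{\dot\lambda}{\lambda}+b_1)$ rather than for $\dot z-b_1z-\mu$ alone. The source terms are pairings of $\sum\Psi_i(V)$ with $\Phi$, bounded by Lemma~\ref{le:Psi2} in the crude form $\frac{|\dot z-b_1z-\mu|}{s\log s}+\frac{|\dot\mu|+|\dot b_1|}{s}+s^{-3}$. Part (i) then follows, since inverting gives $|\vec m|+|\dot\mu-2\Theta|+|\dots|\lesssim C_0s^{-3/2}\log^{-1}s$ once the $\dot\mu,\dot b_1,\dot z$ factors are absorbed using the bootstrap bounds \eqref{est:Boot}.

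Part (ii) is the main obstacle. It needs a gain of roughly $s^{-1}\log^{-1}s$, which comes from using the projections onto $Q_1$ and $Q_2$ specifically. Here the leading coefficient comes from $(M_{3Q},Q_1)=(X,Q)=-m_0^2$ and $(M_{2Q},Q_2)$, and the linear $\varepsilon$-term is
\begin{equation*}
(\varepsilon,\mathcal L\partial_yQ_j)+\text{interactions}=O\big(\mathcal N_B\, e^{-z}z^K\big).
\end{equation*}
The dangerous contribution is $\vec m\cdot(\Lambda V,Q_j)$. Using $(\Lambda Q,Q)=0$ and $(\partial_yQ,Q)=0$, this is only $|\vec m|\cdot O(b_1+e^{-z}z^K)\lesssim C_0s^{-5/2}\log^{-2}s$ by part (i). Next, $\varepsilon$-dependent cutoff terms and $(\varepsilon,\partial_sQ_j)$ are controlled similarly, giving $C_0s^{-5/2}\log^{-2}s$. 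The quadratic terms give $C_0^2s^{-3}\log^{-2}s$. Finally, the $\Psi_i$ contributions must be checked using the sharp forms in Lemma~\ref{le:Psi1}. The nontrivial ones, namely $(\Psi_3,Q_j)$ and $(\Psi_2,Q_2)$, are of size $b_1z^2e^{-z}$ or $\dot z ze^{-z}$, of order $s^{-3}\log s$ at worst. I expect these to cancel or be absorbed via the refined profiles $U_1,U_2$, leaving errors dominated by the stated bound. I would verify the bookkeeping of these $z^2e^{-z}$ terms carefully, as that is where the estimate could fail. Solving the resulting $2\times2$ system in $(\dot b_1,\dot\zeta_2)$, and using \eqref{est:dotzeta}, gives (ii).
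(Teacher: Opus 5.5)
Your strategy is the paper's. You pair the equation of $\varepsilon$ with the six orthogonality directions and exploit the almost triangular structure of the resulting system. The entry $z$ coming from $\Lambda Q_2=\Gamma\Lambda Q+z\partial_yQ_2$ is exactly why only $\dot z-b_1z-\mu+z(\frac{\dot\lambda}{\lambda}+b_1)$ is controlled at the sharp rate. You then gain on the $Q_1,Q_2$ projections through $\mathcal L Q'=0$. The paper only differs in performing the inversion sequentially: $\Lambda Q$ and $\partial_yQ_1$ first, then $\Gamma\Lambda Q$, then $\partial_yQ_2$, then $Q_1,Q_2$. Part (i) goes through as you describe; there the linear $\varepsilon$-terms are simply $O(\mathcal N_B)$, which is all that is needed. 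Two claims in your part (ii) are incorrect, although the conclusion survives.

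First, the linear $\varepsilon$-term in the $Q_j$-projection is not $O(\mathcal N_B z^Ke^{-z})$. After $(\mathcal L\varepsilon,Q')=0$ one is left with $-5((V^4-Q^4)\varepsilon,Q')$. Near $Q_1$ the difference $V^4-Q^4$ contains $4Q^3b_1X$. Near $Q_2$ the combination $b_1X_1-\zeta_2X_2-10m_0b_1Y_2\approx-b_1P_2$ plays the same role. This produces $-20b_1(\varepsilon,Q^3XQ')$, respectively $-20b_1(\varepsilon,Q_2^3P_2\partial_yQ_2)$, which is of size $b_1\mathcal N_B\approx C_0s^{-5/2}\log^{-2}s$. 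It is an $\varepsilon$--profile interaction, not a soliton--soliton one. It is one of the contributions of exactly the size of the first term of the bound in (ii), alongside $b_1|\vec m|$. It must be estimated as in the paper, by $(\|R\phi\|_{L^\infty}+\|U\|_{L^\infty})\|\varepsilon Q'\|_{L^1}$; these are also the terms that later force the correction $\mathcal J_2$ in Lemma~\ref{le:refinedTheta}. By contrast, $\frac{\dot\lambda}{\lambda}(\Lambda\varepsilon,Q_j)$, $(\frac{\dot x_1}{\lambda}-1)(\partial_y\varepsilon,Q_j)$ and $(\varepsilon,\partial_sQ_2)$ vanish identically by the orthogonality conditions.

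Second, the step you flag as the place where the estimate could fail requires no cancellation, and none occurs. Under the bootstrap bounds, $b_1z^2e^{-z}$, $\mu ze^{-z}$ and $\dot z\,ze^{-z}$ are all $O(s^{-3})$, not $s^{-3}\log s$; even $s^{-3}\log s$ would be fine. Since $s^{-3}\ll C_0s^{-5/2}\log^{-2}s$, they are absorbed directly. The paper carries a $+s^{-3}$ through Step~4 of its proof and drops it in Step~5 for $s_0$ large. Looking for a cancellation through $U_1,U_2$ would be misguided: these profiles are already inside $V$, and Lemma~\ref{le:Psi1} is computed after them. The surviving projections $\frac{\alpha}{4}m_0^2\zeta_2z^2e^{-z}$, $\frac{\alpha}{2}m_0^2\mu ze^{-z}$, $-\frac{\alpha}{4}m_0^2b_1z^2e^{-z}$ and $-2\alpha m_0^2\dot z\,ze^{-z}$ are genuine. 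They reappear as main terms in \eqref{est:refinedb1} and \eqref{est:refinedb1b2}, where precision $s^{-3}\log^{-1}s$ is required. At the precision of the present lemma they are simply lower order.
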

	
	\begin{proof}
		\textbf{Step 1.} Estimate on $(\lambda,x_{1})$. 
		From Lemma~\ref{le:eque} and the orthogonality condition $(\varepsilon(s),\Lambda Q)=0$, we check that 
		\begin{equation*}
			\begin{aligned}
				&\left(\partial_{y}\left(\partial_{y}^{2}\varepsilon-\varepsilon+(V+\varepsilon)^{5}-V^{5}\right),\Lambda Q\right)+\left(\Psi(V),\Lambda Q\right)\\
				&=\frac{\dot{\lambda}}{\lambda}\left(\Lambda \varepsilon,\Lambda Q\right)+
				\bigg(\frac{\dot{\lambda}}{\lambda}+b_{1}\bigg)\left(\Lambda V,\Lambda Q\right)+\left(\frac{\dot{x}_{1}}{\lambda}-1\right)\left(\left(\partial_{y}V+\partial_{y}\varepsilon\right),\Lambda Q\right).
			\end{aligned}
		\end{equation*}
		First, from the bootstrap assumption~\eqref{est:Boot} and Lemma~\ref{le:L2e}, we find 
		\begin{equation*}
			\begin{aligned}
				& \left| \left(\partial_{y}\left(\partial_{y}^{2}\varepsilon-\varepsilon+(V+\varepsilon)^{5}-V^{5}\right),\Lambda Q\right)\right|\\
		&\lesssim\|V\|_{L^{\infty}}^{2}\|\varepsilon\|_{L^{\infty}}^{3}\|(\Lambda Q)'\|_{L^{1}}+\|V\|_{L^{\infty}}\|\varepsilon\|_{L^{\infty}}^{4}
        \|(\Lambda Q)'\|_{L^{1}}+\|\varepsilon\|_{L^{\infty}}^{5}\|(\Lambda Q)'\|_{L^{1}}\\
				&+
				(1+\|V\|_{L^{\infty}}^{4})
				\left|\left(\varepsilon,\partial^{\le 3}_{y}\Lambda Q\right)\right|+\|V\|_{L^{\infty}}^{3}\|\varepsilon\|_{L^{\infty}}^{2}\|(\Lambda Q)'\|_{L^{1}}\lesssim \frac{C_{0}}{s^{\frac{3}{2}}\log s}.
			\end{aligned}
		\end{equation*}
		
		Then, using $(\Lambda Q,Q')=0$ the definition of $V$ in~\eqref{equ:defV}, we check that 
		\begin{equation*}
			\begin{aligned}
				(\partial_{y}V,\Lambda Q)&=O\left(|b_{1}|+|\Theta|+zb_{1}^{2}+z^{5}e^{-z}\right),\\
				(\Lambda V,\Lambda Q)&=\|\Lambda Q\|^{2}_{L^{2}}+O\left(|b_{1}|+|\Theta|+zb_{1}^{2}+z^{5}e^{-z}\right).
			\end{aligned}
		\end{equation*}
		On the other hand, from Lemma~\ref{le:Psi2} and bootstrap assumption~\eqref{est:Boot}, 
		\begin{equation*}
			\begin{aligned}
				\left(\Psi(V),\Lambda Q\right)
				&=\left(\vec{{\rm{Mod}}}\cdot \vec{M}Q+\sum_{i=1}^{4}\Psi_{i}(V)+O_{\mathcal{S}}(1),\Lambda Q\right)\\
				&=O \left(\frac{|\vec{{\rm{Mod}}}|}{s \log s}
				+|\dot{b}_{1}+\alpha e^{-z}+2b_{1}^{2}|
				+ \frac{1}{s^{3}}\right).
			\end{aligned}
		\end{equation*}
		Here, we use the fact that 
		\begin{equation*}
			f\in \mathcal{S}\Longrightarrow \|f\|_{H^{1}}\lesssim \frac{|\vec{{\rm{Mod}}}|}{s\log s}+\frac{1}{s^{3}\log s}.
		\end{equation*}
		Next, we rewrite 
		\begin{equation*}
			\begin{aligned}
				\frac{\dot{\lambda}}{\lambda}\left(\Lambda \varepsilon,\Lambda Q\right)
				&=
				\bigg( \frac{\dot{\lambda}}{\lambda}+b_{1}\bigg)\left(\Lambda \varepsilon,\Lambda Q\right)
				-b_{1}(\Lambda \varepsilon,\Lambda Q)\\
				&=O\bigg(\frac{C_{0}}{s^{\frac{3}{2}}\log s}
				\bigg|\frac{\dot{\lambda}}{\lambda}+b_{1}\bigg|
				+\frac{C_{0}}{s^{\frac{5}{2}}\log^{2}s}\bigg).
			\end{aligned}
		\end{equation*}
		
		It follows from the bootstrap assumption \eqref{est:Boot} that
		\begin{equation*}
			\bigg|\frac{\dot{\lambda}}{\lambda}+b_{1}\bigg|\lesssim 
			\frac{|\vec{{\rm{Mod}}}|}{s \log s}
			+ \frac{1}{s \log s}\left|\frac{\dot{x}_{1}}{\lambda}-1\right|
			+|\dot{b}_{1}+\alpha e^{-z}+2b_{1}^{2}|
			+\frac{C_{0}}{s^{\frac{3}{2}}\log s}.
		\end{equation*}
		Similarly, using the orthogonality condition $(\varepsilon(s),\partial_{y}Q_{1})=0$, we find 
		\begin{equation*}
			\bigg|\frac{\dot{x}_{1}}{\lambda}-1\bigg|\lesssim 
			\frac{|\vec{{\rm{Mod}}}|}{s \log s}
			+\frac{1}{s\log s}\bigg|\frac{\dot{\lambda}}{\lambda}+b_{1}\bigg|
			+|\dot{b}_{1}+\alpha e^{-z}+2b_{1}^{2}|
			+\frac{C_{0}}{s\log^{\frac{3}{2}}s}.
		\end{equation*}
		Combining the above estimates, we deduce that 
		\begin{equation}\label{est:lambdax1s}
			\bigg|\frac{\dot{\lambda}}{\lambda}+b_{1}\bigg|
			+ \left|\frac{\dot{x}_{1}}{\lambda}-1\right|
			\lesssim 
			\frac{|\vec{{\rm{Mod}}}|}{s \log s}
			+|\dot{b}_{1}+\alpha e^{-z}+2b_{1}^{2}|
			+\frac{C_{0}}{s^{\frac{3}{2}}\log s}.
		\end{equation}
		
		\textbf{Step 2.} Estimate on $\mu$. From Lemma~\ref{le:eque} and the orthogonality condition $(\varepsilon(s),\Gamma \Lambda Q)=0$, we check that
		\begin{equation*}
			\begin{aligned}
				&\left(\partial_{y}\left(\partial_{y}^{2}\varepsilon-\varepsilon+(V+\varepsilon)^{5}-V^{5}\right),\Gamma \Lambda Q\right)+\left(\Psi(V),
				\Gamma\Lambda Q\right)\\
				&=
				\bigg(\frac{\dot{\lambda}}{\lambda}+b_{1}\bigg)\left(\Lambda V,
				\Gamma\Lambda Q\right)+\left(\frac{\dot{x}_{1}}{\lambda}-1\right)\left(\left(\partial_{y}V+\partial_{y}\varepsilon\right),\Gamma\Lambda Q\right)\\
				&+\frac{\dot{\lambda}}{\lambda}\left(\Lambda \varepsilon,\Gamma \Lambda Q\right)-\dot{z}\left(\varepsilon,\partial_{y}\Gamma\Lambda Q\right)
				+\frac{\dot{\mu}}{2(1+\mu)}(\varepsilon,\Gamma (\Lambda \Lambda Q)).
			\end{aligned}
		\end{equation*}
		First, from the bootstrap assumption~\eqref{est:Boot} and Lemma~\ref{le:L2e},
		\begin{equation*}
			\left| \left(\partial_{y}\left(\partial_{y}^{2}\varepsilon-\varepsilon+(V+\varepsilon)^{5}-V^{5}\right),\Gamma \Lambda Q\right)\right|\lesssim \frac{C_{0}}{s^{\frac{3}{2}}\log s}.
		\end{equation*}
		Then, using again $(\Lambda Q,Q')=0$ and 
		the definition of $V$ in~\eqref{equ:defV}, 
		\begin{equation*}
			\begin{aligned}
				(\partial_{y}V,\Gamma \Lambda Q)&=O\left(|b_{1}|+|\Theta|+zb_{1}^{2}+z^{5}e^{-z}\right),\\
				(\Lambda V,\Gamma \Lambda Q)&=-\|\Lambda Q\|^{2}_{L^{2}}+O\left(z|b_{1}|+z|\Theta|+zb_{1}^{2}+z^{5}e^{-z}\right).
			\end{aligned}
		\end{equation*}
		On the other hand, from Lemma~\ref{le:Psi2} and bootstrap assumption~\eqref{est:Boot}, 
		\begin{equation*}
			\begin{aligned}
				\left(\Psi(V),\Gamma\Lambda Q\right)
				&=\left(\vec{{\rm{Mod}}}\cdot \vec{M}Q+\sum_{i=1}^{4}\Psi_{i}(V)+O_{\mathcal{S}}(1),\Gamma\Lambda Q\right)\\
				&=-\frac{1}{2}(\dot{\mu}-2\Theta)\left(\|\Lambda Q\|_{L^{2}}^{2}+O(s^{-1})\right)+O\left(\frac{|\vec{\rm{Mod}}|}{s\log s}+\frac{1}{s^{3}}\right)\\
				&+O\left(|\dot{b}_{1}+\alpha e^{-z}+2b_{1}^{2}|+|\dot{\zeta}_{2}+\alpha e^{-z}+2b_{1}^{2}|\right).
			\end{aligned}
		\end{equation*}
		Next, using again the bootstrap assumption~\eqref{est:Boot}, we find 
		\begin{equation*}
			\begin{aligned}
				& \bigg| \frac{\dot{\lambda}}{\lambda}\left(\Lambda \varepsilon,\Gamma \Lambda Q\right)\bigg|+
				\left|\dot{z}\left(\varepsilon,\partial_{y}\Gamma\Lambda Q\right)\right|
				+\left|\frac{\dot{\mu}}{1+\mu}(\varepsilon,\Gamma (\Lambda \Lambda Q))\right|\\
				&\lesssim \frac{C_{0}}{s^{\frac{3}{2}}}\bigg|\frac{\dot{\lambda}}{\lambda}+b_{1}\bigg|
				+\frac{|\vec{\rm{Mod}}|}{s\log s}
				+\frac{C_{0}}{s^{\frac{5}{2}}\log s}
				+\frac{C_{0}}{s^{\frac{5}{2}}\log^{2}s}.
			\end{aligned}
		\end{equation*}
		Combining the above estimates with~\eqref{est:lambdax1s}, we deduce that 
		\begin{equation}\label{est:dotmu}
			\begin{aligned}
				|\dot{\mu}-2\Theta|&\lesssim
				\frac{|\vec{{\rm{Mod}}}|}{s \log s}
				+|\dot{b}_{1}+\alpha e^{-z}+2b_{1}^{2}|\\
				&+\frac{C_{0}}{s^{\frac{3}{2}}\log s}
				+|\dot{\zeta}_{2}+\alpha e^{-z}+2b_{1}^{2}|.
			\end{aligned}
		\end{equation}
		
		\textbf{Step 3.} Estimate on $z$. From Lemma~\ref{le:eque} and the orthogonality condition $(\varepsilon(s),\partial_{y}Q_{2})=0$, we check that
		\begin{equation*}
			\begin{aligned}
				&\left(\partial_{y}\left(\partial_{y}^{2}\varepsilon-\varepsilon+(V+\varepsilon)^{5}-V^{5}\right),\partial_{y}Q_{2}\right)+\left(\Psi(V),
				\partial_{y}Q_{2}\right)\\
				&=
				\bigg(\frac{\dot{\lambda}}{\lambda}+b_{1}\bigg)\left(\Lambda V,
				\partial_{y}Q_{2}\right)+\left(\frac{\dot{x}_{1}}{\lambda}-1\right)\left(\left(\partial_{y}V+\partial_{y}\varepsilon\right),\partial_{y}Q_{2}\right)\\
				&+\frac{\dot{\lambda}}{\lambda}\left(\Lambda \varepsilon,\partial_{y}Q_{2}\right)-\dot{z}\left(\varepsilon,\partial^{2}_{y}Q_{2}\right)
				+\frac{\dot{\mu}}{2(1+\mu)^{\frac{1}{2}}}(\varepsilon,\Gamma (\Lambda Q')).
			\end{aligned}
		\end{equation*}
		First, from the bootstrap assumption~\eqref{est:Boot} and Lemma~\ref{le:L2e}, we find 
		\begin{equation*}
			\left| \left(\partial_{y}\left(\partial_{y}^{2}\varepsilon-\varepsilon+(V+\varepsilon)^{5}-V^{5}\right),\partial_{y}Q_{2}\right)\right|\lesssim \frac{C_{0}}{s^{\frac{3}{2}}\log s}.
		\end{equation*}
		Then, using again $(\Lambda Q,Q')=0$ and 
		the definition of $V$ in~\eqref{equ:defV}, 
		\begin{equation*}
			\begin{aligned}
				(\partial_{y}V,\partial_{y}Q_{2})&=-\|Q'\|^{2}_{L^{2}}+O\left(|b_{1}|+|\Theta|+|\mu|+zb_{1}^{2}+z^{5}e^{-z}\right),\\
				(\Lambda V,\partial_{y}Q_{2})&=-z\| Q'\|^{2}_{L^{2}}+O\left(z|b_{1}|+z|\Theta|+|\mu|z+zb_{1}^{2}+z^{5}e^{-z}\right).
			\end{aligned}
		\end{equation*}
		On the other hand, from Lemma~\ref{le:Psi2} and bootstrap assumption~\eqref{est:Boot}, 
		\begin{equation*}
			\begin{aligned}
				\left(\Psi(V),\partial_{y}Q_{2}\right)
				&=\left(\vec{{\rm{Mod}}}\cdot \vec{M}Q+\sum_{i=1}^{4}\Psi_{i}(V)+O_{\mathcal{S}}(1),\partial_{y}Q_{2}\right)\\
				&=(\dot{z}-b_{1}z-\mu)\|Q'\|_{L^{2}}^{2}
				+O\left(\frac{|\vec{\rm{Mod}}|}{s\log s}+\frac{1}{s^{3}}\right)\\
				&+O\left(|\dot{b}_{1}+\alpha e^{-z}+2b_{1}^{2}|+|\dot{\zeta}_{2}+\alpha e^{-z}+2b_{1}^{2}|\right).
			\end{aligned}
		\end{equation*}
		Next, using again the bootstrap assumption~\eqref{est:Boot}, we find 
		\begin{equation*}
			\begin{aligned}
				& \bigg| \frac{\dot{\lambda}}{\lambda}\left(\Lambda \varepsilon,\partial_{y}Q_{2}\right)\bigg|+
				\left|\dot{z}\left(\varepsilon,\partial^{2}_{y} Q_{2}\right)\right|
				+\left|\frac{\dot{\mu}}{(1+\mu)^{\frac{1}{2}}}(\varepsilon,\Gamma (\Lambda Q'))\right|\\
				&\lesssim \frac{C_{0}}{s^{\frac{3}{2}}}\bigg|\frac{\dot{\lambda}}{\lambda}+b_{1}\bigg|
				+\frac{|\vec{\rm{Mod}}|}{s\log s}
				+\frac{C_{0}}{s^{\frac{5}{2}}\log s}
				+\frac{C_{0}}{s^{\frac{5}{2}}\log^{2}s}.
			\end{aligned}
		\end{equation*}
		Combining the above estimates with~\eqref{est:lambdax1s}, we deduce that 
		\begin{equation}\label{est:dotzbz}
			\begin{aligned}
				\bigg|\dot{z}-b_{1}z-\mu+z\bigg(\frac{\dot{\lambda}}{\lambda}+b_{1}\bigg)\bigg|
				&\lesssim
				\frac{|\vec{{\rm{Mod}}}|}{s \log s}
				+|\dot{b}_{1}+\alpha e^{-z}+2b_{1}^{2}|\\
				&+\frac{C_{0}}{s^{\frac{3}{2}}\log s}
				+|\dot{\zeta}_{2}+\alpha e^{-z}+2b_{1}^{2}|.
			\end{aligned}
		\end{equation}
		
		\textbf{Step 4.} Estimates on $(b_{1},\zeta_{2})$. From Lemma~\ref{le:eque} and the orthogonality condition $(\varepsilon(s), Q)=0$, we check that 
		\begin{equation*}
			\begin{aligned}
				&\left(\partial_{y}\left(\partial_{y}^{2}\varepsilon-\varepsilon+(V+\varepsilon)^{5}-V^{5}\right),Q\right)+\left(\Psi(V), Q\right)\\
				&=\frac{\dot{\lambda}}{\lambda}\left(\Lambda \varepsilon,Q\right)+
				\bigg(\frac{\dot{\lambda}}{\lambda}+b_{1}\bigg)\left(\Lambda V, Q\right)+\left(\frac{\dot{x}_{1}}{\lambda}-1\right)\left(\left(\partial_{y}V+\partial_{y}\varepsilon\right),Q\right).
			\end{aligned}
		\end{equation*}
		First, we rewrite 
		\begin{equation*}
			\begin{aligned}
				\partial_{y}^{2}\varepsilon-\varepsilon+(V+\varepsilon)^{5}-V^{5}
				&=-\mathcal{L}\varepsilon+5(V^{4}-Q^{4})\varepsilon\\
				&+\left((V+\varepsilon)^{5}-V^{5}-5V^{4}\varepsilon\right).
			\end{aligned}
		\end{equation*}
		It follows from~\eqref{est:Boot}, Lemma~\ref{le:L2e} and ${\rm{Ker}}\mathcal{L}={\rm{Span}}\{Q'\}$ that 
		\begin{equation*}
			\begin{aligned}
				&\left|\left(\partial_{y}\left(\partial_{y}^{2}\varepsilon-\varepsilon+(V+\varepsilon)^{5}-V^{5}\right),Q\right)\right|\\
				&\lesssim\left(\|V\|_{L^{\infty}}^{3}+\|\varepsilon\|_{L^{\infty}}^{3}\right)(\varepsilon^{2},|Q'|)+|((S^{4}-Q^{4})\varepsilon,Q')|\\
				&+\left(\|R\phi\|_{L^{\infty}}+\|U\|_{L^{\infty}}\right)|\|\varepsilon Q'\|_{L^{1}}\lesssim \frac{C_{0}}{s^{\frac{5}{2}}\log^{2}s}+\frac{C_{0}^{2}}{s^{3}\log^{2}s}.
			\end{aligned}
		\end{equation*}
		Here, we use the fact that 
		\begin{equation*}
			\|\varepsilon Q'\|_{L^{1}}
			+\|\varepsilon Q_{2}\|_{L^{1}}
			+|(\varepsilon^{2},Q')|^{\frac{1}{2}}\lesssim \mathcal{N}_{B}(\varepsilon).
		\end{equation*}
		
		Then, using again $(Q',Q)=(\Lambda Q,Q)=0$ and 
		the definition of $V$ in~\eqref{equ:defV},
		\begin{equation*}
			\begin{aligned}
				(\partial_{y}V, Q)&=O\left(|b_{1}|+|\Theta|+zb_{1}^{2}+z^{5}e^{-z}\right),\\
				(\Lambda V,Q)&=O\left(|b_{1}|+|\Theta|+zb_{1}^{2}+z^{5}e^{-z}\right).
			\end{aligned}
		\end{equation*}
		On the other hand, from Lemma~\ref{le:Psi1} and bootstrap assumption~\eqref{est:Boot}, 
		\begin{equation*}
			\begin{aligned}
				\left(\Psi(V),Q\right)
				&=\left(\vec{{\rm{Mod}}}\cdot \vec{M}Q+\sum_{i=1}^{4}\Psi_{i}(V)+O_{\mathcal{S}}(1),Q\right)\\
				&=\left(-m_{0}^{2}+O(s^{-1})\right)\big(\dot{b}_{1}+\alpha e^{-z}+2b_{1}^{2}\big)+O\left(s^{-3}\right)\\
				&+O\left(\frac{1}{s}|\dot{\zeta}_{2}+\alpha e^{-z}+2b_{1}^{2}|
				+\frac{|\dot{z}-b_{1}z-\mu|}{s^{\frac{3}{2}} \log s}+\frac{|\dot{\mu}-2\Theta|}{s\log s}\right).
			\end{aligned}
		\end{equation*}
		Next, using integration by parts,
		\begin{equation*}
			(\varepsilon,\Lambda Q)=(\varepsilon,Q')=0\Longrightarrow
			\left(\Lambda \varepsilon,Q\right)=(\partial_{y}\varepsilon,Q)=0.
		\end{equation*}
		Combining the above estimates with~\eqref{est:lambdax1s}, we have 
		\begin{equation*}
			\begin{aligned}
				|\dot{b}_{1}+\alpha e^{-z}+2b_{1}^{2}|
				&\lesssim
			\frac{1}{s}|\dot{\zeta}_{2}+\alpha e^{-z}+2b_{1}^{2}|
				+\frac{|\dot{z}-b_{1}z-\mu|}{s^{\frac{3}{2}} \log s}\\
                &+\frac{|\dot{\mu}-2\Theta|}{s\log s}
                + \frac{C_{0}}{s^{\frac{5}{2}}\log^{2}s}+\frac{C_{0}^{2}}{s^{3}\log^{2}s}+\frac{1}{s^{3}}.
			\end{aligned}
		\end{equation*}
		Based on a similar argument as above, we also obtain
		\begin{equation*}
			\begin{aligned}
				|\dot{\zeta}_{2}+\alpha e^{-z}+2b_{1}^{2}|
				&\lesssim
			\frac{1}{s}|\dot{b}_{1}+\alpha e^{-z}+2b_{1}^{2}|
				+\frac{|\dot{z}-b_{1}z-\mu|}{s^{\frac{3}{2}} \log s}\\
                &+\frac{|\dot{\mu}-2\Theta|}{s\log s}
                + \frac{C_{0}}{s^{\frac{5}{2}}\log^{2}s}+\frac{C_{0}^{2}}{s^{3}\log^{2}s}+\frac{1}{s^{3}}.
			\end{aligned}
		\end{equation*}
		Combining the above two estimates, we deduce that 
		\begin{equation}\label{est:dotb12}
			\begin{aligned}
				|\dot{b}_{1}+\alpha e^{-z}+2b_{1}^{2}|
				&\lesssim
				\frac{|\dot{z}-b_{1}z-\mu|}{s^{\frac{3}{2}} \log s}+\frac{|\dot{\mu}-2\Theta|}{s\log s}
                + \frac{C_{0}}{s^{\frac{5}{2}}\log^{2}s}+\frac{C_{0}^{2}}{s^{3}\log^{2}s}+\frac{1}{s^{3}},
                \\
				|\dot{\zeta}_{2}+\alpha e^{-z}+2b_{1}^{2}|
				&\lesssim
				\frac{|\dot{z}-b_{1}z-\mu|}{s^{\frac{3}{2}} \log s}+\frac{|\dot{\mu}-2\Theta|}{s\log s}
                + \frac{C_{0}}{s^{\frac{5}{2}}\log^{2}s}+\frac{C_{0}^{2}}{s^{3}\log^{2}s}+\frac{1}{s^{3}}.
			\end{aligned}
		\end{equation}
		
		\smallskip
		\textbf{Step 5.} Conclusion. First, from~\eqref{est:lambdax1s}--\eqref{est:dotb12}, we have
		\begin{equation}\label{est:modu1}
			\begin{aligned}
				&\bigg|\dot{z}-b_{1}z-\mu+z\bigg(\frac{\dot{\lambda}}{\lambda}+b_{1}\bigg)\bigg|+ |\dot{\mu}-2\Theta|\\
				&\lesssim \frac{|\vec{{\rm{Mod}}}|}{s \log s}
				+\frac{C_{0}}{s^{\frac{3}{2}}\log s}+\frac{1}{s^{3}}+\frac{C_{0}^{2}}{s^{3}\log^{2}s},
			\end{aligned}
		\end{equation}
		which directly implies that 
		\begin{equation}\label{est:modu2}
			\begin{aligned}
				&\bigg|\dot{z}-b_{1}z-\mu\bigg|+ |\dot{\mu}-2\Theta|\lesssim \frac{|\vec{{\rm{Mod}}}|}{s}
				+\frac{C_{0}}{s^{\frac{3}{2}}}+\frac{\log s}{s^{3}}+\frac{C_{0}^{2}}{s^{3}\log s}.
			\end{aligned}
		\end{equation}
		Combining the above estimate with~\eqref{est:dotb12}, we deduce that 
		\begin{equation}\label{est:MOD1}
			|\vec{{\rm{Mod}}}|\lesssim \frac{C_{0}}{s^{\frac{3}{2}}}+\frac{\log s}{s^{3}}+\frac{C_{0}^{2}}{s^{3}\log s}\lesssim \frac{C_{0}}{s^{\frac{3}{2}}}.
		\end{equation}
		We see that (i) and (ii) follows directly from~\eqref{est:lambdax1s}--\eqref{est:MOD1} and $s_{0}$ large enough.
	\end{proof}
	
	Based on Lemma~\ref{le:Psi1} and Lemma~\ref{le:controlpara}, we directly obtain the following estimates.
	
	\begin{lemma}\label{le:Psinorm}
		The following estimate hold.
		\begin{enumerate}
			\item \emph{Estimate for $\Psi_{1}(V)$.} We have 
			\begin{equation*}
				\Psi_{1}(V)=\left(\dot{z}-b_{1}z-\mu\right)(\zeta_{2}\partial_{y}X_{2}+10m_{0}b_{1}\partial_{y}Y_{2})+O_{H^{1}}\left(\frac{1}{s^{\frac{5}{2}}\log s}\right).
			\end{equation*}
			
			\item \emph{Standard estimate for $\Psi_{2}(V)-\Psi_{4}(V)$.} We have 
			\begin{equation*}
            \begin{aligned}
            \|\Psi_{2}(V)\|_{{H}^{1}} 
+\|\Psi_{4}(V)\|_{{H}^{1}}
				\lesssim \frac{1}{s^{2}\log s},\\
				\|\Psi_{2}(V)\|_{\dot{H}^{1}} +
				\|\Psi_{3}(V)\|_{H^{1}}
				+\|\Psi_{4}(V)\|_{\dot{H}^{1}}
				\lesssim \frac{1}{s^{\frac{5}{2}}\log s}.
                \end{aligned}
			\end{equation*}
			
			\item \emph{Weighted $L^{2}$ estimate for $\Psi_{2}(V)$ and $\Psi_{4}(V)$.} We have 
			\begin{equation*}
				\left(\int_{\RR}|\Psi_{2}(V)|^{2}\phi_{1}\dd y\right)^{\frac{1}{2}} +\left(\int_{\RR}|\Psi_{4}(V)|^{2}\phi_{1}\dd y\right)^{\frac{1}{2}}\lesssim \frac{1}{s^{\frac{5}{2}}\log s}.
			\end{equation*}
		\end{enumerate}
	\end{lemma}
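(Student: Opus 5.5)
The plan is to combine the raw bounds of Lemma~\ref{le:Psi1} with the modulation estimates of Lemma~\ref{le:controlpara}, which under the bootstrap assumption~\eqref{est:Boot} turn the time derivatives of the parameters into explicit powers of $s$. From Lemma~\ref{le:controlpara} (and the bound $|\vec{\rm Mod}|\lesssim C_0 s^{-3/2}$ obtained in its proof) we get $|\dot z|\lesssim |b_1z|+|\mu|+C_0s^{-3/2}\lesssim s^{-1}$. We also get $|\dot\mu|\lesssim|\Theta|+C_0s^{-3/2}\lesssim s^{-3/2}$. For $\dot b_1$, the identity $\dot b_1=-\alpha e^{-z}-2b_1^2+O(C_0s^{-5/2}\log^{-2}s)$ gives $|\dot b_1|\lesssim s^{-2}\log^{-1}s$, and \eqref{est:dotzeta} gives the same bound for $\dot b_2$ and $\dot\zeta_2$. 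Here $s_0$ is taken large so that powers of $C_0$ multiplying extra decay are absorbed.

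\emph{Item (i).} The first term of $\Psi_1(V)$ is kept explicitly. For the remaining terms I will use Definition~\ref{def:Admissible} together with Lemma~\ref{le:RAB}. The terms $b_1^3\Lambda((E_1+F_2)\phi)$ and $b_1e^{-z}\Lambda(A_1+B_2)\phi$ are controlled by Lemma~\ref{le:RAB}(iii)--(iv), which give an $H^1$ size $s^{1/2}\log s$ for both $(A_1+B_2)\phi$ and $y\partial_y(A_1+B_2)\phi$. Since $b_1^3+b_1e^{-z}\lesssim s^{-3}\log^{-3}s$, these terms are $O(s^{-5/2})$. The terms carrying $b_1(\zeta_2-b_1)$, $\Theta\zeta_2$, $b_1^2\mu$, $\Theta\mu$ and $\Theta e^{-z}$ are multiplied by $X_2\phi$-type profiles. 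These profiles have $H^1$ norm $\lesssim s^{1/2}$, or $\lesssim z s^{1/2}$ for $\Lambda X_2\phi$ and $z\partial_yX_2$. Using $|\zeta_2-b_1|\lesssim |\Theta|+|\mu b_1|\lesssim s^{-2}$, each such term is $\lesssim s^{-3}\cdot s^{1/2}\log s$, which fits within $s^{-5/2}\log^{-1}s$ up to the available log margin. The one spot I must check is the $\Lambda X_2\phi$ term, since $y\partial_yX_2$ is bounded on $\operatorname{supp}\phi$ and so costs only $s^{1/2}$.

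\emph{Item (ii).} For $\Psi_2$ I will substitute the bounds on $\dot z,\dot\mu,\dot b_1$ into the $H^1$ expansion of Lemma~\ref{le:Psi1}(ii). The main terms are handled as follows.
\begin{itemize}
\item The term $\dot z e^{-z}(A_1+B_2)\phi$ has size $s^{-1}\cdot s^{-2}\log^{-1}s\cdot s^{1/2}\log s=s^{-5/2}$ in $H^1$, which is within $s^{-2}\log^{-1}s$.
\item For its $\dot H^1$ norm, $\partial_y(A_1+B_2)$ is bounded, so $\|\partial_y(A_1+B_2)\phi\|_{L^2}\lesssim s^{1/2}$. The contribution is then $s^{-5/2}\log^{-1}s$.
\item The term $\dot\mu b_1^2\Gamma(\Lambda F)\phi$ is bounded directly.
\end{itemize}
For $\Psi_4$ I will use the pointwise bounds of Lemma~\ref{le:Psi1}(iv) on an interval of length $s$. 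This gives $\|\Psi_4\|_{L^2}\lesssim s^{1/2}(s^{-3}\log s\cdot s^{-1}\cdot\log s\cdot\dots)$, which is within $s^{-5/2}$. For the derivative, $\|\partial_y\Psi_4\|_{L^2}\lesssim s^{1/2}(s^{-3}\log^{-1}s)$. The bound on $\Psi_3$ is Lemma~\ref{le:Psi1}(iii) verbatim.

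\emph{Item (iii).} This is the main obstacle, because the $L^2$ norms of $\Psi_2$ and $\Psi_4$ lose a factor $s^{1/2}$ from the long region $[0,2a^{-1}]$. The point is that $\phi_1(y)=\chi_1(y/B-s^{1/2})$ equals $e^{-(y/B-s^{1/2})}$ for $y\ge B(s^{1/2}+2)$, so it decays exponentially well before $y\sim s$. In particular, $\Psi_4$ is supported in $[s/2,5s/2]$, where $\phi_1\lesssim e^{-s/(2B)+s^{1/2}}$, and this makes its weighted norm negligible. For $\Psi_2$, the non-localized pieces (linearly growing in $y$) are integrated against $\phi_1$. This effectively restricts them to $y\lesssim Bs^{1/2}$, where they have size $\lesssim \log s+ y s^{-1}$ times coefficients $\lesssim s^{-3}\log^{-1}s$. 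The resulting weighted $L^2$ norm is $\lesssim s^{-3}\log^{-1}s\cdot s^{1/4}\log s$, and this must be checked against $s^{-5/2}\log^{-1}s$. The localized pieces are handled by the $H^1$ bounds already established.
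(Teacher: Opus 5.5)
Your proposal follows the paper's route: for (i)--(ii) you insert the modulation bounds of Lemma~\ref{le:controlpara} into Lemma~\ref{le:Psi1} and Lemma~\ref{le:RAB}, and for (iii) you use that $\phi_1$ decays exponentially beyond $y\approx Bs^{1/2}$, which confines the non-localized pieces of $\Psi_2$ to a window of length $\sim Bs^{1/2}$ (giving $s^{-11/4}$) and makes the contribution of $\Psi_4$, supported in $[s/2,5s/2]$, negligible. The bookkeeping needs tightening in two places: take $|\dot\mu-2\Theta|\lesssim C_0s^{-3/2}\log^{-1}s$ directly from Lemma~\ref{le:controlpara} rather than from $|\vec{{\rm{Mod}}}|\lesssim C_0 s^{-3/2}$, so that $b_1\dot\mu(Y_2-2\Gamma(\Lambda Y))$ is bounded with a $C_0$-independent constant; and in (i) there is no spare logarithm, since the $X$-type terms land exactly on $s^{-5/2}\log^{-1}s$ (their coefficients are $O(s^{-3}\log^{-1}s)$ and $\|\Lambda X_2\phi\|_{H^1}+\|\Gamma(\Lambda X)\phi\|_{H^1}\lesssim s^{1/2}$), while $\Theta e^{-z}\Gamma(\Lambda B)\phi$ costs $s^{3/2}$ in $H^1$ and is rescued by the factor $e^{-z}$.
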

	
	\begin{proof}
		Proof of (i) and (ii). These estimates follow directly from 
        Lemma~\ref{le:RAB}, Lemma~\ref{le:Psi1}, Lemma~\ref{le:controlpara}, and the expansion of $\Psi_1(V)$ in Proposition~\ref{prop:approx}.
		
		\smallskip
		Proof of (iii). First, from Lemma~\ref{le:Psi1} and Lemma~\ref{le:controlpara}, we have 
		\begin{equation*}
			\Psi_{2}(V)=-\dot{z}e^{-z}\left(A_{1}+B_{2}\right)\phi
			+\frac{\dot{\mu}}{2}b_{1}^{2}\phi\Gamma(\Lambda F) 
			+O_{L^{2}}\left(\frac{1}{s^{\frac{5}{2}}\log s}\right).
		\end{equation*}
		Then, we rewrite again 
		\begin{equation*}
			\begin{aligned}
				A_{1}+B_{2}&=(A_{1}-2\alpha m_{0}y)
				+2\alpha m_{0}y\big(1-(1+\mu)^{\frac{3}{4}}\big)\\
				&+(B_{2}+2\alpha m_{0}(1+\mu)^{\frac{3}{4}}(y-z))+2\alpha m_{0}(1+\mu)^{\frac{3}{4}}z,
			\end{aligned}
		\end{equation*}
		which implies that 
		\begin{equation*}
			|(A_{1}+B_{2})\phi|\lesssim z\textbf{1}_{\left(-\infty,\frac{5}{2}s\right]}(y)\lesssim (\log s) \textbf{1}_{\left(-\infty,\frac{5}{2}s\right]}(y).
		\end{equation*}
		Combining the above estimate with~\eqref{equ:defchi1}--\eqref{equ:defphi1} and Lemma~\ref{le:controlpara}, we obtain
		\begin{equation*}
			\begin{aligned}
				\int_{\RR}|\dot{z}e^{-z}\left(A_{1}+B_{2}\right)\phi|^{2}\phi_{1}\dd y&\lesssim s^{-6}\log^{-2}s\int_{-\infty}^{B(s^{\frac{1}{2}}+2)}
				|A_{1}+B_{2}|^{2}\phi^{2}
				\dd y\\
				&+s^{-6}\int_{B(s^{\frac{1}{2}}+2)}^{\frac{5s}{2}}\phi_{1}\dd y\lesssim s^{-\frac{11}{2}}.
			\end{aligned}
		\end{equation*}
		Similar as above, we also obtain
		\begin{equation*}
			\begin{aligned}
				\int_{\RR}|\dot{\mu}b_{1}^{2}\phi\Gamma(\Lambda F)|^{2}\phi_{1}\dd y&\lesssim \frac{C_{0}^{2}}{s^{7}\log^{6}s}\int_{\RR}|\phi\Gamma(\Lambda F)|^{2}\phi_{1}\dd y\\
				&\lesssim\frac{C_{0}^{2}}{s^{7}\log^{6}s}\int^{\frac{5s}{2}}_{B(s^{\frac{1}{2}}+2)}\left(\left|\frac{y}{B}-s^\frac{1}{2}\right|^{2}+s\right)\phi_{1}
				\dd y\\
				&+\frac{C_{0}^{2}}{s^{7}\log^{6}s}\int_{-\infty}^{B(s^{\frac{1}{2}}+2)}|\Gamma(\Lambda F)|^{2}\dd y\lesssim \frac{C_{0}^{2}}{s^{\frac{11}{2}}\log^{6}s}.
			\end{aligned}
		\end{equation*}
		In addition, from (iv) of Lemma~\ref{le:Psi1} and Lemma~\ref{le:controlpara}, we obtain 
		\begin{equation*}
			\int_{\RR}|\Psi_{4}(V)|^{2}\phi_{1}\dd y\lesssim s^{-6}\int_{\frac{s}{2}}^{\frac{5s}{2}}\phi_{1}\dd y\lesssim s^{-6}.
		\end{equation*}
		We see that (ii) follows directly from the above estimates.
	\end{proof}
	
	Last, we derive the refined control for the evolution of parameters $(\mu,\Theta)$, which is inspired by the previous work of~\cite[(iv) of Lemma 2.7]{MMRACTA} on the study of near soliton dynamics of the mass-critical gKdV equation. We denote 
	\begin{equation}\label{equ:defrho}
		\mathcal{J}_{1}(s)=\frac{1}{m_{0}^{2}}\left(\varepsilon(s),\rho_{1}(s)\right)\ \  \mbox{with}\ \ \rho_{1}(s,y)=\int_{y}^{\infty}\left(\Lambda Q(\sigma)-(\Lambda Q)(\sigma-z)\right)\dd \sigma.
	\end{equation}
	Thanks to the exponential decay of $\rho_{1}$ on both sides of real axis $\mathbb{R}$, the quantity $\mathcal{J}_{1}(s)$ is well-defined for any $\varepsilon(s) \in H^1$ over the time interval $[s^{*}, s^{in}]$.
	More precisely, from the exponential decay of $Q$, for $y>z$, we have 
	\begin{equation}\label{est:rho1}
		|\rho_{1}(s,y)|\lesssim \int_{y}^{\infty}|\Lambda Q(\sigma)|\dd \sigma+\int_{y-z}^{\infty}|\Lambda Q(\sigma)|\dd \sigma\lesssim
		e^{-\frac{1}{2}|y-z|}.
	\end{equation}
	On the other hand, using again the exponential decay of $Q$, for $y<0$, we have 
	\begin{equation}\label{est:rho2}
		|\rho_{1}(s,y)|\lesssim \int_{-\infty}^{y}|\Lambda Q(\sigma)|\dd \sigma+\int_{-\infty}^{y-z}|\Lambda Q(\sigma)|\dd \sigma\lesssim e^{-\frac{1}{2}|y|}.
	\end{equation}
	Combining the above two estimates with~\eqref{equ:defchi1}--\eqref{equ:defphi1}, we obtain
	\begin{equation}\label{est:rho3}
		|\rho_{1}(s,y)|\lesssim e^{-\frac{1}{2}|y|}\textbf{1}_{(-\infty,0)}+\textbf{1}_{[0,z]}+e^{-\frac{1}{2}|y-z|}\textbf{1}_{(z,\infty)}
		\lesssim
		\phi_{1}(s,y).
	\end{equation}
	\begin{lemma}
		[Refined control of $\mu$]
		\label{le:refinmu}
		On $[s^{*},s^{in}]$, we have 
		\begin{equation*}
			\big|\dot{\mu}-2\Theta+\dot{\mathcal{J}}_{1}\big|\lesssim 
			\frac{1}{s^{\frac{5}{2}}\log^{\frac{1}{2}} s}
			+\frac{C_{0}}{s^{\frac{5}{2}}\log s}
			+\frac{C_{0}^{2}}{s^{3}\log s}.
		\end{equation*}
	\end{lemma}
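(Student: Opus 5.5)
We differentiate $\mathcal{J}_{1}$ in time and use the equation of $\varepsilon$ from Lemma~\ref{le:eque}, following the strategy of \cite[Lemma 2.7 (iv)]{MMRACTA}. We write
\begin{equation*}
m_{0}^{2}\dot{\mathcal{J}}_{1}=(\partial_{s}\varepsilon,\rho_{1})+(\varepsilon,\partial_{s}\rho_{1}),\qquad \partial_{s}\rho_{1}=\dot{z}(\Lambda Q)(\cdot-z).
\end{equation*}
The second term is $O(|\dot z|\,|(\varepsilon,(\Lambda Q)(\cdot-z))|)$. The orthogonality $(\varepsilon,\Gamma\Lambda Q)=0$ and $|\mu|\lesssim s^{-1}$ give $\Gamma\Lambda Q-(\Lambda Q)(\cdot-z)=O(\mu z)$ in weighted norms. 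Hence this term is $\lesssim \frac{\log s}{s}\cdot\frac{\log s}{s}\cdot\frac{C_0}{s^{3/2}\log s}$, which is negligible.

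For $(\partial_{s}\varepsilon,\rho_{1})$ we substitute \eqref{equ:pse}.

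\textbf{Linear part.} Since $\partial_{y}\rho_{1}=-(\Lambda Q-(\Lambda Q)(\cdot-z))$, integration by parts gives
\begin{equation*}
\bigl(\partial_{y}(\partial_{y}^{2}\varepsilon-\varepsilon+5V^{4}\varepsilon),\rho_{1}\bigr)=\bigl(\varepsilon,\mathcal{L}\Lambda Q\bigr)-\bigl(\varepsilon,(\mathcal{L}\Lambda Q)(\cdot-z)\bigr)+\text{errors}.
\end{equation*}
By Proposition~\ref{prop:Spectral} (ii) and the orthogonality conditions \eqref{equ:orth}, the main part equals $-2(\varepsilon,Q_{1})+2(\varepsilon,\Gamma Q)+O(\cdot)=O(\cdot)$. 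The errors come from three sources: $V^{4}-Q_{1}^{4}-Q_{2}^{4}$, the shift $\mu$, and the localized $b_1,e^{-z}$ profiles. Each of them is paired against $\varepsilon$, which is controlled through \eqref{est:rho3} by $\mathcal{N}_{B}(\varepsilon)$. This yields bounds of order $\frac{C_0}{s^{5/2}\log s}$. The nonlinear terms are at least quadratic in $\varepsilon$ and are $\lesssim \mathcal{N}_B(\varepsilon)^2\lesssim \frac{C_0^2}{s^3\log^2 s}$, using the weighted bound $\int\varepsilon^2|\rho_1'|$ and Lemma~\ref{le:L2e}.

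\textbf{Modulation terms.} These are $\frac{\dot\lambda}{\lambda}(\Lambda\varepsilon,\rho_1)$ and $(\frac{\dot x_1}{\lambda}-1)(\partial_y\varepsilon,\rho_1)$. By Lemma~\ref{le:controlpara} each is a product of a small factor and $|(\varepsilon,\cdot)|$. Since $|y\rho_1'|$ only grows like $z$, they are bounded by $\frac{C_0}{s^{5/2}\log s}$.

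\textbf{Main obstacle: the $V$-terms.} The essential computation is
\begin{equation*}
(\Psi(V),\rho_{1}) - \Bigl(\frac{\dot\lambda}{\lambda}+b_1\Bigr)(\Lambda V,\rho_1) - \Bigl(\frac{\dot x_1}{\lambda}-1\Bigr)(\partial_y V,\rho_1).
\end{equation*}
We must show this equals $m_{0}^{2}(\dot\mu-2\Theta)$ up to the stated errors, so that the large unknown $\frac{\dot\lambda}{\lambda}+b_1$ (only $O(C_0 s^{-3/2})$) cancels out.

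First, the key cancellation. Since $\rho_1$ is an antiderivative of $\Lambda Q_1-\Lambda Q(\cdot-z)$, we have $(\partial_yV,\rho_1)=(V,\Lambda Q-\Lambda Q(\cdot-z))$, which is $O(b_1+\mu+\dots)$ small. For $(\Lambda V,\rho_1)$ we use $(\Lambda Q,\int_y^\infty\Lambda Q)=\frac12(\int\Lambda Q)^2$ and the symmetric contributions of the two bubbles, which have opposite signs in $S$. We expect $(\Lambda S,\rho_1)=O(\mu z+e^{-z}z^K)$, hence small. The products with $|\frac{\dot\lambda}{\lambda}+b_1|\lesssim C_0 s^{-3/2}/\log s$ are then $\lesssim \frac{C_0}{s^{5/2}\log^{1/2}s}$ provided $(\Lambda V,\rho_1)=O(\log^{1/2}s/s)$. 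This is where we first try to exploit exact cancellation, with the last $\log^{1/2}$ loss appearing in the statement.

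Second, we expand $(\Psi(V),\rho_1)$ using Proposition~\ref{prop:approx}.
\begin{itemize}
\item The $M_{4Q}$ term gives $-\frac12(\dot\mu-2\Theta)(\Gamma\Lambda Q,\rho_1)$. Since $(\Lambda Q,\int_y^\infty\Lambda Q)$ evaluated at the second bubble equals $-\frac12(\int\Lambda Q)^2=-2m_0^2$, and $\int\Lambda Q=-2m_0$, this produces the leading coefficient. After normalization by $m_0^{-2}$, the sign convention gives $\dot\mu-2\Theta$.
\item The $M_{2Q},M_{3Q}$ terms are multiplied by $\vec N$, which is small by Lemma~\ref{le:controlpara}.
\item The $M_{1Q}$ term is multiplied by $\dot z-b_1z-\mu$ and has pairing $(\partial_yQ_2,\rho_1)=O(\mu)$.
\item $\Psi_1,\dots,\Psi_4$ are controlled by Lemma~\ref{le:Psinorm} and the weighted estimate (iii) together with $|\rho_1|\lesssim\phi_1$. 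This gives $s^{-5/2}/\log s$.
\end{itemize}

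The only delicate pairings are those where $\rho_1$ meets non-localized profiles on $[0,z]$ ($\rho_1\approx 2m_0$ there). These we bound via $\|\cdot\|_{L^1(0,z)}\lesssim z^{1/2}\|\cdot\|_{L^2}$, accepting the $\log^{1/2}s$ loss.
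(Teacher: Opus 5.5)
Your outline matches the paper's: differentiate $\mathcal{J}_1$, insert \eqref{equ:pse}, remove the linear part with $\mathcal{L}\Lambda Q=-2Q$ and $(\varepsilon,Q_1)=(\varepsilon,Q_2)=0$, read off $m_0^2(\dot\mu-2\Theta)$ from $M_{4Q}$, and pay $\log^{1/2}s$ on $\Psi_2,\Psi_4$ via $|\rho_1|\lesssim\phi_1$. However, your handling of the two terms involving $\partial_yQ_2$ does not give the stated bound.

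\textbf{The proviso on $(\Lambda V,\rho_1)$ is false.} You assume $(\Lambda V,\rho_1)=O(s^{-1}\log^{1/2}s)$. But $\Lambda Q_2=\Gamma(\Lambda Q)+z\partial_yQ_2$, so $(\Lambda V,\rho_1)=z(Q_2,\partial_y\rho_1)+O(s^{-1})$. Moreover,
\begin{equation*}
(Q_2,\partial_y\rho_1)=(Q_{1+\mu},\Lambda Q)-(Q_2,\Lambda Q)=\frac{\mu}{2}\|\Lambda Q\|_{L^2}^2+O(\mu^2+z^Ke^{-z}).
\end{equation*}
Hence $(\Lambda V,\rho_1)\approx\frac{\mu z}{2}\|\Lambda Q\|_{L^2}^2$, which is of size $s^{-1}\log s$ in the bootstrap regime. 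Your own expectation $(\Lambda S,\rho_1)=O(\mu z)$ already says this. So $(\frac{\dot\lambda}{\lambda}+b_1)(\Lambda V,\rho_1)$ is only $O(C_0s^{-5/2})$.

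\textbf{The same loss occurs in the $M_{1Q}$ term.} Its contribution is $-(\dot z-b_1z-\mu)(Q_2,\partial_y\rho_1)$. Lemma~\ref{le:controlpara} only yields $|\dot z-b_1z-\mu|\lesssim C_0s^{-3/2}$, because the factor $z$ in front of $\frac{\dot\lambda}{\lambda}+b_1$ eats the logarithm. So this term is also only $O(C_0s^{-5/2})$. Both terms exceed $\frac{C_0}{s^{5/2}\log s}$ by a factor $\log s$. Even your conditional bound $\frac{C_0}{s^{5/2}\log^{1/2}s}$ is not of a form allowed on the right-hand side of the lemma.

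\textbf{Missing idea: combine the two terms.} Consider the $M_{1Q}$ contribution to $-(\Psi(V),\rho_1)$ together with $(\frac{\dot\lambda}{\lambda}+b_1)(\Lambda V,\rho_1)$. By (iv) and (v) of Lemma~\ref{le:estrho}, they add up to
\begin{equation*}
\Big(\dot z-b_1z-\mu+z\Big(\frac{\dot\lambda}{\lambda}+b_1\Big)\Big)\,(Q_2,\partial_y\rho_1)+O\Big(\frac{C_0}{s^{5/2}\log s}\Big).
\end{equation*}
The first estimate in (i) of Lemma~\ref{le:controlpara}, which comes from the orthogonality $(\varepsilon,\partial_yQ_2)=0$, controls exactly this combination by $\frac{C_0}{s^{3/2}\log s}$. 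Together with $(Q_2,\partial_y\rho_1)=O(s^{-1})$, this gives the required $\frac{C_0}{s^{5/2}\log s}$. With that replacement, the rest of your argument goes through.
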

	
	To complete the proof of above Lemma, we need the following technical estimates.
	\begin{lemma}\label{le:estrho}
		The following estimates hold on $[s^{*},s^{in}]$.
		\begin{enumerate}
			\item \emph{Estimates related to $\varepsilon$.} We have
			\begin{equation*}
				\left|   (\partial_{y}\varepsilon,\rho_{1}) \right|
				+
				\left|(\Lambda \varepsilon,\rho_{1})\right|
				\lesssim \frac{C_{0}}{s^{\frac{3}{2}}}.
			\end{equation*}
			
			\item \emph{Estimates related to $Q$.} We have 
			\begin{equation*}
				\left(\Lambda Q,\rho_{1}\right)=(\Gamma (\Lambda Q),\rho_{1})=-2m_{0}^{2}+O\left(s^{-1}\right).
			\end{equation*}
			
			\item \emph{First estimate related to $V$.}
			We have 
			\begin{equation*}
				\left|(\partial_{y}V,\rho_{1})\right|\lesssim s^{-1}.
			\end{equation*}
			
			\item \emph{Second estimate related to $V$.}
			We have 
			\begin{equation*}
				(\Lambda V,\rho_{1})=z(Q_{2},\partial_{y}\rho_{1})+O\left(s^{-1}\right).
			\end{equation*}
			
			\item \emph{Estimates related to $\Psi(V)$.} We have 
			\begin{equation*}
				\begin{aligned}
					\left(\Psi(V),\rho_{1}\right)&=m_{0}^{2}(\dot{\mu}-2\Theta)-(\dot{z}-b_{1}z-\mu)\left(Q_{2},\partial_{y}\rho_{1}\right)\\
					&+O\left(\frac{1}{s^{\frac{5}{2}}\log^{\frac{1}{2}} s}
					+\frac{C_{0}}{s^{\frac{5}{2}}\log s}
					+\frac{C_{0}^{2}}{s^{3}\log s}
					\right).
				\end{aligned}
			\end{equation*}
		\end{enumerate}
	\end{lemma}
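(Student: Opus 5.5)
We prove the five estimates of Lemma~\ref{le:estrho} one at a time. The tools are the pointwise bound \eqref{est:rho3}, $|\rho_{1}|\lesssim\phi_{1}$, the exact derivative $\partial_{y}\rho_{1}=-\Lambda Q+(\Lambda Q)(\cdot-z)$, the bootstrap assumption \eqref{est:Boot}, and the decomposition of $\Psi(V)$ in Proposition~\ref{prop:approx}.

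\emph{Item (i).} Integrate by parts. Since $\partial_{y}\rho_{1}$ is exponentially localized, we have $(\partial_{y}\varepsilon,\rho_{1})=-(\varepsilon,\partial_{y}\rho_{1})$, which is $O(\mathcal{N}_{B}(\varepsilon))$. For the scaling term, $(\Lambda\varepsilon,\rho_{1})=-\frac12(\varepsilon,\rho_{1})-(\varepsilon,y\partial_{y}\rho_{1})$. The piece $(\varepsilon,y\partial_{y}\rho_{1})$ is $O(z\,\mathcal{N}_{B}(\varepsilon))$. The piece $(\varepsilon,\rho_{1})$ is controlled by Cauchy--Schwarz and $|\rho_{1}|\lesssim\phi_{1}$:
\[
|(\varepsilon,\rho_{1})|\le\Big(\int\varepsilon^{2}\phi_{1}\Big)^{\frac12}\Big(\int\phi_{1}\Big)^{\frac12}\lesssim \mathcal{N}_{B}(\varepsilon)\,(Bs^{\frac12}+z)^{\frac12}.
\]
With $\mathcal{N}_{B}(\varepsilon)\le C_{0}s^{-\frac32}\log^{-1}s$, all of these are $\lesssim C_{0}s^{-\frac32}$. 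A sharper bound on $\int\phi_{1}$ may in fact be available, but we only need this one.

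\emph{Item (ii).} Compute directly: $(\Lambda Q,\rho_{1})=\int\Lambda Q(y)\int_{y}^{\infty}\Lambda Q\,\dd\sigma\,\dd y-(\text{cross term})$. The first integral equals $\frac12\big(\int\Lambda Q\big)^{2}=\frac12(-2m_{0})^{2}=2m_{0}^{2}$, using $\int\Lambda Q=-\frac12\int Q=-2m_{0}$. In the cross term the tail $\int_{y}^{\infty}(\Lambda Q)(\sigma-z)\,\dd\sigma$ is equal to $\int\Lambda Q=-2m_{0}$ up to $O(ze^{-z})$ on the support of $Q$, so the cross term is $(-2m_{0})^{2}+O(ze^{-z})$. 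Altogether this gives $-2m_{0}^{2}+O(s^{-1})$. The computation for $\Gamma\Lambda Q$ is symmetric; one uses $\Gamma\Lambda Q=(\Lambda Q)(\cdot-z)+O(\mu)$ with $|\mu|\lesssim s^{-1}$.

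\emph{Items (iii) and (iv).} Write $(\partial_{y}V,\rho_{1})=-(V,\partial_{y}\rho_{1})$ and expand $V=Q_{1}-Q_{2}+R\phi+U$. The soliton parts vanish up to $O(s^{-1})$ because $(Q,\Lambda Q)=0$; the $\mu$-correction in $Q_{2}$ and the interaction terms contribute at most $|\mu|+z^{K}e^{-z}$. The remaining parts are $O(|b_{1}|+|b_{2}|+|\mu|+z^{K}e^{-z})$ by Lemma~\ref{le:PAB}. For (iv), use $(\Lambda V,\rho_{1})=-\frac12(V,\rho_{1})-(V,y\partial_{y}\rho_{1})$. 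The soliton $Q_{1}$ contributes $-\frac12(Q,\rho_{1})-(Q,y\partial_{y}\rho_{1})$. To leading order $\rho_{1}$ behaves like $\int_{y}^{\infty}\Lambda Q-(-2m_{0})$ near $Q_{1}$, so this contribution is $(\Lambda Q,\int_{y}^{\infty}\Lambda Q)+2m_{0}\cdot(\Lambda Q,1)$ rewritten. The identity $\frac12\int Q\,\rho_{1}+\int yQ\,\partial_{y}\rho_{1}=-(\Lambda Q,\rho_{1})+\dots$ should make the $Q_{1}$ and $-Q_{2}$ contributions cancel, leaving only the translation part. That part comes from $y=(y-z)+z$ near $Q_{2}$ and produces $z(Q_{2},\partial_{y}\rho_{1})$. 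The non-localized pieces $R\phi$ and $U$ are bounded by $(|b_{1}|+e^{-z}z)(1+z)\lesssim s^{-1}$, using that $\rho_{1}$ decays away from $[0,z]$.

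\emph{Item (v), the main obstacle.} Pair the decomposition of Proposition~\ref{prop:approx} with $\rho_{1}$. The modulation vector $\vec{\rm Mod}\cdot\vec{M}Q$ gives the following contributions. From $M_{4Q}=-\frac12\Gamma\Lambda Q+\dots$ together with (ii) we get $m_{0}^{2}(\dot{\mu}-2\Theta)$. From $M_{1Q}=\partial_{y}Q_{2}$ we get $(\partial_{y}Q_{2},\rho_{1})=-(Q_{2},\partial_{y}\rho_{1})$, with the sign as stated. The $\vec N$ components pair with $X_{1}\phi$ and $X_{2}\phi$ against $\rho_{1}$, giving $O(\log s)$ times $|\vec N|\lesssim C_{0}s^{-\frac52}\log^{-2}s$ by Lemma~\ref{le:controlpara}; these are acceptable. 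The errors $\Psi_{1}$ through $\Psi_{4}$ are handled as follows. For $\Psi_{1}$ and $\Psi_{3}$, use their $H^{1}$ bounds (Lemma~\ref{le:Psinorm}) times $\|\rho_{1}\|_{L^{2}}\lesssim z^{\frac12}$, which gives $s^{-\frac52}\log^{-\frac12}s$; this is exactly where the $\log^{-\frac12}$ in the statement comes from. For $\Psi_{2}$ and $\Psi_{4}$, use the weighted estimate of Lemma~\ref{le:Psinorm}(iii) together with $|\rho_{1}|\lesssim\phi_{1}$, so that $|(\Psi,\rho_{1})|\le(\int\Psi^{2}\phi_{1})^{\frac12}(\int\phi_{1})^{\frac12}$. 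The delicate point is that $\int\phi_{1}\sim Bs^{\frac12}$ would lose a factor $s^{\frac14}$. I would therefore instead bound $\Psi_{2}$ and $\Psi_{4}$ directly on $\mathrm{supp}\,\rho_{1}$, which is effectively $[-C,z+C]$ up to exponential tails. There $\Psi_{4}$ vanishes, since it is supported in $[s/2,5s/2]$. For $\Psi_{2}$, the leading terms $\dot{z}e^{-z}(A_{1}+B_{2})$ and $\dot{\mu}b_{1}^{2}\Gamma\Lambda F$ are $O(s^{-3}\log s)$ pointwise on $[0,z]$, so their pairing is $O(z^{2}s^{-3})$. The $O_{\mathcal S}(1)$ remainder pairs to $O(s^{-3}z^{\frac12})$. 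Verifying that no term of $\Psi_{3}$ or $\Psi_{1}$ produces an $s^{-2}$-size pairing with the plateau $\rho_{1}\approx 2m_{0}$ on $[0,z]$ is the step I expect to be hardest. The fallback is that all such terms are total $y$-derivatives of localized functions, whose integrals against the constant plateau vanish.
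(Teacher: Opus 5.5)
\textbf{The gap is in item (i), in your bound for $(\varepsilon,\rho_1)$.}

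You apply Cauchy--Schwarz with the weight $\phi_1$ in both factors and use $\int\phi_1\lesssim Bs^{1/2}+z$. That bound is false. Since $\chi_1=1-e^{y}$ on $(-\infty,-1]$, you have $\phi_1(y)=\chi_1(y/B-s^{1/2})\to 1$ as $y\to-\infty$, so $\int_{\RR}\phi_1=+\infty$.

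Even the finite part over $[0,\infty)$ gives only $\mathcal{N}_B(\varepsilon)(Bs^{1/2})^{1/2}\lesssim C_0B^{1/2}s^{-5/4}\log^{-1}s$, which is not $\lesssim C_0s^{-3/2}$. So your conclusion ``all of these are $\lesssim C_0s^{-3/2}$'' fails for this piece. You notice exactly this $s^{1/4}$ loss yourself in (v), but not here.

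The fix is to put $|\rho_1|$, not $\phi_1$, as the weight in Cauchy--Schwarz. Use $|\rho_1|\lesssim\phi_1$ only inside the $\varepsilon$-factor, and use $\int|\rho_1|\lesssim 1+z$ in the other factor; this holds because $\rho_1$ is bounded on $[0,z]$ and decays exponentially outside, by \eqref{est:rho1}--\eqref{est:rho2}. This gives
\[
|(\varepsilon,\rho_1)|\le\Big(\int_{\RR}\varepsilon^2|\rho_1|\,\dd y\Big)^{\frac12}\Big(\int_{\RR}|\rho_1|\,\dd y\Big)^{\frac12}\lesssim z^{\frac12}\mathcal{N}_B(\varepsilon)\lesssim\frac{C_0}{s^{\frac32}\log^{\frac12}s}.
\]
This is the paper's argument. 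The other two pieces of (i) are fine as you wrote them.

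\textbf{Item (v).}

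The same device settles $\Psi_2$ and $\Psi_4$ in one line. By Lemma~\ref{le:Psinorm}(iii),
\[
|(\Psi_i(V),\rho_1)|\le\Big(\int_{\RR}|\Psi_i(V)|^2\phi_1\,\dd y\Big)^{\frac12}\Big(\int_{\RR}|\rho_1|\,\dd y\Big)^{\frac12}\lesssim\frac{1}{s^{\frac52}\log^{\frac12}s}.
\]
This is where the paper's $\log^{-\frac12}$ comes from. Your term-by-term alternative can be made to work, but it is unnecessary. If you keep it, replace ``$\Psi_4$ vanishes on $\mathrm{supp}\,\rho_1$'' by ``$|\rho_1|\lesssim e^{-s/8}$ on $\mathrm{supp}\,\Psi_4$'', since $\rho_1$ is not compactly supported.

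Your treatment of $\Psi_1$ also falls short of the stated error. Bounding $\Psi_1$ by $\|\Psi_1\|_{L^2}\|\rho_1\|_{L^2}$ gives only $C_0s^{-5/2}\log^{-1/2}s$. The reason is the leading term $(\dot z-b_1z-\mu)(\zeta_2\partial_yX_2+10m_0b_1\partial_yY_2)$, where only $|\dot z-b_1z-\mu|\lesssim C_0s^{-3/2}$ is available. To reach the stated $C_0s^{-5/2}\log^{-1}s$:
\begin{enumerate}
\item pair this localized term directly with the bounded $\rho_1$, as the paper does;
\item use the $H^1$ bound, against $\|\rho_1\|_{L^2}\lesssim z^{1/2}$, only for the $O_{H^1}(s^{-5/2}\log^{-1}s)$ remainder of Lemma~\ref{le:Psinorm}(i) and for $\Psi_3$.
\end{enumerate}
Once you do this, your worry about $s^{-2}$-size pairings of $\Psi_1,\Psi_3$ with the plateau of $\rho_1$ is moot.

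\textbf{Items (ii)--(iv).} These follow the paper's route. In (iv), the cancellation you anticipate is exactly (ii), applied after writing $\Lambda Q_2=\Gamma(\Lambda Q)+z\partial_yQ_2$.
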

	
	\begin{proof}
		Proof of (i). From integration by parts and the bootstrap assumption~\eqref{est:Boot}, 
		\begin{equation*}
			\begin{aligned}
				(\partial_{y}\varepsilon,\rho_{1})&=(\varepsilon,\Lambda Q-(\Lambda Q)(\cdot-z))=O\left(\frac{C_{0}}{s^{\frac{3}{2}}}\right),\\
				(\Lambda\varepsilon,\rho_{1})&=-\frac{1}{2}(\varepsilon,\rho_{1})-(\varepsilon,y\partial_{y}\rho_{1})
				=-\frac{1}{2}(\varepsilon,\rho_{1})+
				O\left(\frac{C_{0}}{s^{\frac{3}{2}}}\right).
			\end{aligned}
		\end{equation*}
		Here, we use the fact that 
		\begin{equation*}
			|(1+|y|)\Lambda Q|+|(1+|y-z|)\Gamma \Lambda Q|\lesssim \phi_{1},\quad \mbox{on}\ \RR.
		\end{equation*}
		Then, from~\eqref{est:Boot} and~\eqref{est:rho1}--\eqref{est:rho3}, we check that 
		\begin{equation*}
			\begin{aligned}
				\left| \left(\varepsilon,\rho_{1}\right)\right|
				&\lesssim \left(\int_{\RR}\varepsilon^{2}\rho_{1}\dd y\right)^{\frac{1}{2}}\left(\int_{\RR}\rho_{1}\dd y\right)^{\frac{1}{2}}\lesssim \frac{C_{0}}{s^{\frac{3}{2}}\log^{\frac{1}{2}}s}.
			\end{aligned}
		\end{equation*}
		Combining the above estimates, we complete the proof for (i).
		
		\smallskip
		Proof of (ii). First, from the Fubini's Theorem and $\int_{\RR}\Lambda Q(y)\dd y=-2m_{0}$, 
		\begin{equation*}
			\begin{aligned}
				\Big(\Lambda Q,\int_{y}^{\infty}\Lambda Q(\sigma)\dd \sigma\Big)
				&=
				\int_{\RR^{2}}\Lambda Q(y)\Lambda Q(\sigma)
				\textbf{1}_{\{y\le \sigma\}}
				\dd y\dd \sigma\\
				&=\frac{1}{2}\left(\int_{\RR}\Lambda Q(y)\dd y\right)^{2}=2m_{0}^{2}.
			\end{aligned}
		\end{equation*}
		Second, from the exponential decay of $Q$, we check that 
		\begin{equation*}
			\begin{aligned}
				\int_{y}^{\infty}(\Lambda Q)(\sigma-z)\dd \sigma&=O\left(\langle y-z\rangle e^{-|y-z|}\right),\qquad \qquad \ \ \  \mbox{for} \ \  y>z,\\
				\int_{y}^{\infty}(\Lambda Q)(\sigma-z)\dd \sigma&=-2m_{0}+O\left(\langle y-z\rangle e^{-|y-z|}\right),\quad \mbox{for}\ \  y<z.
			\end{aligned}
		\end{equation*}
		Based on the above estimate and the bootstrap assumption~\eqref{est:bootz}, we find  
		\begin{equation*}
			\begin{aligned}
				\int_{z}^{\infty}\int_{y}^{\infty}
				\left(\Lambda Q(y)\right)
				\left((\Lambda Q)(\sigma-z)\right)\dd \sigma\dd y&=O\Big(s^{-\frac{3}{2}}\Big),\\
				\int_{-\infty}^{z}\int_{y}^{\infty}
				\left(\Lambda Q(y)\right)
				\left((\Lambda Q)(\sigma-z)\right)\dd \sigma\dd y&=4m_{0}^{2}+O\Big(s^{-\frac{3}{2}}\Big),
			\end{aligned}
		\end{equation*}
		which directly implies 
		\begin{equation*}
			\Big(\Lambda Q,\int_{y}^{\infty}\Lambda Q(\sigma-z)\dd \sigma\Big)
			=4m_{0}^{2}+O\Big(s^{-\frac{3}{2}}\Big).
		\end{equation*}
		Here, we use the fact that 
		\begin{equation*}
			\begin{aligned}
				\int_{z}^{\infty}\langle y\rangle \langle y-z \rangle e^{-|y|-|y-z|}\dd y\lesssim e^{-\frac{3}{4}z}&\lesssim s^{-\frac{3}{2}},\\
				\int_{-\infty}^{z}\langle y\rangle \langle y-z \rangle e^{-|y|-|y-z|}\dd y\lesssim e^{-\frac{3}{4}z}&\lesssim s^{-\frac{3}{2}}.
			\end{aligned}
		\end{equation*}
		Combining the above estimates, we obtain 
		\begin{equation*}
			\begin{aligned}
				(\Lambda Q ,\rho_{1})&= \Big(\Lambda Q,\int_{y}^{\infty}\Lambda Q(\sigma)\dd \sigma\Big)\\
				&-   \Big(\Lambda Q,\int_{y}^{\infty}\Lambda Q(\sigma-z)\dd \sigma\Big)=-2m_{0}^{2}+O\Big(s^{-\frac{3}{2}}\Big).
			\end{aligned}
		\end{equation*}
		Similar as above, we also obtain 
		\begin{equation*}
			\begin{aligned}
				(\Gamma(\Lambda Q),\rho_{1})&= \Big(\Gamma(\Lambda Q),\int_{y}^{\infty}\Lambda Q(\sigma)\dd \sigma\Big)\\
				&-   \Big(\Gamma\left(\Lambda Q\right),\int_{y}^{\infty}\Lambda Q(\sigma-z)\dd \sigma\Big)=-2m_{0}^{2}+O\Big(s^{-1}\Big),
			\end{aligned}
		\end{equation*}
		which directly completes the proof for (ii).
		
		\smallskip
		Proof of (iii). Using integration by parts, we compute
		\begin{equation*}
			(\partial_{y}V,\rho)=(V,\Lambda Q)-(V,(\Lambda Q)(\cdot-z)).
		\end{equation*}
		Based on the above identity, $(\Lambda Q,Q)=0$ and Lemma~\ref{le:boundinter}, we obtain 
		\begin{equation*}
			\left|(\partial_{y}V,\rho)\right|\lesssim |\mu|+|b_{1}|+|\Theta|+z^{5}e^{-z}\lesssim s^{-1},
		\end{equation*}
		which directly completes the proof for (iii).
		
		\smallskip
		Proof of (iv). First, we rewrite 
		\begin{equation*}
			\Lambda V=\Lambda Q-\Gamma(\Lambda Q)-z\partial_{y}Q_{2}+\Lambda (R\phi)+\Lambda U.
		\end{equation*}
		On the one hand, from (ii) of Lemma~\ref{le:estrho}, we check that 
		\begin{equation*}
			\left( \Lambda Q-\Gamma(\Lambda Q)-z\partial_{y}Q_{2},\rho_{1}\right)=z(Q_{2},\partial_{y}\rho_{1})+O(s^{-1}).
		\end{equation*}
		On the other hand, from the bootstrap assumption~\eqref{est:Boot}, 
		\begin{equation*}
			\Lambda (R\phi)+\Lambda U=z\zeta_{2}\partial_{y}X_{2}+10m_{0}zb_{1}\partial_{y}Y_{2}+O_{L^{\infty}}\left(\frac{1}{s\log s}\right).
		\end{equation*}
		Based on the above estimate and~\eqref{est:rho1}--\eqref{est:rho2}, we obtain 
		\begin{equation*}
			\begin{aligned}
				\left| \left(
				\Lambda (R\phi)+\Lambda U,\rho_{1}
				\right)\right|
				&\lesssim s^{-1}\|\partial_{y}X_{2}\|_{L^{1}}+s^{-1}\|\partial_{y}Y_{2}\|_{L^{1}}
				+\frac{\|\rho_{1}\|_{L^{1}}}{s\log s}\lesssim s^{-1}.
			\end{aligned}
		\end{equation*}
		
		Combining the above estimates, we complete the proof for (iv).
		
		\smallskip
		Proof of (v). Note that, from Definition~\ref{def:Admissible}, Proposition~\ref{prop:approx}, Lemma~\ref{le:controlpara} and Lemma~\ref{le:Psinorm}, we find 
		\begin{equation*}
			\begin{aligned}
				\Psi (V)&=
				\left(\dot{z}-b_{1}z-\mu\right)(\zeta_{2} \partial_{y}X_{2}+10m_{0}b_{1}\partial_{y}Y_{2})\\
				&+\vec{{\rm{Mod}}}\cdot \vec{M}Q
				+O_{L^{2}_{\phi_{1}}}\left(\frac{1}{s^{\frac{5}{2}}\log s}\right).
			\end{aligned}
		\end{equation*}
		Here, we denote 
		\begin{equation*}
			f\in O_{L^{2}_{\phi_{1}}}\left(\frac{1}{s^{\frac{5}{2}}\log s}\right)
			\Longleftrightarrow \left(\int_{\RR}|f(y)|^{2}\phi_{1}\dd y\right)^{\frac{1}{2}}\lesssim \frac{1}{s^{\frac{5}{2}}\log s}.
		\end{equation*}
		It follows from~\eqref{est:rho1}--\eqref{est:rho3} that 
		\begin{equation*}
			\begin{aligned}
				\left(\Psi (V),\rho_{1}\right)&=
				\left(\dot{z}-b_{1}z-\mu\right)
				\left((\zeta_{2} \partial_{y}X_{2}+10m_{0}b_{1}\partial_{y}Y_{2}),\rho_{1}\right)\\
				&+\left(\vec{{\rm{Mod}}}\cdot \vec{M}Q,\rho_{1}\right)
				+O\left(\frac{1}{s^{\frac{5}{2}}\log^{\frac{1}{2}} s}\right).
			\end{aligned}
		\end{equation*}
		In addition, using again Lemma~\ref{le:controlpara} and integration by parts, we obtain 
		\begin{equation}\label{est:Psirho1}
			\begin{aligned}
				\left(\Psi (V),\rho_{1}\right)=\left(\vec{{\rm{Mod}}}\cdot \vec{M}Q,\rho_{1}\right)
				+O\left(\frac{1}{s^{\frac{5}{2}}\log^{\frac{1}{2}} s}
				+\frac{C_{0}}{s^{\frac{5}{2}}\log s}
				\right).
			\end{aligned}
		\end{equation}
		Using integration by parts and the definition of $\vec{M}Q$ in~\eqref{equ:defMQ}, 
		\begin{equation*}
			(\dot{z}-b_{1}z-\mu)\left(M_{1Q},\rho_{1}\right)=-(\dot{z}-b_{1}z-\mu)(Q_{2},\partial_{y}\rho_{1}).
		\end{equation*}
		Then, using again the bootstrap assumption~\eqref{est:Boot} and
		~\eqref{est:rho1}--\eqref{est:rho2}, we have 
		\begin{equation*}
			\left|(M_{2Q},\rho_{1})\right|+\left|(M_{3Q},\rho_{1})\right|\lesssim 
			1+z\lesssim
			\log s,
		\end{equation*}
		which implies that 
		\begin{equation*}
			\begin{aligned}
				\left| (\dot{b}_{1}+\alpha e^{-z}+2b_{1}^{2})(M_{3Q},\rho_{1})\right|
				&\lesssim 
				\frac{C_{0}}{s^{\frac{5}{2}}\log s}
				+\frac{C_{0}^{2}}{s^{3}\log s},\\
				\left| (\dot{\zeta}_{2}+\alpha (1+\mu)^{\frac{3}{2}}e^{-z}+2(1+\mu)^{\frac{3}{2}}b_{1}^{2})(M_{2Q},\rho_{1})\right|
				&\lesssim 
				\frac{C_{0}}{s^{\frac{5}{2}}\log s}
				+\frac{C_{0}^{2}}{s^{3}\log s}.
			\end{aligned}
		\end{equation*}
		Last, from (ii) of Lemma~\ref{le:estrho} and~\eqref{est:Boot}, we find
		\begin{equation*}
			(M_{4Q},\rho_{1})=-\frac{1}{2}\left(\Gamma (\Lambda Q),\rho_{1}\right)+O(s^{-1})=m_{0}^{2}+O(s^{-1}),
		\end{equation*}
		which implies that 
		\begin{equation*}
			(\dot{\mu}-2\Theta)\left(M_{4Q},\rho_{1}\right)=m_{0}^{2} (\dot{\mu}-2\Theta)+\left(\frac{C_{0}}{s^{\frac{5}{2}}\log s}\right).
		\end{equation*}
		Combining the above estimates with~\eqref{est:Psirho1}, we complete the proof for (v).
	\end{proof}
	
	We are in a position to complete the proof of Lemma~\ref{le:refinmu}.
	\begin{proof}[Proof of Lemma~\ref{le:refinmu}]
		From the definition of $\mathcal{J}_{1}$ in~\eqref{equ:defrho}, we compute 
		\begin{equation*}
			\dot{\mathcal{J}_{1}}=\frac{1}{m_{0}^{2}}(\partial_{s}\varepsilon,\rho_{1})-\frac{\dot{z}}{m_{0}^{2}}(\varepsilon,(\Lambda Q)(\cdot-z)).
		\end{equation*}
		Using Lemma~\ref{le:eque}, we decompose 
		\begin{equation*}
			\begin{aligned}
				\left(\partial_{s}\varepsilon,\rho_{1}\right)
				&=\left(\left(\partial_{y}^{2}\varepsilon-\varepsilon+(V+\varepsilon)^{5}-V^{5}\right),\partial_{y}\rho_{1}\right)
				+\frac{\dot{\lambda}}{\lambda}\left(\Lambda \varepsilon,\rho_{1}\right)\\
				&-\left(\Psi(V),\rho_{1}\right)
				+\bigg(\frac{\dot{\lambda}}{\lambda}+b_{1}\bigg)
				\left(\Lambda V,\rho_{1}\right)+\left(\frac{\dot{x}_{1}}{\lambda}-1\right)\left(\left(\partial_{y}V+\partial_{y}\varepsilon\right),\rho_{1}\right).
			\end{aligned}
		\end{equation*}
		First, we compute
		\begin{equation*}
			\begin{aligned}
				\partial_{y}^{2}\varepsilon-\varepsilon +(V+\varepsilon)^{5}-V^{5}&= \partial_{y}^{2}\varepsilon-\varepsilon
				+5(Q_{1}^{4}+Q_{2}^{4})\varepsilon\\
				&
				+5(V^{4}-Q_{1}^{4}-Q_{2}^{4})\varepsilon+10V^{3}\varepsilon^{2}\\
				&+10V^{2}\varepsilon^{3}+5V\varepsilon^{4}+\varepsilon^{5}.
			\end{aligned}
		\end{equation*}
		Based on~\eqref{est:Boot}, $\mathcal{L}\Lambda Q=-2Q$ and $(\varepsilon,Q_{1})=(\varepsilon,Q_{2})=0$, we compute 
		\begin{equation*}
			\begin{aligned}
				&\left| \left(  \partial_{y}^{2}\varepsilon-\varepsilon
				+5(Q_{1}^{4}+Q_{2}^{4})\varepsilon,\partial_{y}\rho_{1}\right)\right|\\
				&\lesssim |\mu|\mathcal{N}_{B}(\varepsilon)+ze^{-z}\mathcal{N}_{B}(\varepsilon)\lesssim \frac{C_{0}}{s^{\frac{5}{2}}\log s}.
			\end{aligned}
		\end{equation*}
		In addition, using again the bootstrap assumption~\eqref{est:Boot} and Lemma~\ref{le:L2e},
		\begin{equation*}
			\begin{aligned}
				\left| \left((V^{4}-Q_{1}^{4}-Q_{2}^{4})\varepsilon,\partial_{y}\rho_{1}\right)\right|
				+\left|\left(V^{3}\varepsilon^{2},\partial_{y}\rho_{1}\right)\right|&\lesssim \frac{C_{0}}{s^{\frac{5}{2}}\log^{2}s}, \\
				\left(V^{2}\varepsilon^{3},\partial_{y}\rho_{1}\right)+
				\left|\left(V\varepsilon^{4},\partial_{y}\rho_{1}\right)\right|
				+\left|\left(\varepsilon^{5},\partial_{y}\rho_{1}\right)\right|
				&\lesssim \frac{C_{0}}{s^{\frac{5}{2}}\log^{2}s}.
			\end{aligned}
		\end{equation*}
		Then, from (iv)--(v) of Lemma~\ref{le:estrho} and Lemma~\ref{le:controlpara}
		\begin{equation*}
			\begin{aligned}
				& -\left(\Psi(V),\rho_{1}\right)
				+\bigg(\frac{\dot{\lambda}}{\lambda}+b_{1}\bigg)
				\left(\Lambda V,\rho_{1}\right)\\
				&=-m_{0}^{2}(\dot{\mu}-2\Theta)+O\left(\frac{1}{s^{\frac{5}{2}}\log^{\frac{1}{2}} s}
				+\frac{C_{0}}{s^{\frac{5}{2}}\log s}
				+\frac{C_{0}^{2}}{s^{3}\log s}
				\right).
			\end{aligned}
		\end{equation*}
		Here, we use the fact that 
		\begin{equation*}
			(Q_{2},\partial_{y}\rho_{1})=
			(Q_{2},(\Lambda Q)(\cdot-z))+
			(Q_{2},\Lambda Q)=O\left(s^{-1}\right).
		\end{equation*}
		Last, from Lemma~\ref{le:controlpara} and the bootstrap assumption~\eqref{est:Boot},
		\begin{equation*}
			\begin{aligned}
				\bigg|\left(\frac{\dot{x}_{1}}{\lambda}-1\right)\left(\left(\partial_{y}V+\partial_{y}\varepsilon\right),\rho_{1}\right)\bigg|&\lesssim \frac{C_{0}}{s^{\frac{5}{2}}\log s},\\\
				\bigg|\frac{\dot{\lambda}}{\lambda}\left(\Lambda \varepsilon,\rho_{1}\right)\bigg|
				+\bigg|
				\dot{z}(\varepsilon,(\Lambda Q)(\cdot-z))
				\bigg|&\lesssim \frac{C_{0}}{s^{\frac{5}{2}}\log s}.
			\end{aligned}
		\end{equation*}
		Combining the above estimates, we complete the proof of Lemma~\ref{le:refinmu}.
	\end{proof}
	
	We denote 
	\begin{equation}\label{equ:defJ2}
		\mathcal{J}_{2}(s)=\frac{1}{m_{0}^{2}}(\varepsilon(s),\rho_{2}(s))\quad \mbox{with}\ \ \rho_{2}(s,y)=X-\bar{P}_{2}+2m_{0}.
	\end{equation}
	Here, we set $\bar{P}_{2}(y)=P(y-z)$. Note that, from Remark~\ref{re:P}, for $y>z$, we have 
	\begin{equation}\label{est:rho21}
		|\rho_{2}(s,y)|\lesssim |X(y)+2m_{0}|+|\bar{P}_{2}(y)|\lesssim e^{-\frac{1}{2}|y|}+e^{-\frac{1}{2}|y-z|}.
	\end{equation}
	Similarly, for $y<0$, we also have 
	\begin{equation}\label{est:rho22}
		|\rho_{2}(s,y)|\lesssim |X(y)|+|\bar{P}_{2}(y)-2m_{0}|\lesssim e^{-\frac{1}{2}|y|}+e^{-\frac{1}{2}|y-z|}.
	\end{equation}
	It follows directly that 
	\begin{equation}\label{est:pointrho2}
		|\rho_{2}(s,y)|\lesssim \left(e^{-\frac{1}{2}|y|}+e^{-\frac{1}{2}|y-z|}\right)\left(\textbf{1}_{(-\infty,0)}+\textbf{1}_{(z,\infty)}\right)
		+\textbf{1}_{[0,z]}\lesssim \phi_{1}(s,y).
	\end{equation}
	
	\begin{lemma}[Refined control of $\Theta$]\label{le:refinedTheta}
		On $[s^{*},s^{in}]$, we have
		\begin{equation}\label{est:refinedTheta}
			\begin{aligned}
				&\left|\dot{\Theta}+\frac{5}{2}\alpha \frac{\dd}{\dd s}(ze^{-z})+c_{0}b_{1}\dot{\mu}-b_{1}\dot{\mathcal{J}_{2}}\right|\lesssim \frac{1}{s^{3}\log s}+\frac{C_{0}^{2}}{s^{3}\log^{2}s}.
			\end{aligned}
		\end{equation}
		Here, $c_{0}\in \RR$ is a constant depending only on $(P,Q,Y)$.
	\end{lemma}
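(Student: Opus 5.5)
The plan mirrors the proof of Lemma~\ref{le:refinmu}, with the roles of the scaling and mass directions exchanged. The coarse bound of Lemma~\ref{le:controlpara} controls $\dot b_1+\alpha e^{-z}+2b_1^2$ and $\dot\zeta_2+\alpha e^{-z}+2b_1^2$ only at order $C_0 s^{-5/2}\log^{-2}s$. That loss comes from the term $(\varepsilon,\mathcal L(\cdot))$ produced when $\partial_s\varepsilon$ is tested against $Q_1$ and $Q_2$. To remove it, I would test the $\varepsilon$-equation of Lemma~\ref{le:eque} against a \emph{non-localized} function whose $y$-derivative reproduces the first-order structure of $\mathcal L$. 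By Remark~\ref{re:P}, $(\mathcal L X)'=\Lambda Q$ and $(\mathcal LP)'=\Lambda Q$. Hence $\rho_2=X-\bar P_2+2m_0$ satisfies
\[
\partial_y\mathcal L\rho_2\approx \Lambda Q-\Gamma(\Lambda Q)+\text{interaction terms}.
\]
Its pairing with $\partial_y(\partial_y^2\varepsilon-\varepsilon+5S^4\varepsilon)$ is therefore, up to sign, $(\varepsilon,\Lambda Q)-(\varepsilon,\Gamma\Lambda Q)$, which vanishes by \eqref{equ:orth}. This cancellation is the source of the gain.

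\textbf{Step 1.} Differentiate $\mathcal J_2$: $m_0^2\dot{\mathcal J}_2=(\partial_s\varepsilon,\rho_2)+\dot z(\varepsilon,\bar P_2')$. Substitute \eqref{equ:pse}, producing four kinds of terms.

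The linear $\varepsilon$-terms cancel as above, up to errors $(|\mu|+z^Ke^{-z})\mathcal N_B(\varepsilon)$, using $\rho_2\lesssim\phi_1$ from \eqref{est:pointrho2}.

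The nonlinear terms $(V^3\varepsilon^2,\partial_y\rho_2)$ and similar are bounded by $\mathcal N_B(\varepsilon)^2\lesssim C_0^2s^{-3}\log^{-2}s$.

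The modulation terms $(\dot\lambda/\lambda+b_1)(\Lambda V,\rho_2)$ and $(\dot x_1/\lambda-1)(\partial_yV,\rho_2)$ are handled via Lemma~\ref{le:controlpara}.

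The main term is $-(\Psi(V),\rho_2)$. I expand it with Proposition~\ref{prop:approx} and Lemmas~\ref{le:Psi1}--\ref{le:Psinorm}, using $(\vec M Q,\rho_2)$. Its leading pieces are:
\begin{itemize}
\item $(M_{3Q},\rho_2)\approx (X,X+2m_0)$ and $(M_{2Q},\rho_2)$, which multiply $\dot b_1+\alpha e^{-z}+2b_1^2$ and $\dot\zeta_2+\cdots$ with $O(\log s)$ coefficients;
\item $(M_{4Q},\rho_2)$, which multiplies $\dot\mu-2\Theta$;
\item $(\Psi_3,\rho_2)$, $(\Psi_2,\rho_2)$, which carry the $z^2e^{-z}$ and $\dot z z e^{-z}$ pieces.
\end{itemize}

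\textbf{Step 2.} I would not try to bound $\dot{\mathcal J}_2$ by itself. Instead, I combine Step 1, multiplied by $b_1$, with the exact projections of the $\varepsilon$-equation onto $Q_1$ and $Q_2$. Those projections are the identities behind \eqref{est:dotb12}, now kept with their explicit leading terms from Lemma~\ref{le:Psi1}:
\begin{itemize}
\item $(\Psi_3,Q_1)=\tfrac{\alpha}{4}m_0^2(\zeta_2z^2e^{-z}+2\mu ze^{-z})$;
\item $(\Psi_3,Q_2)=-\tfrac{\alpha}{4}m_0^2b_1z^2e^{-z}$;
\item $(\Psi_2,Q_2)=-2\alpha m_0^2\dot z ze^{-z}+\tfrac52m_0b_1\dot\mu(Y-2\Lambda Y,Q)$.
\end{itemize}
Subtracting the $Q_1$ and $Q_2$ identities gives $-m_0^2\dot\Theta$ plus these explicit terms plus the $\varepsilon$-pairing $(\varepsilon,\mathcal L(Q_2'-Q_1'))$-type errors. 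Exactly these errors are cancelled by $b_1\dot{\mathcal J}_2$, since the orthogonality-free part of $\partial_y\mathcal L\rho_2$ matches them at order $b_1$.

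Then I collect the explicit terms. I use $\dot z=b_1z+\mu+O(C_0s^{-3/2})$ and $\zeta_2-b_1=\Theta+O(\mu b_1)$, and the admissible sizes $\mu\sim b_1z\sim s^{-1}$, $z^2e^{-z}b_1\sim s^{-3}$. The $z^2e^{-z}$ contributions from $Q_1$ and $Q_2$ cancel in the difference up to $\Theta z^2e^{-z}=O(s^{-4})$. The $\mu ze^{-z}$ and $\dot z ze^{-z}$ terms then combine into $-\tfrac52\alpha m_0^2\tfrac{d}{ds}(ze^{-z})=\tfrac52\alpha m_0^2(z-1)\dot z e^{-z}$ up to $O(s^{-3}\log^{-1}s)$. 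The $b_1\dot\mu$ terms collect into $c_0b_1\dot\mu$, with $c_0$ built from $(Y-2\Lambda Y,Q)$ and $(M_{4Q},\rho_2)$, hence depending only on $(P,Q,Y)$. Dividing by $-m_0^2$ gives \eqref{est:refinedTheta}.

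\textbf{Main obstacle.} The hard step is the term-by-term bookkeeping showing that every remaining contribution is $O(s^{-3}\log^{-1}s)$, in particular:
\begin{itemize}
\item the products of $O(\log s)$ coefficients $(M_{2Q},\rho_2)$, $(M_{3Q},\rho_2)$ with the $N$-components;
\item the cross terms $(\Psi_4,\rho_2)$ supported on $[s/2,5s/2]$, where $\rho_2$ is not small;
\item the exact coefficient $\tfrac52$.
\end{itemize}
For the first, I would insert \eqref{est:dotb12} and note the factor $b_1$ in front. For the second, I would use the pointwise bounds of Lemma~\ref{le:Psi1}(iv) together with $b_1\lesssim (s\log s)^{-1}$. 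For the third, I expect to have to verify carefully that the $\mu ze^{-z}$ term from $\Psi_3$ and the $\dot z ze^{-z}$ terms from $\Psi_2$ (with weights $-8+6$) assemble exactly into a total derivative. If they do not, I would absorb the discrepancy through $\dot z-b_1z-\mu$ and the relation $\mu\approx 5b_1z$ suggested by \eqref{est:finialdatawell}.
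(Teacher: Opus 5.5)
Your architecture matches the paper's: refined projections onto $Q_1$ and $Q_2$ kept with their explicit terms, then corrected by $b_1\dot{\mathcal J}_2$. However, you misidentify the mechanism that makes $b_1\dot{\mathcal J}_2$ useful, and as written your Steps~1 and~2 contradict each other.

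\textbf{Where the loss comes from, and what $b_1\dot{\mathcal J}_2$ cancels.} Against $Q_1$ the linear part contributes nothing: $(\partial_y\mathcal L\varepsilon,Q)=-(\varepsilon,\mathcal LQ')=0$. The same holds at $Q_2$, using $(\varepsilon,\partial_yQ_2)=0$, up to interaction terms. The loss $b_1\mathcal N_B(\varepsilon)\sim C_0s^{-5/2}\log^{-2}s$ in Lemma~\ref{le:controlpara} comes instead from the potential term $5\partial_y((V^4-Q_1^4-Q_2^4)\varepsilon)$. Near $Q_1$ one has $V^4-Q_1^4\approx4b_1Q^3X$, and near $Q_2$ one has $V\approx-Q_2-b_1P_2$. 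These produce $-20b_1(\varepsilon,Q^3XQ')$ and $-20b_1(\varepsilon,Q_2^3P_2\partial_yQ_2)$.

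On the $\mathcal J_2$ side, the linear term is $(\varepsilon,\mathcal L\,\partial_y\rho_2)$, not $(\varepsilon,\partial_y\mathcal L\rho_2)$. Because $[\mathcal L,\partial_y]=20Q^3Q'$, one has $\mathcal L(X')=\Lambda Q+20Q^3XQ'$ and $\mathcal L(P')=\Lambda Q+20Q^3PQ'$. Orthogonality removes only the $\Lambda Q$ and $\Gamma\Lambda Q$ parts. What survives is $m_0^2\dot{\mathcal J}_2=-20(\varepsilon,Q^3XQ')+20(\varepsilon,Q_2^3P_2\partial_yQ_2)+\dots$, of size $\mathcal N_B(\varepsilon)$. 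Multiplied by $b_1$, this is exactly what cancels the two terms above.

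If the linear terms cancelled up to $(|\mu|+z^Ke^{-z})\mathcal N_B(\varepsilon)$, as your Step~1 claims, then $b_1\dot{\mathcal J}_2$ would be negligible. The estimate would then be off by $C_0s^{-5/2}\log^{-2}s$.

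\textbf{The modulation terms.} You also never address $(\frac{\dot\lambda}{\lambda}+b_1)(\Lambda V,Q_j)$. Up to negligible terms, these equal $b_1(\frac{\dot\lambda}{\lambda}+b_1)(\Lambda X,Q)$ at $Q_1$ and $-b_1(\frac{\dot\lambda}{\lambda}+b_1)(\Lambda P,Q)$ at $Q_2$. They are of the same bad size, and they cancel in the $\Theta$-combination only through $(X+P,\Lambda Q)=0$ (Remark~\ref{re:P}).

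\textbf{The coefficient $\frac52$.} Your explicit bookkeeping also fails to produce it. Since $X_1\phi$ and $Y_2$ both see $Q_2$, one has $(M_{3Q},Q_2)\approx-2m_0^2$. Write $N_b$ and $N_\zeta$ for the $\dot b_1$ and $\dot\zeta_2$ components of $\vec N$. The $Q_1$-identity contains $-m_0^2N_b$, and the $Q_2$-identity contains $m_0^2N_\zeta-2m_0^2N_b$. Their difference does give $-m_0^2\dot\Theta$, up to $b_1\dot\mu$ terms.

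In that difference, though, the two $\Psi_3$ contributions you list have opposite signs and therefore add. Using $\dot z=b_1z+\mu+O(C_0s^{-3/2})$,
$(\Psi_3,Q_1)-(\Psi_3,Q_2)=\frac{\alpha}{4}m_0^2(\zeta_2+b_1)z^2e^{-z}+\frac{\alpha}{2}m_0^2\mu ze^{-z}=\frac{\alpha}{2}m_0^2\dot z\,ze^{-z}+O(s^{-4}\log s+C_0s^{-7/2})$.
This $\frac12$, together with the $2\alpha\dot z\,ze^{-z}$ coming from $(\Psi_2,Q_2)$ (your $-8+6$), is where the $\frac52$ comes from. Cancelling the $z^2e^{-z}$ terms, as you propose, loses $\frac{\alpha}{2}b_1z^2e^{-z}\sim s^{-3}$, which exceeds the allowed error.

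Your fallback via $\mu\approx5b_1z$ is circular. The bootstrap only gives $|\mu|\le5/s$, and \eqref{est:improvedmu1} is derived from this very lemma. The same cross term $-2m_0^2N_b$ is also what makes $-20\frac{b_1}{m_0^2}(\varepsilon,Q^3XQ')$ enter $\dot\Theta$ with the sign that $b_1\dot{\mathcal J}_2$ cancels. Your plan to keep $(\dot\mu-2\Theta)(M_{4Q},\rho_2)$ and absorb it into $c_0$ is fine, since $b_1\Theta=O(s^{-3}\log^{-1}s)$.
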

	
	\begin{proof}
		\textbf{Step 1.} Refined estimate related to $b_{1}$. We claim that 
		\begin{equation}\label{est:refinedb1}
			\begin{aligned}
				\dot{b}_{1}+\alpha e^{-z}+2b_{1}^{2}
				&=\frac{\alpha}{4}b_{1}z^{2}e^{-z}+\frac{\alpha}{2}\mu ze^{-z}\\
				&-20\frac{b_{1}}{m^{2}_{0}}\left(\varepsilon,Q^{3}XQ'\right)-\frac{b_{1}}{m_{0}^{2}}\left(\frac{\dot{\lambda}}{\lambda}+b_{1}\right)(\Lambda X,Q)\\
				&+O\left(\frac{1}{s^{3}\log s}+\frac{C_{0}^{2}}{s^{3}\log^{2}s}+\frac{C_{0}}{s^{\frac{7}{2}}}
				+\frac{C_{0}}{s^{\frac{7}{2}}\log s}
				\right).
			\end{aligned}
		\end{equation}
		Indeed, from Lemma~\ref{le:eque} and the orthogonality condition $(\varepsilon,Q)=0$, 
		\begin{equation*}
			\begin{aligned}    
				&\left(\partial_{y}\left(\partial_{y}^{2}\varepsilon-\varepsilon+(V+\varepsilon)^{5}-V^{5}\right), Q\right)+\left(\Psi(V), Q\right)\\
				&=\frac{\dot{\lambda}}{\lambda}
				\left(\Lambda \varepsilon, Q\right)+\bigg(\frac{\dot{\lambda}}{\lambda}+b_{1}\bigg)\left(\Lambda V,Q\right)+\left(\frac{\dot{x}_{1}}{\lambda}-1\right)
				\left(\left(\partial_{y}V+\partial_{y}\varepsilon\right), Q\right).
			\end{aligned}
		\end{equation*}
		By an elementary computation, we decompose
		\begin{equation*}
			\begin{aligned}
				\partial_{y}^{2}\varepsilon-\varepsilon +(V+\varepsilon)^{5}-V^{5}&= -\mathcal{L}\varepsilon+5Q_{2}^{4}\varepsilon
				+5V\varepsilon^{4}+\varepsilon^{5}
				\\
				&
				+5(V^{4}-Q_{1}^{4}-Q_{2}^{4})\varepsilon+10V^{3}\varepsilon^{2}+10V^{2}\varepsilon^{3}.
			\end{aligned}
		\end{equation*}
		It follows from Lemma~\ref{le:boundinter} and Lemma~\ref{le:L2e} that\footnote{Here, we use the fact that ${\rm{Ker}}\mathcal{L}={{\rm{Span}}}\{Q'\}$.}
		\begin{equation*}
			\begin{aligned}
				&\left(\partial_{y}\left(\partial_{y}^{2}\varepsilon-\varepsilon+(V+\varepsilon)^{5}-V^{5}\right), Q\right)\\
				&=
				5\left(\partial_{y}\left((V^{4}-Q_{1}^{4}-Q_{2}^{4})\varepsilon\right),Q\right)+O\left(
				ze^{-z}\mathcal{N}_{B}(\varepsilon)
				+\mathcal{N}^{2}_{B}(\varepsilon)
				\right)
				\\
				&=
				5\left(\partial_{y}\left((V^{4}-Q_{1}^{4}-Q_{2}^{4})\varepsilon\right),Q\right)+
				O\left(\frac{C_{0}^{2}}{s^{3}\log^{2}s}+\frac{C_{0}}{s^{\frac{7}{2}}\log s}\right).
			\end{aligned}
		\end{equation*}
		Moreover, from the definition of $V$ in~\eqref{equ:defV} and the bootstrap assumption~\eqref{est:Boot}, 
		\begin{equation}\label{est:V4Q4}
			\begin{aligned}
				V^{4}-Q_{1}^{4}-Q_{2}^{4}
				&=(S^{4}-Q_{1}^{4}-Q_{2}^{4})+4(S^{3}-Q_{1}^{3}+Q_{2}^{3})(R\phi+U)\\
				&+4(Q_{1}^{3}-Q_{2}^{3})(b_{1}X_{1}-\zeta_{2}X_{2}-10m_{0}b_{1}Y_{2})+O_{L^{\infty}}\left(s^{-2}\right).
			\end{aligned}
		\end{equation}
		Therefore, from Lemma~\ref{le:boundinter}, Lemma~\ref{le:PAB} and the bootstrap assumption~\eqref{est:Boot}, 
		\begin{equation*}
			\begin{aligned}
				&\left(\partial_{y}\left(\partial_{y}^{2}\varepsilon-\varepsilon+(V+\varepsilon)^{5}-V^{5}\right), Q\right)\\
				&=-20b_{1}\left(\varepsilon,Q^{3}XQ'\right)+
				O\left(\frac{C_{0}^{2}}{s^{3}\log^{2}s}+\frac{C_{0}}{s^{\frac{7}{2}}\log s}\right).
			\end{aligned}
		\end{equation*}
		Then, using again Remark~\ref{re:P}, Lemma~\ref{le:boundinter}, Lemma~\ref{le:PAB}, Lemma~\ref{le:controlpara} and the definition of $\vec{{\rm{Mod}}}\cdot \vec{M}Q$ in~\eqref{equ:defMod}--\eqref{equ:defMQ}, we find
		\begin{equation*}
			\left( {\vec{\rm{Mod}}}\cdot\vec{M}Q,Q\right)=-m_{0}^{2}
			\left(\dot{b}_{1}+\alpha e^{-z}+2b_{1}^{2}\right)
			+O\left(\frac{C_{0}}{s^{\frac{7}{2}}}\right).
		\end{equation*}
		Based on the above estimate and Lemma~\ref{le:Psi1}, we obtain 
		\begin{equation*}
			\begin{aligned}
				(\Psi(V),Q)&=-m_{0}^{2}\left(
				\dot{b}_{1}+\alpha e^{-z}+2b_{1}^{2}
				-\frac{\alpha}{4}\zeta_{2}z^{2}e^{-z}-\frac{\alpha}{2}\mu ze^{-z}
				\right)\\
				&+O\left(\frac{1}{s^{3}\log s}+\frac{C_{0}}{s^{\frac{7}{2}}\log^{2}s}
				+\frac{C_{0}^{2}}{s^{3}\log^{2}s}
				+\frac{C_{0}}{s^{\frac{7}{2}}}
				\right).
			\end{aligned}
		\end{equation*}
		Next, from $(\varepsilon,\Lambda Q)=(\varepsilon,Q')=(Q,Q')=(X,Q')=0$, Lemma~\ref{le:boundinter} and Lemma~\ref{le:PAB},
		\begin{equation*}
			\begin{aligned}
				& \bigg|  \frac{\dot{\lambda}}{\lambda}
				\left(\Lambda \varepsilon, Q\right)\bigg|
				+\bigg|\left(\frac{\dot{x}_{1}}{\lambda}-1\right)
				\left(\left(\partial_{y}V+\partial_{y}\varepsilon\right), Q\right)\bigg|\\
				&\lesssim
				\bigg|\frac{\dot{x}_{1}}{\lambda}-1\bigg|
				\left(ze^{-z}+zb_{1}^{2}\right)
				\lesssim \frac{C_{0}}{s^{\frac{7}{2}}\log s}.
			\end{aligned}
		\end{equation*}
		Last, using again the definition of $V$ in~\eqref{equ:defV}, we decompose 
		\begin{equation*}
			\begin{aligned}
				\Lambda V
				&=\Lambda Q-\Gamma (\Lambda Q)-z\partial_{y}Q_{2}+b_{1}(\Lambda X)\phi\\
				&+b_{1}yX\partial_{y}\phi-\zeta_{2}\Lambda (X_{2}\phi)
				-10m_{0}b_{1}\Lambda Y_{2}
				+O_{L^{\infty}}\left(\frac{\log s}{s^{2}}\right).
			\end{aligned}
		\end{equation*}
		It follows from $(\Lambda Q,Q)=0$ and Lemma~\ref{le:boundinter} that 
		\begin{equation*}
			\left(\frac{\dot{\lambda}}{\lambda}+b_{1}\right)\left(\Lambda V,Q\right)=b_{1}\left(\frac{\dot{\lambda}}{\lambda}+b_{1}\right)(\Lambda X,Q)+O\left(\frac{C_{0}}{s^{\frac{7}{2}}}\right).
		\end{equation*}
		Combining the above estimates with~\eqref{est:Boot}, we complete the proof for~\eqref{est:refinedb1}.
		
		\smallskip
		\textbf{Step 2.} Refined estimate related to $\Theta$. We claim that 
		\begin{equation}\label{est:refinedb1b2}
			\begin{aligned}
				\dot{\Theta}&=20\frac{b_{1}}{m_{0}^{2}}\left(\varepsilon,Q_{2}^{3}P_{2}\partial_{y}Q_{2}\right)
				-20\frac{b_{1}}{m_{0}^{2}}\left(\varepsilon,Q^{3}XQ'\right)\\
				&+\frac{b_{1}\dot{\mu}}{2m_{0}}
				\left(3m_{0}+\frac{1}{m_{0}}(\Lambda X,Q)-5(Y-2\Lambda Y,Q)\right)
				+2\alpha \dot{z}ze^{-z}\\
				&+\frac{\alpha}{2}\left(bz+\mu\right)ze^{-z}
				+O\left(\frac{1}{s^{3}\log s}+\frac{C_{0}^{2}}{s^{3}\log^{2}s}+\frac{C_{0}}{s^{\frac{7}{2}}}+\frac{C_{0}}{s^{\frac{7}{2}}\log s}\right).
			\end{aligned}
		\end{equation}
		Indeed, from Lemma~\ref{le:eque} and the orthogonality conditions $(\varepsilon,Q_{2})=(\varepsilon,\partial_{y}Q_{2})=(\varepsilon,\Gamma(\Lambda Q))=0$, we find 
		\begin{equation*}
			\begin{aligned}    
				&\left(\partial_{y}\left(\partial_{y}^{2}\varepsilon-\varepsilon+(V+\varepsilon)^{5}-V^{5}\right), Q_{2}\right)+\left(\Psi(V), Q_{2}\right)\\
				&=\frac{\dot{\lambda}}{\lambda}
				\left(\Lambda \varepsilon, Q_{2}\right)+\bigg(\frac{\dot{\lambda}}{\lambda}+b_{1}\bigg)\left(\Lambda V,Q_{2}\right)+\left(\frac{\dot{x}_{1}}{\lambda}-1\right)
				\left(\left(\partial_{y}V+\partial_{y}\varepsilon\right), Q_{2}\right).
			\end{aligned}
		\end{equation*}
		Based on a similar argument as in Step 1, we have 
		\begin{equation*}
			\begin{aligned}
				&\left(\partial_{y}\left(\partial_{y}^{2}\varepsilon-\varepsilon+(V+\varepsilon)^{5}-V^{5}\right), Q_{2}\right)\\
				&=
				5\left(\partial_{y}\left((V^{4}-Q_{1}^{4}-Q_{2}^{4})\varepsilon\right),Q_{2}\right)+
				O\left(\frac{C_{0}^{2}}{s^{3}\log^{2}s}+\frac{C_{0}}{s^{\frac{7}{2}}\log s}\right).
			\end{aligned}
		\end{equation*}
		Note that, from Lemma~\ref{le:refindpointX} and the definition of $X$ in Remark~\ref{re:P},
		\begin{equation*}
			\left(b_{1}X_{1}-\zeta_{2}X_{2}-10m_{0}b_{1}Y_{2}\right)
			\textbf{1}_{\left[\frac{z}{2},\infty\right)}=-b_{1}P_{2}\textbf{1}_{[\frac{z}{2},\infty)}+O_{L^{\infty}}\left(\frac{\log s}{s^{2}}\right).
		\end{equation*}
		Therefore, using again Lemma~\ref{le:boundinter}, Lemma~\ref{le:PAB}, Lemma~\ref{le:controlpara}, the definition of $\vec{{\rm{Mod}}}\cdot \vec{M}Q$ in~\eqref{equ:defMod}--\eqref{equ:defMQ} and~\eqref{est:V4Q4}, we find
		\begin{equation*}
			\begin{aligned}
				&\left(\partial_{y}\left(\partial_{y}^{2}\varepsilon-\varepsilon+(V+\varepsilon)^{5}-V^{5}\right), Q_{2}\right)\\
				&=-20b_{1}\left(\varepsilon,Q_{2}^{3}P_{2}\partial_{y}Q_{2}\right)
				+
				O\left(\frac{C_{0}^{2}}{s^{3}\log^{2}s}+\frac{C_{0}}{s^{\frac{7}{2}}}\right).
			\end{aligned}
		\end{equation*}
		Then, using the definition of $\vec{{\rm{Mod}}}\cdot \vec{M}Q$ in~\eqref{equ:defMod}--\eqref{equ:defMQ} and Lemma~\ref{le:controlpara},
		we have 
		\begin{equation*}
			\begin{aligned}
				\left( {\vec{\rm{Mod}}}\cdot\vec{M}Q,Q_{2}\right)
				&=m_{0}^{2}\left(\dot{\zeta}_{2}+\alpha(1+\mu)^{\frac{3}{2}}e^{-z}+2(1+\mu)^{\frac{3}{2}}b_{1}^{2}\right)\\
				&-2m_{0}^{2}
				\left(\dot{b}_{1}+\alpha e^{-z}+2b_{1}^{2}\right)
				-\frac{b_{1}\dot{\mu}}{2}(\Lambda X,Q)
				\\
				&+O\left(\frac{1}{s^{\frac{7}{2}}}+\frac{1}{s^{3}\log s}+\frac{C_{0}}{s^{\frac{7}{2}}\log s}+\frac{C_{0}^{2}}{s^{3}\log^{2}s}\right),
			\end{aligned}
		\end{equation*}
		which implies that 
		\begin{equation*}
			\begin{aligned}
				\left( {\vec{\rm{Mod}}}\cdot\vec{M}Q,Q_{2}\right)
				&=-m_{0}^{2}
				\left(\dot{b}_{1}+\alpha e^{-z}+2b_{1}^{2}\right)\\
				&+m_{0}^{2}\dot{\Theta}-\frac{3}{2}m_{0}^{2}b_{1}\dot{\mu}
				-\frac{b_{1}\dot{\mu}}{2}(\Lambda X,Q)\\
				&+O\left(\frac{1}{s^{3}\log s}+\frac{C_{0}}{s^{\frac{7}{2}}\log s}+\frac{C_{0}^{2}}{s^{3}\log^{2}s}\right).
			\end{aligned}
		\end{equation*}
		Here, we use the fact that 
		\begin{equation*}
			\dot{\zeta}_{2}=\frac{\dot{b}_{2}}{(1+\mu)^{\frac{3}{2}}}-\frac{3\dot{\mu}b_{2}}{2(1+\mu)^{\frac{5}{2}}}
			\Longrightarrow
			\dot{\zeta}_{2}=\dot{b}_{2}-\frac{3}{2}b_{1}\dot{\mu}+O\left(\frac{1}{s^{3}\log s}\right).
		\end{equation*}
		Based on the above estimate and Lemma~\ref{le:Psi1}, we obtain 
		\begin{equation*}
			\begin{aligned}
				(\Psi(V),Q_{2})
				&=-m_{0}^{2}
				\left(\dot{b}_{1}+\alpha e^{-z}+2b_{1}^{2}\right)
				+m_{0}^{2}\dot{\Theta}-\frac{3}{2}m_{0}^{2}b_{1}\dot{\mu}   -\frac{b_{1}\dot{\mu}}{2}(\Lambda X,Q)
				\\
				&+\frac{5}{2}m_{0}b_{1}\dot{\mu}(Y-2\Lambda Y,Q)-\frac{\alpha}{4}m_{0}^{2}b_{1}z^{2}e^{-z}
				-2\alpha m_{0}^{2}\dot{z}ze^{-z}\\
				& +O\left(\frac{1}{s^{3}\log s}+\frac{C_{0}}{s^{\frac{7}{2}}\log^{2}s}
				+\frac{C_{0}^{2}}{s^{3}\log^{2}s}
				+\frac{1}{s^{\frac{7}{2}}}
				\right).
			\end{aligned}
		\end{equation*}
		From $(\varepsilon,\Gamma(\Lambda Q))=(\varepsilon,\partial_{y}Q_{2})=(Q,Q')=(X,Q')=(Y,Q')=0$ and Lemma~\ref{le:boundinter},
		\begin{equation*}
			\begin{aligned}
				& \bigg|  \frac{\dot{\lambda}}{\lambda}
				\left(\Lambda \varepsilon, Q_{2}\right)\bigg|
				+\bigg|\left(\frac{\dot{x}_{1}}{\lambda}-1\right)
				\left(\left(\partial_{y}V+\partial_{y}\varepsilon\right), Q_{2}\right)\bigg|\\
				&\lesssim
				\bigg|\frac{\dot{x}_{1}}{\lambda}-1\bigg|
				\left(ze^{-z}+zb_{1}^{2}\right)
				\lesssim \frac{C_{0}}{s^{\frac{7}{2}}\log s}.
			\end{aligned}
		\end{equation*}
		Last, using again the definition of $V$ in~\eqref{equ:defV}, we decompose 
		\begin{equation*}
			\begin{aligned}
				\Lambda V
				&=\Lambda Q-\Gamma (\Lambda Q)-z\partial_{y}Q_{2}+\frac{1}{2}b_{1}X\phi
				-\zeta_{2}yX_{2}\partial_{y}\phi
				\\
				&+b_{1}y(\partial_{y}X)\phi
				+b_{1}yX\partial_{y}\phi-\zeta_{2} \Gamma(\Lambda X)\phi
				-z\zeta_{2}(\partial_{y}X_{2})\phi\\
				&-10m_{0}b_{1}\Gamma(\Lambda Y)
				-10m_{0}b_{1}z\partial_{y}Y_{2}
				+O_{L^{\infty}}\left(\frac{\log s}{s^{2}}\right).
			\end{aligned}
		\end{equation*}
		It follows from $(\Lambda Q,Q)=(Q,Q')=(X,Q')=(Y,Q')=0$ and Lemma~\ref{le:boundinter} that 
		\begin{equation*}
			\left(\frac{\dot{\lambda}}{\lambda}+b_{1}\right)\left(\Lambda V,Q_{2}\right)=-b_{1}\left(\frac{\dot{\lambda}}{\lambda}+b_{1}\right)(\Lambda P,Q)+O\left(\frac{C_{0}}{s^{\frac{7}{2}}}\right).
		\end{equation*}
		Here, we use the fact that 
		\begin{equation*}
			\left( \frac{1}{2}b_{1}X-\zeta_{2}\Gamma(\Lambda X)
			-10m_{0}b_{1}\Gamma(\Lambda Y)
			\right)\textbf{1}_{[\frac{z}{2},\infty)}=-b_{1}\Gamma(\Lambda P)\textbf{1}_{[\frac{z}{2},\infty)}+O_{L^{\infty}}\left(\frac{\log s}{s^{2}}\right).
		\end{equation*}
		Combining the above estimates with~\eqref{est:refinedb1}, we obtain 
		\begin{equation*}
			\begin{aligned}
				\dot{\Theta}
				-2\alpha \dot{z}ze^{-z}
				&=20\frac{b_{1}}{m_{0}^{2}}\left(\varepsilon,Q_{2}^{3}P_{2}\partial_{y}Q_{2}\right)
				-20\frac{b_{1}}{m_{0}^{2}}\left(\varepsilon,Q^{3}XQ'\right)\\
				&+\frac{b_{1}\dot{\mu}}{2m_{0}}\left(3m_{0}+\frac{1}{m_{0}}(\Lambda X,Q)-5(Y-2\Lambda Y,Q)\right)\\
				&+\frac{\alpha}{2}\left(bz+\mu\right)ze^{-z}
				+\frac{b_{1}}{m_{0}^{2}}\left(\frac{\dot{\lambda}}{\lambda}+b_{1}\right)\left(X+P,\Lambda Q\right)\\
				&+O\left(\frac{1}{s^{3}\log s}+\frac{C_{0}^{2}}{s^{3}\log^{2}s}+\frac{C_{0}}{s^{\frac{7}{2}}}+\frac{C_{0}}{s^{\frac{7}{2}}\log s}\right).
			\end{aligned}
		\end{equation*}
		We see that~\eqref{est:refinedb1b2} follows directly from the above estimate and Remark~\ref{re:P}.
		
		\smallskip
		\textbf{Step 3.} Conclusion. We claim that 
		\begin{equation}\label{est:dsJ2}
			\begin{aligned}
				\dot{\mathcal{J}}_{2}
				&=-\frac{20}{m_{0}^{2}}\left(\varepsilon,Q^{3}XQ'\right)+\frac{20}{m_{0}^{2}}(\varepsilon,Q_{2}^{3}P_{2}\partial_{y}Q_{2})\\
				&+O\left(
				\frac{C_{0}}{s^{\frac{5}{2}}\log^{\frac{1}{2}} s}+
				\frac{C^{2}_{0}}{s^{3}\log^{2} s}
				\right).
			\end{aligned}
		\end{equation}
		
		Indeed, from the definition of $\mathcal{J}_{2}$ in~\eqref{equ:defJ2}, we compute 
		\begin{equation*}
			\dot{\mathcal{J}_{2}}=\frac{1}{m_{0}^{2}}(\partial_{s}\varepsilon,\rho_{2})+\frac{\dot{z}}{m_{0}^{2}}(\varepsilon,(\partial_{y}P(\cdot-z))).
		\end{equation*}
		Using Lemma~\ref{le:eque}, we decompose 
		\begin{equation*}
			\begin{aligned}
				\left(\partial_{s}\varepsilon,\rho_{2}\right)
				&=\left(\left(\partial_{y}^{2}\varepsilon-\varepsilon+(V+\varepsilon)^{5}-V^{5}\right),\partial_{y}\rho_{2}\right)
				+\frac{\dot{\lambda}}{\lambda}\left(\Lambda \varepsilon,\rho_{2}\right)\\
				&-\left(\Psi(V),\rho_{2}\right)
				+\bigg(\frac{\dot{\lambda}}{\lambda}+b_{1}\bigg)
				\left(\Lambda V,\rho_{2}\right)+\left(\frac{\dot{x}_{1}}{\lambda}-1\right)\left(\left(\partial_{y}V+\partial_{y}\varepsilon\right),\rho_{2}\right).
			\end{aligned}
		\end{equation*}
		Recall that, we compute
		\begin{equation*}
			\begin{aligned}
				\partial_{y}^{2}\varepsilon-\varepsilon +(V+\varepsilon)^{5}-V^{5}&= \partial_{y}^{2}\varepsilon-\varepsilon
				+5(Q_{1}^{4}+Q_{2}^{4})\varepsilon\\
				&
				+5(V^{4}-Q_{1}^{4}-Q_{2}^{4})\varepsilon+10V^{3}\varepsilon^{2}\\
				&+10V^{2}\varepsilon^{3}+5V\varepsilon^{4}+\varepsilon^{5}.
			\end{aligned}
		\end{equation*}
		Recall also that, from Remark~\ref{re:P},
		\begin{equation*}
			\begin{aligned}
				\mathcal{L}(X')&=(\mathcal{L}X)'+20Q^{3}XQ'=\Lambda Q+20Q^{3}XQ',\\
				\mathcal{L}(P')&=(\mathcal{L}P)'+20Q^{3}PQ'=\Lambda Q+20Q^{3}PQ'.
			\end{aligned}
		\end{equation*}
		It follows from Lemma~\ref{le:boundinter} and the bootstrap assumption~\eqref{est:Boot} that 
		\begin{equation*}
			\begin{aligned}
				&\left(\left(\partial_{y}^{2}\varepsilon-\varepsilon+(V+\varepsilon)^{5}-V^{5}\right),\partial_{y}\rho_{2}\right)\\
				&=-20\left(\varepsilon,Q^{3}XQ'\right)+20(\varepsilon,Q_{2}^{3}P_{2}\partial_{y}Q_{2})+O\left(\frac{C_{0}}{s^{\frac{5}{2}}\log s}\right).
			\end{aligned}
		\end{equation*}
		Then, from Lemma~\ref{le:controlpara}, Lemma~\ref{le:Psinorm},~\eqref{est:pointrho2} and the bootstrap assumption~\eqref{est:Boot},
		\begin{equation*}
			\begin{aligned}
				\bigg|\left(\frac{\dot{x}_{1}}{\lambda}-1\right)\left(\partial_{y}\varepsilon,\rho_{2}\right)\bigg|
				&\lesssim
				\mathcal{N}^{2}_{B}(\varepsilon)\lesssim
				\frac{C^{2}_{0}}{s^{3}\log^{2} s},\\
				\left| \left(\Psi(V),\rho_{2}\right)\right|
				+\bigg|\frac{\dot{\lambda}}{\lambda}\left(\Lambda \varepsilon,\rho_{2}\right)\bigg|
				&\lesssim
				s^{-1}\mathcal{N}_{B}(\varepsilon)+\frac{1}{s^{\frac{5}{2}}\log ^{\frac{1}{2}}s}
				\lesssim
				\frac{C_{0}}{s^{\frac{5}{2}}\log^{\frac{1}{2}} s}.
			\end{aligned}
		\end{equation*}
		Next, using again Remark~\ref{re:P} and Lemma~\ref{le:boundinter}, we check that 
		\begin{equation*}
			\begin{aligned}
				\left(\Lambda Q,\rho_{2}\right)=(\Lambda Q,X)+O\left(s^{-1}\right),\\
				\left(\Gamma (\Lambda Q),\rho_{2}\right)=-(\Lambda Q,P)+O\left(s^{-1}\right).
			\end{aligned}
		\end{equation*}
		In particular, we also check that 
		\begin{equation*}
			(\partial_{y}Q_{1},\rho_{2})=O\left(s^{-1}\right)\quad \mbox{and}\quad (\partial_{y}Q_{2},\rho_{2})=O\left(s^{-1}\right).
		\end{equation*}
		Therefore, from Remark~\ref{re:P}, Lemma~\ref{le:controlpara}, bootstrap assumption~\eqref{est:Boot} and~\eqref{est:pointrho2}, 
		\begin{equation*}
			\begin{aligned}
				&\bigg| \bigg(\frac{\dot{\lambda}}{\lambda}+b_{1}\bigg)
				\left(\Lambda V,\rho_{2}\right)\bigg|+\bigg|\left(\frac{\dot{x}_{1}}{\lambda}-1\right)\left(\partial_{y}V,\rho_{2}\right)\bigg|\\
				&\lesssim \left(\|R\phi+U\|_{L^{\infty}}\|\rho_{2}\|_{L^{1}}+s^{-1}\right)\mathcal{N}_{B}(\varepsilon)\lesssim \frac{C_{0}}{s^{\frac{5}{2}}\log s}.
			\end{aligned}
		\end{equation*}
		Last, from Lemma~\ref{le:controlpara} and the Cauchy-Schwarz inequality, 
		\begin{equation*}
			\bigg| \frac{\dot{z}}{m_{0}^{2}}(\varepsilon,(\partial_{y}P(\cdot-z)))\bigg|
			\lesssim s^{-1}\mathcal{N}_{B}(\varepsilon)\lesssim \frac{C_{0}}{s^{\frac{5}{2}}\log s}.
		\end{equation*}
		Combining the above estimates, we complete the proof of~\eqref{est:dsJ2}.
		
		\smallskip
		We see that~\eqref{est:refinedTheta} follows directly from~\eqref{est:refinedb1b2},~\eqref{est:dsJ2} and Lemma~\ref{le:controlpara}.
	\end{proof}
	
	\subsection{Energy functional}\label{SS:Energy}
	Let $\chi_{2}:\RR\to [0,1]$ be $C^{\infty}$ function such that 
	\begin{equation*}
		\chi_{2}\equiv 1 \ \ \mbox{on}\ \left[\frac{1}{2},\frac{3}{2}\right]\quad \mbox{and}\quad \chi_{2}\equiv 0 \ \ \mbox{on}\ \left(-\infty,\frac{1}{4}\right]\cup \left[\frac{7}{4},\infty\right).
	\end{equation*}
	Denote 
	\begin{equation}\label{equ:defphi2}
		\phi_{2}(y)=\chi_{2}\left(\frac{y}{z(s)}\right)\Longrightarrow
		\phi_{2}\equiv 1 \ \ \mbox{on}\ \left[\frac{z}{2},\frac{3z}{2}\right].
	\end{equation}
	We consider the nonlinear energy functional for $\varepsilon$:
	\begin{equation*}
		\mathcal{F}(s,\varepsilon)=\mathcal{F}_{1}(s,\varepsilon)+\mathcal{F}_{2}(s,\varepsilon),
	\end{equation*}
	where
	\begin{equation*}
		\begin{aligned}
			\mathcal{F}_{1}(s,\varepsilon)&=\int_{\RR}
			\left((\partial_{y}\varepsilon)^{2}+\varepsilon^{2}\phi_{1}
			-\frac{1}{3}\left((V+\varepsilon)^{6}-V^{6}-6V^{5}\varepsilon\right)\right)\dd y,\\
			\mathcal{F}_{2}(s,\varepsilon)&=\mu\int_{\RR}\varepsilon^{2}\phi_{2}\dd y-2b_{1}\left(\mathcal{J}_{2}+\left(c_{0}+\frac{5}{4}\right)\mathcal{J}_{1}\right)(\varepsilon\phi_{1},X_{1}\phi).
		\end{aligned}
	\end{equation*}
	
	We mention here that, the function $\mathcal{F}$ is coercive in $\varepsilon$ at the main order and enjoys a monotonicity property adapted to this problem.
	
	\begin{proposition}\label{prop:energy}
		The following estimates hold on $[s^{*},s^{in}]$.
		\begin{enumerate}
			\item \emph{Coercivity of $\mathcal{F}$.}
			We have 
			\begin{equation}\label{est:coerF}
				\mathcal{N}_{B}^{2}(\varepsilon)\lesssim \mathcal{F}+s^{-\frac{13}{4}}.
			\end{equation}
			
			\item \emph{Time variation of $\mathcal{F}$.} We have 
			\begin{equation}\label{est:dsF}
				-\frac{\dd \mathcal{F}}{\dd s}\lesssim \frac{C_{0}^{2}}{s^{4}\log^{\frac{5}{2}}s}+\frac{C_{0}}{s^{4}\log^{2}s}.
			\end{equation}
		\end{enumerate}
	\end{proposition}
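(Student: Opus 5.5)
For the coercivity bound~\eqref{est:coerF}, I would begin by expanding the nonlinear part of $\mathcal{F}_{1}$ to second order around $V$:
\begin{equation*}
\frac{1}{3}\left((V+\varepsilon)^{6}-V^{6}-6V^{5}\varepsilon\right)=5V^{4}\varepsilon^{2}+O\left(|V|^{3}|\varepsilon|^{3}+\varepsilon^{6}\right).
\end{equation*}
Next I would replace $V^{4}$ by $Q_{1}^{4}+Q_{2}^{4}$. The error from $R\phi+U$ is $O(s^{-1}\log s)$ in $L^{\infty}$, the cross terms are $O(s^{-2})$, and Lemma~\ref{le:L2e} gives $\|\varepsilon\|_{L^{2}}\lesssim s^{-1}$. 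Together these make the replacement error $O(s^{-3}\log s)$, which fits inside $s^{-13/4}$ only after one notices that $\varepsilon$ is localized by $\mathcal{N}_{B}$. For that reason I would rather estimate $(|R\phi+U|\,|\varepsilon|^{2},1)$ using $\mathcal{N}_{B}$ and $\|\varepsilon\|_{L^{\infty}}\lesssim s^{-5/4}$.

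The quadratic form that remains is
\begin{equation*}
\int\left((\partial_{y}\varepsilon)^{2}+\varepsilon^{2}\phi_{1}-5(Q_{1}^{4}+Q_{2}^{4})\varepsilon^{2}\right)\dd y.
\end{equation*}
I would localize it with a partition of unity adapted to the two bubbles, with cutoffs varying on scale $z$. On each piece I would apply Proposition~\ref{prop:Spectral}(iii) together with the six orthogonality conditions~\eqref{equ:orth}. Since $\phi_{1}\equiv1$ on the soliton regions (which are $\ll Bs^{1/2}$) and the localization errors are $O(z^{-2})$, this yields a lower bound $\gtrsim\mathcal{N}_{B}^{2}(\varepsilon)$.

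The two pieces of $\mathcal{F}_{2}$ are then handled as perturbations. First, $|\mu|\int\varepsilon^{2}\phi_{2}\lesssim s^{-1}\mathcal{N}_{B}^{2}$. Second, the term involving $b_{1}\mathcal{J}_{i}(\varepsilon\phi_{1},X_{1}\phi)$ is controlled by $b_{1}\mathcal{N}_{B}^{2}\,\|\rho_{i}\|_{L^{1}}^{1/2}\|X_{1}\phi\phi_{1}^{1/2}\|_{L^{2}}$, using \eqref{est:rho3}, \eqref{est:pointrho2} and Cauchy--Schwarz. This is $\lesssim s^{-1}\mathcal{N}_{B}^{2}$ up to powers of $\log s$, hence absorbable. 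The cubic and sextic remainders are bounded by $\|\varepsilon\|_{L^{\infty}}\mathcal{N}_{B}^{2}$.

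For the time variation~\eqref{est:dsF}, I would differentiate $\mathcal{F}_{1}$ along~\eqref{equ:pse}, in the spirit of the Kato identity and~\cite{MMRACTA}. The main contribution is the transport/dispersion term produced by $\phi_{1}$, namely
\begin{equation*}
-\frac{1}{B}\int\left(3(\partial_{y}\varepsilon)^{2}+\varepsilon^{2}\right)\chi_{1}'\left(\frac{y}{B}-s^{\frac{1}{2}}\right)\dd y,
\end{equation*}
which has a good sign because $\chi_{1}'<0$. The modulation terms come with coefficients $\dot\lambda/\lambda+b_{1}$ and $\dot x_{1}/\lambda-1$, which by Lemma~\ref{le:controlpara} are small. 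Their pairings $(\Lambda V,\ldots)$ would be bounded after using the orthogonality conditions. The term $b_{1}\int\Lambda\varepsilon(\ldots)$ gives $b_{1}\mathcal{N}_{B}^{2}$, and its sign has to be checked; the time derivative of $\phi_{1}$, of size $s^{-1/2}$, also has a sign. The $\Psi(V)$ contribution would be estimated via Lemma~\ref{le:Psinorm}, whose weighted estimates give $\mathcal{N}_{B}\cdot s^{-5/2}\log^{-1}s\lesssim C_{0}s^{-4}\log^{-2}s$.

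The hard part is the leftover terms of size $\mathcal{N}_{B}^{2}\cdot s^{-1}$, which are too large. The first source is the scaling mismatch $-\mu\partial_{y}Q_{2}$ acting near the second bubble, i.e.\ $\mu\int\varepsilon^{2}$ localized at $y\approx z$. It is exactly cancelled by the time derivative of $\mu\int\varepsilon^{2}\phi_{2}$, following~\cite{NguyenKdV}. The second source is the pairing $(\Psi_{1}(V),\varepsilon)$ and the linear terms $b_{1}(\varepsilon,X_{1}\ldots)$, which are of order $s^{-1}\mathcal{N}_{B}\cdot s^{-3/2}$. These are removed by the $\mathcal{J}$-corrections: I would use Lemmas~\ref{le:refinmu} and~\ref{le:refinedTheta} to express $\dot{\mathcal{J}}_{1}$ and $\dot{\mathcal{J}}_{2}$ through $\dot\mu-2\Theta$ and the $(\varepsilon,Q^{3}XQ')$ terms, and choose the constant $c_{0}+\tfrac54$ so that these pieces cancel.

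I expect this cancellation, that is, identifying precisely which linear-in-$\varepsilon$ terms of size $s^{-5/2}\mathcal{N}_{B}$ survive and matching them against $\frac{d}{ds}\bigl[b_{1}\mathcal{J}(\varepsilon\phi_{1},X_{1}\phi)\bigr]$, to be the main obstacle. All the remaining terms are routine once Lemma~\ref{le:controlpara} is in hand.
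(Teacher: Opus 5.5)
Your coercivity argument and the overall architecture of (ii) agree with the paper: the Kato-type gain from $\partial_{y}\phi_{1}<0$, the $\mu\int\varepsilon^{2}\phi_{2}$ correction, and $\mathcal{J}$-corrections fed by Lemmas~\ref{le:refinmu}--\ref{le:refinedTheta}. There is, however, a genuine gap: the step you flag as the main obstacle is misidentified, and one ``routine'' step fails as stated.

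You bound the whole $\Psi(V)$ contribution by Lemma~\ref{le:Psinorm}. That lemma only covers $\Psi_{1},\dots,\Psi_{4}$, not the modulation part $\vec{\mathrm{Mod}}\cdot\vec{M}Q$ of Proposition~\ref{prop:approx}. The profiles $M_{2Q}=-X_{2}\phi$, $M_{3Q}=X_{1}\phi-\cdots$ and the $\zeta_{2}\Gamma(\Lambda X)\phi$ part of $M_{4Q}$ are non-localized ($X\to-2m_{0}$ as $y\to+\infty$). Tested against the $-\varepsilon\phi_{1}$ part of the first variation of $\mathcal{F}_{1}$, and using $N_{1}-N_{2}=\dot{\Theta}-\frac{3}{2}b_{1}\dot{\mu}+O(s^{-3}\log^{-1}s)$, they produce $2\dot{\Theta}(\varepsilon\phi_{1},X_{1}\phi)-\frac{5}{2}b_{1}\dot{\mu}(\varepsilon\phi_{1},X_{1}\phi)$.

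These are the real obstruction. Lemma~\ref{le:controlpara} only gives $\dot{\Theta}$ and $b_{1}\dot{\mu}$ of size $C_{0}s^{-5/2}\log^{-2}s$, because they hide $b_{1}\dot{\mathcal{J}}_{2}$ and $b_{1}\dot{\mathcal{J}}_{1}$, which are linear in $\varepsilon$. Meanwhile $(\varepsilon\phi_{1},X_{1}\phi)$ is only $O(s^{1/4}\mathcal{N}_{B}(\varepsilon))$ by~\eqref{est:ephi1Xphi}. The product is therefore $O(C_{0}^{2}s^{-15/4}\log^{-3}s)$, too large by $s^{1/4}$.

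This is why the correction in $\mathcal{F}_{2}$ carries exactly the factor $(\varepsilon\phi_{1},X_{1}\phi)$. Lemmas~\ref{le:refinmu}--\ref{le:refinedTheta} give $\frac{\dd}{\dd s}\bigl[b_{1}\bigl(\mathcal{J}_{2}+(c_{0}+\frac{5}{4})\mathcal{J}_{1}\bigr)\bigr]=\dot{\Theta}-\frac{5}{4}b_{1}\dot{\mu}+O(s^{-3})$. Here $c_{0}$ removes the $c_{0}b_{1}\dot{\mu}$ of Lemma~\ref{le:refinedTheta}, and $\frac{5}{4}$ produces $-\frac{5}{4}b_{1}\dot{\mu}$ through $b_{1}\dot{\mathcal{J}}_{1}\approx b_{1}(2\Theta-\dot{\mu})$. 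So these terms cancel exactly. The piece where the derivative falls on $(\varepsilon\phi_{1},X_{1}\phi)$ is $O(C_{0}^{2}s^{-4}\log^{-5/2}s)$, since $|\mathcal{J}_{i}|\lesssim\log^{1/2}s\,\mathcal{N}_{B}(\varepsilon)$ and $\bigl|\frac{\dd}{\dd s}(\varepsilon\phi_{1},X_{1}\phi)\bigr|\lesssim C_{0}s^{-3/2}\log^{-1}s$.

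Conversely, the term you assign to the $\mathcal{J}$-corrections, $(\Psi_{1}(V),\varepsilon)$, cannot be removed by anything of the form $\frac{\dd}{\dd s}[b_{1}\mathcal{J}]\,(\varepsilon\phi_{1},X_{1}\phi)$. The main part of $\Psi_{1}(V)$ is $(\dot{z}-b_{1}z-\mu)(\zeta_{2}\partial_{y}X_{2}+10m_{0}b_{1}\partial_{y}Y_{2})$, which, together with $(\dot{z}-b_{1}z-\mu)M_{1Q}$, involves localized profiles. It is handled by merging it with $-(\frac{\dot{\lambda}}{\lambda}+b_{1})\Lambda V$. The part $-z\bigl(\partial_{y}Q_{2}+\zeta_{2}(\partial_{y}X_{2})\phi+10m_{0}b_{1}\partial_{y}Y_{2}\bigr)$ of $\Lambda V$ turns the coefficient into $\mathcal{Z}=\dot{z}-b_{1}z-\mu+z(\frac{\dot{\lambda}}{\lambda}+b_{1})=O(C_{0}s^{-3/2}\log^{-1}s)$. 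On its own, $\dot{z}-b_{1}z-\mu$ is only $O(C_{0}s^{-3/2})$, and it leaves an error $C_{0}^{2}s^{-4}\log^{-2}s$ that does not close the bootstrap.

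The same missing factor $z$ affects your $\mu$-term. The quadratic term to be cancelled is $20(\dot{z}+z\frac{\dot{\lambda}}{\lambda})\int\partial_{y}Q_{2}\,Q_{2}^{3}\varepsilon^{2}=20(\mu+\mathcal{Z})\int\partial_{y}Q_{2}\,Q_{2}^{3}\varepsilon^{2}$. The $\dot{z}$ comes from $\partial_{s}V\approx\dot{z}\,\partial_{y}Q_{2}$ inside $-2\int\partial_{s}V\,\bigl((V+\varepsilon)^{5}-V^{5}-5V^{4}\varepsilon\bigr)$. The $z\frac{\dot{\lambda}}{\lambda}$ comes from the weight $y\approx z$ in $\frac{\dot{\lambda}}{\lambda}\int\Lambda\varepsilon\,(5Q_{2}^{4}\varepsilon)$. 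Your bound ``$b_{1}\int\Lambda\varepsilon(\dots)\sim b_{1}\mathcal{N}_{B}^{2}$'' drops this $z$. Without it the coefficient would be $20\dot{z}\approx20(b_{1}z+\mu)$, which $\frac{\dd}{\dd s}\bigl(\mu\int\varepsilon^{2}\phi_{2}\bigr)=-20\mu\int\partial_{y}Q_{2}\,Q_{2}^{3}\varepsilon^{2}+(\text{admissible errors})$ does not cancel.

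A minor point in (i): the $\mathcal{J}$-part of $\mathcal{F}_{2}$ is $O(s^{-3/4}\log^{-1/2}s\,\mathcal{N}_{B}^{2})$ rather than $O(s^{-1}\mathcal{N}_{B}^{2})$, since $\|X_{1}\phi\,\phi_{1}^{1/2}\|_{L^{2}}\sim s^{1/4}$. It is still absorbable.
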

	
	\begin{proof}
		Proof of (i). The proof of the coercivity for $\mathcal{F}$ is a standard consequence of the coercivity property in Proposition~\ref{prop:Spectral} around single soliton with the orthogonality conditions~\eqref{equ:orth} and an elementary localization argument. Below, we provide
		a sketch of the proof for the sake of completeness and for the readers’ convenience. 
		
		\smallskip
		For some $0<\delta\ll1$, we decompose 
		\begin{equation*}
			\mathcal{F}_{1}=\mathcal{F}_{1,1}+\mathcal{F}_{1,2}+\mathcal{F}_{1,3}+\mathcal{F}_{1,4}+\mathcal{F}_{1,5},
		\end{equation*}
		where
		\begin{equation*}
			\begin{aligned}
				\mathcal{F}_{1,1}&=5\int_{\RR}(S^{4}-V^{4})\varepsilon^{2}\dd y-5\int_{\RR}S^{4}\varepsilon^{2}(1-\phi_{1})\dd y,\\
				\mathcal{F}_{1,2}&=-\frac{1}{3}\int_{\RR}\varepsilon^{3}\left(20V^{3}+15V^{2}\varepsilon+6V\varepsilon^{2}+\varepsilon^{3}\right)\dd y,\\
				\mathcal{F}_{1,3}&=(1-\delta)\int_{\RR}\left((\partial_{y}(\varepsilon\sqrt{\phi_{1}}))^{2}+(\varepsilon\sqrt{\phi_{1}})^{2}-
				5S^{4}(\varepsilon\sqrt{\phi_{1}})^{2}
				\right)\dd y,\\
				\mathcal{F}_{1,4}&=\delta\int_{\RR}(\partial_y\e)^2+\delta\int_{\RR}\left((\varepsilon\sqrt{\phi_{1}})^{2}-
				5S^{4}(\varepsilon\sqrt{\phi_{1}})^{2}
				\right)\dd y,\\
				\mathcal{F}_{1,5}&=(1-\delta)\int_{\RR}(\partial_{y}\varepsilon)^{2}(1-\phi_{1})\dd y
				-\frac{(1-\delta)}{2}\int_{\RR}\varepsilon^{2}
                \left(\frac{(\partial_{y}\phi_{1})^{2}}{2\phi_{1}}
                -\partial_{y}^{2}\phi_{1}
                \right)\dd y.
			\end{aligned}
		\end{equation*}
		
		\emph{Estimates on $\mathcal{F}_{1,1}$ and $\mathcal{F}_{1,2}$.} We claim that 
		\begin{equation}\label{est:F1112}
			\mathcal{F}_{1,1}+\mathcal{F}_{1,2}=O\left(s^{-\frac{13}{4}}\right).
		\end{equation}
		Indeed, from Lemma~\ref{le:L2e} and the bootstrap assumption~\eqref{est:Boot}, we directly have 
		\begin{equation*}
			\begin{aligned}
				\left| \int_{\RR}(S^{4}-V^{4})\varepsilon^{2}\dd y\right|
				&\lesssim \left(\|R\phi\|_{L^{\infty}}+\|U\|_{L^{\infty}}\right)\mathcal{N}_{B}^{2}(\varepsilon)\\
				&+\left(\|R\phi\|^{4}_{L^{\infty}}+\|U\|^{4}_{L^{\infty}}\right)\|\varepsilon\|_{L^{2}}^{2}\lesssim s^{-4}.
			\end{aligned}
		\end{equation*}
		Then, using the exponential decay of $Q$ and the definition of $\phi_{1}$ in~\eqref{equ:defphi1}, 
		\begin{equation*}
			\left| \int_{\RR}S^{4}\varepsilon^{2}(1-\phi_{1})\dd y\right|\lesssim \|S^{4}(1-\phi_{1})\|_{L^{\infty}}\|\varepsilon\|_{L^{2}}^{2}\lesssim s^{-4}.
		\end{equation*}
		Based on a similar argument as above, we deduce that 
		\begin{equation*}
			\left|\mathcal{F}_{1,2}\right|\lesssim \left(\|\varepsilon\|_{L^{\infty}}
			+\|\varepsilon\|_{L^{\infty}}^{4}
			\right)
			\|\varepsilon\|_{L^{2}}^{2}\lesssim s^{-\frac{13}{4}}.
		\end{equation*}
		We see that~\eqref{est:F1112} follows directly from the above estimates.
		
		\smallskip
		\emph{Estimate on $\mathcal{F}_{1,3}$.} We claim that 
		\begin{equation}\label{est:F13}
			\mathcal{F}_{1,3}\ge \frac{\nu}{2} \left\|\varepsilon\sqrt{\phi_{1}}\right\|_{L^2}^{2}
			+O\left(s^{-4}\right).
		\end{equation}
		Indeed, from the orthogonality condition~\eqref{equ:orth}, we find 
		\begin{equation*}
			\begin{aligned}
				(\varepsilon\sqrt{\phi_{1}},Q_{1})^{2}+
				(\varepsilon\sqrt{\phi_{1}},\partial_{y}Q_{1})^{2}+
				(\varepsilon\sqrt{\phi_{1}},\Lambda Q)^{2}&=O\left(s^{-4}\right),\\
				(\varepsilon\sqrt{\phi_{1}},Q_{2})^{2}+
				(\varepsilon\sqrt{\phi_{1}},\partial_{y}Q_{2})^{2}+ 
				(\varepsilon\sqrt{\phi_{1}},\Gamma(\Lambda Q))^{2}&=O\left(s^{-4}\right).
			\end{aligned}
		\end{equation*}
		It follows from Proposition~\ref{prop:Spectral} and an standard localization argument that 
		\begin{equation*}
			\int_{\RR}\left((\partial_{y}(\varepsilon\sqrt{\phi_{1}}))^{2}+(\varepsilon\sqrt{\phi_{1}})^{2}-
			5(Q_{1}^{4}+Q_{2}^{4})(\varepsilon\sqrt{\phi_{1}})^{2}
			\right)\dd y
			\ge \nu\left\|\varepsilon\sqrt{\phi_{1}}\right\|^2_{H^{1}}+O(s^{-4}).
		\end{equation*}
		We see that~\eqref{est:F13} follows from the above estimates and Lemma~\ref{le:boundinter}.
		
		\smallskip
		\emph{Estimate on $\mathcal{F}_{1,4}$.} It holds 
		\begin{equation}\label{est:F14}
			\mathcal{F}_{1,4}=\delta \int_{\RR}(\partial_y\e)^2\dd y+O( \delta)\int_{\RR} \e^2\phi_1\dd y.
		\end{equation}
		
		\smallskip
		\emph{Estimate on $\mathcal{F}_{1,5}$.} We claim that 
		\begin{equation}\label{est:F15}
			\mathcal{F}_{1,5}\gtrsim -B^{-2}\int_{\RR}\varepsilon^{2}\phi_{1}\dd y.
		\end{equation}
		Indeed, from the definition of $\phi_{1}$ in~\eqref{equ:defphi1}, we have $1-\phi_1\geq 0$ and
		\begin{equation*}
			\begin{aligned}
				|\partial_{y}\phi_{1}|\lesssim B^{-1}\left|\chi_{1}'\left(\frac{y}{B}-s^{\frac{1}{2}}\right)\right|\lesssim B^{-1}\phi_{1},\\
				|\partial^{2}_{y}\phi_{1}|\lesssim B^{-2}\left|\chi_{1}''\left(\frac{y}{B}-s^{\frac{1}{2}}\right)\right|\lesssim B^{-2}\phi_{1},
			\end{aligned}
		\end{equation*}
		which directly completes the proof for~\eqref{est:F15}.
		
		\smallskip
        \emph{Estimate on $\mathcal{F}_{2}$.}
		Using~\eqref{est:rho3} and~\eqref{est:pointrho2}, we find \begin{equation}\label{est:J1J2}
			|\mathcal{J}_{1}|+|\mathcal{J}_{2}|\lesssim \left(\int_{\RR}
			(|\rho_{1}|+|\rho_{2}|)\dd y\right)^{\frac{1}{2}}\mathcal{N}_{B}(\varepsilon)
			\lesssim (\log^{\frac{1}{2}}s) \mathcal{N}_{B}(\varepsilon).
		\end{equation}
		Then, from Lemma~\ref{le:refindpointX} and the definition of $\phi_{1}$ in~\eqref{equ:defphi1}, 
		\begin{equation}\label{est:ephi1Xphi}
			\begin{aligned}
				|(\varepsilon\phi_{1},X_{1}\phi)|
				&\lesssim \mathcal{N}_{B}(\varepsilon)
				\left(   \int_{-\infty}^{0}y^{4}e^{-2|y|}\dd y+\int_{0}^{B(s^{\frac{1}{2}}+2)}1\dd y\right)^{\frac{1}{2}}\\
				&+\mathcal{N}_{B}(\varepsilon)\left(\int_{B(s^{\frac{1}{2}}+2)}^{\infty}\phi_{1}\dd y\right)^{\frac{1}{2}}\lesssim \left(1+s^{\frac{1}{4}}\right)\mathcal{N}_{B}(\varepsilon).
			\end{aligned}
		\end{equation}
		Based on the above estimate and the bootstrap assumption~\eqref{est:Boot}, we obtain
		\begin{equation}\label{est:F2}
			|\mathcal{F}_{2}|\lesssim |\mu|\mathcal{N}_{B}^{2}(\varepsilon)+
			|b_{1}|(|\mathcal{J}_{1}|+|\mathcal{J}_{2}|)|(\varepsilon\phi_{1},X_{1}\phi)|
			\lesssim s^{-\frac{7}{2}}.
		\end{equation}
		Here, we use the fact that 
		\begin{equation*}
			\phi_{2}\lesssim \phi_{1}\Longrightarrow 
			\int_{\RR}\varepsilon^{2}\phi_{2}\dd y 
			\lesssim \int_{\RR}\varepsilon^{2}\phi_{1}\dd y 
			\lesssim \mathcal{N}_{B}^{2}(\varepsilon).
		\end{equation*}
		Combining the estimates~\eqref{est:F1112}--\eqref{est:F15} with~\eqref{est:F2}, we complete the proof for~\eqref{est:coerF}.
		
		\smallskip
		Proof of (ii). From now on, we use the following notation:
		\begin{equation*}
			f\simeq g\Longrightarrow f=g+O\left(\frac{C_{0}^{2}}{s^{4}\log^{\frac{5}{2}}s}+\frac{C_{0}}{s^{4}\log^{2}s}\right).
		\end{equation*}
		
		\textbf{Step 1.} Estimate for the time variation of $\mathcal{F}_{1}$. We claim that 
		\begin{equation}\label{est:dsF1}
			\begin{aligned}
				\frac{\dd \mathcal{F}_{1}}{\dd s}&\ge 
				2\dot{\Theta}(\varepsilon\phi_{1},X_{1}\phi)
				-\frac{5}{2}b_{1}\dot{\mu}(\varepsilon\phi_{1},X_{1}\phi)
				-\frac{1}{8}\int_{\RR}\varepsilon^{2}\partial_{y}\phi_{1}\dd y\\
				&+20\mu\int_{\RR}(\partial_{y}Q_{2})(Q_{2}^{3}\varepsilon^{2})\dd y
				+
				O\left(\frac{C_{0}^{2}}{s^{4}\log^{\frac{5}{2}}s}+\frac{C_{0}}{s^{4}\log^{2}s}\right).
			\end{aligned}
		\end{equation}
		
		By an elementary computation, we decompose
		\begin{equation*}
			\frac{\dd \mathcal{F}_{1}}{\dd s}=\mathcal{I}_{1}+\mathcal{I}_{2},
		\end{equation*}
		where 
		\begin{equation*}
			\begin{aligned}
				\mathcal{I}_{1}&=2\int_{\RR}(\partial_{s}\varepsilon)
				\left(-\partial_{y}^{2}\varepsilon+\varepsilon\phi_{1}-(V+\varepsilon)^{5}+V^{5}\right)\dd y,\\
				\mathcal{I}_{2}&=-2\int_{\RR}(\partial_{s}V)\left((V+\varepsilon)^{5}-V^{5}-5V^{4}\varepsilon\right)\dd y+\int_{\RR}\varepsilon^{2}\partial_{s}\phi_{1}\dd y.
			\end{aligned}
		\end{equation*}
		
		\emph{Estimate on $\mathcal{I}_{1}$.}
		We claim that 
		\begin{equation}\label{est:I1}
			\begin{aligned}
				\mathcal{I}_{1}&\ge 2\dot{\Theta}(\varepsilon\phi_{1},X_{1}\phi)
				-\frac{5}{2}b_{1}\dot{\mu}(\varepsilon\phi_{1},X_{1}\phi)
				-\frac{1}{4}\int_{\RR}\varepsilon^{2}\partial_{y}\phi_{1}\dd y\\
				&+20z\frac{\dot{\lambda}}{\lambda}\int_{\RR}(\partial_{y}Q_{2})(Q_{2}^{3}\varepsilon^{2})\dd y
				+
				O\left(\frac{C_{0}^{2}}{s^{4}\log^{\frac{5}{2}}s}+\frac{C_{0}}{s^{4}\log^{2}s}\right).
			\end{aligned}
		\end{equation}
		Indeed, using Lemma~\ref{le:eque}, we decompose 
		\begin{equation*}
			\mathcal{I}_{1}=\mathcal{I}_{1,1}+\mathcal{I}_{1,2}+\mathcal{I}_{1,3}+\mathcal{I}_{1,4}+\mathcal{I}_{1,5},
		\end{equation*}
		where
		\begin{equation*}
			\begin{aligned}
				\mathcal{I}_{1,1}&=2\int_{\RR}\bar{\Psi}(V)
				\left(\partial_{y}^{2}\varepsilon-\varepsilon \phi_{1}+(V+\varepsilon)^{5}-V^{5}\right)\dd y,\\
				\mathcal{I}_{1,2}&=2\frac{\dot{\lambda}}{\lambda}\int_{\RR}\Lambda \varepsilon \left(-\partial_{y}^{2}\varepsilon+\varepsilon\phi_{1}-(V+\varepsilon)^{5}+V^{5}\right)\dd y,\\
				\mathcal{I}_{1,3}&=2\int_{\RR}\varepsilon(1-\phi_{1})\partial_{y}\left(\partial_{y}^{2}\varepsilon-\varepsilon +(V+\varepsilon)^{5}-V^{5}\right)\dd y,\\
				\mathcal{I}_{1,4}&=2\Big(\frac{\dot{x}_{1}}{\lambda}-1\Big)
				\int_{\RR}\partial_{y}V\left(-\partial_{y}^{2}\varepsilon+\varepsilon\phi_{1}-(V+\varepsilon)^{5}+V^{5}\right)\dd y,\\
				\mathcal{I}_{1,5}&=2\Big(\frac{\dot{x}_{1}}{\lambda}-1\Big)
				\int_{\RR}\partial_{y}\varepsilon\left(-\partial_{y}^{2}\varepsilon+\varepsilon\phi_{1}-(V+\varepsilon)^{5}+V^{5}\right)\dd y.
			\end{aligned}
		\end{equation*}
		Here, we denote 
		\begin{equation*}
			\bar{\Psi}(V)=\Psi(V)-\left(\frac{\dot{\lambda}}{\lambda}+b_{1}\right)\Lambda V.
		\end{equation*}
		
		\emph{Estimate on $\mathcal{I}_{1,1}$.} 
		We claim that 
		\begin{equation}\label{est:I11}
			\begin{aligned}
				\mathcal{I}_{1,1}\simeq 
				2\dot{\Theta}(\varepsilon\phi_{1},X_{1}\phi)
				-\frac{5}{2}b_{1}\dot{\mu}(\varepsilon\phi_{1},X_{1}\phi).
			\end{aligned}
		\end{equation}
		Indeed, we decompose 
		\begin{equation*}
			\begin{aligned}
				\partial_{y}^{2}\varepsilon-\varepsilon \phi_{1}+(V+\varepsilon)^{5}-V^{5}&= \partial_{y}^{2}\varepsilon-\varepsilon
				+5(Q_{1}^{4}+Q_{2}^{4})\varepsilon\\
				&+(1-\phi_{1})\varepsilon
				+5(V^{4}-Q_{1}^{4}-Q_{2}^{4})\varepsilon\\
				&+10V^{3}\varepsilon^{2}+10V^{2}\varepsilon^{3}+5V\varepsilon^{4}+\varepsilon^{5}. \end{aligned}
		\end{equation*}
		Based on the above identity, ${\rm{Ker}}\mathcal{L}={\rm{Span}}\{Q'\}$
        and $(\varepsilon,\partial_{y}Q_{2})=0$, we compute 
		\begin{equation*}
			\begin{aligned}
				&(\dot{z}-b_{1}z-\mu)\int_{\RR}M_{1Q}\left(
				\partial_{y}^{2}\varepsilon-\varepsilon \phi_{1}+(V+\varepsilon)^{5}-V^{5}
				\right)\dd y\\
				&=5(\dot{z}-b_{1}z-\mu)\int_{\RR}(\partial_{y}Q_{2})(V^{4}-Q^{4}_{1}-Q^{4}_{2})\varepsilon\dd y\\
				&+O\left(|\dot{z}-b_{1}z-\mu|
				\left(\|Q_{1}Q_{2}^{3}\|_{L^{2}}\mathcal{N}_{B}(\varepsilon)
				+\|Q^{3}_{1}Q_{2}\|_{L^{2}}\mathcal{N}_{B}(\varepsilon)
				\mathcal{N}_{B}^{2}(\varepsilon)\right)\right)\\
				&+O\left(
				|\dot{z}-b_{1}z-\mu|\left(\|Q_{1}^{3}\partial_{y}Q_{2}\|_{L^{2}}
				+\||\partial_{y}Q_{2}|^{\frac{1}{2}}(1-\phi_{1})\|_{L^{2}}\right)\mathcal{N}_{B}(\varepsilon)\right).
			\end{aligned}
		\end{equation*}
		It follows from~\eqref{est:Boot}, Lemma~\ref{le:boundinter} and Lemma~\ref{le:controlpara} that 
		\begin{equation}\label{est:I111}
			\begin{aligned}
				&(\dot{z}-b_{1}z-\mu)\int_{\RR}M_{1Q}\left(
				\partial_{y}^{2}\varepsilon-\varepsilon \phi_{1}+(V+\varepsilon)^{5}-V^{5}
				\right)\dd y\\
				&\simeq 5(\dot{z}-b_{1}z-\mu)\int_{\RR}(\partial_{y}Q_{2})(V^{4}-Q^{4}_{1}-Q^{4}_{2})\varepsilon\dd y.
			\end{aligned}
		\end{equation}
		Then, from the bootstrap assumption~\eqref{est:Boot} and Lemma~\ref{le:L2e}, 
		\begin{equation*}
			\begin{aligned}
				&\int_{\RR}M_{2Q}\left(
				\partial_{y}^{2}\varepsilon-\varepsilon \phi_{1}+(V+\varepsilon)^{5}-V^{5}
				\right)\dd y\\
				&=-\left(\varepsilon\phi_{1},M_{2Q}\right)+
				O\left(\|M_{2Q}\|_{L^{2}}\left(\|R\phi\|_{L^{\infty}}^{4}+\|U\|_{L^{\infty}}^{4}\right)\|\varepsilon\|_{L^{2}}+\mathcal{N}_{B}(\varepsilon)\right)\\
				&+
				O\left( \|M_{2Q}\|_{\dot{H}^{1}}\mathcal{N}_{B}(\varepsilon)
				+\|\varepsilon\|_{L^{\infty}}^{3}\|\varepsilon\|^{2}_{L^{2}}\right)
				=-\left(\varepsilon\phi_{1},M_{2Q}\right)+
				O\left( \frac{C_{0}}{s^{\frac{3}{2}}\log s}\right),
			\end{aligned}
		\end{equation*}
		\begin{equation*}
			\begin{aligned}
				&\int_{\RR}M_{3Q}\left(
				\partial_{y}^{2}\varepsilon-\varepsilon \phi_{1}+(V+\varepsilon)^{5}-V^{5}
				\right)\dd y\\
				&=-\left(\varepsilon\phi_{1},M_{3Q}\right)+
				O\left(\|M_{3Q}\|_{L^{2}}\left(\|R\phi\|_{L^{\infty}}^{4}+\|U\|_{L^{\infty}}^{4}\right)\|\varepsilon\|_{L^{2}}+\mathcal{N}_{B}(\varepsilon)\right)\\
				&+
				O\left( \|M_{3Q}\|_{\dot{H}^{1}}\mathcal{N}_{B}(\varepsilon)
				+\|\varepsilon\|_{L^{\infty}}^{3}\|\varepsilon\|^{2}_{L^{2}}\right)
				=-\left(\varepsilon\phi_{1},M_{3Q}\right)+
				O\left( \frac{C_{0}}{s^{\frac{3}{2}}\log s}\right).
			\end{aligned}
		\end{equation*}
		Based on the above estimates and Lemma~\ref{le:controlpara}, we find 
		\begin{equation*}
			\begin{aligned}
				&\int_{\RR}\vec{N}\cdot (M_{2Q},M_{3Q})\left(
				\partial_{y}^{2}\varepsilon-\varepsilon \phi_{1}+(V+\varepsilon)^{5}-V^{5}
				\right)\dd y\\
				&\simeq (\dot{\zeta}_{2}-\dot{b}_{1}+\alpha ((1+\mu)^{\frac{3}{2}}-1) e^{-z}
                +2((1+\mu)^{\frac{3}{2}}-1)b_{1}^{2}
                )\left(\varepsilon\phi_{1},X_{1}\phi\right)\\
				&+(\dot{\zeta}_{2}+\alpha(1+\mu)^{\frac{3}{2}}e^{-z}+2(1+\mu)^{\frac{3}{2}}b_{1}^{2})\left(\varepsilon\phi_{1},(X_{2}-X_{1})\phi\right).
			\end{aligned}
		\end{equation*}
		Note that, from $b_{2}=(1+\mu)^{\frac{3}{2}}\zeta_{2}$ and Lemma~\ref{le:controlpara}, we compute 
		\begin{equation*}
			\dot{\zeta}_{2}-\dot{b}_{1}+\alpha ((1+\mu)^{\frac{3}{2}}-1) e^{-z}
                +2((1+\mu)^{\frac{3}{2}}-1)b_{1}^{2}
            =\dot{\Theta}
			-\frac{3}{2}b_{1}\dot{\mu}
			+O\left(\frac{1}{s^{3}\log s}\right).
		\end{equation*}
		Combining the above estimates with Lemma~\ref{le:RAB} and Lemma~\ref{le:controlpara}, we obtain 
		\begin{equation}\label{est:I112}
			\begin{aligned}
				&\int_{\RR}\vec{N}\cdot (M_{2Q},M_{3Q})\left(
				\partial_{y}^{2}\varepsilon-\varepsilon \phi_{1}+(V+\varepsilon)^{5}-V^{5}
				\right)\dd y\\
				&\simeq
				\dot{\Theta}(\varepsilon\phi_{1},X_{1}\phi)-\frac{3}{2}b_{1}\dot{\mu}(\varepsilon\phi_{1},X_{1}\phi).
			\end{aligned}
		\end{equation}
		Similar as above, from $\mathcal{L}\Lambda Q=-2Q$, $(\varepsilon,Q_{2})=(\varepsilon,\Gamma(\Lambda Q))=0$ and (i) of Lemma~\ref{le:RAB}, we deduce that 
		\begin{equation}\label{est:I113}
			\begin{aligned}
				& (\dot{\mu}-2\Theta)\int_{\RR}M_{4Q}\left(
				\partial_{y}^{2}\varepsilon-\varepsilon \phi_{1}+(V+\varepsilon)^{5}-V^{5}
				\right)\dd y\simeq \frac{1}{4}b_{1}\dot{\mu}(\varepsilon\phi_{1},X_{1}\phi).
			\end{aligned}
		\end{equation}
		Here, we use the fact that 
		\begin{equation*}
        \begin{aligned}
        \left|\left(\varepsilon\phi_{1},\Gamma(\Lambda X)\phi\right)\right|\lesssim \mathcal{N}_{B}(\varepsilon)\left(\int_{\RR}|\Gamma(\Lambda X)|^{2}\phi_{1}\dd y\right)^{\frac{1}{2}}\lesssim \frac{C_{0}}{s^{\frac{5}{4}}\log s},\\
			\left|\left(\varepsilon\phi_{1},\Gamma(\Lambda B)\phi\right)\right|\lesssim \mathcal{N}_{B}(\varepsilon)\left(\int_{\RR}|\Gamma(\Lambda B)|^{2}\phi_{1}\dd y\right)^{\frac{1}{2}}\lesssim \frac{C_{0}}{s^{\frac{3}{4}}\log s}.
            \end{aligned}
		\end{equation*}
		
		On the other hand, from Lemma~\ref{le:L2e} and Lemma~\ref{le:Psinorm}, we find 
		\begin{equation}\label{est:I114}
			\begin{aligned}
				& \left| \int_{\RR}\Psi_{2}(V) \left(\partial_{y}^{2}\varepsilon-\varepsilon \phi_{1}+(V+\varepsilon)^{5}-V^{5}\right)\dd y\right|\\
				&\lesssim \left(\|\Psi_{2}(V)\|_{\dot{H}^{1}}+\left(\int_{\RR}|\Psi_{2}(V)|^{2}\phi_{1}\dd y\right)^{\frac{1}{2}}\right)\mathcal{N}_{B}(\varepsilon)\\
				&+\|\Psi_{2}(V)\|_{L^{2}}\left(\|R\phi\|_{L^{\infty}}^{4}+\|U\|_{L^{\infty}}^{4}+\|\varepsilon\|_{L^{\infty}}^{4}\right)\|\varepsilon\|_{L^{2}}\lesssim 
				\frac{C_{0}}{s^{4}\log^{2}s}.
			\end{aligned}
		\end{equation}
		Similarly, we also find 
		\begin{equation}\label{est:I115}
			\begin{aligned}
				\left| \int_{\RR}\Psi_{3}(V) \left(\partial_{y}^{2}\varepsilon-\varepsilon \phi_{1}+(V+\varepsilon)^{5}-V^{5}\right)\dd y\right|
				&\lesssim 
				\frac{C_{0}}{s^{4}\log^{2}s},\\
				\left| \int_{\RR}\Psi_{4}(V) \left(\partial_{y}^{2}\varepsilon-\varepsilon \phi_{1}+(V+\varepsilon)^{5}-V^{5}\right)\dd y\right|
				&\lesssim 
				\frac{C_{0}}{s^{4}\log^{2}s}.
			\end{aligned}
		\end{equation}
		In particular, using again (i) of Lemma~\ref{le:Psinorm}, we have 
		\begin{equation}\label{est:I116}
			\begin{aligned}
				& \int_{\RR}\Psi_{1}(V) \left(\partial_{y}^{2}\varepsilon-\varepsilon \phi_{1}+(V+\varepsilon)^{5}-V^{5}\right)\dd y\\
                &\simeq\zeta_{2}(\dot{z}-b_{1}z-\mu)\int_{\RR}
				\partial_{y}X_{2}\left(\partial_{y}^{2}\varepsilon-\varepsilon \phi_{1}+(V+\varepsilon)^{5}-V^{5}\right)\dd y\\
				&+10m_{0}b_{1}(\dot{z}-b_{1}z-\mu)\int_{\RR}
				\partial_{y}Y_{2}\left(\partial_{y}^{2}\varepsilon-\varepsilon \phi_{1}+(V+\varepsilon)^{5}-V^{5}\right)\dd y.
			\end{aligned}
		\end{equation}
		Last, from the definition of $V$ in~\eqref{equ:defV}, we rewrite 
		\begin{equation*}
			\begin{aligned}
				\Lambda V
				&=
				\Lambda Q-
				\Gamma(\Lambda Q)
				-z\partial_{y}Q_{2}+b_{1}(\Lambda X)\phi+b_{1}yX\partial_{y}\phi\\
				&-\zeta_{2}\Gamma(\Lambda X)\phi-\zeta_{2}yX_{2}\partial_{y}\phi-z\zeta_{2}(\partial_{y}X_{2})\phi
				+e^{-z}\Lambda ((A_{1}+B_{2})\phi)\\
				&-\frac{10m_{0}b_{1}\Gamma (\Lambda Y)}{(1+\mu)^{\frac{1}{4}}}-\frac{10m_{0}b_{1}z\partial_{y}Y_{2}}{(1+\mu)^{\frac{1}{4}}}+5(2\alpha m_{0}z+a_{0})e^{-z}\Lambda Y_{2}+\Lambda U_{1}.
			\end{aligned}
		\end{equation*}
		Note that, from the bootstrap assumption~\eqref{est:Boot} and Lemma~\ref{le:RAB}, 
		\begin{equation*}
			\begin{aligned}
				b_{1}(\Lambda X)\phi-\zeta_{2}(\Gamma(\Lambda X))\phi&=O_{H^{1}}\left(\frac{1}{s\log^{\frac{1}{2}}s}\right),\\
				b_{1}yX_{1}\partial_{y}\phi-\zeta_{2}yX_{2}\partial_{y}\phi&=O_{H^{1}}\left(\frac{1}{s\log^{\frac{1}{2}}s}\right).
			\end{aligned}
		\end{equation*}
		It follows from Lemma~\ref{le:RAB} that 
		\begin{equation}\label{est:LambdaVenergy}
			\begin{aligned}
				\Lambda V&=\Lambda Q-\Gamma (\Lambda Q)-z\partial_{y}Q_{2}-z\zeta_{2}(\partial_{y}X_{2})\phi\\
				&-10m_{0}b_{1}z\partial_{y}Y_{2}+O_{H^{1}}\left(\frac{1}{s\log^{\frac{1}{2}}s}\right).
			\end{aligned}
		\end{equation}
		Based on the above estimate and Lemma~\ref{le:controlpara}, we obtain
		\begin{equation*}
			\begin{aligned}
				\left(\frac{\dot{\lambda}}{\lambda}+b_{1}\right)\Lambda V&=
				\left(\frac{\dot{\lambda}}{\lambda}+b_{1}\right)
				\Lambda Q-
				\left(\frac{\dot{\lambda}}{\lambda}+b_{1}\right)
				\Gamma (\Lambda Q)\\
				&-z \left(\frac{\dot{\lambda}}{\lambda}+b_{1}\right)
				\partial_{y}Q_{2}-z\zeta_{2} \left(\frac{\dot{\lambda}}{\lambda}+b_{1}\right)(\partial_{y}X_{2})\phi\\
				&-10m_{0}b_{1}z \left(\frac{\dot{\lambda}}{\lambda}+b_{1}\right)\partial_{y}Y_{2}+O_{H^{1}}\left(\frac{C_{0}}{s^{\frac{5}{2}}\log^{\frac{3}{2}}s}\right).
			\end{aligned}
		\end{equation*}
		Therefore, using a similar argument as in the proof for~\eqref{est:I111} and \eqref{est:I114}, 
		\begin{equation}\label{est:I117}
			\begin{aligned}
				&\left(\frac{\dot{\lambda}}{\lambda}+b_{1}\right)\int_{\RR}\Lambda V
				\left(\partial_{y}^{2}\varepsilon-\varepsilon \phi_{1}+(V+\varepsilon)^{5}-V^{5}\right)\dd y\\
				&\simeq -5z \left(\frac{\dot{\lambda}}{\lambda}+b_{1}\right)
				\int_{\RR}\partial_{y}Q_{2} \left(V^{4}-Q_{1}^{4}-Q_{2}^{4}\right)\varepsilon\dd y
				\\
				&-z\zeta_{2} \left(\frac{\dot{\lambda}}{\lambda}+b_{1}\right)\int_{\RR}\partial_{y}X_{2} \left(\partial_{y}^{2}\varepsilon-\varepsilon \phi_{1}+(V+\varepsilon)^{5}-V^{5}\right)\dd y\\
				&-10m_{0}b_{1}z \left(\frac{\dot{\lambda}}{\lambda}+b_{1}\right)\int_{\RR}\partial_{y}Y_{2}\left(\partial_{y}^{2}\varepsilon-\varepsilon \phi_{1}+(V+\varepsilon)^{5}-V^{5}\right)\dd y.
			\end{aligned}
		\end{equation}
		Combining the estimates~\eqref{est:I111}--\eqref{est:I116} with~\eqref{est:I117}, we obtain
		\begin{equation*}
			\begin{aligned}
				\mathcal{I}_{1,1}&\simeq 
				2\dot{\Theta}(\varepsilon\phi_{1},X_{1}\phi)
				-\frac{5}{2}b_{1}\dot{\mu}(\varepsilon\phi_{1},X_{1}\phi)
				\\
				&+10\mathcal{Z}\int_{\RR}\partial_{y}Q_{2}(V^{4}-Q^{4}_{1}-Q^{4}_{2})\varepsilon\dd y\\
				&+2\zeta_{2}
				\mathcal{Z}
				\int_{\RR}
				\partial_{y}X_{2}\left(\partial_{y}^{2}\varepsilon-\varepsilon \phi_{1}+(V+\varepsilon)^{5}-V^{5}\right)\dd y\\
				&+20m_{0}b_{1}
				\mathcal{Z}
				\int_{\RR}
				\partial_{y}Y_{2}\left(\partial_{y}^{2}\varepsilon-\varepsilon \phi_{1}+(V+\varepsilon)^{5}-V^{5}\right)\dd y.
			\end{aligned}
		\end{equation*}
		Here, we denote 
		\begin{equation*}
			\mathcal{Z}=\dot{z}-b_{1}z-\mu+z\left(\frac{\dot{\lambda}}{\lambda}+b_{1}\right).
		\end{equation*}
		Then, from the Cauchy-Schwarz inequality, we directly have 
		\begin{equation*}
			\begin{aligned}
				\left|
				\int_{\RR}
				\partial_{y}X_{2}\left(\partial_{y}^{2}\varepsilon-\varepsilon \phi_{1}+(V+\varepsilon)^{5}-V^{5}\right)\dd y\right|&\lesssim \mathcal{N}_{B}(\varepsilon)\lesssim \frac{C_{0}}{s^{\frac{3}{2}}\log s},\\
				\left|
				\int_{\RR}
				\partial_{y}Y_{2}\left(\partial_{y}^{2}\varepsilon-\varepsilon \phi_{1}+(V+\varepsilon)^{5}-V^{5}\right)\dd y\right|&\lesssim 
				\mathcal{N}_{B}(\varepsilon)\lesssim
				\frac{C_{0}}{s^{\frac{3}{2}}\log s}.
			\end{aligned}
		\end{equation*}
        In addition, from Lemma Lemma~\ref{le:boundinter} and Definition~\ref{def:Admissible}, 
        \begin{equation*}
            \left|\int_{\RR}\partial_{y}Q_{2}(V^{4}-Q^{4}_{1}-Q^{4}_{2})\varepsilon\dd y\right|\lesssim \left(ze^{z}+\|R\phi\|_{L^{\infty}}+\|U\|_{L^{\infty}}\right)\mathcal{N}_{B}(\varepsilon)\lesssim \frac{C_{0}}{s^{\frac{5}{2}}\log^{2} s}.
        \end{equation*}
		We see that~\eqref{est:I11} follows directly from the above estimate and Lemma~\ref{le:controlpara}.

		\smallskip
		\emph{Estimate on $\mathcal{I}_{1,2}$.} We claim that 
		\begin{equation}\label{est:I12}
			\mathcal{I}_{1,2}\simeq 20z\frac{\dot{\lambda}}{\lambda}\int_{\RR}(\partial_{y}Q_{2})(Q_{2}^{3}\varepsilon^{2})\dd y+O\left(s^{-\frac{1}{2}}\int_{\RR}\varepsilon^{2}\partial_{y}\phi_{1}\dd y\right).
		\end{equation}
		Indeed, from integration by parts, we find 
		\begin{equation*}
			\begin{aligned}
				\int_{\RR}\Lambda \varepsilon(-\partial_{y}^{2}\varepsilon+\varepsilon\phi_{1})\dd y
				&=\int_{\RR}(\partial_{y}\varepsilon)^{2}\dd y-\frac{B}{2}s^{\frac{1}{2}}\int_{\RR}\varepsilon^{2}\partial_{y}\phi_{1}\dd y\\
				&-\frac{B}{2}\int_{\RR}\varepsilon^{2}\left(\frac{y}{B}-s^{\frac{1}{2}}\right)\partial_{y}\phi_{1}\dd y.
			\end{aligned}
		\end{equation*}
		Note that, from the definition of $\phi_{1}$ in~\eqref{equ:defphi1},
		\begin{equation*}
			\left| \left(\frac{y}{B}-s^{\frac{1}{2}}\right)\partial_{y}\phi_{1}\right|\lesssim |\partial_{y}\phi_{1}|^{\frac{99}{100}}.
		\end{equation*}
		It follows from Lemma~\ref{le:controlpara} and H\"older inequality that 
		\begin{equation*}
			\begin{aligned}
				&\left|\frac{\dot{\lambda}}{\lambda}\int_{\RR}\varepsilon^{2}\left(\frac{y}{B}-s^{\frac{1}{2}}\right)\partial_{y}\phi_{1}\dd y\right|\\
				&\lesssim \left(\int_{\RR}\varepsilon^{2}|\partial_{y}\phi_{1}|\dd y\right)^{\frac{99}{100}}\left(\frac{\|\varepsilon\|^{\frac{1}{50}}_{L^{2}}}{s\log s}\right)\lesssim s^{-\frac{1}{2}}\int_{\RR}\varepsilon^{2}|\partial_{y}\phi_{1}|\dd y+s^{-10}.
			\end{aligned}
		\end{equation*}
		Combining the above estimates with Lemma~\ref{le:controlpara}, we find 
		\begin{equation*}
			\begin{aligned}
				2\frac{\dot{\lambda}}{\lambda}\int_{\RR}\Lambda \varepsilon(-\partial_{y}^{2}\varepsilon+\varepsilon\phi_{1})\dd y
				&=O\left(s^{-\frac{1}{2}}\int_{\RR}\varepsilon^{2}|\partial_{y}\phi_{1}|\dd y\right)\\
				&+O\left(\frac{C_{0}^{2}}{s^{4}\log^{3}s}+\frac{1}{s^{10}}\right).
			\end{aligned}
		\end{equation*}
		On the other hand, we decompose 
		\begin{equation*}
			\begin{aligned}
				(V+\varepsilon)^{5}-V^{5}
				&=5(Q_{1}^{4}+Q_{2}^{4})\varepsilon+5(V^{4}-Q_{1}^{4}-Q_{2}^{4})\varepsilon\\
				&+10V^{3}\varepsilon^{2}+5V^{2}\varepsilon^{3}+10V\varepsilon^{4}+\varepsilon^{5}.
			\end{aligned}
		\end{equation*}
		Using integration by parts, we directly have 
		\begin{equation*}
			\begin{aligned}
				-2\int_{\RR}\Lambda \varepsilon \left(5(Q_{1}^{4}+Q_{2}^{4})\varepsilon\right)\dd y
				&=
				20z\int_{\RR}(\partial_{y}Q_{2})
				\left( Q_{2}^{3} \varepsilon^{2}\right)\dd y\\
				&+20\int_{\RR}(y\partial_{y}Q_{1})(Q_{1}^{3}\varepsilon^{2})\dd y\\
				&+20\int_{\RR}((y-z)\partial_{y}Q_{2})(Q_{2}^{3}\varepsilon^{2})\dd y,
			\end{aligned}
		\end{equation*}
		which implies that 
		\begin{equation*}
			-2\frac{\dot{\lambda}}{\lambda}\int_{\RR}\Lambda \varepsilon \left(5(Q_{1}^{4}+Q_{2}^{4})\varepsilon\right)\dd y\simeq20z\frac{\dot{\lambda}}{\lambda}\int_{\RR}(\partial_{y}Q_{2})
			\left( Q_{2}^{3} \varepsilon^{2}\right)\dd y.
		\end{equation*}
		Then, using an elementary computation, we have 
		\begin{equation*}
			V^{4}-Q_{1}^{4}-Q_{2}^{4}=\left(S^{4}-Q_{1}^{4}-Q_{2}^{4}\right)
			+O\left(|S|^{3}|R\phi+U|+|R\phi+U|^{4}\right),
		\end{equation*}
		which implies that 
		\begin{equation*}
			\begin{aligned}
				&\bigg|\frac{\dot{\lambda}}{\lambda}\int_{\RR}\Lambda \varepsilon 
				\left(5(V^{4}-Q_{1}^{4}-Q_{2}^{4})\varepsilon\right)
				\dd y\bigg|\\
				&\lesssim\frac{1}{s\log s}\left(\|yQ_{1}\|_{L^{\infty}}+\|yQ_{2}\|_{L^{\infty}}\right)\|R\phi+U\|_{L^{\infty}}\mathcal{N}_{B}^{2}(\varepsilon)\\
				&+ \frac{1}{s\log s}\left(
				\mathcal{N}_{B}^{2}(\varepsilon)+\|\langle y\rangle(R\phi+U)^{4}\|_{L^{\infty}}\|\varepsilon\|_{H^{1}}^{2}
				\right)\lesssim \frac{C_{0}^{2}}{s^{4}\log^{3}s}.
			\end{aligned}
		\end{equation*}
		Based on a similar argument, we also have 
		\begin{equation*}
			\begin{aligned}
				&\bigg|\frac{\dot{\lambda}}{\lambda}\int_{\RR}\Lambda \varepsilon 
				\left(
				10V^{3}\varepsilon^{2}+5V^{2}\varepsilon^{3}+10V\varepsilon^{4}+\varepsilon^{5}
				\right)
				\dd y\bigg|\\
				&\lesssim \frac{\mathcal{N}_{B}^{2}(\varepsilon)\|\varepsilon\|_{L^{\infty}}}{s\log s}
				\left(1+\|yQ_{1}\|_{L^{\infty}}+\|yQ_{2}\|_{L^{\infty}}+\|\varepsilon\|_{L^{\infty}}^{2}\right)\\
				&+\frac{\mathcal{N}_{B}(\varepsilon)\|\varepsilon\|_{L^{\infty}}}{s\log s}
				\|\varepsilon\|_{L^{2}}
				\left(\|y(R\phi+U)^{3}\|_{L^{\infty}}+\|\varepsilon\|_{L^{\infty}}^{2}\|yV\|_{L^{\infty}}\right)\\
				&+\frac{\|\varepsilon\|_{L^{2}}^{2}\|\varepsilon\|_{L^{\infty}}}{s\log s}\left(\|R\phi\|_{L^{\infty}}^{3}+\|U\|_{L^{\infty}}^{3}+\|\varepsilon\|_{L^{\infty}}^{3}\right)\\
				&+\frac{1}{s\log s}\|R\phi+U\|_{L^{\infty}}\|\varepsilon\|_{L^{\infty}}^{3}\|\varepsilon\|^{2}_{L^{2}}\lesssim \frac{C_{0}^{2}}{s^{4}\log^{3}s}.
			\end{aligned}
		\end{equation*}
		We see that~\eqref{est:I12} follows directly from the above estimates.
		
		\smallskip
		\emph{Estimate on $\mathcal{I}_{1,3}$.} We claim that 
		\begin{equation}\label{est:I13}
			\mathcal{I}_{1,3}\ge -\frac{1}{2}\int_{\RR}\varepsilon^{2}\partial_{y}\phi_{1}\dd y+
			O\left(\frac{C_{0}^{2}}{s^{4}\log^{3}s}\right).
		\end{equation}
		Indeed, from integration by parts, we compute 
		\begin{equation*}
			\begin{aligned}
				\mathcal{I}_{1,3}&=-3\int_{\RR}(\partial_{y}\varepsilon)^{2}\partial_{y}\phi_{1}\dd y
				-\int_{\RR}\varepsilon^{2}\partial_{y}\phi_{1}\dd y\\
				&-2\int_{\RR}\partial_{y}\varepsilon\left((V+\varepsilon)^{5}-V^{5}\right)
				(1-\phi_{1})
				\dd y\\
				&+2\int_{\RR}\varepsilon\left((V+\varepsilon)^{5}-V^{5}\right)\partial_{y}\phi_{1}\dd y
				+\int_{\RR}\varepsilon^{2}\partial_{y}^{3}\phi_{1}\dd y.
			\end{aligned}
		\end{equation*}
		First, from the definition of $\phi_{1}$ in~\eqref{equ:defphi1},
		\begin{equation*}
			\begin{aligned}
				|\partial_{y}\phi_{1}|\lesssim s^{-2},\ \ \mbox{on}\ \left(-\infty,s^{\frac{1}{2}}\right)&\Longrightarrow
				S^{4}|\partial_{y}\phi_{1}|\lesssim s^{-2}(Q_{1}+Q_{2}),\\
				|1-\phi_{1}|\lesssim s^{-2},\ \ \mbox{on}\ \left(-\infty,s^{\frac{1}{2}}\right)&\Longrightarrow
				S^{4}|1-\phi_{1}|\lesssim s^{-2}(Q_{1}+Q_{2}).
			\end{aligned}
		\end{equation*}
		It follows from the bootstrap assumption~\eqref{est:Boot} and Lemma~\ref{le:L2e} that 
		\begin{equation*}
			\begin{aligned}
				&\bigg|\int_{\RR}\varepsilon\left((V+\varepsilon)^{5}-V^{5}\right)\partial_{y}\phi_{1}\dd y\bigg|\\
				&+\bigg|\int_{\RR}\partial_{y}\varepsilon\left((V+\varepsilon)^{5}-V^{5}\right)
				(1-\phi_{1})
				\dd y\bigg|\\
				&\lesssim s^{-2}\mathcal{N}_{B}^{2}(\varepsilon)
				+\|\varepsilon\|_{L^{2}}^{2}\left(\|R\phi+U\|_{L^{\infty}}^{4}+\|\varepsilon\|_{L^{\infty}}^{4}\right)
				\\
				&+\mathcal{N}_{B}(\varepsilon)\|\varepsilon\|_{L^{2}}\left(\|R\phi+U\|_{L^{\infty}}^{4}
				+\|\varepsilon\|_{L^{\infty}}^{4}\right)\lesssim
				\frac{C_{0}^{2}}{s^{4}\log^{3}s}.
			\end{aligned}
		\end{equation*}
		Second, using again the definition of $\phi_{1}$ in~\eqref{equ:defphi1}, 
		\begin{equation*}
			| \partial_{y}^{3}\phi_{1}|\lesssim B^{-2}|\partial_{y}\phi_{1}|\Longrightarrow
			\int_{\RR}\varepsilon^{2}\partial_{y}^{3}\phi_{1}\dd y=O\left(B^{-2}\int_{\RR}\varepsilon^{2}|\partial_{y}\phi_{1}|\dd y\right).
		\end{equation*}
		We see that~\eqref{est:I13} follows directly from the above estimates and $\partial_{y}\phi_{1}<0$.
		
		\smallskip
		\emph{Estimates on $\mathcal{I}_{1,4}$ and $\mathcal{I}_{1,5}$.}
		We claim that 
		\begin{equation}\label{est:I14I15}
			\mathcal{I}_{1,4}+\mathcal{I}_{1,5}=O\left(\frac{C_{0}^{2}}{s^{4}\log^{3}s}\right).
		\end{equation}
		Indeed, we decompose again
		\begin{equation*}
			\begin{aligned}
				\partial_{y}^{2}\varepsilon-\varepsilon \phi_{1}+(V+\varepsilon)^{5}-V^{5}&= \partial_{y}^{2}\varepsilon-\varepsilon
				+5(Q_{1}^{4}+Q_{2}^{4})\varepsilon\\
				&+(1-\phi_{1})\varepsilon
				+5(V^{4}-Q_{1}^{4}-Q_{2}^{4})\varepsilon\\
				&+10V^{3}\varepsilon^{2}+10V^{2}\varepsilon^{3}+5V\varepsilon^{4}+\varepsilon^{5}.
			\end{aligned}
		\end{equation*}
		Based on the above identity and ${\rm{Ker}}\mathcal{L}={\rm{Span}}\{Q'\}$, we compute 
		\begin{equation*}
			\begin{aligned}
				& \Big|\Big(\frac{\dot{x}_{1}}{\lambda}-1\Big)
				\int_{\RR}\partial_{y}S\left(-\partial_{y}^{2}\varepsilon+\varepsilon\phi_{1}-(V+\varepsilon)^{5}+V^{5}\right)\dd y\Big|\\
				&\lesssim\frac{C_{0}}{s^{\frac{3}{2}}\log s}\left(\|R\phi+U\|_{L^{\infty}}\mathcal{N}_{B}(\varepsilon)+\|R\phi+U\|_{L^{\infty}}^{4}\mathcal{N}_{B}(\varepsilon)
				+\mathcal{N}_{B}^{2}(\varepsilon)
				\right)\\
				&+ \frac{C_{0}}{s^{\frac{3}{2}}\log s}\left(\|\partial_{y}Q_{1}|^{\frac{1}{2}}(1-\phi_{1})\|_{L^{\infty}}+\||\partial_{y}Q_{2}|^{\frac{1}{2}}(1-\phi_{1})\|_{L^{2}}
                +\|Q_{2}Q^{2}_{1}\|_{L^{2}}
                \right)\mathcal{N}_{B}(\varepsilon)\\
				&+\frac{C_{0}}{s^{\frac{3}{2}}\log s}\left(\|(\partial_{y}Q_{1})Q^{2}_{2}\|_{L^{2}}+\|(\partial_{y}Q_{2})Q^{2}_{1}\|_{L^{2}}
                +\|Q_{1}Q^{2}_{2}\|_{L^{2}}
                \right)\mathcal{N}_{B}(\varepsilon)\lesssim \frac{C_{0}^{2}}{s^{4}\log^{3}s}.
			\end{aligned}
		\end{equation*}
		On the other hand, using an argument similar to that in the proof for Lemma~\ref{le:RAB}, 
		\begin{equation*}
			\|\partial_{y}(R\phi)\|_{H^{1}}+\|\partial_{y}U\|_{H^{1}}
			\lesssim \|(\partial_{y}R)\phi\|_{H^{1}}+\|\partial_{y}U\|_{H^{1}}
			+\|R\partial_{y}\phi\|_{H^{1}}\lesssim \frac{1}{s\log s}.
		\end{equation*}
		It follows directly from the bootstrap assumption~\eqref{est:Boot} and Lemma~\ref{le:L2e} that 
		\begin{equation*}
			\begin{aligned}
				&\Big|\Big(\frac{\dot{x}_{1}}{\lambda}-1\Big)
				\int_{\RR}(\partial_{y}(R\phi+U))\left(-\partial_{y}^{2}\varepsilon+\varepsilon\phi_{1}-(V+\varepsilon)^{5}+V^{5}\right)\dd y\Big|\\
				&\lesssim \frac{C_{0}}{s^{\frac{3}{2}}\log s}\|\partial_{y}(R\phi+U)\|_{H^{1}}\left(\|R\phi+U\|_{L^{\infty}}^{4}+\|\varepsilon\|_{L^{\infty}}^{4}\right)\|\varepsilon\|_{L^{2}}\\
				&+ \frac{C_{0}}{s^{\frac{3}{2}}\log s}\|\partial_{y}(R\phi+U)\|_{H^{1}}\mathcal{N}_{B}(\varepsilon)\lesssim \frac{C_{0}^{2}}{s^{4}\log^{3}s}.
			\end{aligned}
		\end{equation*}
		Similar as above, we also check that 
		\begin{equation*}
			\begin{aligned}
				\left|   \mathcal{I}_{1,5}\right|&\lesssim
				\frac{C_{0}}{s^{\frac{3}{2}}\log s}\mathcal{N}_{B}(\varepsilon)\|\varepsilon\|_{L^{2}}
				\left(\|R\phi+U\|_{L^{\infty}}^{4}
				+\|\varepsilon\|_{L^{\infty}}^{4}
				\right)
				\\
				&+ \frac{C_{0}}{s^{\frac{3}{2}}\log s}\mathcal{N}_{B}^{2}(\varepsilon)
				\lesssim \frac{C_{0}^{3}}{s^{\frac{9}{2}}\log^{3}s}+\frac{C^{2}_{0}}{s^{5}\log^{2}s}.
			\end{aligned}
		\end{equation*}
		We see that~\eqref{est:I14I15} follows directly from the above estimates.
		
		\smallskip
		Combining ~\eqref{est:I11},~\eqref{est:I12},~\eqref{est:I13} with \eqref{est:I14I15}, we complete the proof for~\eqref{est:I1}.

		\smallskip
		\emph{Estimate on $\mathcal{I}_{2}$.}
		We claim that 
		\begin{equation}\label{est:I2}
			\mathcal{I}_{2}\simeq 20\dot{z}\int_{\RR}(\partial_{y}Q_{2})\left(Q_{2}^{3}\varepsilon^{2}\right)\dd y
			+O\left(\frac{B}{s^{\frac{1}{2}}}\int_{\RR}\varepsilon^{2}|\partial_{y}\phi_{1}|\dd y\right).
		\end{equation}
		Indeed, from the definition of $V$ in~\eqref{equ:defV}, we compute\footnote{See the proof of~\eqref{equ:dsV}--\eqref{est:H4} for more detail.} 
		\begin{equation}\label{equ:dsV2}
			\begin{aligned}
				\partial_{s}V&=\dot{z}M_{1Q}+\dot{\zeta}_{2}M_{2Q}+\dot{b}_{1}{M_{3Q}}+\dot{\mu}M_{4Q}
				\\
				&+2\dot{b}_{1}b_{1}(E_{1}+F_{2})\phi-\dot{z}b_{1}^{2}\phi\partial_{y}F_{2} +R\partial_{s}\phi\\
				&+b_{1}^{2}(E_{1}+F_{2})\partial_{s}\phi+30m_{0}z\dot{b}_{1}b_{1}Y_{2}+\frac{\dot{\mu}b_{1}^{2}\phi}{2(1+\mu)}\Gamma (\Lambda F)
				\\
				&-10\alpha m_{0}\dot{z}ze^{-z}(Y_{2}+\partial_{y}Y_{2})
				-\dot{z}e^{-z}(A_{1}+B_{2})\phi
				-\dot{z}e^{-z}\phi\partial_{y}B_{2}\\
                &+\dot{z}\left(\zeta_{2}\phi \partial_{y}X_{2}+{10m_{0}b_{1}}\partial_{y}Y_{2}\right)+\frac{5}{2}m_{0}b_{1}\dot{\mu}\left(Y_{2}-2\Gamma(\Lambda Y)\right)
                +O_{\mathcal{S}}(1).
			\end{aligned}
		\end{equation}
		It follows from~\eqref{est:Boot} and Lemma~\ref{le:controlpara} that 
		\begin{equation}\label{est:psV}
			\|\partial_{s}V\|_{L^{\infty}}\lesssim s^{-1}\quad \mbox{and}\quad 
			\partial_{s}V=\dot{z}\partial_{y}Q_{2}+O_{L^{\infty}}\left(\frac{1}{s\log s}\right).
		\end{equation}
		On the one hand, using again~\eqref{est:Boot} and Lemma~\ref{le:L2e}, we have 
		\begin{equation*}
			\begin{aligned}
				\int_{\RR}|\partial_{s}V||(R\phi+U)|^{3}\varepsilon^{2}\dd y
				&\lesssim \|\partial_{s}V\|_{L^{\infty}}\|R\phi+U\|_{L^{\infty}}^{3}\|\varepsilon\|^{2}_{L^{2}}\lesssim \frac{C_{0}^{2}}{s^{5}\log^{3}s},\\
				\int_{\RR}|\partial_{s}V||S^{2}(R\phi+U)|\varepsilon^{2}\dd y&\lesssim \|\partial_{s}V\|_{L^{\infty}}\|R\phi+U\|_{L^{\infty}}\mathcal{N}_{B}^{2}(\varepsilon)\lesssim \frac{C_{0}^{2}}{s^{5}\log^{3}s}.
			\end{aligned}
		\end{equation*}
		It follows from~\eqref{est:psV} and Lemma~\ref{le:boundinter} that  
		\begin{equation*}
			\begin{aligned}
				\int_{\RR}(\partial_{s}V)V^{3}\varepsilon^{2}\dd y
				&=\int_{\RR}(\partial_{s}V)S^{3}\varepsilon^{2}\dd y+O\left(\frac{C_{0}^{2}}{s^{5}\log^{3}s}\right)\\
				&=-\dot{z}\int_{\RR}(\partial_{y}Q_{2})\left(Q_{2}^{3}\varepsilon^{2}\right)\dd y+O\left(\frac{C_{0}^{2}}{s^{4}\log^{3}s}\right).
			\end{aligned}
		\end{equation*}
		On the other hand, using again~\eqref{est:psV} and Lemma~\ref{le:L2e}, 
		\begin{equation*}
			\begin{aligned}
				& \int_{\RR}|\partial_{s}V|
				\left(|V|^{2}|\varepsilon|^{3}
				+|V||\varepsilon|^{4}+|\varepsilon|^{5}
				\right)
				\dd y\\
				&\lesssim \|\partial_{s}V\|_{L^{\infty}}
				\left(  \|V\|_{L^{\infty}}^{2}
				\|\varepsilon\|_{L^{\infty}}
				+\|\varepsilon\|_{L^{\infty}}^{3}
				\right)
				\| \varepsilon\|_{L^{2}}^{2}\lesssim s^{-\frac{17}{4}}.
			\end{aligned}
		\end{equation*}
		Last, from the definition of $\phi_{1}$ in~\eqref{equ:defphi1},
		\begin{equation*}
			|\partial_{s}\phi_{1}|\lesssim \frac{B}{s^{\frac{1}{2}}}|\partial_{y}\phi_{1}|
			\Longrightarrow
			\int_{\RR}\varepsilon^{2}\partial_{s}\phi_{1}\dd y
			=O\left(\frac{B}{s^{\frac{1}{2}}}\int_{\RR}\varepsilon^{2}|\partial_{y}\phi_{1}|\dd y\right).
		\end{equation*}
		Combining the above estimates,
		we directly complete the proof for~\eqref{est:I2}.
		
		\smallskip
		Note that, from~\eqref{est:I1} and~\eqref{est:I2}, we directly have 
		\begin{equation*}
			\begin{aligned}
				\frac{\dd \mathcal{F}_{1}}{\dd s}&\ge 
				2\dot{\Theta}(\varepsilon\phi_{1},X_{1}\phi)-\frac{5}{2}b_{1}\dot{\mu}(\varepsilon\phi_{1},X_{1}\phi)
				-\frac{1}{8}\int_{\RR}\varepsilon^{2}\partial_{y}\phi_{1}\dd y\\
				&+20\left(\dot{z}+z\frac{\dot{\lambda}}{\lambda}\right)
				\int_{\RR}(\partial_{y}Q_{2})(Q_{2}^{3}\varepsilon^{2})\dd y
				+
				O\left(\frac{C_{0}^{2}}{s^{4}\log^{\frac{5}{2}}s}+\frac{C_{0}}{s^{4}\log^{2}s}\right).
			\end{aligned}
		\end{equation*}
		Note also that, from Lemma~\ref{le:controlpara}, we obtain 
		\begin{equation*}
			20\left(\dot{z}+z\frac{\dot{\lambda}}{\lambda}\right)
			\int_{\RR}(\partial_{y}Q_{2})(Q_{2}^{3}\varepsilon^{2})\dd y
			\simeq 20\mu \int_{\RR}(\partial_{y}Q_{2})(Q_{2}^{3}\varepsilon^{2})\dd y.
		\end{equation*}
		Combining the above two estimates, we complete the proof for~\eqref{est:dsF1}.
		
		\smallskip
		\textbf{Step 2.} Estimate for the time variation of $\mathcal{F}_{2}$. We claim that 
		\begin{equation}\label{est:dsF2}
			\begin{aligned}
				\frac{\dd \mathcal{F}_{2}}{\dd s}&\simeq
				-20\mu \int_{\RR}(\partial_{y}Q_{2})(Q_{2}^{3}\varepsilon^{2})\dd y
				\\
				&-2\dot{\Theta}(\varepsilon\phi_{1},X_{1}\phi)+\frac{5}{2}b_{1}\dot{\mu}(\varepsilon\phi_{1},X_{1}\phi).
			\end{aligned}
		\end{equation}
		Indeed, we decompose 
		\begin{equation*}
			\begin{aligned}
				\frac{\dd \mathcal{F}_{2}}{\dd s}
				&=\frac{\dd }{\dd s}\left(\mu \int_{\RR}\varepsilon^{2}\phi_{2}\dd y\right)
				-
				2\left(b_{1}
				\left(\mathcal{J}_{2}+
				\left(c_{0}+\frac{5}{4}\right)\mathcal{J}_{1}\right)\right)
				\left( \frac{\dd }{\dd s}(\varepsilon \phi_{1},X_{1}\phi)\right)\\
				&-2\left(\frac{\dd }{\dd s}\left(b_{1}
				\left(\mathcal{J}_{2}+
				\left(c_{0}+\frac{5}{4}\right)\mathcal{J}_{1}\right)\right)\right)
				(\varepsilon \phi_{1},X_{1}\phi)
				=\mathcal{I}_{3}+\mathcal{I}_{4}+\mathcal{I}_{5}.
			\end{aligned}
		\end{equation*}
		
		\emph{Estimate on $\mathcal{I}_{3}$.} We claim that 
		\begin{equation}\label{est:I3}
			\mathcal{I}_{3}\simeq-20\mu \int_{\RR}(\partial_{y}Q_{2})(Q_{2}^{3}\varepsilon^{2})\dd y.
		\end{equation}
		Indeed, by an elementary computation, we find 
		\begin{equation*}
			\mathcal{I}_{3}=2\mu \int_{\RR}(\partial_{s}\varepsilon)\varepsilon\phi_{2}\dd y+\mu\int_{\RR}\varepsilon^{2}\partial_{s}\phi_{2}\dd y+\dot{\mu}\int_{\RR}\varepsilon^{2}\phi_{2}\dd y. 
		\end{equation*}
		First, from Lemma~\ref{le:eque}, we compute 
		\begin{equation*}
			\begin{aligned}
				\int_{\RR}(\partial_{s}\varepsilon)\varepsilon\phi_{2}\dd y
				&=\int_{\RR}\left(\partial_{y}^{2}\varepsilon-\varepsilon+(V+\varepsilon)^{5}-V^{5}\right)(\partial_{y}\varepsilon)\phi_{2}\dd y\\
				&+\int_{\RR}\left(\partial_{y}^{2}\varepsilon-\varepsilon+(V+\varepsilon)^{5}-V^{5}\right)\varepsilon(\partial_{y}\phi_{2})\dd y\\
				&+\bigg(\frac{\dot{\lambda}}{\lambda}+b_{1}\bigg)
				\int_{\RR}(\Lambda V)\varepsilon\phi_{2}\dd y+
				\left(\frac{\dot{x}_{1}}{\lambda}-1\right)\int_{\RR}\left(\partial_{y}V\right)\varepsilon\phi_{2}\dd y\\
				&-\int_{\RR}\Psi(V)\varepsilon\phi_{2}\dd y+\frac{\dot{\lambda}}{\lambda}\int_{\RR}(\Lambda \varepsilon)\varepsilon \phi_{2}\dd y
				+
				\left(\frac{\dot{x}_{1}}{\lambda}-1\right)\int_{\RR}\left(\partial_{y}\varepsilon\right)\varepsilon\phi_{2}\dd y.
			\end{aligned}
		\end{equation*}
		Recall that, we decompose 
		\begin{equation*}
			\begin{aligned}
				\partial_{y}^{2}\varepsilon-\varepsilon +(V+\varepsilon)^{5}-V^{5}&= \partial_{y}^{2}\varepsilon-\varepsilon
				+5(Q_{1}^{4}+Q_{2}^{4})\varepsilon\\
				&
				+5(V^{4}-Q_{1}^{4}-Q_{2}^{4})\varepsilon+\varepsilon^{5}\\
				&+10V^{3}\varepsilon^{2}+10V^{2}\varepsilon^{3}+5V\varepsilon^{4}. 
			\end{aligned}
		\end{equation*}
		Using integration by parts and Lemma~\ref{le:boundinter}, we check that 
		\begin{equation*}
			\begin{aligned}
				&\int_{\RR}\left(\partial_{y}^{2}\varepsilon-\varepsilon+5(Q_{1}^{4}+Q_{2}^{4})\varepsilon\right)(\partial_{y}\varepsilon)\phi_{2}\dd y\\
				&=-\frac{1}{2}\int_{\RR}(\partial_{y}\varepsilon)^{2}\partial_{y}\phi_{2}\dd y+\frac{1}{2}\int_{\RR}\varepsilon^{2}\partial_{y}\phi_{2}\dd y\\
				&-10\int_{\RR}(\partial_{y}Q_{2})(Q_{2}^{3}\varepsilon^{2})\dd y+O\left(\frac{C_{0}^{2}}{s^{4}\log^{2}s}\right).
			\end{aligned}
		\end{equation*}
		Note that, from the definition of $\phi_{2}$ in~\eqref{equ:defphi2}, we find 
		\begin{equation}\label{est:pointpy2}
			\partial_{y}\phi_{2}=\frac{1}{z}\chi'_{2}\left(\frac{y}{z}\right)\Longrightarrow |\partial_{y}\phi_{2}|\lesssim \frac{\phi_{1}}{\log s}.
		\end{equation}
		It follows from the bootstrap assumption~\eqref{est:Boot} that 
		\begin{equation*}
			\begin{aligned}
				&\int_{\RR}\left(\partial_{y}^{2}\varepsilon-\varepsilon+5(Q_{1}^{4}+Q_{2}^{4})\varepsilon\right)(\partial_{y}\varepsilon)\phi_{2}\dd y\\
				&=-10\int_{\RR}(\partial_{y}Q_{2})(Q_{2}^{3}\varepsilon^{2})\dd y
				+O\left(\frac{C_{0}^{2}}{s^{4}\log^{2}s}+\frac{C_{0}^{2}}{s^{3}\log^{3}s}\right).
			\end{aligned}
		\end{equation*}
		Then, using again Lemma~\ref{le:L2e} and the bootstrap assumption~\eqref{est:Boot},
		\begin{equation*}
			\begin{aligned}
				&\int_{\RR}  \left(|V^{4}-Q_{1}^{4}-Q_{2}^{4}||\varepsilon|+|\varepsilon|^{5}\right)|\partial_{y}\varepsilon|\phi_{2}\dd y\\
				&+\int_{\RR}\left(|V|^{3}\varepsilon^{2}+V^{2}|\varepsilon|^{3}+|V|\varepsilon^{4}\right) |\partial_{y}\varepsilon|\phi_{2}\dd y\\
				&\lesssim \left(\|Q_{1}^{3}Q_2\|_{L^{2}}+\|Q_{1}Q^{3}_2\|_{L^{2}}\right)\|\varepsilon\|_{L^{\infty}}\mathcal{N}_{B}(\varepsilon)\\
				&+\left(\|R\phi+U\|_{L^{\infty}}+\|\varepsilon\|_{L^{\infty}}\right)\mathcal{N}_{B}^{2}(\varepsilon)\lesssim \frac{C_{0}^{2}}{s^{4}\log^{2}s}.
			\end{aligned}
		\end{equation*}
		Combining the above two estimates, we deduce that 
		\begin{equation}\label{est:I31}
			\begin{aligned}
				& \int_{\RR}\left(\partial_{y}^{2}\varepsilon-\varepsilon+(V+\varepsilon)^{5}-V^{5}\right)(\partial_{y}\varepsilon)\phi_{2}\dd y\\
				&=-10\int_{\RR}(\partial_{y}Q_{2})(Q_{2}^{3}\varepsilon^{2})\dd y
				+O\left(\frac{C_{0}^{2}}{s^{4}\log^{2}s}+\frac{C_{0}^{2}}{s^{3}\log^{3}s}\right).
			\end{aligned}
		\end{equation}
		Using again~\eqref{est:pointpy2}, the bootstrap assumption~\eqref{est:Boot} and Lemma~\ref{le:L2e},
		\begin{equation}\label{est:I32}
			\begin{aligned}
				&\left|\int_{\RR}\left(\partial_{y}^{2}\varepsilon-\varepsilon+(V+\varepsilon)^{5}-V^{5}\right)\varepsilon(\partial_{y}\phi_{2})\dd y\right|\\
				&\lesssim\left(z^{-1}+\|R\phi+U\|_{L^{\infty}}+\|\varepsilon\|_{L^{\infty}}\right)\mathcal{N}_{B}^{2}(\varepsilon)\\
				&\lesssim \frac{C_{0}^{2}}{s^{3}\log^{3}s}+\frac{C_{0}^{2}}{s^{4}\log^{2}s}+\frac{C_{0}^{2}}{s^{\frac{17}{4}}\log^{2}s}.
			\end{aligned}
		\end{equation}
		Then, from $(\varepsilon,\Lambda Q)=(\varepsilon,\Gamma(\Lambda Q))=(\varepsilon,\partial_{y}Q_{1})=(\varepsilon,\partial_{y}Q_{2})=0$ and~\eqref{est:LambdaVenergy}, 
		\begin{equation}\label{est:I33}
			\begin{aligned}
				\bigg|\bigg(\frac{\dot{\lambda}}{\lambda}+b_{1}\bigg)
				\int_{\RR}(\Lambda V)\varepsilon\phi_{2}\dd y\bigg|
				\lesssim \frac{C_{0}^{2}}{s^{3}\log^{3}s},
				\\
				\bigg| \left(\frac{\dot{x}_{1}}{\lambda}-1\right)\int_{\RR}\left(\partial_{y}V\right)\varepsilon\phi_{2}\dd y\bigg|\lesssim \frac{C_{0}^{2}}{s^{3}\log^{3}s}.
			\end{aligned}
		\end{equation}
		Next, using again $(\varepsilon,\Gamma(\Lambda Q))=(\varepsilon,\partial_{y}Q_{2})=0$ and Lemma~\ref{le:controlpara}, we find 
		\begin{equation*}
			\bigg|\int_{\RR}({\vec{{\rm{Mod}}}}\cdot\vec{M}Q)\varepsilon\phi_{2}\dd y\bigg|\lesssim s^{-\frac{1}{2}}\mathcal{N}_B^{2}(\varepsilon)\lesssim \frac{C_{0}^{2}}{s^{\frac{7}{2}}\log^{2}s}.
		\end{equation*}
		In addition, from Lemma~\ref{le:controlpara}, Lemma~\ref{le:Psinorm} and $\phi_{2}\lesssim \phi_{1}$,
		\begin{equation*}
			\sum_{i=1}^{4}\bigg|\int_{\RR}\Psi_{i}({V})\varepsilon\phi_{2}\dd y\bigg|\lesssim s^{-\frac{5}{2}}\mathcal{N}_{B}(\varepsilon)\lesssim \frac{C_{0}}{s^{4}\log s}.
		\end{equation*}
		Combining the above estimates with Definition~\ref{def:Admissible} and Proposition~\ref{prop:approx}, we obtain
		\begin{equation}\label{est:I34}
			\bigg|\int_{\RR}\Psi({V})\varepsilon\phi_{2}\dd y\bigg|\lesssim 
			\frac{C_{0}^{2}}{s^{\frac{7}{2}}\log^{2}s}+
			\frac{C_{0}^{2}}{s^{3}\log^{3} s}.
		\end{equation}
		By integration by parts, we compute 
		\begin{equation*}
			\int_{\RR}(\Lambda \varepsilon)\varepsilon\phi_{2}\dd y=\frac{1}{2}\int_{\RR}\varepsilon^{2}\phi_{2}\dd y-\frac{1}{2}\int_{\RR}(\partial_{y}\varepsilon)^{2}\phi_{2}\dd y-\frac{1}{2}(\partial_{y}\varepsilon)^{2}y\partial_{y}\phi_{2}\dd y,
		\end{equation*}
		which implies that 
		\begin{equation}\label{est:I35}
			\begin{aligned}
				&\bigg|\frac{\dot{\lambda}}{\lambda}\int_{\RR}(\Lambda \varepsilon)\varepsilon \phi_{2}\dd y\bigg|
				+
				\bigg|\left(\frac{\dot{x}_{1}}{\lambda}-1\right)\int_{\RR}\left(\partial_{y}\varepsilon\right)\varepsilon\phi_{2}\dd y\bigg|\\
				&\lesssim s^{-1}\mathcal{N}_{B}^{2}(\varepsilon)+\mathcal{N}_{B}^{3}(\varepsilon)\lesssim \frac{C_{0}^{2}}{s^{4}\log^{2}s}+\frac{C_{0}^{3}}{s^{\frac{9}{2}}\log^{3}s}.
			\end{aligned}
		\end{equation}
		Last, from Lemma~\ref{le:controlpara}, the definition of $\phi_{2}$ in~\eqref{equ:defphi2} and~\eqref{est:Boot},
		\begin{equation}\label{est:I36}
			\bigg|  \mu\int_{\RR}\varepsilon^{2}\partial_{s}\phi_{2}\dd y\bigg|+\bigg|\dot{\mu}\int_{\RR}\varepsilon^{2}\phi_{2}\dd y\bigg|\lesssim
			\frac{C_{0}^{2}}{s^{5}\log s}+\frac{C_{0}^{3}}{s^{\frac{9}{2}}\log^{3}s}.
		\end{equation}
		We see that~\eqref{est:I3} follows directly from estimates~\eqref{est:I31}--\eqref{est:I36}.
		
		\smallskip
		\emph{Estimate on $\mathcal{I}_{4}$.} We claim that 
		\begin{equation}\label{est:I4}
			\mathcal{I}_{4}=O\left(\frac{C_{0}^{2}}{s^{4}\log^{\frac{5}{2}}s}\right).
		\end{equation}
		Indeed, we first decompose 
		\begin{equation*}
			\frac{\dd }{\dd s}\left(\varepsilon\phi_{1},X_{1}\phi\right)
			=\left((\partial_{s}\varepsilon)\phi_{1},X_{1}\phi\right)+\left(\varepsilon
			(\partial_{s}\phi_{1}),X_{1}\phi\right)
			+\left(\varepsilon\phi_{1},X_{1}\partial_{s}\phi\right).
		\end{equation*}
		From Lemma~\ref{le:eque} and integration by parts,
		\begin{equation*}
			\begin{aligned}
				\left((\partial_{s}\varepsilon)\phi_{1},X_{1}\phi\right)
				&=\int_{\RR}\left(
				\partial_{y}^{2}\varepsilon-\varepsilon+(V+\varepsilon)^{5}-V^{5}\right)
				\left(\partial_{y}(X_{1}\phi\phi_{1})\right)\dd y\\
				&+\left(\frac{\dot{\lambda}}{\lambda}+b_{1}\right)\left((\Lambda V)\phi_{1},X_{1}\phi\right)+\left(\frac{\dot{x}_{1}}{\lambda}-1\right)
				\left((\partial_{y}V)\phi_{1},X_{1}\phi\right)\\
				&-(\Psi(V)\phi_{1},X_{1}\phi)+\frac{\dot{\lambda}}{\lambda}\left((\Lambda \varepsilon)\phi_{1},X_{1}\phi\right)
				+\left(\frac{\dot{x}_{1}}{\lambda}-1\right)
				\left((\partial_{y}\varepsilon)\phi_{1},X_{1}\phi\right).
			\end{aligned}
		\end{equation*}
		By an elementary computation, we find 
		\begin{equation*}
			\partial_{y}(X_{1}\phi\phi_{1})=(\partial_{y}X_{1})\phi \phi_{1}+X_{1}(\partial_{y}\phi)\phi_{1}+X_{1}\phi\partial_{y}\phi_{1},
		\end{equation*}
		which implies that 
		\begin{equation*}
			\begin{aligned}
				\left| \int_{\RR}\left(
				\partial_{y}^{2}\varepsilon-\varepsilon\right)
				\left(\partial_{y}(X_{1}\phi\phi_{1})\right)\dd y\right|&
				\lesssim\mathcal{N}_{B}(\varepsilon)
				\left(\|X_{1}\partial_{y}\phi\|_{L^{2}}
                +\|X_{1}\partial^{2}_{y}\phi\|_{L^{2}}
                \right)\\
                &\lesssim\mathcal{N}_{B}(\varepsilon)
				\left(
\|\partial_{y}\phi_{1}\|_{L^{1}}^{\frac{1}{2}}
                +\|\partial^{2}_{y}\phi_{1}\|_{L^{1}}^{\frac{1}{2}}
                \right)\\
				&+\mathcal{N}_{B}(\varepsilon)
				\left(1+\|\partial_{y}X_{1}\|_{L^{2}}
                +\|\partial^{2}_{y}X_{1}\|_{L^{2}}
                \right)
				\lesssim \frac{C_{0}}{s^{\frac{3}{2}}\log s}.
			\end{aligned}
		\end{equation*}
		Using a similar argument as above and Lemma~\ref{le:L2e}, we also find 
		\begin{equation*}
			\left|\int_{\RR}\left((V+\varepsilon)^{5}-V^{5}\right)
			\left(\partial_{y}(X_{1}\phi\phi_{1})\right)\dd y\right|\lesssim \frac{C_{0}}{s^{\frac{3}{2}}\log s}.
		\end{equation*}
		Next, from~\eqref{est:Boot},~\eqref{est:LambdaVenergy} and Lemma~\ref{le:controlpara},
		\begin{equation*}
			\begin{aligned}
				&\left(\frac{\dot{\lambda}}{\lambda}+b_{1}\right)\left((\Lambda V)\phi_{1},X_{1}\phi\right)
				+\left(\frac{\dot{x}_{1}}{\lambda}-1\right)
				\left((\partial_{y}V)\phi_{1},X_{1}\phi\right)
				\\
				&=-z\left(\frac{\dot{\lambda}}{\lambda}+b_{1}\right)((\partial_{y}Q_{2}))\phi_{1},X_{1}\phi)
				+O\left(\frac{C_{0}}{s^{\frac{3}{2}}\log s}\right).
			\end{aligned}
		\end{equation*}
		Similarly, from the definition of $\vec{\rm{Mod}}$ and $\vec{M}Q$ in~\eqref{equ:defMod}--\eqref{equ:defMQ},
		\begin{equation*}
			\left(  (\vec{\rm{Mod}}\cdot \vec{M}Q)\phi_{1},X_{1}\phi\right)=(\dot{z}-b_{1}z-\mu)((\partial_{y}Q_{2}))\phi_{1},X_{1}\phi)
			+O\left(\frac{C_{0}}{s^{\frac{3}{2}}\log s}\right).
		\end{equation*}
		It follows from Lemma~\ref{le:Psinorm} and Proposition~\ref{prop:approx} that 
		\begin{equation*}
			-(\Psi(V)\phi_{1},X\phi)=-(\dot{z}-b_{1}z-\mu)((\partial_{y}Q_{2}))\phi_{1},X\phi)
			+O\left(\frac{C_{0}}{s^{\frac{3}{2}}\log s}\right).
		\end{equation*}
		Based on the above two estimates and Lemma~\ref{le:controlpara}, we deduce that 
		\begin{equation*}
			\begin{aligned}
				&\left(\frac{\dot{\lambda}}{\lambda}+b_{1}\right)\left((\Lambda V)\phi_{1},X_{1}\phi\right)-(\Psi(V)\phi_{1},X_{1}\phi)\\
				& +\left(\frac{\dot{x}_{1}}{\lambda}-1\right)
				\left((\partial_{y}V)\phi_{1},X_{1}\phi\right)=O\left(\frac{C_{0}}{s^{\frac{3}{2}}\log s}\right).
			\end{aligned}
		\end{equation*}
		Then, from Lemma~\ref{le:controlpara} and the bootstrap assumption, 
		\begin{equation*}
			\begin{aligned}
				\bigg|\frac{\dot{\lambda}}{\lambda}\left((\Lambda \varepsilon)\phi_{1},X_{1}\phi\right)\bigg|&\lesssim
				\frac{\mathcal{N}_{B}(\varepsilon)}{s\log s}
				\left(\int_{\RR}(1+y^{2})X_{1}^{2}\phi_{1}\dd y\right)^{\frac{1}{2}}
				\lesssim \frac{C_{0}}{s^{\frac{3}{2}}\log s},
				\\
				\bigg|\left(\frac{\dot{x}_{1}}{\lambda}-1\right)
				\left((\partial_{y}\varepsilon)\phi_{1},X_{1}\phi\right)\bigg|
				&\lesssim \frac{C_{0}\mathcal{N}_{B}(\varepsilon)}{s^{\frac{3}{2}}\log s}
				\left(\int_{\RR}(1+y^{2})X_{1}^{2}\phi_{1}\dd y\right)^{\frac{1}{2}}
				\lesssim \frac{C_{0}}{s^{\frac{3}{2}}\log s}.
			\end{aligned}
		\end{equation*}
		Here, we use the fact that 
		\begin{equation*}
			\begin{aligned}
				\int_{\RR}(1+y^{2})X_{1}^{2}\phi_{1}\dd y
                &\lesssim s\int_{B(s^{\frac{1}{2}}+2)}^{\infty}
                \left(1+\left(\frac{y}{B}-s^{\frac{1}{2}}\right)^{2}\right)
                \phi_{1}\dd y\\
				&+\int_{-\infty}^{0}e^{-|y|}\dd y+\int_{0}^{B(s^{\frac{1}{2}}+2)}(1+y^{2})\dd y\lesssim s^{\frac{3}{2}}.
			\end{aligned}
		\end{equation*}
		Last, from the definition of $\phi$ and $\phi_{1}$ in~\eqref{equ:defphi} and~\eqref{equ:defphi1}, we find 
		\begin{equation*}
			|\partial_{s}\phi|\lesssim s^{-1}\textbf{1}_{[\frac{s}{2},\frac{5s}{2}]}\quad \mbox{and}\quad |\partial_{s}\phi_{1}|\lesssim s^{-\frac{1}{2}}\phi_{1},
		\end{equation*}
		which directly implies that 
		\begin{equation*}
			\left| \left(\varepsilon
			(\partial_{s}\phi_{1}),X_{1}\phi\right)\right|
			+\left|\left(\varepsilon\phi_{1},X_{1}\partial_{s}\phi\right)\right|\lesssim
			\mathcal{N}_{B}(\varepsilon)\lesssim \frac{C_{0}}{s^{\frac{3}{2}}\log s}.
		\end{equation*}
		Combining  the above estimates with~\eqref{est:J1J2}, we complete the proof of~\eqref{est:I4}.
		
		\smallskip
		\emph{Estimate on $\mathcal{I}_{5}$.}
		We claim that 
		\begin{equation}\label{est:I5}
			\mathcal{I}_{5}\simeq 
			-2\dot{\Theta}(\varepsilon\phi_{1},X_{1}\phi)+\frac{5}{2}b_{1}\dot{\mu}(\varepsilon\phi_{1},X_{1}\phi).
		\end{equation}
		Indeed, from Lemma~\ref{le:refinmu} and Lemma~\ref{le:refinedTheta}, we compute 
		\begin{equation*}
			\begin{aligned}
				\frac{\dd }{\dd s}\left(b_{1}
				\left(\mathcal{J}_{2}+
				\left(c_{0}+\frac{5}{4}\right)\mathcal{J}_{1}\right)\right)
				&=\dot{\Theta}-\frac{5}{4}b_{1}\dot{\mu}+\frac{5}{2}\alpha\frac{\dd}{\dd s}(ze^{-z})+2\Theta\left(c_{0}+\frac{5}{4}\right)b_{1}\\
				&+\dot{b}_{1}\left(\mathcal{J}_{2}+\left(c_{0}+\frac{5}{4}\right)\mathcal{J}_{1}\right)+O\left(\frac{1}{s^{3}\log s}\right).
			\end{aligned}
		\end{equation*}
		It follows from~Lemma~\ref{le:controlpara} and~\eqref{est:J1J2} that 
		\begin{equation*}
			\frac{\dd }{\dd s}\left(b_{1}
			\left(\mathcal{J}_{2}+
			\left(c_{0}+\frac{5}{4}\right)\mathcal{J}_{1}\right)\right)
			=\dot{\Theta}-\frac{5}{4}b_{1}\dot{\mu}+O\left(s^{-3}\right).
		\end{equation*}
		Combining the above estimate with~\eqref{est:ephi1Xphi}, we complete the proof of~\eqref{est:I5}.
		
		\smallskip
		We see that~\eqref{est:dsF2} follows directly from~\eqref{est:I3},~\eqref{est:I4} and~\eqref{est:I5}.
		
		\smallskip
		\textbf{Step 3.}
		Conclusion. Combining the estimate~\eqref{est:dsF1} with \eqref{est:dsF2}, we thus complete the proof of the estimate~\eqref{est:dsF}.
	\end{proof}
	
	\subsection{End of the proof for Proposition~\ref{prop:uni}}\label{SS:Proofuni}
	We are in a position to complete the proof of Proposition~\ref{prop:uni} via bootstrap argument. We start by closing all bootstrap estimates except the distance and position parameters $(z,x_{1})$. Then, we prove the existence of suitable final data $(z^{in},x_{1}^{in})$ using a topological argument.
	
	\begin{proof}[Proof of Proposition~\ref{prop:uni}]
		\textbf{Step 1.} Closing the estimate for $\varepsilon$.
		Integrating~\eqref{est:dsF} on $[s,s^{in}]$ for any $s\in [s^{*},s^{in}]$ and then using $\varepsilon(s^{in})=0$, we have 
		\begin{equation*}
			\mathcal{F}(s)\lesssim \frac{C_{0}^{2}}{s^{3}\log^{\frac{5}{2}}s}+\frac{C_{0}}{s^{3}\log^{2}s}.
		\end{equation*}
		Thus, from~\eqref{est:coerF}, for $s_{0}>1$ large enough (depending on $C_{0}$),
		\begin{equation*}
			\mathcal{N}_{B}^{2}(\varepsilon)
			\lesssim \mathcal{F}+s^{-\frac{13}{4}}\lesssim \frac{C_{0}^{2}}{s^{3}\log^{\frac{5}{2}}s}+\frac{C_{0}}{s^{3}\log^{2}s}.
		\end{equation*}
		This strictly improves the estimate on $\varepsilon$ in~\eqref{est:Boot} for $C_{0}$ large enough.
		
		\smallskip
		\textbf{Step 2.} Closing the estimate for $\Theta$. Note that, from the bootstrap assumption~\eqref{est:Boot} and~\eqref{est:bootz}, we find 
		\begin{equation}\label{est:alphae-z}
			\frac{z^{-1}}{3\alpha}e^{z}=s^{2}+O\left(\frac{s^{2}}{\log^{\frac{1}{2}}s}\right)
			\Longrightarrow \alpha e^{-z}=\frac{1}{6s^{2}\log s}+O\left(\frac{1}{s^{2}\log^\frac{3}{2}s}\right).
		\end{equation}
		From Lemma~\ref{le:controlpara}, the bootstrap assumption~\eqref{est:Boot},~\eqref{est:rho21}--\eqref{est:rho22} and~\eqref{est:alphae-z}, 
		\begin{equation*}
			c_{0}b_{1}\dot{\mu}-b_{1}\dot{\mathcal{J}}_{2}=c_{0}\frac{\dd }{\dd s}(b_{1}\mu)-\frac{\dd}{\dd s}(b_{1}\mathcal{J}_{2})+O\left(
            \frac{1}{s^{3}\log s}+
            \frac{C_{0}}{s^{\frac{7}{2}}\log^{\frac{3}{2}} s}\right).
		\end{equation*}
		Similarly, from~\eqref{est:Boot} and~\eqref{est:rho21}--\eqref{est:rho22}, we find 
		\begin{equation*}
			|b_{1}\mu|+|b_{1}\mathcal{J}_{2}|\lesssim \frac{1}{s^{2}\log s}+\frac{C_{0}}{s^{\frac{5}{2}}\log^{\frac{3}{2}}s}.
		\end{equation*}
		Combining the above estimates with Lemma~\ref{le:refinedTheta} and~\eqref{est:finialdatawell}, we find 
		\begin{equation}\label{est:improveTheta1}
			\Theta(s)=-\frac{5\alpha}{2} ze^{-z}+
			O\left(\frac{1}{s^{2}\log s}+\frac{C^{2}_{0}}{s^{\frac{5}{2}}\log^{\frac{3}{2}}s}\right).
		\end{equation}
		It follows from~\eqref{est:bootz} and~\eqref{est:alphae-z} that 
		\begin{equation}\label{est:improveTheta2}
			\Theta(s)=-\frac{5}{6s^{2}}+O\left(\frac{1}{s^{2}\log^{\frac{1}{2}}s}+\frac{C_{0}}{s^{\frac{5}{2}}\log^{\frac{3}{2}}s}\right).
		\end{equation}
		This strictly improves the estimate on $\Theta$ in~\eqref{est:Boot} for $s_{0}$ large enough.
		
		\smallskip
		\textbf{Step 3.} Closing the estimate for $(b_{1},b_{2},\mu)$.
		Recall that, from~\eqref{est:rho1},~\eqref{est:rho2} and the bootstrap assumption~\eqref{est:Boot}, we find 
		\begin{equation*}
			| \mathcal{J}_{1}|\lesssim \left(\int_{\RR}\varepsilon^{2}\rho_{1}\dd y\right)^{\frac{1}{2}}\left(\int_{\RR}\rho_{1}\dd y\right)^{\frac{1}{2}}\lesssim \frac{C_{0}}{s^{\frac{3}{2}}\log^{\frac{1}{2}}s}.
		\end{equation*}
		Then, from~\eqref{est:improveTheta1}, Lemma~\ref{le:controlpara} and Lemma~\ref{le:refinmu},
		\begin{equation*}
			\left| \frac{\dd}{\dd s}\left(\mu-5b_{1}z+\mathcal{J}_{1}\right)\right|
			\lesssim \frac{1}{s^{2}\log s}+\frac{C_{0}^{2}}{s^{\frac{5}{2}}\log s}.
		\end{equation*}
		Integrating the above estimate over $[s,s^{in}]$ and then using~\eqref{est:finialdatawell}, we find 
		\begin{equation}\label{est:improvedmu1}
			\mu(s)=5b_{1}(s)z(s)+O\left(\frac{1}{s\log s}+\frac{C^{2}_{0}}{s^{\frac{3}{2}}\log^{\frac{1}{2}} s}\right).
		\end{equation}
		Next, using again Lemma~\ref{le:controlpara}, we have 
		\begin{equation*}
			b_{1}=\frac{\dot{z}}{z}-\frac{\mu}{z}+O\left(\frac{C_{0}}{s^{\frac{3}{2}}\log s}\right).
		\end{equation*}
		Based on the above estimate and~\eqref{est:improvedmu1}, we find 
		\begin{equation*}
			\begin{aligned}
				b_{1}e^{-z}&=\frac{\dot{z}}{6}z^{-1}e^{-z}+O\left(\frac{1}{s^{3}\log^{3}s}+\frac{C_{0}^{2}}{s^{\frac{7}{2}}\log^{2} s}\right)\\
				&=-\frac{1}{6}\frac{\dd}{\dd s}(z^{-1}e^{-z})+O\left(\frac{1}{s^{3}\log^{3}s}+\frac{C_{0}^{2}}{s^{\frac{7}{2}}\log^{2} s}\right).
			\end{aligned}
		\end{equation*}
		It follows directly from Lemma~\ref{le:controlpara} and~\eqref{est:Boot} that 
		\begin{equation*}
			\dot{b}_{1}b_{1}=\frac{\alpha}{6}\frac{\dd }{\dd s }(z^{-1}e^{-z})+O\left(\frac{1}{s^{3}\log^{3}s}+\frac{C_{0}^{2}}{s^{\frac{7}{2}}\log^{2} s}\right).
		\end{equation*}
		Integrating the above estimate over $[s,s^{in}]$ and then using~\eqref{est:finialdatawell}, we find 
		\begin{equation}
			\begin{aligned}
				b^{2}_{1}(s)=\left(\frac{\alpha}{3}\right)z^{-1}e^{-z}+O\left(\frac{1}{s^{2}\log^{3}s}+\frac{C^{2}_{0}}{s^{\frac{5}{2}}\log^{2}s}\right).
			\end{aligned}
		\end{equation}
		Therefore, from~\eqref{est:alphae-z} and the bootstrap assumption~\eqref{est:Boot},
		\begin{equation}\label{est:improved1}
			\begin{aligned}
				\big|b_{1}(s)-\frac{1}{6s \log s}\big|&\lesssim \frac{1}{s\log^{\frac{3}{2}}s}+\frac{C^{2}_{0}}{s^{\frac{3}{2}}\log s},\\
				\big|b_{1}(s)-\left(\frac{\alpha}{3}\right)^{\frac{1}{2}}z^{-\frac{1}{2}}e^{-\frac{1}{2}z}\big|&\lesssim \frac{1}{s\log^{2}s}+\frac{C^{2}_{0}}{s^{\frac{3}{2}}\log s}.
			\end{aligned}
		\end{equation}
		This strictly improves the estimate on $b_{1}$ in~\eqref{est:Boot} for $s_{0}$ large enough. Moreover, using~\eqref{est:improveTheta2} and~\eqref{est:improved1}, we improve the estimate on $b_{2}$ in~\eqref{est:Boot} for $s_{0}$ large enough.
		
		\smallskip
		Combining the above estimate with~\eqref{est:improvedmu1}, we obtain
		\begin{equation}\label{est:improvemu}
			\mu(s)=\frac{5}{3s}+O\left(\frac{1}{s\log^{\frac{1}{2}}s}+\frac{C^{2}_{0}}{s^{\frac{3}{2}}}\right).
		\end{equation}
		This strictly improves the estimate on $\mu$ in~\eqref{est:Boot} for $s_{0}$ large enough.
		
		\smallskip
		\textbf{Step 4.} 
		Closing the estimate for $\lambda$. 
		From~\eqref{est:improved1} and Lemma~\ref{le:controlpara}, we have 
		\begin{equation*}
			\frac{\dd }{\dd s}\log \lambda(s)=-\frac{1}{6s\log s}
			+O\left(\frac{1}{s \log^\frac{3}{2}s}+\frac{C^{2}_{0}}{s^{\frac{3}{2}}\log s}\right).
		\end{equation*}
		Based on the above estimate and \eqref{est:finialdatawell}, we find 
		\begin{equation*}
			\log \lambda(s)=-\frac{1}{6}\log \log s+O\left(\frac{1}{\log^{\frac{1}{2}}s}+\frac{C^{2}_{0}}{s^{\frac{1}{2}}\log s}\right),
		\end{equation*}
		which directly implies that 
		\begin{equation}\label{est:improvelambda}
			\lambda(s)=\frac{1}{\log^{\frac{1}{6}}s}+O\left(\frac{1}{\log^{\frac{2}{3}}s}+\frac{C^{2}_{0}}{s^{\frac{1}{2}}\log s}\right).
		\end{equation}
		This strictly improves the estimate on $\lambda$ in~\eqref{est:Boot} for $s_{0}$ large enough.
		
		\smallskip
		\textbf{Step 5.} Estimate for $(z,x_{1})$.
		We need to adjust the final choice of $(z^{in},x_{1}^{in})$ via a topological argument (see~\cite[Section 3.4]{MartelRaphael} for a similar argument). For any $(\theta_{1},\theta_{2})\in \mathcal{B}_{\RR^{2}}(1)$,
		we consider the following choice of finial data
		\begin{equation}\label{equ:deffinalzx1}
			\begin{aligned}
				x_{1}^{in}-\frac{s^{in}}{\log^{\frac{1}{6}} s^{in}}&=\frac{\theta_{1}s^{in}}{\log^{\frac{1}{2}}s^{in}},\\
				\left(\frac{1}{3\alpha}\right)^{\frac{1}{2}}(z^{in})^{-\frac{1}{2}}e^{\frac{1}{2}z^{in}}-s^{in}&=\frac{\theta_{2}s^{in}}{\log^{\frac{1}{2}}s^{in}}.
			\end{aligned}
		\end{equation}
		We define $(\xi_{1},\xi_{2})$ the following two functions on $[s^{*},s^{in}]$:
		\begin{equation*}
			\xi_{1}(s)=x_{1}\log^{\frac{1}{6}}s\quad \mbox{and}\quad \xi_{2}(s)=\left(\frac{1}{3\alpha}\right)^{\frac{1}{2}}z^{-\frac{1}{2}}e^{\frac{1}{2}z}.
		\end{equation*}
		In addition, we denote 
		\begin{equation*}
			\eta(s)=\eta_{1}^{2}(s)+\eta_{2}^{2}(s),\quad \mbox{on}  \ [s^{*},s^{in}],
		\end{equation*}
		where
		\begin{equation*}
			\begin{aligned}
				\eta_{1}(s)&=(\xi_{1}(s)-s)s^{-1}\log^{\frac{1}{3}}s,\quad \mbox{on}  \ [s^{*},s^{in}],
				\\
				\eta_{2}(s)&=(\xi_{2}(s)-s)s^{-1}\log^{\frac{1}{2}}s,\quad \mbox{on}  \ [s^{*},s^{in}].
			\end{aligned}
		\end{equation*}
		We claim that, for any $s\in [s^{*},s^{in}]$ such that $\eta(s)=1$, the following transversality property holds,
		\begin{equation}\label{est:dseta}
			\frac{\dd }{\dd s}\eta(s)\le -s^{-1}.
		\end{equation}
		Indeed, from ~\eqref{est:improvelambda} and Lemma~\ref{le:controlpara}, we check that 
		\begin{equation*}
			\dot{\xi}_{1}(s)=1+O\left(\frac{1}{\log^{\frac{1}{2}}s}+\frac{C_{0}^{2}}{s^{\frac{1}{2}}\log^{\frac{1}{2}}s}\right).
		\end{equation*}
		It follows directly from the bootstrap assumption~\eqref{est:Boot} that 
		\begin{equation}\label{est:eta1}
			\frac{\dd }{\dd s}\left(\eta^{2}_{1}(s)\right)=-2s^{-1}\eta_{1}^{2}(s)+O\left(\frac{1}{s\log^{\frac{1}{6}}s}+\frac{C_{0}^{2}}{s^{\frac{3}{2}}\log^{\frac{1}{6}}s}\right).
		\end{equation}
		On the other hand, from~\eqref{est:improvedmu1},~\eqref{est:improved1} and Lemma~\ref{le:controlpara}, we check that 
		\begin{equation*}
			\dot{z}=6\left(\frac{\alpha}{3}\right)^{\frac{1}{2}}z^{\frac{1}{2}}e^{-\frac{1}{2}z}+O\left(\frac{1}{s\log s}+\frac{C^{2}_{0}}{s^{\frac{3}{2}}}+\frac{C^{2}_{0}}{s^{\frac{3}{2}}\log^{\frac{1}{2}}s}\right).
		\end{equation*}
		Based on the above estimate and~\eqref{est:alphae-z}, we find  
		\begin{equation*}
			\dot{\xi}_{2}=1+O\left(\frac{1}{\log s}+\frac{C^{2}_{0}}{s^{\frac{1}{2}}}+\frac{C^{2}_{0}}{s^{\frac{1}{2}}\log^{\frac{1}{2}}s}\right).
		\end{equation*}
		It follows from the bootstrap assumption~\eqref{est:Boot} that 
		\begin{equation}\label{est:eta2}
			\frac{\dd}{\dd s}(\eta_{2}^{2}(s))=-2s^{-1}\eta_{2}^{2}(s)+O\left(\frac{1}{s\log^{\frac{1}{2}}s}+\frac{C_{0}^{2}}{s^{\frac{3}{2}}}\log s\right).
		\end{equation}
		Combining~\eqref{est:eta1} and~\eqref{est:eta2}, we complete the proof of~\eqref{est:dseta} for $s_{0}$ large enough. The transversality relation~\eqref{est:dseta} is enough to justify the existence of at least a couple of $(\theta_{1},\theta_{2})\in \mathcal{B}_{\RR^{2}}(1)$ such that $s^{*}(\theta_{1},\theta_{2})=s_{0}$.
		
		\smallskip
		The proof is by contradiction, we assume that for all $(\theta_{1},\theta_{2})\in \mathcal{B}_{\RR^{2}}(1)$, it holds $s_{0}<s^{*}(\theta_{1},\theta_{2})$. Then, a contradiction follows from the following discussion (see for example~\cite[Page 730]{MartelRaphael} for more discussion).
		
		\smallskip
		\emph{Continuity of $s^{*}(\theta_{1},\theta_{2})$.}
		The above transversality relation~\eqref{est:dseta} implies that the map 
		\begin{equation*}
			(\theta_{1},\theta_{2})\in\bar{\mathcal{B}}_{\RR^{2}}(1)\mapsto 
			s^{*}(\theta_{1},\theta_{2}),
		\end{equation*}
		is continuous and 
		\begin{equation*}
			s^{*}(\theta_{1},\theta_{2})=s^{in},\quad \mbox{for any}\ (\theta_{1},\theta_{2})\in \mathcal{S}_{\RR^{2}}(1).
		\end{equation*}
		
		\smallskip
		\emph{Construction of a retraction.} We define 
		\begin{equation*}
			\begin{aligned}
				\mathcal{M}:(\theta_{1},\theta_{2})\in \mathcal{B}_{\RR^{2}}(1)\mapsto\left(\eta_{1}(s^{*}(\theta_{1},\theta_{2})),\eta_{2}(s^{*}(\theta_{1},\theta_{2}))\right)\in \mathcal{S}_{\RR^{2}}(1).
			\end{aligned}
		\end{equation*}
		From what precedes, $\mathcal{M}$ is continuous. Moreover, $\mathcal{M}$ restricted to $\mathcal{S}_{\RR^{2}}(1)$ is the identity. The existence of such a map is contradictory to the no retraction theorem for continuous maps from the ball to the sphere. Therefore, the existence of $(\theta_{1},\theta_{2})$ have proved and the uniform estimate~\eqref{est:unisolution} is a consequence of bootstrap estimate~\eqref{est:Boot},~\eqref{est:improvelambda} and Lemma~\ref{le:L2e}. The proof of Proposition~\ref{prop:uni} is complete.
	\end{proof}

	\section{Compactness argument}
	The objective of this section is to finish the construction of solution in 
    Theorem~\ref{thm:main} by passing to the limit on a sequence of solutions give by Proposition~\ref{prop:uni}.
	
	\begin{proof}
		[Proof of Theorem~\ref{thm:main}]
		\textbf{Step 1.} Construction of a sequence of backwards solutions.
		We claim that, there exists $T_{0}>1$ and a sequence of solution $u_{n}\in C([T_{0},T_{n}];H^{1})$ of~\eqref{equ:gKdV} where $T_{n}=n\in \mathbb{N}$ satisfying the following estimates on $[T_{0},T_{n}]$:
		\begin{equation}\label{est:unit}
			\left\{
			\begin{aligned}
				|z_{n}(t)-2\log t|&\lesssim \log \log t,\quad |b_{1n}(t)|\lesssim \frac{1}{t\log^{\frac{3}{2}}t},\\
				|x_{1n}(t)-t\log^{\frac{1}{3}}t|&\lesssim t\log^{\frac{1}{6}}t,\quad \|\varepsilon_{n}(t)\|_{H^{1}}\lesssim \frac{1}{t\log^{\frac{1}{2}}t},\\
				\bigg|\lambda_{n}(t)-\frac{1}{\log^{\frac{1}{6}}t}\bigg|&\lesssim \frac{1}{\log^{\frac{2}{3}}t},\quad |\mu_{n}(t)|\lesssim \frac{1}{t\log^{\frac{1}{2}}t},\quad |b_{2n}(t)|\lesssim \frac{1}{t\log^{\frac{3}{2}}t}.
			\end{aligned}
			\right.
		\end{equation}
		Here, $(\lambda,z,\mu,x_{1},b_{1},b_{2})\in (0,\infty)^{2}\times \RR^{4}$ are the parameters for the decomposition of $u_{n}$ given by Proposition~\ref{prop:modu}, that is, 
		\begin{equation}\label{def:ue}
			\begin{aligned}
				u_{n}(t,x)&=\frac{1}{\lambda^{\frac{1}{2}}_{n}}V\left(\frac{x-x_{1n}}{\lambda_{n}};(z_{n},\mu_{n},b_{1n},b_{2n})\right)+\frac{1}{\lambda^{\frac{1}{2}}_{n}}\varepsilon_{n}\left(t,\frac{x-x_{1n}}{\lambda_{n}}\right)\\
				&=\frac{1}{\lambda_{n}^{\frac{1}{2}}}
				\left(Q\left(\frac{x-x_{1n}}{\lambda_{n}}\right)-Q\left(\frac{x-x_{1n}}{\lambda_{n}}-z_{n}\right)\right)+O_{H^{1}}\left(\frac{1}{t^{\frac{1}{4}}}\right).
			\end{aligned}
		\end{equation}
		Indeed, from Proposition~\ref{prop:uni}, we know that there exists a solution $u_{n}(s)$ of~\eqref{equ:gKdV} on the time interval $[s_{0},s^{in}]$ and satisfies the uniform estimate~\eqref{est:unisolution} over such time interval. Recall that, 
		from~\eqref{equ:defs}
		we set 
		\begin{equation}\label{equ:defts}
			s(t)=s^{in}-\int_{t}^{T_{n}}\frac{\dd \sigma}{\lambda^{3}(\sigma)}
			\Longleftrightarrow
			t(s)=T_{n}-\int_{s}^{s^{in}}\lambda^{3}(\sigma)\dd \sigma.
		\end{equation}
		Denote 
		\begin{equation*}
			\eta_{3}(s)=(\xi_{3}(s)-s)^{2}s^{-2}\log^{\frac{1}{2}}s,\quad \mbox{with}\ \ 
			\xi_{3}(s)=t(s)\log^{\frac{1}{2}}s.
		\end{equation*}
		From~\eqref{est:unisolution} and~\eqref{equ:defts}, we check that 
		\begin{equation*}
			\eta_{3}(s)=1\Longrightarrow \frac{\dd \eta_{3}}{\dd s}\le -2s^{-1}+O\left(\frac{1}{s\log^{\frac{1}{4}}s}\right)\le -s^{-1}.
		\end{equation*}
		Therefore, using a similar topological argument as in the step 5 of the proof for Proposition~\ref{prop:uni}, we know that for any $T_{n}=n$ large enough, there exists a choice of $s^{in}>0$ satisfying~\eqref{est:defsin} such that 
		\begin{equation}\label{est:ts1}
			\eta_{3}(s)\le 1, \ \ \mbox{on}\ [s_{0},s^{in}]\Longrightarrow
			\left|t(s)-\frac{s}{\log^{\frac{1}{2}}s}\right|\le \frac{s}{\log^{\frac{3}{4}}s},\ \ \mbox{on}\ [s_{0},s^{in}].
		\end{equation}
		It follows directly that 
		\begin{equation}
			s(t)=t\log^{\frac{1}{2}}t\left(1+O\left(\frac{1}{\log^{\frac{1}{4}}t}\right)\right),\quad \mbox{on}\ [T_{0},T_{n}].
		\end{equation}
		Here, $T_{0}>0$ is a universal constant independent with the choice of $n\in \mathbb{N}$. Therefore, the uniform estimates~\eqref{est:unit} and~\eqref{def:ue} are standard consequence of Lemma~\ref{le:RAB}, Proposition~\ref{prop:uni} and the definition of $V$ in~\eqref{equ:defV}.
		
		\smallskip
		\textbf{Step 2.} Compactness argument. By~\eqref{est:unit} and~\eqref{def:ue}, the sequence $(u_{n}(T_0{}))_{n\in \mathbb{N}}$ is bounded in $H^{1}$. Therefore, there exists a subsequence of $(u_{n})_{n\in \mathbb{N}}$ (still denote by $(u_{n})_{n\in \mathbb{N}}$) and $u_{0}\in H^{1}$ such that 
		$ u_{n}(T_{0})\rightharpoonup u_{0}$ weakly in $H^{1}$. Let 
		$u(t)$ be the solution of~\eqref{equ:gKdV} corresponding to the initial data $u_{0}$. From the local Cauchy theory and weak continuity of the flow%
        \footnote{We refer to \cite[Lemma 2.10]{Comkdv} and \cite[Lemma 30]{MMJMPA} for the statement and proof of this property.}
        for~\eqref{equ:gKdV} and~\eqref{est:unit}, we know that $u\in C([T_{0},\infty);H^{1})$ and $u(t)$ satisfies~\eqref{equ:modu2} for all $t\in [T_{0},\infty)$. Moreover, the decomposition $(\mathcal{G},\varepsilon)$ of $u$ satisfies, for all $t\in [T_{0},\infty)$,
		\begin{equation}\label{est:weakL2}
			\mathcal{G}_{n}(t)\to \mathcal{G}(t)\quad \mbox{and}\quad \varepsilon_{n}(t)\rightharpoonup \varepsilon(t)\ \ \mbox{weakly in}\ H^{1}.
		\end{equation}
		In particular, for $t\in [T_{0},\infty)$, the solution $u(t)$ decompose as 
		\begin{equation}\label{est:defutx}
			u(t,x)=\frac{1}{\lambda^{\frac{1}{2}}}V\left(\frac{x-x_{1}}{\lambda_{}};(z,\mu,b_{1},b_{2})\right)+\frac{1}{\lambda^{\frac{1}{2}}}\varepsilon \left(t,\frac{x-x_{1}}{\lambda_{}}\right).
		\end{equation}
		From~\eqref{est:unit} and~\eqref{est:weakL2}, the functions $(\mathcal{G},\varepsilon)$ satisfy
		\begin{equation}\label{est:unit2}
			\left\{
			\begin{aligned}
				|z(t)-2\log t|&\lesssim \log \log t,\quad |b_{1}(t)|\lesssim \frac{1}{t\log^{\frac{3}{2}}t},\\
				|x_{1}(t)-t\log^{\frac{1}{3}}t|&\lesssim t\log^{\frac{1}{6}}t,\quad \|\varepsilon_{}(t)\|_{H^{1}}\lesssim \frac{1}{t\log^{\frac{1}{2}}t},\\
				\bigg|\lambda_{}(t)-\frac{1}{\log^{\frac{1}{6}}t}\bigg|&\lesssim \frac{1}{\log^{\frac{2}{3}}t},\quad |\mu_{}(t)|\lesssim \frac{1}{t\log^{\frac{1}{2}}t},\quad |b_{2}(t)|\lesssim \frac{1}{t\log^{\frac{3}{2}}t}.
			\end{aligned}
			\right.
		\end{equation}
		It follows directly from Lemma~\ref{le:RAB} that 
		\begin{equation*}
			\lambda^{-1}(t)\|V(t,y)-Q+Q(\cdot-z)\|_{H^{1}}\lesssim t^{-\frac{1}{4}}.
		\end{equation*}
		Therefore, we obtain the following expansion of $u$:
		\begin{equation*}
			\begin{aligned}
				u(t,x)&=\frac{1}{\lambda^{\frac{1}{2}}(t)}Q\left(\frac{x-x_{1}(t)}{\lambda(t)}\right)+\frac{1}{\lambda^{\frac{1}{2}}(t)}\varepsilon\left(t,\frac{x-x_{1}(t)}{\lambda(t)}\right)\\
				&-\frac{1}{\lambda^{\frac{1}{2}}(t)}Q\left(\frac{x-x_{1}(t)-\lambda(t)z(t)}{\lambda(t)}\right)+O_{H^{1}}(t^{-\frac{1}{4}}).
			\end{aligned}
		\end{equation*}
		Combining above estimate with~\eqref{est:unit2}, we complete the proof for Theorem~\ref{thm:main}.
	\end{proof}

\end{document}